\documentclass[12pt,a4paper,twoside]{extarticle}

\usepackage[T1]{fontenc}
\usepackage{amsmath,amssymb,mathtools,amsthm}
\usepackage{aliascnt}
\IfFileExists{newtxtext.sty}
  {\usepackage{newtxtext,newtxmath}}
  {\usepackage{mathptmx}}
\usepackage{microtype}
\usepackage[a4paper,inner=13mm,outer=12mm,top=11mm,bottom=13mm,headsep=3mm,footskip=6mm]{geometry}
\usepackage{array,booktabs,longtable,tabularx}
\usepackage{enumitem}
\usepackage{titlesec}
\usepackage{fancyhdr}
\usepackage[numbers,sort&compress]{natbib}
\usepackage{hyperref}
\usepackage[nameinlink,noabbrev]{cleveref}
\usepackage{tikz-cd}
\usepackage{xcolor}
\usepackage{caption}
\usepackage{etoolbox}
\usepackage{csquotes}

\definecolor{linkblue}{RGB}{22,62,105}
\hypersetup{
  colorlinks=true,
  linkcolor=linkblue,
  citecolor=linkblue,
  urlcolor=linkblue,
  pdftitle={Guarded Realization Semantics: Occurrence-Sensitive Certificates and Behavior-Dependent Lower Bounds},
  pdfauthor={SeungJu Lee}
}

\setlist{nosep,leftmargin=1.45em,labelsep=.45em}
\setlist[enumerate,1]{label=\arabic*.,ref=\arabic*}
\setlist[itemize,1]{label=\textbullet}
\titleformat{\section}{\normalfont\bfseries\fontsize{10}{11}\selectfont}{\thesection}{.55em}{}
\titleformat{\subsection}{\normalfont\bfseries\fontsize{8.7}{9.7}\selectfont}{\thesubsection}{.5em}{}
\titleformat{\subsubsection}{\normalfont\itshape\fontsize{8.2}{9.1}\selectfont}{\thesubsubsection}{.45em}{}
\titlespacing*{\section}{0pt}{1.6ex plus .4ex minus .2ex}{.55ex}
\titlespacing*{\subsection}{0pt}{1.05ex plus .25ex minus .15ex}{.35ex}
\titlespacing*{\subsubsection}{0pt}{.8ex plus .2ex minus .1ex}{.25ex}

\newtheoremstyle{compactplain}{2pt}{2pt}{\itshape}{}\bfseries{.}{.45em}{}
\newtheoremstyle{compactdef}{2pt}{2pt}{\normalfont}{}\bfseries{.}{.45em}{}
\theoremstyle{compactplain}
\newtheorem{theorem}{Theorem}[section]
\newaliascnt{proposition}{theorem}
\newtheorem{proposition}[proposition]{Proposition}
\aliascntresetthe{proposition}
\newaliascnt{lemma}{theorem}
\newtheorem{lemma}[lemma]{Lemma}
\aliascntresetthe{lemma}
\newaliascnt{corollary}{theorem}
\newtheorem{corollary}[corollary]{Corollary}
\aliascntresetthe{corollary}
\theoremstyle{compactdef}
\newaliascnt{definition}{theorem}
\newtheorem{definition}[definition]{Definition}
\aliascntresetthe{definition}
\newaliascnt{construction}{theorem}
\newtheorem{construction}[construction]{Construction}
\aliascntresetthe{construction}
\newaliascnt{example}{theorem}
\newtheorem{example}[example]{Example}
\aliascntresetthe{example}
\newaliascnt{remark}{theorem}
\newtheorem{remark}[remark]{Remark}
\aliascntresetthe{remark}

\crefname{theorem}{theorem}{theorems}
\Crefname{theorem}{Theorem}{Theorems}
\crefname{proposition}{proposition}{propositions}
\Crefname{proposition}{Proposition}{Propositions}
\crefname{lemma}{lemma}{lemmas}
\Crefname{lemma}{Lemma}{Lemmas}
\crefname{corollary}{corollary}{corollaries}
\Crefname{corollary}{Corollary}{Corollaries}
\crefname{definition}{definition}{definitions}
\Crefname{definition}{Definition}{Definitions}
\crefname{construction}{construction}{constructions}
\Crefname{construction}{Construction}{Constructions}
\crefname{example}{example}{examples}
\Crefname{example}{Example}{Examples}
\crefname{remark}{remark}{remarks}
\Crefname{remark}{Remark}{Remarks}

\makeatletter
\renewenvironment{proof}[1][\proofname]{\par\pushQED{\qed}\normalfont
  \topsep2pt\trivlist\item[\hskip\labelsep\bfseries #1\@addpunct{.}]\ignorespaces}
 {\popQED\endtrivlist\@endpefalse}
\makeatother

\newcommand{\PSh}{\operatorname{PSh}}
\newcommand{\Pres}{\mathsf{Pres}}
\newcommand{\Beh}{\mathsf{Beh}}

\newcommand{\Err}{\mathsf{Err}}

\newcommand{\Sub}{\operatorname{Sub}}
\newcommand{\Hom}{\operatorname{Hom}}
\newcommand{\Aut}{\operatorname{Aut}}
\newcommand{\Lan}{\operatorname{Lan}}
\newcommand{\Ran}{\operatorname{Ran}}
\newcommand{\colim}{\operatorname*{colim}}
\newcommand{\dom}{\operatorname{dom}}
\newcommand{\trk}{\operatorname{trk}}
\newcommand{\Stab}{\operatorname{Stab}}
\newcommand{\minsub}{\min_{\subseteq}}
\newcommand{\Npos}{\mathbb N_{>0}}
\newcommand{\down}{\mathord{\downarrow}}
\newcommand{\blank}{[-]}
\newcommand{\iso}{\cong}
\newcommand{\mono}{\hookrightarrow}
\newcommand{\onto}{\twoheadrightarrow}
\newcommand{\esssup}{\operatorname*{ess\,sup}}

\newcommand{\compactpart}[2]{%
  \par\vspace{1.5ex}\noindent\rule{\linewidth}{.3pt}\par
  \noindent{\bfseries\fontsize{9.4}{10.4}\selectfont #1. #2}\par
  \vspace{.45ex}\noindent\rule{\linewidth}{.3pt}\par\vspace{.6ex}}

\title{\vspace{-7mm}\bfseries\fontsize{15}{16}\selectfont
Guarded Realization Semantics\\[-1mm]
\normalfont\fontsize{9.5}{10.5}\selectfont
Occurrence-Sensitive Certificates and Behavior-Dependent Lower Bounds}
\author{SeungJu Lee}
\date{\today}

\begin{document}
\maketitle
\vspace{-5mm}

\begin{abstract}
Distinct proofs, programs, formulas, or rewrite paths may have the same
observable behavior while differing in their occurrence structure, sharing,
interfaces, or transformation history.  We develop a guarded realization
semantics that retains these distinctions when an error is extracted.  The
resulting magnitude is bounded above by a certificate attached to the chosen
realization and below by the greatest behavior-dependent lower bound valid for
every compatible error realization.

For linear double-pushout rewriting in the typed presheaf setting, we identify
the greatest subobject transported intact through a rewrite step and through a
finite rewrite path.  Sound guarded local estimates compose to give pathwise
upper certificates.  At the set level, the complementary lower bound is the
infimum of the magnitudes in a behavior fiber.  For non-discrete categories,
when the magnitude is functorial and the relevant pointwise right Kan extension
exists, the lower reflection is given by that extension; it reduces to the
strict-fiber infimum under a Grothendieck fibration hypothesis.

For continuous surjective linear observations from complex normed spaces onto
finite-dimensional complex normed spaces, the lower reflection is the induced
quotient norm.  Applied to finitely many distinct characters on a compact
metrizable abelian group, this gives an interpolation norm with an exact dual
formula.  Under the hypotheses of the continuous and discrete Abel transfer
theorems, these norms bound tail amplitudes from observed coefficients.  With
the corresponding boundary-agreement assumptions, the bounds yield Mellin and
generating-function consequences, as well as an application to the normalized
point-count error of smooth, projective, geometrically connected curves of
positive genus over finite fields.  Altogether, every realization to which the relevant guards and soundness
hypotheses apply satisfies
\[
  Q\bigl(O(\Err(r))\bigr)
  \leq A(\Err(r))
  \leq U(r).
\]
\end{abstract}

\noindent\textbf{2020 Mathematics Subject Classification.}
Primary 18M35; Secondary 18A40, 18F20, 43A15, 68Q42.

\noindent\textbf{Keywords.}
realization semantics; occurrence structure; DPO rewriting; right Kan extension;
quotient norm; compact-group interpolation; Abel means.

\section{Introduction}\label{sec:introduction}

Distinct proofs, programs, formulas, or rewrite paths can produce the same
observable result.  One program may reuse an intermediate value while another
recomputes it; one proof may share a subproof while another duplicates it; and
two rewrite paths may reach isomorphic targets through different intermediate
objects.  These differences can be ignored when only the final behavior
matters.  They cannot always be ignored when an error estimate depends on the
occurrences used by a realization, on the domain where a rule is valid, or on
the path along which local estimates are composed.

The framework used here therefore extracts an error before passing to observed
behavior.  A realization \(r\) has a structured error object \(\Err(r)\), which
may retain a residual function, an occurrence attribution, or a validity
domain.  The functor \(O\) records the observed error behavior, and \(A\)
assigns a numerical magnitude.  If \(U(r)\) is a sound upper certificate on
the actual guard of \(r\), and \(Q\) is a sound lower bound determined by the
observation, then
\[
  Q\bigl(O(\Err(r))\bigr)
  \leq A(\Err(r))
  \leq U(r).
\]
The two endpoints use different information.  The upper endpoint may depend
on the occurrences and rule path of the chosen realization.  The lower
endpoint must remain valid for every error realization compatible with the
same observation.  In particular, the lower construction does not select or
reconstruct a preferred realization.

Guarded local semantics supplies the numerical upper estimates, while rewrite
structure determines where the occurrence labels used by those estimates
remain intact.  The local certificates take values in a unital ordered monoid.
If the identity certificate is sound, each generating certificate is sound on
its guard, and soundness is preserved by ordered composition on the composite
guard, their product gives a sound certificate for the entire path.

Occurrence transport is modeled by linear double-pushout rewriting in
\[
  \mathcal C=[\mathcal H,\mathbf{Set}]/\Sigma ,
\]
where \(\mathcal H\) is a small incidence category and \(\mathcal C\) is a
Grothendieck topos.  Whole-grain incidence models motivate this presentation
\cite{kock-petri}; the topos and adhesive-rewriting results used in its
analysis are drawn from
\cite{maclane-moerdijk,lack-sobocinski,behr-harmer-krivine,
zanasi-free-hypergraph,bonchi-rewriting-frobenius}.
For a linear DPO derivation with a monic match, its context is the greatest
subobject of the source transported intact through the step.  Equivalently, an
occurrence is transported intact precisely when its intersection with the
matched left-hand side is contained in the retained interface.  Pullback
composition of the track spans gives the greatest intact domain of a finite
rewrite path.  On the rewrite bicategory freely generated by the chosen
parallel-independence diamonds, these spans extend to a pseudofunctor; see
\cref{thm:maximal-step,thm:path-domain,thm:track-pseudo}.  An occurrence that
meets nonretained material may have a transformed descendant, but it is not
transported as the same intact subobject.  Moreover, track transport alone
does not preserve a numerical certificate: the certificate assignment must
also be invariant under occurrence isomorphisms and compatible with the
context embeddings, as required in \cref{prop:certificate-transport}.

The passage from realizations to behavior also places a precise restriction on
reification.  Proof identity, sharing, and intensional distinctions provide
the motivating examples
\cite{girard,strassburger,hasegawa,pfenning,hu-pientka}.
Let \(\mathcal E\) be a small realization category and
\(p:\mathcal E\to\mathcal B\) a behavior functor.  If the Yoneda embedding
\(y_{\mathcal E}\) factors, up to natural isomorphism, through \(p\), then
\(p\) is faithful and any two realizations with isomorphic behaviors are
isomorphic.  Consequently, a behavior functor that identifies nonisomorphic
realizations cannot support such a representable factorization.  This is an
obstruction to representable behavior-level reification, not to every form of
coding.

There are two relevant alternatives.  If \(\mathcal E\) is a small groupoid
and \(\mathcal B\) is a discrete behavior set, the behaviorwise left-Kan
aggregate is generally nonrepresentable, but its category of elements
recovers the full realization fiber.  If a realization fibration is presented
by a pseudofunctor with small fibers, or if a fixed universe enlargement is
used, fiberwise presheaf completion represents each realization separately in
its own fiber and transports these representables by reindexing.  Thus the
aggregate and fiberwise constructions retain realization data without choosing
one representative of each behavior.  The categorical tools involved are the
standard Yoneda, Kan-extension, fibration, and descent constructions
\cite{maclane,streicher,coquand-mannaa-ruch}.

The numerical effect of passing to behavior has a simple universal description
for sets.  Given functions
\[
  O:E\longrightarrow B,
  \qquad
  A:E\longrightarrow[0,\infty],
\]
define
\[
  Q_A(b)=\inf\{A(e):O(e)=b\},
  \qquad \inf\varnothing=\infty.
\]
Then \(Q_A\circ O\leq A\), and every other function
\(H:B\to[0,\infty]\) satisfying \(H\circ O\leq A\) also satisfies
\(H\leq Q_A\).  Hence \(Q_A\) is the greatest lower bound determined by
behavior alone; this is \cref{thm:behavior-reflection}.

The non-discrete formula requires more than replacing sets by categories.  If
\[
  O:\mathcal E\longrightarrow\mathcal B,
  \qquad
  A:\mathcal E\longrightarrow[0,\infty]
\]
are functors and the pointwise right Kan extension exists, then
\[
  (\Ran_OA)(b)
  =
  \inf_{(u:b\to O(e))\in(b\downarrow O)} A(e).
\]
Thus the value is controlled in general by a comma category, not by the strict
fiber alone.  When \(O\) is a Grothendieck fibration, a cartesian lift of
\(u:b\to O(e)\) produces an object over \(b\), and functoriality of \(A\)
reduces the comma-category infimum to
\[
  (\Ran_OA)(b)=\inf_{e\in\mathcal E_b}A(e).
\]
The strict-fiber formula in \cref{thm:fibrational-reflection} is therefore a
consequence of the fibration hypothesis rather than a formula for arbitrary
observation functors.

A first analytic realization of this lower bound is obtained from quotient
norms.  Let \(X\) be a complex normed space, let \(B\) be a
finite-dimensional complex normed space, and let
\[
  T:X\twoheadrightarrow B
\]
be continuous and linear.  Then
\[
  q_T(b)=\inf\{\|x\|_X:Tx=b\}
\]
is the quotient norm induced by \(T\), and standard quotient-space duality
\cite{conway,folland} gives
\[
  q_T(b)
  =
  \sup_{\varphi\in B^*,\,\varphi\ne0}
  \frac{|\varphi(b)|}{\|T^*\varphi\|_{X^*}}.
\]
For a nonempty compact convex uncertainty set \(U\subseteq B\), Sion's minimax
theorem \cite{sion} gives the corresponding robust formula.  A different
compactness statement is used for infinite systems of tests: when
\(X=Y^*\), the tests are weak-star continuous, and every finite subsystem is
solvable with one common norm bound, weak-star compactness gives a simultaneous
solution with the same bound.  This finite-intersection argument is separate
from both minimax and categorical descent.

Compact-group interpolation gives a concrete family of quotient norms.  Let
\(G\) be a compact metrizable abelian group and let
\(\chi_1,\ldots,\chi_m\) be distinct characters.  Their coefficient map
from \(L^\infty(G)\) onto \(\mathbb C^m\) is a continuous linear
surjection, so the least \(L^\infty\)-norm of a function with prescribed
coefficients is a quotient norm.  Its dual norm is the \(L^1\)-norm of the
corresponding character polynomial, yielding the formula in
\cref{thm:compact-duality}.  Character orthogonality and compact-group harmonic
analysis provide the underlying tools, while Kronecker--Weyl averaging
identifies the orbit groups associated with continuous and discrete
frequencies \cite{bailleul,rudin,katznelson}.

These interpolation norms become tail lower bounds through Abel limits.  In
the continuous case, let \(F\in L^1_{\mathrm{loc}}([0,\infty))\) satisfy
\[
  \int_0^\infty |F(y)|e^{-\varepsilon y}\,dy<\infty
  \qquad(\varepsilon>0).
\]
If the Abel coefficient limits exist at finitely many distinct real
frequencies, the essential tail amplitude of \(F\) is at least the associated
compact-orbit interpolation norm.  In the discrete case, the corresponding
conclusion holds for a sequence \((F_n)\) satisfying
\[
  \sum_{n\geq0}|F_n|r^n<\infty
  \qquad(0<r<1),
\]
provided that the Abel limits exist at pairwise distinct phases.  These are
the continuous and discrete bounds of
\cref{thm:continuous-abel,thm:discrete-abel}.

Mellin transforms and generating functions supply such coefficients only with
an additional boundary guard.  A simple Mellin boundary pole may be used when
the meromorphic continuation agrees with the original Mellin integral on the
interior side of the boundary; similarly, a simple generating-function
boundary pole may be used when the continuation agrees there with the original
power series.  Meromorphic continuation by itself does not identify the Abel
coefficients of the original error term.  The Mellin conventions are those of
\cite{zagier}.  The discrete result is also applied to a smooth, projective,
geometrically connected curve of genus at least one over a finite field,
using the point-count formula and Frobenius eigenvalue bound from
\cite{weil,deligne}.  In that application the coefficient-constrained lower
bound is kept distinct from the exact limsup of the specified Frobenius
trigonometric realization.

The upper and lower constructions meet only after the error extractor,
magnitude, and observation have been fixed.  The general comparison theorem
assumes a realization category fibered in groupoids, with small fibers or a
fixed universe enlargement and with occurrence structure transported by
cartesian arrows; a cartesian or suitably isomorphism-invariant error
extractor; sound guarded local certificates; an existing behaviorwise
reflection, with functorial magnitude in the non-discrete case; and the
norming or lower-bound hypotheses required by any test family used to compute
that reflection.  Under these conditions, every realization to which the
guards apply satisfies the displayed lower--upper comparison
\cref{thm:integrated}.  The compact-orbit specialization additionally assumes
the corresponding continuous or discrete Abel coefficients and that the
chosen error magnitude dominates the relevant tail amplitude
\cref{thm:summary}.

Parts~I and II develop occurrence transport and the distinction between
realizations and behavior.  Part~III constructs compositional upper
certificates and behavior-dependent lower bounds.  Part~IV develops the
compact-group interpolation and Abel applications, and Part~V combines the two
estimates.

\

\setcounter{tocdepth}{2}
\vspace{-1ex}
\tableofcontents
\vspace{.5ex}

\compactpart{I}{Realizations, Specifications, and Occurrence Structure}

\section{Specifications, realizations, and error objects}\label{sec:realization-error}

\subsection{The realization--error data}
\begin{definition}[Realization--error datum]
A realization--error datum consists of categories and functors
\[
\begin{tikzcd}[column sep=3.4em]
\mathcal R \arrow[r,"\Err"] \arrow[d,"p"'] & \mathcal E \arrow[d,"e"] \arrow[r,"O"] & \mathcal B_{\mathrm{err}}\\
\mathcal S \arrow[r,equal] & \mathcal S &
\end{tikzcd}
\]
The displayed diagram is required to commute; in particular, $e\circ\Err=p$.  The datum also includes a magnitude map $A:\operatorname{Ob}(\mathcal E)\to[0,\infty]$.  When categorical transport of magnitudes is used, $A$ is additionally required to be a functor from $\mathcal E$ to the poset category $[0,\infty]$ with its usual order.  Here $\mathcal S$ is the category of specifications, $p:\mathcal R\to\mathcal S$ is the realization family, $\mathcal E$ is the category of error objects over specifications, $\Err$ extracts the error of a realization relative to its specification, and $O$ records an observable error behavior.  For $r\in\mathcal R$ we write
\[
  b_r:=O(\Err(r)),\qquad a_r:=A(\Err(r)).
\]
The error object may retain a domain, residual function, interval, distribution, occurrence attribution, or other information about the defect; $A$ records only its magnitude.
\end{definition}

\begin{construction}[Error as comparison with a target]
Suppose $u:\mathcal M\to\mathcal S$ is a semantic bundle, $J:\mathcal R\to\mathcal M$ satisfies $uJ=p$, and $\tau:\mathcal S\to\mathcal M$ is a chosen target section.  If a comparison operation over $\mathcal S$,
\[
  \Delta:\mathcal M\times_{\mathcal S}\mathcal M\longrightarrow\mathcal E,
  \qquad e\Delta=\operatorname{pr}_{\mathcal S},
\]
is available, define
\[
  \Err(r):=\Delta\bigl(J(r),\tau(p(r))\bigr).
\]
This covers normed differences $J(r)-\tau(p(r))$, equation residuals, approximate-program outputs, proof obligations left open by a derivation, and partial errors defined only on a guard domain.
\end{construction}

\begin{definition}[Guard]
A guard is a specified subcollection $\mathcal G\subseteq\operatorname{Ob}(\mathcal E)$, or more generally a subobject of the state/error domain, on which an observation or quantitative inequality is defined and sound.  Each partial rule, boundary transfer, or error estimate is understood on its actual guard.  Analytic continuation, syntactic applicability, and domain membership are different guards and must not be conflated.
\end{definition}

\subsection{Upper and lower certificates}
\begin{definition}[Occurrence-based upper certificate]
A function $U:\operatorname{Ob}(\mathcal R)\to[0,\infty]$ is a sound upper certificate when
\[
  A(\Err(r))\le U(r)
\]
for every guarded realization under consideration.  Such certificates are normally computed compositionally from rule occurrences and therefore may distinguish realizations with the same external behavior.
\end{definition}

\begin{definition}[Observation-forced lower semantics]
A function $Q:\operatorname{Ob}(\mathcal B_{\mathrm{err}})\to[0,\infty]$ is a sound lower semantics when
\[
  Q(O(e))\le A(e)
\]
for every guarded error realization $e$.  The optimal such function is developed in \cref{sec:quant-reflection}.
\end{definition}

\begin{proposition}[Certification sandwich]\label{prop:basic-sandwich}
If $U$ is a sound upper certificate and $Q$ a sound lower semantics, then every guarded realization satisfies
\[
  Q\bigl(O(\Err(r))\bigr)\le A(\Err(r))\le U(r).
\]
If the left endpoint exceeds a proposed upper certificate, the specification, observation data, or certificate assumptions are inconsistent.  Equality of the endpoints proves quantitative optimality relative to the chosen observation $O$.
\end{proposition}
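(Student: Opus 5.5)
The plan is to prove the two inequalities separately, each directly from the corresponding soundness definition, and then to read off the two interpretive clauses as formal consequences.

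Fix a guarded realization $r$ and put $e:=\Err(r)\in\operatorname{Ob}(\mathcal E)$. For the upper inequality, the definition of a sound upper certificate gives $A(\Err(r))\le U(r)$ for every guarded realization, so it holds at $r$. For the lower inequality, apply the definition of a sound lower semantics to the error object $e$, which yields $Q(O(e))\le A(e)$, i.e.\ $Q(O(\Err(r)))\le A(\Err(r))$. Chaining the two in the ordered set $[0,\infty]$ gives the displayed sandwich. Transitivity is valid with the value $\infty$ allowed.

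\textbf{Main obstacle: matching the two guards.} The upper definition quantifies over guarded realizations, while the lower one quantifies over guarded error realizations. I will read ``guarded realization'' in the statement to mean that $r$ lies in the guard of $U$ and that $\Err(r)$ lies in the guard of $Q$, and I will state this convention explicitly in the proof. If instead the guard of $Q$ is given as a subcollection $\mathcal G\subseteq\operatorname{Ob}(\mathcal E)$, I will use the commuting square $e\circ\Err=p$ only to note that $\Err(r)$ sits over the same specification $p(r)$. The membership $\Err(r)\in\mathcal G$ must then be an explicit hypothesis; it is not something that can be derived.

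For the consistency clause, suppose $U'$ is a proposed upper certificate with $Q(O(\Err(r)))>U'(r)$ for some guarded $r$. Then $A(\Err(r))\le U'(r)$ cannot hold, since the sandwich gives $A(\Err(r))\ge Q(O(\Err(r)))>U'(r)$. Hence the soundness of $U'$ fails at $r$, or the lower soundness of $Q$ fails, or the guard and observation data are misassigned. This is exactly the contrapositive of the first part.

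For the optimality clause, suppose $Q(O(\Err(r)))=U(r)$. Then both inequalities are equalities, so $A(\Err(r))=U(r)$. Moreover, any sound lower semantics $Q'$ and any sound upper certificate $U'$ satisfy $Q'(O(\Err(r)))\le A(\Err(r))=U(r)\le U'(r)$. Thus no certificate can improve on $U$ at $r$, and no observation-based lower bound can exceed $Q(O(\Err(r)))$. This is the sense of ``quantitative optimality relative to $O$''.
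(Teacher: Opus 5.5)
Your proposal is correct and follows the paper's proof: both inequalities are the defining soundness conditions applied at $r$ and at $\Err(r)$, and the consistency and optimality clauses follow from transitivity of the order on $[0,\infty]$. Your explicit convention that a guarded realization places $r$ in the guard of $U$ and $\Err(r)$ in the guard of $Q$ makes precise a hypothesis the paper leaves implicit.
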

\begin{proof}
The two inequalities are the defining soundness conditions.  The consequences follow from transitivity of the order on $[0,\infty]$.
\end{proof}

\begin{remark}[When occurrence structure matters]
The lower term depends only on observed error behavior, but the construction of $\Err(r)$ and $U(r)$ may depend on sharing, copying, intermediate values, residual domains, and rewrite history.  Collapsing realizations to behavior before extracting error can therefore discard occurrence or domain information needed for a correct upper bound even when it leaves the eventual lower reflection unchanged.
\end{remark}
\section{Presentations, behaviors, and occurrences}\label{sec:five-roles}

\subsection{Proof presentations}
A \emph{presentation} (in particular, a proof or algorithm presentation) is a finite or effectively finitely described derivational object carrying enough occurrence data to represent a derivation graph. Depending on the model, it may contain proposition or value occurrences, inference occurrences with ordered finite input and output ports, explicit sharing, nested regions, binding or discharge links, cycles or guarded recursive references, and external interfaces.  No single formalism for derivation graphs is assumed; finiteness is imposed only where a later statement explicitly requires it.

\subsection{The five roles}
\begin{definition}[Presentation]
A presentation is a concrete derivational object $P\in\Pres$. It records occurrences and incidence, not merely output behavior.
\end{definition}
\begin{definition}[Behavior]
A behavior is an observational class of presentations. It may be represented by a surjection $q:\Pres\onto\Beh$ or, categorically, by a functor from presentations to behaviors. Distinct presentations may have the same behavior.
\end{definition}
\begin{definition}[Code]
A code is an object making a presentation available as input to another construction. A quotation operation has the form $\ulcorner-\urcorner:\Pres\to\mathsf{Code}$.
\end{definition}
\begin{definition}[Occurrence structure]
The occurrence structure records which uses are identical, which are distinct copies, and which inference or proposition occurrence is connected to which port.
\end{definition}
\begin{definition}[Source support]
Fix a finite set $S$ of external source occurrences and an output occurrence $t$. A support of $t$ is a subset $A\subseteq S$ sufficient for a derivation of $t$. The inclusion-minimal supports form an antichain
\[
\operatorname{Supp}_P(t)\subseteq\mathcal P_{\mathrm{fin}}(S).
\]
\end{definition}
Behavior records what is observed; code records which presentation is inspectable; occurrence structure records how it is assembled; support records which external sources suffice.  These roles are not interchangeable: behavior and support can forget occurrence data, while flattening can forget quotation boundaries.
\section{Typed whole-grain generative hypernetworks}
For whole-grain occurrence modelling and explicit finite-set incidence data, see \cite{kock-petri}.  The topos and adhesive-category facts invoked in \cref{prop:topos,thm:maximal-step,thm:track-pseudo} are given in \cite{maclane-moerdijk,lack-sobocinski}.

\subsection{The incidence schema}
Fix a small category $\mathcal H$ with a distinguished object $V$ and, for every pair $(n,m)\in\mathbb N^2$, an object $E_{n,m}$. For $1\le i\le n$ and $1\le j\le m$, include generating morphisms
\[
s_i:E_{n,m}\to V,\qquad t_j:E_{n,m}\to V,
\]
with no equations beyond the category axioms. Let
\[
\widehat{\mathcal H}:=[\mathcal H,\mathbf{Set}]
\]
be the category of covariant $\mathcal H$-sets. Equivalently, one may replace the incidence category by $\mathcal H^{\mathrm{op}}$ and use the standard presheaf convention; this is a change of presentation of the schema, not a claim that $[\mathcal H,\mathbf{Set}]$ and $[\mathcal H^{\mathrm{op}},\mathbf{Set}]$ are canonically equivalent for an arbitrary fixed $\mathcal H$.

An object $X\in\widehat{\mathcal H}$ consists of a set $X(V)$ of vertex occurrences, sets $X(E_{n,m})$ of $(n,m)$-hyperedge occurrences, and incidence maps $X(s_i),X(t_j):X(E_{n,m})\to X(V)$. Repeated vertices and parallel hyperedges remain distinct elements.

\begin{definition}[Typing signature]
A typing signature is an object $\Sigma\in\widehat{\mathcal H}$. The category of typed generative hypernetworks is the slice
\[
\mathcal C:=\widehat{\mathcal H}/\Sigma.
\]
An object $p_X:X\to\Sigma$ assigns types to vertices and operation labels to hyperedges compatibly with incidence.
\end{definition}

\begin{remark}
The input arity of one hyperedge, the total number of hyperedge occurrences in a finite network, and the length of a directed path are three different parameters.
\end{remark}

\begin{proposition}[Inherited categorical structure]\label{prop:topos}
The category $\mathcal C$ is a Grothendieck topos. In particular, it is complete, cocomplete, extensive, and adhesive, and every subobject poset $\Sub(X)$ is a complete Heyting algebra.
\end{proposition}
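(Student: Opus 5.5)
The plan is to reduce everything to two standard facts and then read off the listed consequences.

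\textbf{Step 1: $\widehat{\mathcal H}$ is a topos.} Since $\mathcal H$ is small, $\widehat{\mathcal H}=[\mathcal H,\mathbf{Set}]$ is isomorphic to the presheaf category $[(\mathcal H^{\mathrm{op}})^{\mathrm{op}},\mathbf{Set}]$ on the small category $\mathcal H^{\mathrm{op}}$. It is therefore a Grothendieck topos: it is the category of sheaves for the trivial topology \cite{maclane-moerdijk}. This is only a renaming of the indexing category, so no canonical comparison between $[\mathcal H,\mathbf{Set}]$ and $[\mathcal H^{\mathrm{op}},\mathbf{Set}]$ is needed, consistent with the caveat in the text.

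\textbf{Step 2: the slice $\mathcal C=\widehat{\mathcal H}/\Sigma$ is a topos.} I will invoke the fundamental theorem of topos theory: a slice of a Grothendieck topos is again a Grothendieck topos. To stay concrete, I will also give the presheaf description. Let $\int\Sigma$ be the category of elements of the covariant functor $\Sigma$. Its objects are pairs $(h,\sigma)$ with $\sigma\in\Sigma(h)$. It is small because $\mathcal H$ is small and each $\Sigma(h)$ is a set. Then
\[
\widehat{\mathcal H}/\Sigma\simeq[\textstyle\int\Sigma,\mathbf{Set}],
\]
where $X\to\Sigma$ is sent to the functor $(h,\sigma)\mapsto$ the fiber of $X(h)\to\Sigma(h)$ over $\sigma$. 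So $\mathcal C$ is again a covariant functor category on a small category, and Step 1 applies. Checking this equivalence is the only step with real content. It is the covariant version of the standard identity $\PSh(\mathcal D)/P\simeq\PSh(\int P)$, and I expect the bookkeeping of variance to be the main point needing care. If any doubt remains, I fall back on citing the slice theorem directly.

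\textbf{Step 3: the listed properties.} Each follows from standard facts about Grothendieck toposes.
\begin{itemize}
\item Completeness and cocompleteness hold because limits and colimits in functor categories are computed pointwise in $\mathbf{Set}$.
\item Extensivity holds because coproducts in a topos are disjoint and universal, both being pointwise facts in $\mathbf{Set}$.
\item Adhesivity holds by Lack--Sobociński \cite{lack-sobocinski}: every topos is adhesive. Concretely, pushouts along monos are pointwise, and the van Kampen condition holds in $\mathbf{Set}$ and transfers pointwise.
\item For subobjects, $\Sub(X)$ is a Heyting algebra in any topos. It is complete because a Grothendieck topos is well-powered and complete, so arbitrary meets exist as wide pullbacks of monos. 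Joins are images of coproducts, and pullback along $X$-morphisms preserves them since colimits are universal. Pullback-stability of joins gives $a\wedge\bigvee b_i=\bigvee(a\wedge b_i)$, which makes $\Sub(X)$ a frame and hence a complete Heyting algebra.
\end{itemize}
Equivalently, in the presheaf description, $\Sub(X)$ is the lattice of sub-functors, computed pointwise as families of subsets closed under the action of $\mathcal H$.
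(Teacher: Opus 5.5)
Your proposal is correct and follows the paper's route: $[\mathcal H,\mathbf{Set}]$ is a Grothendieck topos as a functor category on a small category, and its slice over $\Sigma$ is again one. Completeness, cocompleteness, extensivity, adhesivity (via Lack--Sobociński) and completeness of the Heyting algebras $\Sub(X)$ are then standard topos facts. Your explicit equivalence $\widehat{\mathcal H}/\Sigma\simeq[\int\Sigma,\mathbf{Set}]$ is a correct, concrete substitute for citing the slice theorem, and it makes the pointwise arguments in your Step~3 immediate.
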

\begin{proof}
A functor category over a small category is a Grothendieck topos, and every slice of a Grothendieck topos is again a Grothendieck topos. Such categories are complete, cocomplete, and extensive; their subobject posets are complete Heyting algebras. Topoi are adhesive.
\end{proof}

\subsection{Occurrences and subnetworks}
A monomorphism $a:A\mono X$ in $\mathcal C$ is a \emph{subnetwork occurrence}. Two monomorphisms with the same domain can represent different occurrences of the same abstract pattern in $X$. Subobjects are equivalence classes of monomorphisms under isomorphism over $X$. For $A,B\le X$, write $A\wedge B$ for pullback intersection and $A\vee B$ for union in the Heyting algebra $\Sub(X)$. If $u:X\to Y$, pullback gives $u^*:\Sub(Y)\to\Sub(X)$.

\begin{definition}[Comparison modes]
Let $a:A\mono X$ and $b:B\mono Y$ be subnetwork occurrences.
\begin{enumerate}
\item They are strictly identical only when all data are literally equal.
\item Relative to a specified host comparison $\psi:X\iso Y$, they are isomorphic as occurrences when there is an isomorphism $\phi:A\iso B$ with $\psi a=b\phi$, compatible with the chosen interfaces.
\item They have a nontrivial common core when there is a monic span $A\leftarrow K\rightarrow B$ with $K$ noninitial; when needed, a greatest common core must be specified by an additional universal property.
\item They are rewrite related when joined by a zigzag in a chosen rewrite system.
\item They are transport related when one is the intact residual of the other along a rewrite path.
\end{enumerate}
\end{definition}
These notions answer different questions and should not be collapsed into one equivalence relation.
\section{DPO rewriting and the maximal preserved region}
For DPO and adhesive rewriting, see \cite{lack-sobocinski,behr-harmer-krivine,zanasi-free-hypergraph,bonchi-rewriting-frobenius}.

\subsection{Direct derivations and track spans}
A linear DPO rule in $\mathcal C$ is a span of monomorphisms
\[
p=(L\xleftarrow{\ell}K\xrightarrow{r}R).
\]
Given a monic match $m:L\mono G$, a direct derivation $\rho:G\Rightarrow H$ is a diagram
\[
\begin{tikzcd}[column sep=2.7em,row sep=1.7em]
K \arrow[r,"\ell"] \arrow[d,"k"'] & L \arrow[d,"m"] & K \arrow[l,"\ell"'] \arrow[r,"r"] \arrow[d,"k"'] & R \arrow[d,"n"]\\
D \arrow[r,"d"'] & G & D \arrow[l,"d"] \arrow[r,"h"'] & H
\end{tikzcd}
\]
in which both squares are pushouts. The object $D$ is the context or preserved part.

\begin{definition}[Track span]
The track span of $\rho$ is
\[
\trk(\rho):=(G\xleftarrow{d}D\xrightarrow{h}H).
\]
It represents a partial morphism from $G$ to $H$, defined on the image of $D$ in $G$. Both legs are monic for the linear DPO steps considered here.
\end{definition}

\subsection{Stable subobjects}
\begin{definition}[Stepwise stability]
A subobject $A\le G$ is $\rho$-stable if
\[
A\wedge L\le K
\]
as subobjects of $G$, where $L,K,D$ are identified with their monic images. Write $\Stab(\rho)\subseteq\Sub(G)$ for the stability poset.
\end{definition}
The condition says that $A$ may meet the matched left-hand side only in material retained by the rule interface.

\begin{theorem}[Maximal transportable region]\label{thm:maximal-step}
For a linear DPO derivation $\rho:G\Rightarrow H$ with context $D\mono G$, and every $A\le G$, the following are equivalent:
\begin{enumerate}
\item $A$ is $\rho$-stable;
\item $A\le D$;
\item $A\mono G$ factors through $d:D\mono G$.
\end{enumerate}
Consequently,
\[
\Stab(\rho)=\down d(D):=\{A\in\Sub(G):A\le d(D)\},
\]
and $D$ is the greatest subobject transported intact through $\rho$.
\end{theorem}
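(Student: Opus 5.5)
The plan is to reduce everything to two lattice identities in the complete Heyting algebra $\Sub(G)$ provided by \cref{prop:topos}. Identify $L$, $K$ and $D$ with their monic images in $G$; these are monic because $m$ is a monic match and $d$ is monic for linear DPO. The two identities are
\[
  L\wedge D = K \qquad\text{and}\qquad L\vee D = G .
\]
Once these hold, the equivalences follow by distributivity.

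First, the equivalence (2)$\Leftrightarrow$(3) is just the definition of the order on subobjects. $A\le d(D)$ means exactly that some (equivalently, any) representing mono $A\mono G$ factors through $d$, and the factorization is then automatically monic.

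Next come the two identities. The left pushout square has $\ell:K\mono L$ monic and lies in an adhesive category (\cref{prop:topos}). In such a category a pushout along a monomorphism is also a pullback, and the pushout of a mono is again a mono. Hence $K$, with its maps to $L$ and $D$, is the pullback of $m$ and $d$, i.e.\ $K = L\wedge D$ in $\Sub(G)$. For the union, the pushout coprojections $m$ and $d$ are jointly epimorphic, so the induced map $L+D\to G$ is an epimorphism. In a topos the image of this map is precisely $L\vee D$, so $G = L\vee D$. I expect this step, checking that the adhesive and topos facts apply exactly in the slice $\mathcal C$ and with these identifications, to be the main point needing care. Everything else is lattice algebra.

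With the identities in hand, the remaining implications are short.
\begin{itemize}
\item (2)$\Rightarrow$(1): if $A\le D$, then $A\wedge L\le D\wedge L = K$.
\item (1)$\Rightarrow$(2): if $A\wedge L\le K$, then using distributivity of the Heyting algebra $\Sub(G)$ and $K\le D$,
\[
A = A\wedge(L\vee D) = (A\wedge L)\vee(A\wedge D)\le K\vee D = D .
\]
\end{itemize}
Therefore $\Stab(\rho)=\down d(D)$. Since the track span $\trk(\rho)$ is defined exactly on $d(D)$, and $h$ carries it monically into $H$, $D$ is itself transported intact. Any subobject transported intact must lie in the domain of the partial morphism, that is, factor through $d$. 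So $D$ is the greatest such subobject.
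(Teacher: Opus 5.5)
Your proposal is correct and follows the paper's proof essentially step for step: derive $L\wedge D=K$ from the pushout along the mono $\ell$ being a pullback, derive $L\vee D=G$ from the pushout, and then conclude by distributivity in $\Sub(G)$, with (2)$\Leftrightarrow$(3) holding by the definition of the subobject order. Your added justifications go beyond what the paper writes but do not change the route. These are the jointly epimorphic coprojections giving an epi $L+D\to G$ whose image is $L\vee D$, monicity of $d$ by stability of monos under pushout, and the explicit argument for maximality of $D$.
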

\begin{proof}
The left pushout square is also a pullback, so $L\wedge D=K$ in $\Sub(G)$; the pushout gives $L\vee D=G$. If $A\le D$, then $A\wedge L\le K$. Conversely, if $A\wedge L\le K$, distributivity gives
\[
A=A\wedge(L\vee D)=(A\wedge L)\vee(A\wedge D)\le D.
\]
Factorization through $d$ is the definition of subobject order.
\end{proof}

\begin{definition}[Deletion obstruction]
For $A\le G$, define
\[
\operatorname{DelObs}_{\rho}(A)\quad:\Longleftrightarrow\quad A\wedge L\nleq K.
\]
It vanishes if and only if $A$ is transportable intact.
\end{definition}

\begin{remark}
An ``obstruction object'' obtained by subtracting $K$ from $A\wedge L$ would require complements unavailable in a general Heyting algebra.  The displayed predicate is invariant under isomorphism of subobject representatives and holds exactly when intact transport fails.
\end{remark}

\subsection{Canonical transport}
\begin{proposition}[Canonical step transport]\label{thm:canonical-step}
A DPO step induces a canonical order isomorphism
\[
\rho_{\#}:\down d(D)\xrightarrow{\sim}\down h(D).
\]
If $a:A\mono G$ is stable and $\bar a:A\mono D$ is the unique factorization $d\bar a=a$, then $\rho_{\#}(A)$ is represented by $h\bar a:A\mono H$. The transport preserves all meets and joins internal to the principal ideals.
\end{proposition}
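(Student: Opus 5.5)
The plan is to reduce everything to the standard fact that, for a monomorphism $f:D\mono X$ in any category, postcomposition $f_!:\Sub(D)\to\Sub(X)$, $[c:C\mono D]\mapsto[fc]$, is an order embedding with image exactly the principal ideal $\down f(D)$. Applying this to both legs of the track span $\trk(\rho)=(G\xleftarrow{d}D\xrightarrow{h}H)$, which are monic since the step is linear, yields two order isomorphisms
\[
  d_!:\Sub(D)\xrightarrow{\sim}\down d(D),\qquad
  h_!:\Sub(D)\xrightarrow{\sim}\down h(D).
\]
We then define $\rho_\#:=h_!\circ d_!^{-1}$. This composite is canonical because it uses only the track span and no choices.

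First I will verify the fact about $f_!$. It is well defined on subobjects because isomorphic monos $c,c'$ over $D$ give isomorphic $fc,fc'$ over $X$. It reflects order: if $fc=fc'\phi$ then $c=c'\phi$ since $f$ is monic, so $c\le c'$. It is surjective onto $\down f(D)$ because $A\le f(D)$ means $a=f\bar a$ for a unique $\bar a$, and $\bar a$ is monic as the first factor of a mono. By \cref{thm:maximal-step}, $\down d(D)$ is precisely $\Stab(\rho)$, and $d_!^{-1}(A)$ is represented by the unique factorization $\bar a$ with $d\bar a=a$. Hence $\rho_\#(A)$ is represented by $h\bar a$, as claimed. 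Uniqueness of $\bar a$ also shows independence of the representative $a$.

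Next comes preservation of meets and joins. An order isomorphism between posets preserves every meet and join that exists in the relevant poset. So it suffices to note that the principal ideals $\down d(D)\subseteq\Sub(G)$ and $\down h(D)\subseteq\Sub(H)$ are complete lattices: by \cref{prop:topos} they are down-sets of complete Heyting algebras and closed under arbitrary joins and nonempty meets, with top $d(D)$ and $h(D)$ respectively. Moreover, their internal meets and joins agree with those computed in the ambient $\Sub(G)$ and $\Sub(H)$, except that the empty meet is the top of the ideal. I will check this explicitly for joins, since a join of subobjects below $D$ is again below $D$.

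The main obstacle I expect is making precise what "internal to the principal ideals" means and confirming it matches the ambient operations. The order-isomorphism argument settles the abstract statement immediately. However, if the intended claim is compatibility with the ambient Heyting operations, I would also argue directly that $d_!$ preserves pullback intersections, since monos are stable under pullback and a pullback of subobjects of $D$ composed with a mono $d$ remains a pullback in $G$. For unions, I would use image factorization in the topos, since $d_!$ is left adjoint to $d^*$ and so preserves all joins.
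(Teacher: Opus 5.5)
Your proposal is correct and matches the paper's proof, which likewise identifies $\Sub(D)$ with both principal ideals by postcomposition with $d$ and with $h$ and takes the composite $h_!\circ d_!^{-1}$. Your additional checks fill in details the paper leaves implicit: the explicit order-embedding argument, and the agreement of internal meets and joins with the ambient ones via pullback stability and $d_!\dashv d^*$.
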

\begin{proof}
Composition with $d$ identifies $\Sub(D)$ with $\down d(D)$, and composition with $h$ identifies $\Sub(D)$ with $\down h(D)$. Their composite is the stated order isomorphism.
\end{proof}

\begin{corollary}[Uniqueness of intact residual]
A stable occurrence has a unique transported occurrence up to the unique isomorphism determined by its common factorization through $D$.
\end{corollary}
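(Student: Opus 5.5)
The statement to prove is the corollary on uniqueness of the intact residual. The plan is to reduce it to \cref{thm:canonical-step} together with the fact that $d$ and $h$ are monic. Let $a:A\mono G$ be a stable occurrence. By \cref{thm:maximal-step}, stability is equivalent to $a$ factoring through $d:D\mono G$, so a factorization $\bar a:A\mono D$ with $d\bar a=a$ exists. It is unique because $d$ is monic: $d\bar a=d\bar a'$ forces $\bar a=\bar a'$. The map $\bar a$ is monic since $a$ is. The transported occurrence is then $h\bar a:A\mono H$, which is monic as a composite of monos; this is the representative named in \cref{thm:canonical-step}.

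Independence of the chosen representative comes next, and it is the only point that needs care. Suppose $a'=a\phi$ is another representative of the same subobject, with $\phi:A'\iso A$. Uniqueness of factorizations through $d$ gives $\bar{a'}=\bar a\phi$. Hence $h\bar{a'}=(h\bar a)\phi$, so the two transported monomorphisms represent the same subobject of $H$. The comparing isomorphism is $\phi$ itself, and it is forced: if $h\bar a\psi=h\bar a\phi$, then $\psi=\phi$ because $h\bar a$ is monic.

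Thus the transported occurrence is unique up to a unique isomorphism, and that isomorphism is exactly the one determined by the common factorization through $D$. I expect no real obstacle beyond being explicit about the mono-cancellations. The order-isomorphism statement of \cref{thm:canonical-step} already yields uniqueness at the level of subobjects, and the argument above only upgrades it to the level of occurrences, meaning monomorphisms together with their comparison isomorphisms.
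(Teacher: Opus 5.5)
Your proposal is correct and follows the same route as the paper: the paper's proof is uniqueness of the factorization through the monomorphism $d:D\mono G$, followed by the order isomorphism of \cref{thm:canonical-step}. You additionally spell out the representative-independence check $\overline{a\phi}=\bar a\phi$ and the mono-cancellation that makes the comparison isomorphism unique, which the paper leaves implicit.
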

\begin{proof}
This is the uniqueness of the factorization through the monomorphism $d:D\mono G$, followed by the order isomorphism of \cref{thm:canonical-step}.
\end{proof}

\begin{example}
If a rewrite deletes a hyperedge $e$ and its private output vertex $v$ while preserving an interface vertex $k$, an occurrence containing only $k$ and material outside the redex lies below $D$ and transports canonically. An occurrence containing $e$ or $v$ has no intact residual. It may have a transformed descendant, but that is a rewrite of the subnetwork rather than transport of the same occurrence.
\end{example}
\section{Rewrite paths and greatest pathwise domains}

\subsection{The partial-map category}
Let $\mathbf{Par}(\mathcal C)$ be the category whose morphisms $X\rightharpoonup Y$ are isomorphism classes of spans $X\xleftarrow{u}M\xrightarrow{v}Y$ with $u$ monic; composition is by pullback. Each DPO step determines $\trk(\rho):G\rightharpoonup H$. For the linear steps here, both legs are monic, and this property is preserved by pullback composition.

\begin{definition}[Track map of a rewrite path]
For
\[
\pi=(G_0\xRightarrow{\rho_1}G_1\xRightarrow{\rho_2}\cdots\xRightarrow{\rho_n}G_n),
\]
define
\[
\trk(\pi):=\trk(\rho_n)\circ\cdots\circ\trk(\rho_1).
\]
Choose a representative $G_0\xleftarrow{u_\pi}D_\pi\xrightarrow{v_\pi}G_n$ with both legs monic. The subobject $u_\pi(D_\pi)\le G_0$ is the pathwise preserved domain.
\end{definition}

\begin{theorem}[Greatest pathwise transport domain]\label{thm:path-domain}
A subobject $A\le G_0$ transports intact through every step of a finite path $\pi:G_0\Rightarrow^*G_n$ iff $A\le u_\pi(D_\pi)$. Thus $u_\pi(D_\pi)$ is the greatest pathwise transport domain.
\end{theorem}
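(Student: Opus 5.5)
We argue by induction on the length $n$ of $\pi$, with \cref{thm:maximal-step} and \cref{thm:canonical-step} supplying the single-step case. First we make precise what ``transports intact through every step'' means. Put $A_0:=A$. Inductively, if $A_{k-1}\le G_{k-1}$ is $\rho_k$-stable, let $A_k:=(\rho_k)_{\#}(A_{k-1})\le G_k$. The subobject $A$ transports intact exactly when each $A_{k-1}$ is $\rho_k$-stable; by \cref{thm:maximal-step} this means $A_{k-1}\le d_k(D_k)$ for each $k$. For $n=0$ the track map is the identity span, so $u_\pi(D_\pi)=G_0$ and the claim is trivial. For $n=1$ the claim is \cref{thm:maximal-step} itself, since $\trk(\pi)=\trk(\rho_1)$.

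For the inductive step, write $\pi=\rho_n\circ\pi'$ with $\pi':G_0\Rightarrow^*G_{n-1}$ represented by $G_0\xleftarrow{u'}D'\xrightarrow{v'}G_{n-1}$. By definition of composition in $\mathbf{Par}(\mathcal C)$, $D_\pi$ is the pullback of $v':D'\mono G_{n-1}$ and $d_n:D_n\mono G_{n-1}$. Hence $D_\pi\le D'$ as subobjects of $D'$, and $u_\pi(D_\pi)=u'\bigl(v'^*(d_n(D_n))\bigr)$. The key lemma is the following. For $A\le u'(D')$ with unique factorization $\bar a:A\mono D'$, the iterated transport $A_{n-1}$ is represented by $v'\bar a$. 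This follows by the same induction: each step composes the factorization through $D_k$ with $h_k$, and pullback composition of the monic spans realizes exactly these iterated factorizations.

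Granting the lemma, the argument runs as follows. By the induction hypothesis, $A$ transports intact through $\pi'$ iff $A\le u'(D')$. Under that condition, $A$ transports through $\rho_n$ as well iff $v'\bar a$ factors through $d_n$. By the pullback property, this holds iff $\bar a$ factors through $D_\pi\mono D'$, i.e.\ iff $A\le u_\pi(D_\pi)$. Conversely, $A\le u_\pi(D_\pi)$ implies $A\le u'(D')$, because $D_\pi\le D'$, and the same chain of equivalences runs backwards. Greatestness is then immediate: $u_\pi(D_\pi)$ is itself such an $A$, and every transportable $A$ lies below it. Independence of the chosen representative holds because isomorphic spans give the same image subobject.

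The main obstacle is the bookkeeping in the lemma. We must show that the composite span's right leg agrees with iterated $\rho_{\#}$, and that intact transport is stated on subobjects (not just on representatives) compatibly with the isomorphism classes of spans in $\mathbf{Par}(\mathcal C)$. To handle this, we will work with fixed monic representatives throughout and use that monos are stable under pullback, so every factorization involved is unique.
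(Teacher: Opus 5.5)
Your proposal is correct and follows the same route as the paper: induction on path length, where appending a step forms the pullback of the prefix apex against the new context, so the new domain consists exactly of the occurrences whose transport along the prefix lies in that context. Your explicit lemma identifying the iterated $\rho_{\#}$ with $v'\bar a$ spells out what the paper's phrase ``the points whose $\pi$-transport lies in $D$'' takes for granted. That lemma must be proved simultaneously with the main claim, as you indicate.
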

\begin{proof}
Induct on the path length. Appending a step with context span $G_n\xleftarrow dD\xrightarrow hG_{n+1}$ pulls $D_\pi$ back along $d$. The new domain consists of the points whose $\pi$-transport lies in $D$, equivalently those stable for every prior step and for the appended step.
\end{proof}

\begin{proposition}[Canonical path transport]\label{thm:path-transport}
Every path induces an order isomorphism
\[
\pi_{\#}:\down u_\pi(D_\pi)\xrightarrow{\sim}\down v_\pi(D_\pi).
\]
For composable paths $\pi,\sigma$, one has $(\sigma\circ\pi)_{\#}=\sigma_{\#}\circ\pi_{\#}$ on the greatest common domain on which the right-hand side is defined.
\end{proposition}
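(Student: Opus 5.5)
The plan mirrors the proof of \cref{thm:canonical-step}, applied to the composite track span rather than to a single step. First, for the chosen representative $G_0\xleftarrow{u_\pi}D_\pi\xrightarrow{v_\pi}G_n$ with both legs monic, composition with the monomorphism $u_\pi$ gives an order isomorphism $\Sub(D_\pi)\iso\down u_\pi(D_\pi)$. The same holds for $v_\pi$ and $\down v_\pi(D_\pi)$. I define
\[
\pi_{\#}:=(v_\pi\circ-)\circ(u_\pi\circ-)^{-1}.
\]
Concretely, a subobject $a:A\mono G_0$ below $u_\pi(D_\pi)$ factors uniquely as $a=u_\pi\bar a$ and is sent to $v_\pi\bar a$. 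I then check independence of the representative. Two representatives differ by an isomorphism $\theta:D_\pi\iso D'_\pi$ with $u'_\pi\theta=u_\pi$ and $v'_\pi\theta=v_\pi$. Transporting $\bar a$ along $\theta$ therefore yields the same subobject of $G_n$. By \cref{thm:path-domain}, the domain of $\pi_{\#}$ is exactly the set of subobjects that transport intact through every step.

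The second step is to identify $\pi_{\#}$ with the iterated step transports $(\rho_n)_{\#}\circ\cdots\circ(\rho_1)_{\#}$. I do this by induction on the length, using the pullback description of composition in $\mathbf{Par}(\mathcal C)$. Suppose $\pi$ is followed by a step with context span $G_n\xleftarrow{d}D\xrightarrow{h}G_{n+1}$. The composite span has apex $P=D_\pi\times_{G_n}D$, with legs $u_\pi\circ\mathrm{pr}_1$ and $h\circ\mathrm{pr}_2$. Take $A\le u_\pi\mathrm{pr}_1(P)$. Its factorization $\bar a$ through $P$ gives $\mathrm{pr}_1\bar a$ as the factorization through $D_\pi$. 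Then $v_\pi\mathrm{pr}_1\bar a=d\,\mathrm{pr}_2\bar a$, so the step transport sends this to $h\,\mathrm{pr}_2\bar a$, which is precisely the composite transport.

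For composable paths $\pi:G_0\Rightarrow^*G_n$ and $\sigma:G_n\Rightarrow^*G_k$, the composite $\sigma\circ\pi$ is the concatenated path. Its track is $\trk(\sigma)\circ\trk(\pi)$ by associativity of composition in $\mathbf{Par}(\mathcal C)$. The same pullback computation, with $\sigma$'s span in place of the single step, gives two facts. First, the domain of $(\sigma\circ\pi)_{\#}$ is $u_\pi\mathrm{pr}_1(D_\pi\times_{G_n}D_\sigma)$. Second, this domain is exactly the set of $A\le u_\pi(D_\pi)$ with $\pi_{\#}(A)\le u_\sigma(D_\sigma)$. Both follow from the universal property of the pullback and the fact that all legs are monic. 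On that set the two maps agree, as in the one-step case.

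The main obstacle I expect is the domain claim. It requires showing that $A$ factors through the pullback apex if and only if $A$ factors through $D_\pi$ and its transported image factors through $D_\sigma$. The backward direction uses the pullback universal property applied to the pair $(\bar a,\bar a')$. The forward direction uses the monic projections. A further point needs care: the ``greatest common domain'' phrase must be read as this set, not as $\down u_\pi(D_\pi)$ itself. Finally, I verify that the order isomorphism restricts correctly to the principal ideals under pullback.
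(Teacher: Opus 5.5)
Your proposal is correct and follows the paper's own argument. The monic legs $u_\pi,v_\pi$ identify $\Sub(D_\pi)$ with the two principal ideals exactly as in the one-step case, and the composition law comes from pullback composition together with uniqueness of factorization through the composite apex. Your explicit check that the domain of $(\sigma\circ\pi)_{\#}$ is exactly $\{A\le u_\pi(D_\pi):\pi_{\#}(A)\le u_\sigma(D_\sigma)\}$ spells out what the paper's two-line proof leaves implicit. The induction identifying $\pi_{\#}$ with the iterated step transports is not needed for the statement, but it does no harm.
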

\begin{proof}
For the composite span, composition with $u_\pi$ and $v_\pi$ identifies $\Sub(D_\pi)$ with the two displayed principal ideals, as in the proof of \cref{thm:canonical-step}. The second assertion follows from pullback composition and uniqueness of factorization through the composite domain.
\end{proof}

\begin{corollary}[Finite decidability]
Assume a finite network means an $\mathcal H$-set with finite support in the arity objects and finitely many total occurrences, and assume rules and matches are given effectively. For a fixed finite rewrite path and finite subnetwork occurrence, intact transport is decidable.
\end{corollary}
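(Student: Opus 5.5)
The plan is to reduce intact transport to a finite check on the subobject lattice, using \cref{thm:path-domain}.

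By that theorem, a finite occurrence $a:A\mono G_0$ transports intact through $\pi$ if and only if $A\le u_\pi(D_\pi)$ in $\Sub(G_0)$. By the definition of subobject order, this holds exactly when $a$ factors through $u_\pi$. So it suffices to show two things: that a representative of $u_\pi(D_\pi)$ can be computed effectively, and that the factorization test for a monomorphism is decidable.

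\emph{Step 1: the finiteness invariant.} I first check that every object produced along the way is finite in the stated sense. The relevant objects are the contexts $D_i$, the iterated pullbacks $D_\pi$, and the intermediate $G_i$. Each is a subobject of a finite $G_i$, or a pullback of finite objects. Limits and colimits in $\mathcal C=\widehat{\mathcal H}/\Sigma$ are computed pointwise over $\mathcal H$, with the evident structure over $\Sigma$. Hence subobjects and pullbacks of finite networks are again finite, with support contained in the support of the ambient objects.

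\emph{Step 2: effective computation of the path span.} The hypothesis that rules and matches are given effectively means that the data of each step are available as finite set maps at each object of $\mathcal H$. Only finitely many objects of $\mathcal H$ matter, namely $V$ and the $E_{n,m}$ in the support. These data are the context $d_i:D_i\mono G_{i-1}$ and the leg $h_i:D_i\mono G_i$. If the derivation is given only by rule and match, $D_i$ can be recovered as the pushout complement. Concretely, it is the subfunctor of $G_{i-1}$ consisting of elements outside $m(L)\setminus m\ell(K)$. This is a finite computation, and by \cref{thm:maximal-step} it is the greatest stable subobject.

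Next I compose the track spans by pullback, inductively as in the proof of \cref{thm:path-domain}. Given $G_0\xleftarrow{u}D_\pi\xrightarrow{v}G_{i-1}$ and $G_{i-1}\xleftarrow{d}D\xrightarrow{h}G_i$, the pullback of $v$ along $d$ is the finite subset $\{x\in D_\pi(c): v(x)\in d(D(c))\}$ at each $c$. This is computed pointwise and is closed under the incidence maps. Since all legs are monic, $u_\pi(D_\pi)$ is then represented by a finite subfunctor of $G_0$, with its elements listed explicitly.

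\emph{Step 3: decide inclusion.} Since $a$ is monic, $A\le u_\pi(D_\pi)$ holds if and only if $a_c(A(c))\subseteq u_\pi(D_\pi)(c)$ for each of the finitely many relevant $c\in\operatorname{Ob}\mathcal H$. For $c$ outside the support both sides are empty, so these points need no check. The remaining checks are finitely many membership tests between finite sets.

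\emph{Main obstacle.} The main difficulty is pinning down exactly what ``given effectively'' and ``finite support'' mean. The schema $\mathcal H$ has infinitely many objects $E_{n,m}$. Pointwise pullback must be shown to stay within the finite support, and equality of elements must be assumed decidable. I would state these as the working interpretation of the hypotheses, and I would note that the answer is independent of the chosen span representatives because $u_\pi(D_\pi)$ is a subobject.
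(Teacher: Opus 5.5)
Your proposal is correct and follows the paper's route. The paper's proof likewise reduces intact transport via \cref{thm:path-domain} to the inclusion $A\le u_\pi(D_\pi)$, computes the composite track span by finitely many pullbacks, and tests componentwise inclusion of finite sets; your extra details (pointwise pullbacks in the slice, preservation of finiteness, and recovering each context as the pushout complement $G_{i-1}\setminus(m(L)\setminus m\ell(K))$) simply make that one-line argument explicit.
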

\begin{proof}
Compute the composite track span by finitely many pullbacks and test componentwise inclusion of finite sets.
\end{proof}
\section{Coherence, higher relations, and partial monodromy}

\subsection{The rewrite bicategory}
Fix a set $\mathcal R$ of linear DPO rules. Let $\mathbf{Rew}_{\mathcal R}$ be the bicategory freely generated by direct $\mathcal R$-derivations as 1-cells and by specified invertible 2-cells for parallel-independence (local Church--Rosser) diamonds, subject to the usual horizontal and vertical coherence laws.

Let $\mathbf{ParSpan}(\mathcal C)$ be the bicategory whose objects are those of $\mathcal C$, whose 1-cells are spans $X\xleftarrow{u}M\xrightarrow{v}Y$ with $u$ monic, and whose 2-cells are isomorphisms of spans. Composition is by chosen pullbacks; different choices are connected by the canonical associativity and unit isomorphisms. Passing to the isomorphism classes in $\mathbf{Par}(\mathcal C)$ discards these 2-cells, so $\mathbf{ParSpan}(\mathcal C)$ is used as the target.

\begin{theorem}[Track pseudofunctor]\label{thm:track-pseudo}
The assignment of a linear DPO step to its track span extends to a pseudofunctor
\[
\mathbf{Trk}:\mathbf{Rew}_{\mathcal R}\longrightarrow\mathbf{ParSpan}(\mathcal C).
\]
For each specified parallel-independence diamond, the two composite track spans are canonically isomorphic.
\end{theorem}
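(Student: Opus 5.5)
The plan is to use the freeness of $\mathbf{Rew}_{\mathcal R}$: a pseudofunctor out of a bicategory freely generated by 1-cells and invertible 2-cells, modulo the coherence laws, is determined up to equivalence by three pieces of data. These are an assignment on objects and generating 1-cells, an assignment of invertible 2-cells in the target to the generating 2-cells, and a verification that the imposed relations hold in the target.

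On objects, $\mathbf{Trk}$ is the identity. On a generating 1-cell $\rho:G\Rightarrow H$ I take $\mathbf{Trk}(\rho)=\trk(\rho)=(G\xleftarrow{d}D\xrightarrow{h}H)$, which is a legitimate 1-cell of $\mathbf{ParSpan}(\mathcal C)$ because $d$ is monic. Composite 1-cells go to the chosen pullback composites of track spans. The compositor and unitor 2-cells are the canonical associativity and unit isomorphisms of $\mathbf{ParSpan}(\mathcal C)$, which exist by the universal property of pullbacks (all pullbacks exist by \cref{prop:topos}). Because these comparison isomorphisms are unique mediating maps, the pentagon and triangle axioms hold automatically, just as they do for ordinary span bicategories.

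The substantive step is the image of a specified parallel-independence diamond. Suppose $\rho_1:G\Rightarrow H_1$ and $\rho_2:G\Rightarrow H_2$ are parallel independent, and the diamond is completed by $\rho_2':H_1\Rightarrow H$ and $\rho_1':H_2\Rightarrow H$. I must produce a canonical span isomorphism $\trk(\rho_2')\circ\trk(\rho_1)\iso\trk(\rho_1')\circ\trk(\rho_2)$.

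To do this I will recall the standard local Church--Rosser construction in adhesive categories (Lack--Sobociński). Parallel independence gives factorizations of each match through the other step's context. The four derivations are then assembled from a common object $D_{12}=D_1\wedge D_2\le G$, the pullback of the two contexts, which serves as the context of the composite in both orders. Concretely, the composite $\trk(\rho_2')\circ\trk(\rho_1)$ has apex $D_1\times_{H_1}D_2'$. By \cref{thm:maximal-step} and \cref{thm:path-domain}, its image in $G$ is the greatest subobject transported intact along $\rho_1$ and then $\rho_2'$. The independence factorizations identify $D_2'\le H_1$ with $\rho_{1\#}(D_1\wedge D_2)$ via the canonical transport of \cref{thm:canonical-step}, so this image is $D_1\wedge D_2$. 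The symmetric argument gives the same subobject for the other composite. Both composite spans are therefore spans with apex $D_1\wedge D_2$ and left leg the inclusion into $G$.

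For the right legs, both send this apex to $H$ through the pushout diagram of the Church--Rosser construction, so they agree. The resulting isomorphism of apexes commutes with both legs, and it is unique because the left legs are monic. I take it as the image of the diamond 2-cell. Uniqueness then ensures that the remaining coherence laws, compatibility with horizontal and vertical composition and with inverses, are satisfied. Any two parallel 2-cells between spans with a monic left leg coincide, since a span isomorphism is determined by its effect on the monic left leg.

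The main obstacle is the identification of the right legs in the diamond. It requires unpacking the concrete adhesive Church--Rosser construction rather than working only with subobjects of $G$. The subobject order determines the apex and its left leg, but the right leg into $H$ depends on how the chosen $H$ is built. My first approach is to show that both right legs are obtained from the same colimit cocone over $D_1\wedge D_2$, using the pushout–pullback (van Kampen) properties of \cref{prop:topos}. If this fails, the fallback is to note that the specified diamond includes a chosen identification of the two targets, and to define the 2-cell using that identification.
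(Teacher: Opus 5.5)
Your route is the paper's: identity on objects, track spans on generators, pullback associators and unitors as the pseudofunctor constraints, each diamond read off from the common pullback/pushout data of the adhesive Church--Rosser construction, and extension by freeness. The paper identifies the apex with the common object using ``pushouts along monomorphisms are pullbacks''; you go through subobjects of $G$ and \cref{thm:path-domain}, and either works.

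\emph{The identification of $D_2'$ is false as written.} The context $D_2'\le H_1$ of $\rho_2'$ is not $\rho_{1\#}(D_1\wedge D_2)$, because the second step also retains the material created by the first. In fact $D_2'=h_1(D_1\wedge D_2)\vee n_1(R_1)$. For example, let $\rho_1$ only create a vertex $v$, and let $\rho_2$ only delete the isolated vertex of $G=\{w\}$. Then $D_1\wedge D_2=\varnothing$, but $D_2'=\{v\}$.

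Your argument only needs $h_1(D_1)\wedge D_2'=h_1(D_1\wedge D_2)$, and this does hold:
\begin{itemize}
\item the right pushout square of $\rho_1$ is a pullback, so $h_1(D_1)\wedge n_1(R_1)=h_1(K_1)$;
\item independence gives $K_1\le L_1\le D_2$, hence $K_1\le D_1\wedge D_2$.
\end{itemize}
For $A\le D_1$, the condition $\rho_{1\#}(A)\le D_2'$ depends only on $h_1(D_1)\wedge D_2'$. So your conclusion that both apexes are $D_1\wedge D_2$ survives once this is corrected.

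\emph{Right legs.} Your first approach is exactly what the paper relies on. Both composites are built from one diagram of pushouts over $D_1\wedge D_2$, with $H\cong R_1+_{K_1}(D_1\wedge D_2)+_{K_2}R_2$. Hence both routes $D_1\wedge D_2\to H$ are the same cocone leg.

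Your fallback cannot work. A 2-cell of $\mathbf{ParSpan}(\mathcal C)$ must commute with both right legs into the same object $H$. The monic left legs already determine the apex isomorphism, so if the two right legs did not coincide there would be no 2-cell at all, whatever identification of targets you choose. The statement therefore genuinely depends on the specified diamonds being the canonically constructed Church--Rosser squares, and that is what closes this step.

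\emph{A strength.} Your observation that 2-cells between spans with monic left legs are unique is correct and useful. It makes $\mathbf{ParSpan}(\mathcal C)$ locally thin, so every coherence relation imposed in $\mathbf{Rew}_{\mathcal R}$ is automatically respected. That justifies the free extension more cleanly than the paper's bare appeal to freeness.
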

\begin{proof}
Define $\mathbf{Trk}$ on paths by pullback composition. The pullback associators and unitors supply the pseudofunctor constraints. For a parallel-independence diamond, the proof of the DPO local Church--Rosser theorem constructs both residual orders from common pullback/pushout data \cite[Theorem~7.7]{lack-sobocinski}.  Since the relevant pushouts along monomorphisms are also pullbacks \cite[Lemma~4.3]{lack-sobocinski}, the pullback apex computing either composite track span is canonically isomorphic to the common object in that construction, compatibly with its legs to the source and common target.  The two composite track spans are therefore canonically isomorphic.  Because $\mathbf{Rew}_{\mathcal R}$ is freely generated by these specified diamonds subject to bicategorical coherence, the assignment extends uniquely with the pullback associators and unitors as pseudofunctor constraints.
\end{proof}

\begin{corollary}[Coherence of independent transport]
If two parallel-independent rewrites form a local Church--Rosser diamond with common result $H$ up to a specified isomorphism, any subobject in the common domain of the two composite track spans is transported to isomorphic occurrences in $H$, independently of rewrite order.
\end{corollary}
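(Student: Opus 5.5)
The statement is a corollary of \cref{thm:track-pseudo} combined with the canonical path transport of \cref{thm:path-transport}. I will not argue with points of the hypernetworks at all. Everything will be read off from the 2-cell that the pseudofunctor assigns to the specified diamond.

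\textbf{Setup.} Write the two composite paths as \(\pi_1=\rho_2'\circ\rho_1\) and \(\pi_2=\rho_1'\circ\rho_2\), both from \(G\) to \(H\). By \cref{thm:track-pseudo}, \(\mathbf{Trk}\) sends the diamond 2-cell to an isomorphism of spans
\[
  \theta:\bigl(G\xleftarrow{u_1}D_{\pi_1}\xrightarrow{v_1}H\bigr)\Longrightarrow\bigl(G\xleftarrow{u_2}D_{\pi_2}\xrightarrow{v_2}H\bigr).
\]
Concretely, \(\theta:D_{\pi_1}\iso D_{\pi_2}\) satisfies \(u_2\theta=u_1\) and \(v_2\theta=\psi v_1\). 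Here \(\psi\) is the specified isomorphism identifying the two results; when the common result is literally \(H\), \(\psi\) is the identity.

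\textbf{First consequence: the domains agree.} From \(u_2\theta=u_1\) we get \(u_1(D_{\pi_1})=u_2(D_{\pi_2})\) in \(\Sub(G)\). So the ``common domain'' of the statement is simply this single subobject. By \cref{thm:path-domain}, it is the greatest pathwise transport domain for either order.

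\textbf{Transporting a subobject.} Take \(A\le u_1(D_{\pi_1})\), represented by \(a:A\mono G\). Let \(\bar a_1:A\mono D_{\pi_1}\) be its unique factorization through \(u_1\). Then \(\theta\bar a_1\) is a factorization of \(a\) through \(u_2\), since \(u_2\theta\bar a_1=u_1\bar a_1=a\). By uniqueness of factorization through the mono \(u_2\), we have \(\bar a_2=\theta\bar a_1\).

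By \cref{thm:path-transport}, the two transported occurrences are \(v_1\bar a_1\) and \(v_2\bar a_2\). They satisfy
\[
  v_2\bar a_2=v_2\theta\bar a_1=\psi\,v_1\bar a_1 .
\]
With \(\phi=\mathrm{id}_A\), this is exactly an isomorphism of occurrences relative to the host comparison \(\psi\), in the sense of the comparison modes. Therefore the transported occurrence does not depend on the rewrite order.

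\textbf{Main obstacle.} The only delicate step is checking that the 2-cell of \cref{thm:track-pseudo} commutes with both legs, including the leg into the (possibly only isomorphic) common target \(H\). Otherwise the domains might agree while the images differ by a nontrivial automorphism. I would settle this by unwinding the proof of \cref{thm:track-pseudo}: it states that the canonical isomorphism of pullback apices is compatible with the legs to the source and to the common target. The specified isomorphism \(\psi\) is exactly what that compatibility refers to. Once this is in place, the rest is the uniqueness of factorizations already used in the corollary on uniqueness of the intact residual.
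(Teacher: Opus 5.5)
Your proposal is correct and follows the paper's own argument: you apply \cref{thm:track-pseudo} to the specified diamond to get an isomorphism of the two composite track spans, then restrict it to the chosen subobject, and your unique-factorization computation $v_2\bar a_2=\psi\,v_1\bar a_1$ simply makes that restriction step explicit. The ``main obstacle'' you flag is in fact automatic, because 2-cells of $\mathbf{ParSpan}(\mathcal C)$ are by definition isomorphisms of spans and therefore commute with both legs.
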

\begin{proof}
The specified diamond is sent by \cref{thm:track-pseudo} to an isomorphism of the two composite track spans; restricting that isomorphism to the chosen subobject gives the comparison in $H$.
\end{proof}

\subsection{Occurrences, coherence, and partial monodromy}
\begin{definition}[Transport bicategory of occurrences]
Define $\mathbf{Occ}_{\mathcal R}$ as follows.
\begin{enumerate}
\item Objects are pairs $(G,A)$ with $A\le G$.
\item A 1-cell $(G,A)\to(H,B)$ is a path $\pi:G\Rightarrow^*H$ such that $A\le\dom\trk(\pi)$ and $B=\pi_{\#}(A)$.
\item A 2-cell from $\pi$ to $\sigma$ is a 2-cell $\alpha:\pi\Rightarrow\sigma$ in $\mathbf{Rew}_{\mathcal R}$ whose span isomorphism $\mathbf{Trk}(\alpha)$ restricts to a commuting isomorphism between the chosen representatives of $\pi_{\#}(A)$ and $\sigma_{\#}(A)$.
\end{enumerate}
Composition is induced by path concatenation and \cref{thm:path-transport}.
\end{definition}

\begin{definition}[Transport equivalence and partial monodromy]
Two occurrences are transport equivalent when connected by a zigzag of 1-cells in $\mathbf{Occ}_{\mathcal R}$. If a rewrite path $\pi:G\Rightarrow^*H$ is equipped with an ambient isomorphism $i:H\iso G$, then the composite monic span
\[
G\xleftarrow{u_\pi}D_\pi\xrightarrow{i\,v_\pi}G
\]
induces an order isomorphism between the principal ideals $\down u_\pi(D_\pi)$ and $\down i(v_\pi(D_\pi))$. Such partial order automorphisms, composed on their pullback domains, constitute \emph{transport monodromy}. No global automorphism of $\Sub(G)$ is asserted unless the track domain is all of $G$.
\end{definition}

\begin{proposition}[Obstruction to coherent path independence]
Suppose $\pi,\sigma:G\Rightarrow^*H$ share a stable subobject $A\le G$. If there is a 2-cell $\alpha:\pi\Rightarrow\sigma$, then $\pi_{\#}(A)$ and $\sigma_{\#}(A)$ are isomorphic through the restriction of $\mathbf{Trk}(\alpha)$. Hence nonisomorphic transported occurrences obstruct such a coherence 2-cell.
\end{proposition}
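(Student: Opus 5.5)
The plan is to obtain the isomorphism by applying the pseudofunctor \(\mathbf{Trk}\) of \cref{thm:track-pseudo} to \(\alpha\), restricting the resulting span isomorphism to the factorization of \(A\) through the two track domains, and reading off the contrapositive. First, \(\mathbf{Trk}(\alpha)\) is an invertible 2-cell in \(\mathbf{ParSpan}(\mathcal C)\). Concretely, it is an isomorphism \(\theta:D_\pi\iso D_\sigma\) of span apices satisfying
\[
  u_\sigma\theta=u_\pi,\qquad v_\sigma\theta=v_\pi .
\]
Since \(\alpha\) is a 2-cell in \(\mathbf{Rew}_{\mathcal R}\) and \(\mathbf{Trk}\) preserves 2-cells, and every 2-cell of \(\mathbf{Rew}_{\mathcal R}\) is invertible by construction, \(\theta\) exists. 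It is determined up to the coherent choice of pullbacks, which does not affect the argument.

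Next, I use that \(A\) is stable for both paths. By \cref{thm:path-domain} this means \(A\le u_\pi(D_\pi)\) and \(A\le u_\sigma(D_\sigma)\). Let \(a:A\mono G\) represent \(A\), and write \(\bar a_\pi:A\mono D_\pi\) and \(\bar a_\sigma:A\mono D_\sigma\) for the unique factorizations with \(u_\pi\bar a_\pi=a=u_\sigma\bar a_\sigma\). Then \(u_\sigma(\theta\bar a_\pi)=u_\pi\bar a_\pi=a\). Because \(u_\sigma\) is monic, uniqueness of factorization gives \(\theta\bar a_\pi=\bar a_\sigma\).

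By \cref{thm:path-transport}, \(\pi_\#(A)\) is represented by \(v_\pi\bar a_\pi\) and \(\sigma_\#(A)\) by \(v_\sigma\bar a_\sigma\). We then compute
\[
  v_\sigma\bar a_\sigma=v_\sigma\theta\bar a_\pi=v_\pi\bar a_\pi .
\]
So the two transported occurrences are represented by the same monomorphism into \(H\), and the comparison isomorphism of occurrences is \(\mathrm{id}_A\). In terms of the restriction of \(\mathbf{Trk}(\alpha)\), this is the restriction of \(\theta\) to \(\bar a_\pi(A)\to\bar a_\sigma(A)\), whose host comparison is \(\mathrm{id}_H\). This restricted map is the isomorphism claimed in the statement.

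The obstruction clause is the contrapositive: if \(\pi_\#(A)\) and \(\sigma_\#(A)\) are not isomorphic as occurrences in \(H\), no 2-cell \(\alpha\) can exist.

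The only delicate point is bookkeeping about which target the 2-cell lives over. A 2-cell in \(\mathbf{ParSpan}\) is an isomorphism of spans with fixed endpoints \(G\) and \(H\), so the host comparison is the identity of \(H\). If the diamond data instead specify the common result only up to an isomorphism \(H\iso H'\), that isomorphism has to be carried as the host comparison \(\psi\) in the comparison-modes definition. I would state this explicitly. Everything else follows from the uniqueness of factorization through monomorphisms.
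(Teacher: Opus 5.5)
Your proposal is correct and follows the paper's proof: apply $\mathbf{Trk}$ to $\alpha$ to obtain a span isomorphism $\theta$ and restrict it to $A$. Your use of uniqueness of factorization through the monic leg $u_\sigma$ gives $\theta\bar a_\pi=\bar a_\sigma$, and hence $v_\pi\bar a_\pi=v_\sigma\bar a_\sigma$; this is exactly the detail behind the paper's one-line ``restrict the resulting span isomorphism to $A$.''
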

\begin{proof}
Apply the track pseudofunctor and restrict the resulting span isomorphism to $A$.
\end{proof}
\section{Proof-theoretic specialization}
For proof nets, proof identity, sharing, and intensional distinctions, see \cite{girard,strassburger,hasegawa,pfenning,hu-pientka}.

\subsection{Deductive presentations}
Fix a typing signature $\Sigma$ and the corresponding slice $\mathcal C=\widehat{\mathcal H}/\Sigma$.
\begin{definition}[Deductive presentation]
A deductive presentation on $\mathcal C$ consists of:
\begin{enumerate}
\item a class $\operatorname{Corr}(\Gamma,\Delta)$ of correct open typed hypernetworks with premise interface $\Gamma$ and conclusion interface $\Delta$;
\item a set $\mathcal R_{\mathrm{cut}}$ of boundary-preserving linear DPO rules (or a separately specified DPOI formalism with the analogous track theorems);
\item preservation of correctness by every rule in $\mathcal R_{\mathrm{cut}}$;
\item stability of the open-net correctness predicate under the context embeddings and boundary reindexings used in intact transport.
\end{enumerate}
A member of $\operatorname{Corr}(\Gamma,\Delta)$ is a proof net of $\Delta$ from $\Gamma$, and
\[
\Gamma\vdash\Delta\quad\Longleftrightarrow\quad\operatorname{Corr}(\Gamma,\Delta)\ne\varnothing.
\]
\end{definition}

\begin{definition}[Subproof occurrence]
For $P\in\operatorname{Corr}(\Gamma,\Delta)$, a subproof occurrence is a monomorphism $a:A\mono P$ with the induced open boundary such that $A$ is correct for that boundary.
\end{definition}

\subsection{Residuals under cut elimination}
\begin{proposition}[Residual subproof theorem]
Let $\rho:P\Rightarrow Q$ be a correctness-preserving cut-elimination step and $A\mono P$ a subproof occurrence. If $A$ is $\rho$-stable, then it has a canonical residual $\rho_{\#}(A)\mono Q$. Its internal typed hypernetwork is unchanged, and its boundary incidences are transported through the context object.
\end{proposition}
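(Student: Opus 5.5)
The plan is to reduce the statement to \cref{thm:maximal-step} and \cref{thm:canonical-step}, applied to the direct derivation $\rho:P\Rightarrow Q$ in $\mathcal C$. The remaining work is to check that the transported object is again a subproof occurrence, with the correct boundary.

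First, let $a:A\mono P$ be $\rho$-stable. By \cref{thm:maximal-step}, $a$ factors uniquely as $a=d\bar a$ with $\bar a:A\mono D$. By \cref{thm:canonical-step}, $\rho_{\#}(A)$ is the subobject represented by $h\bar a:A\mono Q$. Uniqueness up to unique isomorphism then follows from the uniqueness of factorization through the monomorphism $d$, as in the corollary on uniqueness of intact residuals. This makes the residual canonical.

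Second, I will show that the internal typed hypernetwork is unchanged. The domain of the representative $h\bar a$ is literally $A$, with the same $\mathcal H$-set structure. The typing of $A$ is also preserved. Since $\rho$ is a diagram in the slice $\widehat{\mathcal H}/\Sigma$, the morphisms $d$ and $h$ commute with the typings $p_P$, $p_D$, $p_Q$. Hence
\[
p_Q\,h\bar a=p_D\,\bar a=p_P\,d\bar a=p_P\,a .
\]
So the composite $A\to\Sigma$ obtained via $Q$ equals the one obtained via $P$. All vertex and hyperedge occurrences of $A$, together with their incidences, are the same data, now embedded in $Q$ by $h\bar a$.

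Third, the boundary and correctness, which I expect to be the main obstacle. The induced open boundary of $A$ in $P$ consists of the vertices of $A$ incident to material of $P$ outside $A$. In $Q$ that outside material may have changed, since $R$ is glued in. My plan is to describe the boundary via the context. Factoring $A$ through $D$ gives an open subnetwork $\bar a:A\mono D$ whose boundary incidences lie in $D$. Pushing these forward along $h$ yields the boundary incidences in $Q$, which is the sense of ``transported through the context object''.

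Correctness of $A$ with this transported boundary then comes from hypothesis~(4) of the deductive presentation. The open-net correctness predicate is stable under the context embeddings and boundary reindexings used in intact transport. Apply it first along $d$, to pass from $A$ correct as an occurrence in $P$ to $A$ correct relative to $D$. Then apply it along $h$, to pass to $A$ correct as an occurrence in $Q$. If the boundary induced in $Q$ differs from the one induced in $P$, because new incidences arise from $R$ at interface vertices of $K$ lying in $A$, the change is precisely a boundary reindexing, and hypothesis~(4) covers it.

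Finally, I will note that correctness preservation (hypothesis~(3)) guarantees $Q\in\operatorname{Corr}(\Gamma,\Delta)$. This makes $h\bar a$ a subproof occurrence of a proof net, as the definition of subproof occurrence requires.
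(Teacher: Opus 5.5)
Your proposal is correct and follows the paper's argument. You factor the stable occurrence through the context $D$ by \cref{thm:maximal-step}, compose with $h:D\mono Q$ to obtain the residual, and invoke the correctness-stability hypothesis on context embeddings and boundary reindexings for the transported boundary. Your extra checks only spell out what the paper leaves implicit: canonicity via the intact-residual corollary, typing over $\Sigma$ by slice commutativity, and the two-stage use of stability along $d$ and then $h$.
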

\begin{proof}
By \cref{thm:maximal-step}, $A$ factors through the preserved context $D$. Composing with $D\mono Q$ gives the residual. The additional correctness-stability assumption ensures that the induced boundary reindexing preserves correctness of the open proof net.
\end{proof}

\begin{corollary}[Local obstruction]
A subproof occurrence containing a cut link or formula occurrence deleted by the chosen step has no intact residual under that step. It may instead rewrite to a new subproof; that is not transport of an unchanged occurrence.
\end{corollary}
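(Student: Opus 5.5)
The statement to prove is the Local obstruction corollary. The plan is to argue by contraposition from \cref{thm:maximal-step}. Let $\rho:P\Rightarrow Q$ be the chosen cut-elimination step, with rule $L\xleftarrow{\ell}K\xrightarrow{r}R$, monic match $m:L\mono P$ and context $d:D\mono P$. Let $a:A\mono P$ be a subproof occurrence. Suppose $A$ contains a cut link or formula occurrence $x$ that is deleted by the step.

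First I will make ``deleted'' precise in the presheaf setting. Saying $x$ is deleted means that $x$ is an element $x\in P(c)$, at some object $c$ of $\mathcal H$ (a hyperedge object $E_{n,m}$ for a link, or $V$ for a formula occurrence), with two properties:
\begin{itemize}
\item $x$ lies in the image of $m$;
\item $x$ does not lie in the image of $m\ell:K\mono P$.
\end{itemize}
By \cref{prop:topos}, subobjects of $P$ are computed componentwise (the slice over $\Sigma$ does not affect this). So the subobject generated by $x$ lies below $A\wedge L$. Since $x\notin K(c)$, we get $A\wedge L\nleq K$. This is exactly $\operatorname{DelObs}_\rho(A)$.

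Second, I will apply \cref{thm:maximal-step}. Because $A$ is not $\rho$-stable, $A\nleq d(D)$, and $a$ does not factor through $d$. An intact residual would be an occurrence of the same $A$ in $Q$ obtained by the canonical transport $\rho_\#$ of \cref{thm:canonical-step}. That transport is defined only on $\down d(D)$. A stable-like factorization through $D$ is the only source of such a residual, by the uniqueness corollary to \cref{thm:canonical-step}. Hence there is no intact residual.

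For the final sentence I will give only a remark, not a formal proof. The material of $A$ meeting $R$ may yield a new correct subnetwork of $Q$, namely a rewrite of $A$. That subnetwork does not arise from $h\bar a$ for any factorization $\bar a:A\to D$, so it is not transport in the sense of \cref{thm:canonical-step}.

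The main obstacle is the first step. It needs a clean, isomorphism-invariant statement that an element outside $K$ but inside $L$ witnesses $A\wedge L\nleq K$. I would handle this by working elementwise with monic images in each component $P(c)$, which is legitimate because pullbacks and monos in $\mathcal C$ are componentwise.
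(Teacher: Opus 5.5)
Your proposal is correct and takes the paper's approach: a deleted element of $A$ lies in $A\wedge L$ but not in $K$, so $A\wedge L\nleq K$, and \cref{thm:maximal-step} then rules out factorization through $D$ and hence any intact residual. Your elementwise formalization only spells out what the paper's one-line proof compresses. Two small points: componentwise computation of meets is a standard presheaf-and-slice fact rather than something \cref{prop:topos} literally states, and you can check $x\in(A\wedge L)(c)\setminus K(c)$ directly without the generated subobject.
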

\begin{proof}
Such an occurrence meets the matched left-hand side outside the retained interface, so it fails the stability criterion in \cref{thm:maximal-step}.
\end{proof}

\subsection{Normal-form residuals}
Ordinary confluence joins reduction paths but does not itself identify the histories of surviving occurrences.

\begin{definition}[Transport-coherent normalization]
For a terminating proof-net rewrite bicategory and a chosen normal form $N$ of $P$, let $\mathcal N(P,N)$ be the full subgroupoid of the hom-category on the reduction paths $P\Rightarrow^*N$, with invertible transport-compatible 2-cells as morphisms. The system is \emph{transport-coherent} if every $\mathcal N(P,N)$ is connected, and \emph{thinly transport-coherent} if every $\mathcal N(P,N)$ is contractible.
\end{definition}

\begin{proposition}[Path-independent residual in normal form]
Assume termination, correctness preservation, and transport coherence. If $A\mono P$ is stable along every reduction path to the chosen normal form $N$, then every normalization path transports $A$ to an occurrence isomorphic in $N$. Under thin transport coherence, this isomorphism is canonical.
\end{proposition}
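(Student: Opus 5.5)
The plan is to reduce the statement to the canonical path transport of \cref{thm:path-transport}, followed by the connectivity hypothesis on the groupoid $\mathcal N(P,N)$. First, termination guarantees that reduction paths $P\Rightarrow^*N$ exist. Fix any two such paths $\pi,\sigma$. Stability of $A$ along every reduction path to $N$ gives $A\le u_\pi(D_\pi)$ and $A\le u_\sigma(D_\sigma)$ by \cref{thm:path-domain}, so both $\pi_{\#}(A)$ and $\sigma_{\#}(A)$ are defined as subobjects of $N$. Correctness preservation, together with the residual subproof theorem applied stepwise, ensures these are again subproof occurrences of $N$, not merely subnetworks.

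Next I use transport coherence. Since $\mathcal N(P,N)$ is connected, there is a finite zigzag of invertible transport-compatible 2-cells $\pi=\pi_0\Leftrightarrow\pi_1\Leftrightarrow\cdots\Leftrightarrow\pi_k=\sigma$. Because the 2-cells are invertible, I may regard each one as pointing forward. Each intermediate $\pi_i$ is a reduction path $P\Rightarrow^*N$, so $A$ is stable along it and $(\pi_i)_{\#}(A)$ is defined. For each 2-cell $\alpha_i:\pi_i\Rightarrow\pi_{i+1}$, apply \cref{thm:track-pseudo} to get the span isomorphism $\mathbf{Trk}(\alpha_i)$. As in the obstruction-to-path-independence proposition, restricting it to $A$ gives an isomorphism $(\pi_i)_{\#}(A)\iso(\pi_{i+1})_{\#}(A)$ over $N$; transport compatibility of $\alpha_i$ is exactly what makes this restriction commute with the chosen representatives. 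Composing along the zigzag yields an isomorphism $\pi_{\#}(A)\iso\sigma_{\#}(A)$.

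For canonicity under thin coherence, contractibility means there is exactly one 2-cell between any two objects of $\mathcal N(P,N)$. Any two zigzags from $\pi$ to $\sigma$ therefore compose to the same 2-cell. Pseudofunctoriality of $\mathbf{Trk}$ means restriction to $A$ respects vertical composition and inverses, so the induced occurrence isomorphism does not depend on the chosen zigzag. Concretely, the isomorphism is $\mathbf{Trk}$ of the unique 2-cell, restricted to $A$.

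I expect the main obstacle to be checking that restriction to $A$ is functorial in vertical composition. This requires that the span isomorphism $\mathbf{Trk}(\beta\alpha)$ restricted to the common stable domain equals the composite of the restrictions. My first attempt would use uniqueness of factorization through the monic apex $D_\pi$, as in the proof of \cref{thm:path-transport}: every span isomorphism commutes with the source legs $u_\pi$, so its restriction to $A$ is determined by the unique factorization of $A$ through $D_\pi$. Functoriality then follows from associativity of composition of these factorizations.
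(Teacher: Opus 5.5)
Your proposal is correct and follows the paper's argument: choose an invertible 2-cell between the two normalization paths, restrict $\mathbf{Trk}$ of it to $A$, and use thinness to make the comparison independent of that choice. The only difference is cosmetic. Since $\mathcal N(P,N)$ is a groupoid, connectedness already gives a single 2-cell $\alpha:\pi\Rightarrow\sigma$ (compose your zigzag), so the vertical-composition check you flag as the main obstacle can be bypassed, although your way of handling it via unique factorization through the monic apex also works.
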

\begin{proof}
For paths $\pi,\sigma$, choose an invertible 2-cell $\alpha:\pi\Rightarrow\sigma$ and restrict $\mathbf{Trk}(\alpha)$ to $A$. Thinness makes the resulting comparison independent of the chosen 2-cell.
\end{proof}

\begin{remark}
Allowing $A$ itself to rewrite calls for additional marked or coupled structure, for example a rewrite system on ambient proofs and marked subproofs or a fibration of marked rules over unmarked rules.
\end{remark}

\compactpart{II}{Separation, Reification, and Transport}

\section{What extensional observations forget}\label{sec:information-loss}
The examples in this section are obstruction patterns: they identify occurrence information that binary reachability, support data, or elementwise composition may forget.  Their relevance in an application depends on which distinctions its semantics is intended to retain, independently of the graph syntax used to present them.

\begin{proposition}[Unary ancestry is not separating]
Any invariant of finite multi-input presentations that factors through a binary source--target reachability relation fails to distinguish a conjunctive rule requiring both $a,b$ from two alternative rules requiring $a$ or $b$ separately.
\end{proposition}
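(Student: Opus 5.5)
The plan is to build two explicit finite presentations that any such invariant must identify. First I fix the invariant format: a finite multi-input presentation \(P\) has source occurrences and a target occurrence, and its binary reachability relation \(\mathrm{Reach}(P)\subseteq S\times\{t\}\) (more generally, between occurrences) records which pairs \((x,y)\) are joined by a directed path through the incidence of \(P\). An invariant \(I\) factors through reachability when \(I(P)=\bar I(\mathrm{Reach}(P))\) for some function \(\bar I\). It then suffices to find two presentations with equal reachability relations but different intended semantics.

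The two presentations are as follows. Let \(P_\wedge\) consist of source vertices \(a,b\), target vertex \(t\), and a single \((2,1)\)-hyperedge \(e\) with \(s_1(e)=a\), \(s_2(e)=b\), \(t_1(e)=t\). Let \(P_\vee\) consist of the same three vertices and two \((1,1)\)-hyperedges \(e_a,e_b\) with \(e_a:a\to t\) and \(e_b:b\to t\).

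The argument then has three steps. First, I compute reachability: in both networks the pairs \((a,t)\) and \((b,t)\) are reachable, and no other nontrivial pairs are. This holds because a path passes from any input port of a hyperedge to any of its output ports, so the single conjunctive edge contributes exactly the same pairs as the two unary edges. Hence \(\mathrm{Reach}(P_\wedge)=\mathrm{Reach}(P_\vee)\), and therefore \(I(P_\wedge)=I(P_\vee)\). Second, I show that the presentations really differ using the source-support notion of \cref{sec:five-roles}: \(\operatorname{Supp}_{P_\wedge}(t)=\{\{a,b\}\}\), whereas \(\operatorname{Supp}_{P_\vee}(t)=\{\{a\},\{b\}\}\). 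These antichains are distinct, and the objects are not isomorphic in \(\widehat{\mathcal H}\) because their \(E_{2,1}\) and \(E_{1,1}\) components have different cardinalities. Third, I conclude that \(I\) fails to separate a conjunctive rule from two alternative rules.

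The main obstacle is making "binary source--target reachability" precise enough that the equality in the first step is unambiguous. In particular, one must decide whether hyperedge occurrences count as nodes of the reachability graph. If they do, I will restrict the relation to vertex occurrences, or to sources and targets as the statement's wording suggests. The example is unaffected by this choice, because the conjunctive/disjunctive distinction lives in arity data, which any binary projection forgets.
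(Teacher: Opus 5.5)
Your proposal is correct and is essentially the paper's proof. Both arguments exhibit the conjunctive presentation and the two-alternative presentation, observe that each has exactly the reachability pairs $(a,t)$ and $(b,t)$, and separate them by the support antichains $\{\{a,b\}\}$ versus $\{\{a\},\{b\}\}$. Your explicit encoding in $\widehat{\mathcal H}$, the cardinality check for non-isomorphism, and the restriction of reachability to vertex occurrences only make precise what the paper leaves implicit. That restriction is genuinely required, not a harmless choice: if hyperedge occurrences counted as nodes, the two reachability relations would differ.
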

\begin{proof}
Both presentations have paths $a\leadsto t$ and $b\leadsto t$.  Their minimal supports are respectively $\{\{a,b\}\}$ and $\{\{a\},\{b\}\}$, so the presentations differ although their unary shadows agree.
\end{proof}

\begin{proposition}[Support semantics is not sharing-sensitive]
An invariant determined only by minimal external source supports at designated outputs cannot distinguish a shared intermediate occurrence from two isomorphic but disjoint copies when the external support families agree.
\end{proposition}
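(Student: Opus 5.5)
The plan is to exhibit two concrete finite presentations that have identical minimal external source supports at a designated output $t$, but differ in sharing. Any invariant that depends only on $\operatorname{Supp}_P(t)$ must then take equal values on them. Fix external sources $S=\{a\}$ and an output $t$.

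In the first presentation $P_{\mathrm{sh}}$, a rule $f$ with one input derives a single intermediate occurrence $c$ from $a$. A binary rule $g$ then consumes $c$ twice, through two input ports incident to the same vertex, and produces $t$.

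In the second presentation $P_{\mathrm{cp}}$, the rule $f$ is applied twice to $a$, producing two distinct occurrences $c_1,c_2$. These are two parallel hyperedges of the same label. The rule $g$ consumes $c_1$ and $c_2$ on its two ports and produces $t$.

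In the typed hypernetwork category $\mathcal C$, these two objects are nonisomorphic. $P_{\mathrm{sh}}(V)$ has three elements while $P_{\mathrm{cp}}(V)$ has four, and the two $f$-hyperedges of $P_{\mathrm{cp}}$ are distinct elements of $P_{\mathrm{cp}}(E_{1,1})$. So their occurrence structures differ, since one records a shared use and the other two distinct copies.

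The key step is computing the supports. In both presentations every derivation of $t$ ultimately requires only the source $a$. The empty set does not suffice, because $t$ is not derivable without a source. Hence $\operatorname{Supp}_{P_{\mathrm{sh}}}(t)=\operatorname{Supp}_{P_{\mathrm{cp}}}(t)=\{\{a\}\}$. An invariant $I$ determined only by these families therefore satisfies $I(P_{\mathrm{sh}})=I(P_{\mathrm{cp}})$, even though the presentations differ exactly in whether the intermediate occurrence is shared or duplicated.

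For the general form of the statement, I would note that the argument does not use the specific shape. Given any presentation with a shared intermediate occurrence $c$ used by $k\ge2$ consumers, unsharing replaces $c$ by $k$ disjoint isomorphic copies of its generating subnetwork. Each copy has the same supports as $c$. Because a minimal support of $t$ is computed from the supports of the consumed inputs through the same rules, it is unchanged. This is exactly the hypothesis that the external support families agree.

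The only delicate point is fixing the notion of support precisely enough to verify the equality. I would take a support of $t$ to be a set $A\subseteq S$ such that $t$ lies in the closure of $A$ under the rules. Under that reading, the equality of the closures is immediate, because $c_1$ and $c_2$ are generated exactly when $c$ is.
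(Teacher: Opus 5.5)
Your proposal is correct and follows the paper's proof, which likewise compares a shared derivation of an intermediate occurrence from a single source $a$ with two disjoint copies derived from that same $a$, notes that each designated output has support family $\{\{a\}\}$ in both, and observes that the incidence structures are nonisomorphic. The differences are cosmetic: the paper lets the shared occurrence feed two separate rules with outputs $u,v$, whereas you let one binary rule read $c$ on both ports, which the covariant $\mathcal H$-set model and ordered input lists permit; in your general unsharing aside, the copies must keep the external source occurrences shared, or the support families would no longer agree.
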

\begin{proof}
A shared derivation of $x$ from $a$ used by outputs $u,v$ and two copied derivations $x_1,x_2$ from the same source both give support $\{a\}$ for each output, while their incidence structures are nonisomorphic.
\end{proof}

\begin{proposition}[Overlap data is necessary for separating binary composition]
Suppose the semantic value of a binary composite is $x\otimes y$ and depends only on the separate element-level values of its two inputs.  If a subpresentation $R$ and an isomorphic disjoint copy have the same value $r$, then the semantics cannot distinguish using one shared occurrence of $R$ twice from using two disjoint copies: both composites receive $r\otimes r$.
\end{proposition}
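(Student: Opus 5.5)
The plan is to build two explicit presentations and show directly that the hypothesised semantics gives them the same value. The statement is essentially a computation, so the proof should be a short construction followed by one application of the dependence hypothesis.

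\emph{The shared presentation.} Fix the subpresentation $R$ and an isomorphic disjoint copy $R'$, and let $r$ be their common value. Form $P_{\mathrm{sh}}$ by feeding one occurrence of $R$ into both input ports of the binary composition node. By hypothesis the value of the composite depends only on the element-level values of its two inputs. Both inputs are the same occurrence, with value $r$, so $P_{\mathrm{sh}}$ receives $r\otimes r$.

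\emph{The copied presentation.} Form $P_{\mathrm{cp}}$ by feeding $R$ into the first port and $R'$ into the second. The inputs have values $r$ and $r$, since $R'$ is assumed to share the value $r$. So $P_{\mathrm{cp}}$ also receives $r\otimes r$.

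\emph{The presentations are distinct.} To show the semantics fails to separate them, I still need that $P_{\mathrm{sh}}$ and $P_{\mathrm{cp}}$ are nonisomorphic presentations. The difference lies in the occurrence structure. In $P_{\mathrm{sh}}$ the two input incidences of the composition hyperedge land in a single occurrence of $R$. In $P_{\mathrm{cp}}$ they land in two disjoint occurrences. Any isomorphism of presentations preserves incidence, so it would have to send two disjoint occurrences onto one. That fails by a cardinality count: assuming $R$ is noninitial, $P_{\mathrm{cp}}$ has strictly more vertex or hyperedge occurrences than $P_{\mathrm{sh}}$, exactly as in the proof of the support-semantics proposition.

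The main obstacle is the degenerate case where $R$ is empty, so that the two presentations coincide. I would exclude it by requiring $R$ to be noninitial. Otherwise the argument is immediate from the hypothesis that the value depends only on the separate input values, which is precisely the content of the statement.
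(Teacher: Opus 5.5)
Your computation that both composites receive $r\otimes r$ is exactly the paper's proof: in the shared presentation both inputs have value $r$, in the copied one the two separate inputs also have value $r$, and a semantics of the stated form must therefore assign $r\otimes r$ to both. Your additional argument that $P_{\mathrm{sh}}\not\cong P_{\mathrm{cp}}$ is not needed, since the paper takes the distinctness of shared and copied use as given. Note also that the cardinality count assumes $R$ is finite, whereas an incidence invariant, namely whether the two input ports of the composition hyperedge meet the same vertex, would work without that assumption.
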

\begin{proof}
In the shared presentation both inputs have value $r$, and in the copied presentation the two separate inputs also have value $r$.  A composite semantics of the stated form therefore assigns $r\otimes r$ to both.
\end{proof}

\begin{remark}[Quantitative consequence]
A scalar error bound attached only to external inputs may be sound but cannot in general attribute error to a shared occurrence or determine how many independent local perturbations were introduced.  When such attribution is required, error extraction should be performed on the realization level before passing to an extensional cost or coefficient profile.
\end{remark}
\section{Flattening and reification-sensitive behavior}

\subsection{Hierarchical presentations}
A hierarchical presentation may contain a box naming or enclosing a subpresentation. A flattening operation erases box boundaries and returns the underlying incidence structure.

\begin{definition}[Code-sensitive context]
A context is code-sensitive if it can distinguish a presentation supplied as one quoted unit from one assembled from separately quoted components, even when both have isomorphic flattenings.
\end{definition}

\begin{example}[One box versus two boxes]
Let $R$ be a two-step chain $a\to b\to c$. Let $H_1$ contain one quoted box enclosing the whole chain. Let $H_2$ contain two quoted boxes, one enclosing $a\to b$ and one enclosing $b\to c$, composed externally. Their flattened incidence graphs may be isomorphic, while a context testing the number or boundaries of quoted units distinguishes them.
\end{example}

\begin{theorem}[Flattening cannot be fully abstract for code-sensitive contexts]
Let $\operatorname{Flat}:\Pres_{\mathrm{hier}}\to\Pres_{\mathrm{flat}}$ erase hierarchical boundaries. Suppose there are $P,Q$ with $\operatorname{Flat}(P)\iso\operatorname{Flat}(Q)$ and a context $C\blank$ with different observations on $C[P]$ and $C[Q]$. Then no isomorphism-invariant semantics factoring through $\operatorname{Flat}$ is fully abstract for a context language containing $C\blank$.
\end{theorem}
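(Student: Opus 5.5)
The argument is by contradiction, using the usual definition of full abstraction. A semantics $[\![-]\!]$ on hierarchical presentations is fully abstract for a context language $\mathcal L$ when
\[
  [\![P]\!]=[\![Q]\!]\iff P\simeq_{\mathcal L}Q,
\]
where $P\simeq_{\mathcal L}Q$ means that $\operatorname{obs}(C[P])=\operatorname{obs}(C[Q])$ for every context $C\blank\in\mathcal L$. Only the adequacy direction ($\Rightarrow$) will be needed. Suppose $[\![-]\!]$ is isomorphism-invariant and factors through $\operatorname{Flat}$, so that $[\![X]\!]=S(\operatorname{Flat}(X))$ for some semantics $S$ on $\Pres_{\mathrm{flat}}$ that is itself invariant under isomorphism.

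The steps, in order, are as follows.

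\begin{enumerate}
\item From $\operatorname{Flat}(P)\iso\operatorname{Flat}(Q)$ and isomorphism-invariance of $S$, conclude $S(\operatorname{Flat}(P))=S(\operatorname{Flat}(Q))$, and hence $[\![P]\!]=[\![Q]\!]$.
\item Apply full abstraction (adequacy) to obtain $P\simeq_{\mathcal L}Q$.
\item Since $C\blank\in\mathcal L$, this gives $\operatorname{obs}(C[P])=\operatorname{obs}(C[Q])$, contradicting the hypothesis that $C$ observes $P$ and $Q$ differently.
\end{enumerate}

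So no such semantics exists. The one-box versus two-box example shows that the hypotheses are satisfiable, and a code-sensitive context supplies exactly such a $C\blank$.

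The only real subtlety is interpretive: "isomorphism-invariant" must apply at the level of flat presentations. This is why I phrase the factorization through an isomorphism-invariant $S$ rather than through literal equality of flattenings. If invariance is stated only on $\Pres_{\mathrm{hier}}$, the alternative is to read "factoring through $\operatorname{Flat}$" as "$[\![X]\!]$ depends only on the isomorphism class of $\operatorname{Flat}(X)$", and the argument is then identical. I would state that convention explicitly before running the three steps above. Everything else is immediate.
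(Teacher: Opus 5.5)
Your proposal is correct and follows the same argument as the paper. Isomorphism-invariance of the factor $S_0$ gives equal (the paper writes isomorphic) semantic values for $P$ and $Q$, and the soundness direction of full abstraction then forces equal observations under $C\blank$, which is a contradiction. Your explicit convention that invariance is required of the flat-level factor matches the paper's phrasing $S=S_0\circ\operatorname{Flat}$ with $S_0$ isomorphism-invariant.
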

\begin{proof}
If $S=S_0\circ\operatorname{Flat}$ and $S_0$ is invariant under isomorphism, then $S(P)\iso S(Q)$. Full abstraction would imply contextual equivalence, contradicting the chosen observation.
\end{proof}

\begin{corollary}
A flat hypergraph semantics can be fully abstract only for contexts insensitive to the erased boundaries, or after imposing equations making those boundaries observationally irrelevant.
\end{corollary}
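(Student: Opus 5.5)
The plan is to argue by contradiction from the definition of full abstraction, using the distinguishing context supplied by the hypothesis.

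First fix what is being claimed. A semantics $S$ on hierarchical presentations is \emph{fully abstract} for a context language when semantic equality coincides with contextual equivalence. Semantic equality here means equality up to isomorphism, since $S$ is isomorphism-invariant. Contextual equivalence means that no context in the language yields different observations. We only need the direction ``semantically equal implies contextually equivalent'' (adequacy of the equational theory). Suppose $S$ factors through $\operatorname{Flat}$ as $S=S_0\circ\operatorname{Flat}$ with $S_0$ isomorphism-invariant.

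Take the given $P,Q$ with $\operatorname{Flat}(P)\iso\operatorname{Flat}(Q)$. Applying $S_0$ gives $S(P)=S_0(\operatorname{Flat}(P))\iso S_0(\operatorname{Flat}(Q))=S(Q)$, so $P$ and $Q$ are semantically identified. Full abstraction would then force $P$ and $Q$ to be contextually equivalent. In particular, every context in the language, including $C\blank$, would have to give the same observation on $C[P]$ and $C[Q]$. This contradicts the hypothesis that $C\blank$ separates them, so no such $S$ is fully abstract.

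The only delicate point is making the notion of full abstraction precise enough. Contextual equivalence must be quantified over the stated context language, which contains $C\blank$, and ``semantic equality'' must be read up to isomorphism so that invariance of $S_0$ applies. Once those conventions are fixed, the argument is a two-line transitivity check and needs none of the earlier rewriting results. The example of one box versus two boxes above shows that the hypothesis is satisfiable.
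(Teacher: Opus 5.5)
Your proposal is correct and follows the paper's route. The paper's proof cites the preceding flattening theorem, whose proof is the same transitivity argument you reproduce inline (isomorphism-invariance of $S_0$ plus the separating context), and then reads the corollary off as its contrapositive. The only thing you leave implicit is that final contrapositive step, which is the paper's closing sentence: full abstraction forces every context in the language either to be unable to separate presentations with isomorphic flattenings, or to lose that separating power under the imposed equations.
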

\begin{proof}
If an available context distinguishes presentations with isomorphic flattenings, the preceding theorem rules out full abstraction.  Thus full abstraction requires either the absence of such contexts or equations that make every such distinction observationally irrelevant.
\end{proof}
\section{Multiple realizations and deterministic selection}

\subsection{The elementary obstruction}
\begin{lemma}[No left inverse for a noninjective behavior map]
Let $q:\Pres\to\Beh$ be a function. If distinct $P,Q$ satisfy $q(P)=q(Q)$, then no function $u:\Beh\to\Pres$ satisfies $u\circ q=\mathrm{id}_{\Pres}$.
\end{lemma}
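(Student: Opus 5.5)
The plan is a direct contradiction argument: assume a left inverse exists and apply it to the two collapsed presentations. Suppose, for contradiction, that some function $u:\Beh\to\Pres$ satisfies $u\circ q=\mathrm{id}_{\Pres}$. Fix the distinct presentations $P\neq Q$ with $q(P)=q(Q)$ given by the hypothesis, and write $b:=q(P)=q(Q)\in\Beh$.

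The key step is to evaluate the identity $u\circ q=\mathrm{id}_{\Pres}$ at both points:
\[
  P=u(q(P))=u(b)=u(q(Q))=Q .
\]
The middle equalities use only that $u$ is a function, so it assigns a single value $u(b)$ to the single element $b$. This contradicts $P\neq Q$, so no such $u$ exists.

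I will close with a remark on the equivalent formulation: a function with a left inverse is injective, so the lemma is the contrapositive of that fact. Nothing beyond the definition of a function is used. In particular, no choice, no structure on $\Pres$ or $\Beh$, and no surjectivity of $q$ is needed. There is no real obstacle. The only care needed is to state explicitly that the argument applies to any function $u$, including one built by a deterministic selection rule. This is what makes the lemma an obstruction to behavior-level reification in the sense discussed in the introduction.
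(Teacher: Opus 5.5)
Your proposal is correct and takes the same approach as the paper, which also argues that a left inverse would give $P=u(q(P))=u(q(Q))=Q$, contradicting $P\neq Q$. Your closing remark that the lemma is the contrapositive of ``a left-invertible function is injective'' is accurate but not needed for the proof.
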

\begin{proof}
Such a function would give $P=u(q(P))=u(q(Q))=Q$.
\end{proof}

\begin{corollary}
When one behavior has several realizations, no single-valued behavior operation can recover every realization. A representative selection satisfying $q\circ u=\mathrm{id}_{\Beh}$ may exist, but it chooses one realization and does not invert $q$ on all presentations. The full fiber must be retained as a set, category, groupoid, space, or other multi-realization object if all realizations matter.
\end{corollary}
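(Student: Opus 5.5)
The statement to prove is the final corollary. It has two parts: no single-valued behavior operation recovers every realization, and a section $u$ with $q\circ u=\mathrm{id}_{\Beh}$ may exist but cannot invert $q$ on all of $\Pres$. I would derive both directly from the preceding lemma, and treat the retention of the full fiber as a consequence rather than something proved separately.

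First, I fix a behavior $b$ with at least two distinct realizations $P\neq Q$, so $q(P)=q(Q)=b$. Any single-valued behavior operation that "recovers every realization" is a function $u:\Beh\to\Pres$ with $u(q(R))=R$ for all $R$, that is, $u\circ q=\mathrm{id}_{\Pres}$. The lemma on noninjective behavior maps rules this out, since it would force $P=u(b)=Q$.

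Second, for the section statement, I only need to note what existence of a section would give us, not prove that one exists. If $q$ is surjective (as in the definition of behavior), the axiom of choice, or an explicit choice when the fibers are effectively enumerable, yields some $u$ with $q\circ u=\mathrm{id}_{\Beh}$. Such a $u$ picks one element $u(b)$ of each fiber $q^{-1}(b)$. Because $q^{-1}(b)\supseteq\{P,Q\}$, at least one of $P,Q$, say $Q$, differs from $u(b)=u(q(Q))$. Hence $u\circ q\neq\mathrm{id}_{\Pres}$ at $Q$: the section chooses a realization but is not an inverse.

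Finally, the retention clause is interpretive. Any structure from which every realization is recoverable must carry, over each $b$, an object from which the entire fiber $q^{-1}(b)$ can be read off. Taking that object to be the fiber itself, with whatever set, groupoid, or space structure is present, gives a lossless datum; by the first step, any coarser single-valued datum loses some realization. I expect no real obstacle here. The only care needed is to state precisely what "recover every realization" means, namely as the equation $u\circ q=\mathrm{id}_{\Pres}$, so that the lemma applies verbatim.
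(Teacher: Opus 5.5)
Your proposal is correct and matches the paper's argument: the preceding lemma excludes any left inverse $u\circ q=\mathrm{id}_{\Pres}$, and a chosen right inverse only selects one element of each nonempty fiber (your explicit witness $Q\neq u(q(Q))$ is the lemma applied once more). The paper likewise treats the fiber-retention clause as an interpretive consequence rather than something proved separately.
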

\begin{proof}
The preceding lemma rules out a left inverse of $q$.  A right inverse, when one is chosen, selects only one element in each nonempty fiber.
\end{proof}

\subsection{Partial context action on a quotient}
\begin{proposition}[Congruence and domain-saturation criterion]\label{thm:partial-congruence}
Let $\sim$ be an equivalence relation on $\Pres$, and let a context $C$ define a partial operation with domain $\operatorname{Dom}(C)\subseteq\Pres$. The rule
\[
C([P]):=[C[P]]
\]
defines a partial operation on $\Pres/{\sim}$ iff:
\begin{enumerate}
\item $\operatorname{Dom}(C)$ is $\sim$-saturated: $P\sim Q$ implies $P\in\operatorname{Dom}(C)$ iff $Q\in\operatorname{Dom}(C)$;
\item whenever $P\sim Q$ and both are in the domain, $C[P]\sim C[Q]$.
\end{enumerate}
\end{proposition}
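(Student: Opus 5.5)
The plan is to prove the two implications separately, after first fixing what ``defines a partial operation on $\Pres/{\sim}$'' means. The rule $C([P]):=[C[P]]$ determines a partial function $\bar C$ on $\Pres/{\sim}$ whose domain is $\{[P]:P\in\operatorname{Dom}(C)\}$. Being well defined has two parts: whether a class lies in the domain must not depend on the chosen representative, and the assigned value must not depend on it either. In other words, the relation
\[
  \{([P],[C[P]]):P\in\operatorname{Dom}(C)\}
\]
must be the graph of a partial function. Its domain must also consist exactly of the classes all of whose representatives lie in $\operatorname{Dom}(C)$, so that evaluation is possible through any representative.

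For sufficiency, assume (1) and (2). By (1), the set $\operatorname{Dom}(C)$ is a union of $\sim$-classes. Hence the domain of $\bar C$ is a well-defined subset of $\Pres/{\sim}$, and every representative of a class in it lies in $\operatorname{Dom}(C)$. Now take $P\sim Q$ in that class. Both lie in the domain, so (2) gives $C[P]\sim C[Q]$, that is, $[C[P]]=[C[Q]]$. The value is therefore independent of the representative, and the rule defines a partial operation.

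For necessity, assume the rule is well defined in the sense above and suppose $P\sim Q$. Since $[P]=[Q]$, membership of this class in the domain of $\bar C$ forces both $P$ and $Q$ into $\operatorname{Dom}(C)$, or neither. This is exactly saturation (1). If both lie in the domain, single-valuedness at the class $[P]=[Q]$ gives $[C[P]]=[C[Q]]$, which is (2).

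The only real obstacle is the interpretive one: showing that saturation is genuinely needed rather than an artifact of the definitions. I would handle it by stating explicitly the representative-independent meaning of the domain, as in the first paragraph. The alternative reading, in which a class is in the domain as soon as some representative is, would make (1) unnecessary. Under that reading one could evaluate $\bar C$ at $[Q]$ through a representative outside $\operatorname{Dom}(C)$. I would record this as a remark, so that the equivalence is proved for the intended, representative-independent notion.
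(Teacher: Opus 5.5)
Your proposal is correct and follows the same route as the paper's proof: a representative-independent domain of classes forces saturation, and representative-independence of the value on that domain is exactly the congruence condition. Your version only makes explicit the convention on the domain that the paper's two-line argument leaves implicit, including your remark that condition (1) would not be needed under the ``some representative'' reading.
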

\begin{proof}
A quotient-level partial operation must have a well-defined domain of equivalence classes, which requires saturation. On that domain, representative independence is the congruence condition.
\end{proof}
\section{One-sorted extensionality and faithful quotation}\label{sec:one-sorted}

\subsection{The reification trilemma}
\begin{definition}[Extensional equality]
An equivalence relation $\sim$ on presentations is extensional if it identifies at least one pair of distinct presentations having the same behavior.
\end{definition}

\begin{definition}[Faithfully inspectable quotation]
A quotation operation $q:\Pres\to\Pres$ is faithfully inspectable if there is a closing inspection context $I\blank$ such that $I[q(P)]$ is a closed observation for every $P\in\Pres$ and, for all distinct $P_0,P_1$,
\[
\operatorname{Obs}(I[q(P_0)])\ne\operatorname{Obs}(I[q(P_1)]).
\]
\end{definition}

\begin{theorem}[One-sorted reification trilemma]
The following three conditions cannot hold simultaneously on one presentation sort:
\begin{enumerate}[label=(\alph*)]
\item $\sim$ is extensional and observationally sound on closed observations: $X\sim Y$ implies $\operatorname{Obs}(X)=\operatorname{Obs}(Y)$ whenever both are closed observations;
\item $\sim$ is a congruence for quotation and all inspection contexts, including domain saturation as in \cref{thm:partial-congruence};
\item quotation is faithfully inspectable.
\end{enumerate}
\end{theorem}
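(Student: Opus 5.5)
We argue by contradiction: assume (a), (b) and (c) hold simultaneously on one presentation sort.

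First, by extensionality in (a), fix distinct presentations $P_0\ne P_1$ with $P_0\sim P_1$ (and the same behavior). Everything else is a diagram chase through quotation and the inspection context $I\blank$ supplied by (c).

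Second, invoke (b) twice. Congruence for quotation gives $q(P_0)\sim q(P_1)$, since quotation is total on $\Pres$. Congruence for all inspection contexts, in the partial-operation sense of \cref{thm:partial-congruence}, applies to $I\blank$. Domain saturation is automatically satisfied here, because (c) guarantees that $I[q(P)]$ is defined for every $P$. Hence
\[
I[q(P_0)]\sim I[q(P_1)].
\]

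Third, apply observational soundness from (a). Both terms are closed observations by (c), so
\[
\operatorname{Obs}(I[q(P_0)])=\operatorname{Obs}(I[q(P_1)]).
\]
But $P_0\ne P_1$, so faithful inspectability in (c) asserts that these two observations differ. This is the contradiction.

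The main point requiring care is the domain bookkeeping in the second step. Congruence is only required where the context is defined, so we must confirm that both $q(P_i)$ lie in $\operatorname{Dom}(I)$. We take this from the clause of (c) stating that $I[q(P)]$ is a closed observation for every $P$. Saturation then needs no separate argument, though it is worth remarking that (b) includes it precisely so that partial contexts cannot evade the argument.

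We also make explicit that everything lives on one presentation sort. Codes $q(P)$ are presentations, so the same relation $\sim$ applies to them and to $I[q(P)]$; this is exactly what a two-sorted design (with code distinct from $\Pres$) would avoid.
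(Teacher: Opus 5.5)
Your proof is correct and follows the paper's own argument: pick distinct $P_0\sim P_1$, use congruence to get $q(P_0)\sim q(P_1)$ and then $I[q(P_0)]\sim I[q(P_1)]$, and let observational soundness contradict faithful inspectability. Your extra domain bookkeeping is a harmless refinement that the paper leaves implicit. Strictly, it is not that saturation holds automatically; only the congruence clause of (b) is needed, since (c) already places both $q(P_i)$ in the domain of $I[-]$.
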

\begin{proof}
Choose distinct $P_0\sim P_1$. Congruence gives $q(P_0)\sim q(P_1)$ and then $I[q(P_0)]\sim I[q(P_1)]$. Observational soundness forces equal observations, contradicting faithful inspectability.
\end{proof}

\begin{corollary}[Necessary design choices]
A system retaining extensional proof equality and inspectable proof codes must weaken at least one of single-sortedness, substitutivity of extensional equality in intensional contexts, faithful inspection, or observational soundness. Two possible remedies are to separate behavior and code sorts or to stratify quotation by levels.
\end{corollary}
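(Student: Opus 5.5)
The plan is to argue by contraposition: assume a system keeps extensional proof equality and inspectable proof codes, and read off what the one-sorted reification trilemma then forbids. First I would make the hypotheses precise in the trilemma's vocabulary. ``Extensional proof equality'' means an equivalence $\sim$ on the presentation sort that is extensional in the sense defined above, so some distinct $P_0\sim P_1$ have the same behavior. ``Inspectable proof codes'' means a quotation operation $q$ that is faithfully inspectable via a closing context $I\blank$.

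Next, suppose that none of the four listed items is weakened:
\begin{enumerate}
\item quotation lands in the same sort as the presentations (single-sortedness);
\item $\sim$ is a congruence for quotation and all inspection contexts, including domain saturation as in \cref{thm:partial-congruence} (substitutivity);
\item $q$ is faithfully inspectable;
\item $\sim$ is observationally sound on closed observations.
\end{enumerate}
Then conditions (a), (b) and (c) of the trilemma all hold on one presentation sort. The trilemma says this is impossible, which is exactly the contrapositive of the first sentence of the corollary.

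For the remedies, I would check that each breaks a hypothesis the trilemma's proof actually uses. If behavior and code sorts are separated, then $q:\Pres\to\mathsf{Code}$ no longer maps into the sort on which $\sim$ is a congruence. In that case ``$\sim$ is a congruence for quotation'' has no one-sorted content, and the step $q(P_0)\sim q(P_1)$ is unavailable.

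If quotation is stratified by levels, then $q$ maps level $n$ to level $n+1$, and the extensional equality at level $n$ need not be substitutive under quotation into level $n+1$. This again removes the congruence step. I would present these as sufficient escapes only, not claim they are exhaustive.

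The main obstacle is definitional rather than technical. The phrase ``must weaken at least one of'' has to be read as the negation of the conjunction of hypotheses (a)--(c) together with one-sortedness. I would state that reading explicitly before invoking the trilemma, so that the corollary is a direct logical consequence.
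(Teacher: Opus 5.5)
Your argument is correct and matches the paper's proof. The paper proves the corollary in one line as the contrapositive of the one-sorted reification trilemma, which is what you do after matching the four listed properties to conditions (a)--(c) together with the one-sort hypothesis; your extra check that the two remedies disable the congruence step $q(P_0)\sim q(P_1)$ goes slightly beyond the paper, which simply lists them without argument.
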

\begin{proof}
This is the contrapositive of the incompatibility established in the preceding theorem.
\end{proof}
\section{Interaction systems and same-level quotation}\label{sec:interaction}

\begin{definition}
Fix a set $K$ with distinct elements $0,1$. An interaction system over $K$ is a triple
\[
X=(R_X,C_X,\langle-,-\rangle_X),
\]
where $R_X$ is a set of realizations, $C_X$ a set of tests, and $\langle-,-\rangle_X:R_X\times C_X\to K$ an interaction map.
\end{definition}

\begin{definition}
For $r,s\in R_X$, write $r\equiv_X s$ iff
\[
\langle r,c\rangle_X=\langle s,c\rangle_X\qquad(c\in C_X).
\]
\end{definition}

\begin{definition}[Same-level syntax-separating quotation]
Such a scheme consists of maps $q:R_X\to R_X$ and $\delta:R_X\to C_X$ satisfying:
\begin{align*}
\textnormal{(Q1)}&\quad r\equiv_X s\Longrightarrow q(r)\equiv_X q(s),\\
\textnormal{(Q2)}&\quad \langle q(r),\delta(s)\rangle_X=1\Longleftrightarrow r=s.
\end{align*}
\end{definition}

\begin{theorem}[Same-level quotation collapse]
If an interaction system admits a same-level syntax-separating quotation scheme, then $\equiv_X$ is equality.
\end{theorem}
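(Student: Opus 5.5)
The plan is to argue directly from the two axioms, in the style of the one-sorted reification trilemma. The test $\delta(s)$ recognizes exactly the quote of $s$. Axiom (Q1) says that quotation cannot separate $\equiv_X$-equivalent realizations. Together these force equivalent realizations to coincide.

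Equality trivially implies $\equiv_X$, so only the converse needs proof. Let $r\equiv_X s$. By (Q1), $q(r)\equiv_X q(s)$. By the definition of $\equiv_X$, these two realizations give the same value on every test. In particular, on the test $\delta(s)\in C_X$ we get
\[
\langle q(r),\delta(s)\rangle_X=\langle q(s),\delta(s)\rangle_X.
\]
Applying (Q2) with $r$ replaced by $s$ gives $\langle q(s),\delta(s)\rangle_X=1$, since $s=s$. Hence $\langle q(r),\delta(s)\rangle_X=1$, and the forward direction of (Q2) yields $r=s$. So $\equiv_X$ coincides with equality.

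There is no genuine obstacle here; the argument is a few lines. The one point to watch is using the correct direction of the biconditional in (Q2) at each step. The $\Leftarrow$ direction, applied to the pair $(s,s)$, produces the value $1$. The $\Rightarrow$ direction, applied to $(r,s)$, extracts the equality. The distinctness of $0$ and $1$ in $K$ is not needed for this implication; it only makes the hypothesis nondegenerate. I would close with a remark that this is the interaction-system analogue of the trilemma: (Q1) plays the role of congruence, and $\delta$ is the faithful inspection.
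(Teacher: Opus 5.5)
Your proof is correct and is essentially the paper's argument. The only difference is that you evaluate at the test $\delta(s)$ where the paper uses $\delta(r)$, which is the same argument with the roles of $r$ and $s$ exchanged.
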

\begin{proof}
If $r\equiv_X s$, then $q(r)\equiv_X q(s)$ by (Q1). Evaluating at $\delta(r)$ gives
\[
1=\langle q(r),\delta(r)\rangle_X=\langle q(s),\delta(r)\rangle_X,
\]
and (Q2) implies $s=r$.
\end{proof}

\begin{corollary}
If distinct realizations are extensionally equivalent, syntax-separating quotation cannot both live at the same interaction level and descend to the extensional quotient.
\end{corollary}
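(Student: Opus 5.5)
The statement is the corollary: if distinct realizations are extensionally equivalent, then a syntax-separating quotation scheme $(q,\delta)$ cannot both live at the same interaction level and descend to the extensional quotient. I would argue by contraposition, reducing it to the Same-level quotation collapse theorem just proved.

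First I would make the two informal clauses precise.

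\emph{Living at the same interaction level} means that $q$ maps $R_X$ to $R_X$, and that the separating tests $\delta(s)$ lie in the same test set $C_X$, evaluated by the same pairing $\langle-,-\rangle_X$. This is exactly the typing $q:R_X\to R_X$, $\delta:R_X\to C_X$ in the definition of a same-level scheme, together with condition (Q2).

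\emph{Descending to the extensional quotient} means that $q$ induces a well-defined map $R_X/{\equiv_X}\to R_X/{\equiv_X}$, i.e.\ $[r]\mapsto[q(r)]$ is independent of the representative. This is precisely condition (Q1): $r\equiv_X s$ implies $q(r)\equiv_X q(s)$. Here I would note that no domain-saturation issue in the sense of \cref{thm:partial-congruence} arises, because $q$ is total.

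With these translations, suppose both clauses held. Then $(q,\delta)$ satisfies (Q1) and (Q2), so it is a same-level syntax-separating quotation scheme. The collapse theorem then gives that $\equiv_X$ is equality on $R_X$. This contradicts the hypothesis that some distinct $r\ne s$ satisfy $r\equiv_X s$.

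The only real obstacle is interpretive rather than mathematical. One must make sure that ``descend to the extensional quotient'' is read as (Q1), rather than as a weaker condition such as descent only on a subset. One must also make sure that ``syntax-separating'' means that (Q2) is imposed on all pairs $r,s$, since the proof of the collapse theorem evaluates at $\delta(r)$ for an arbitrary $r$. I would state these readings explicitly and then cite the theorem.
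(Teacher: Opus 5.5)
Your proposal is correct and follows the paper's argument: you read descent to the extensional quotient as (Q1) and same-level syntax separation as (Q2), and the same-level quotation collapse theorem then forces $\equiv_X$ to be equality, contradicting the existence of distinct equivalent realizations. Your added remark that totality of $q$ makes domain saturation vacuous is a harmless clarification the paper leaves implicit.
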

\begin{proof}
Descent gives (Q1), while syntax separation gives (Q2); the preceding theorem would then make extensional equivalence equal to literal equality.
\end{proof}
\section{The categorical reification obstruction}\label{sec:yoneda-obstruction}
For the Yoneda embedding, pointwise Kan extensions, and free cocompletion by presheaves, see \cite{maclane,maclane-moerdijk}.
Let $\mathcal E$ be a small category of proof realizations, $\mathcal B$ a category of behaviors, and $p:\mathcal E\to\mathcal B$ a functor. Write
\[
y_{\mathcal E}:\mathcal E\to\PSh(\mathcal E):=[\mathcal E^{\mathrm{op}},\mathbf{Set}]
\]
for Yoneda.

\begin{definition}[Representable behavior-code factorization]
A factorization consists of a functor $C:\mathcal B\to\PSh(\mathcal E)$ and a natural isomorphism
\[
\alpha:y_{\mathcal E}\xRightarrow{\sim}Cp.
\]
Thus $\alpha_e$ identifies $C(p(e))$ with the representable $y_{\mathcal E}(e)$ for every realization $e$.
\end{definition}

\begin{theorem}[Yoneda reification obstruction]
If $p$ admits a representable behavior-code factorization, then:
\begin{enumerate}[label=(\roman*)]
\item $p$ is faithful;
\item if $p(e)\iso p(e')$ in $\mathcal B$, then $e\iso e'$ in $\mathcal E$.
\end{enumerate}
In particular, two nonisomorphic proofs cannot have the same behavior.
\end{theorem}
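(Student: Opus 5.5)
The plan is to use that the Yoneda embedding $y_{\mathcal E}$ is fully faithful and transport this property along the natural isomorphism $\alpha:y_{\mathcal E}\Rightarrow Cp$. Full faithfulness is invariant under natural isomorphism of functors, so $Cp$ is fully faithful as well. Concretely, for $e,e'\in\mathcal E$ the map
\[
  \Hom_{\mathcal E}(e,e')\longrightarrow\Hom_{\PSh(\mathcal E)}\bigl(Cp(e),Cp(e')\bigr),\qquad f\longmapsto Cp(f),
\]
is a bijection. Indeed, naturality gives $Cp(f)=\alpha_{e'}\circ y_{\mathcal E}(f)\circ\alpha_e^{-1}$, so this map is the Yoneda bijection followed by conjugation by isomorphisms.

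For (i), suppose $f,g:e\to e'$ satisfy $p(f)=p(g)$. Then $Cp(f)=Cp(g)$. Injectivity of the displayed map gives $f=g$, so $p$ is faithful. Equivalently, the first factor of an injective composite $C_*\circ p_*$ is itself injective.

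For (ii), let $\phi:p(e)\iso p(e')$ in $\mathcal B$. Then $C(\phi):Cp(e)\iso Cp(e')$ is an isomorphism in $\PSh(\mathcal E)$, and hence
\[
  \alpha_{e'}^{-1}\circ C(\phi)\circ\alpha_e:\;y_{\mathcal E}(e)\iso y_{\mathcal E}(e').
\]
Since $y_{\mathcal E}$ is fully faithful, it reflects isomorphisms. This isomorphism is therefore $y_{\mathcal E}(f)$ for a unique $f:e\to e'$. Likewise its inverse is $y_{\mathcal E}(g)$ for some $g:e'\to e$, and faithfulness gives $gf=\mathrm{id}$ and $fg=\mathrm{id}$. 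So $e\iso e'$. The final sentence of the statement is the contrapositive of (ii), reading ``same behavior'' as equal or isomorphic objects of $\mathcal B$.

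There is no real obstacle here. The only point needing care is that only fullness and faithfulness of the composite $Cp$ are available, not of $p$ alone. That is why (ii) must route the isomorphism through $C$ and reflect it via Yoneda, rather than trying to lift $\phi$ along $p$ directly. Smallness of $\mathcal E$ is used only so that $\PSh(\mathcal E)$ is locally small and the Yoneda lemma applies.
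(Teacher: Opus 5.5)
Your proposal is correct and follows the paper's proof essentially step for step. For (i) you use the naturality square $Cp(f)\alpha_e=\alpha_{e'}y_{\mathcal E}(f)$ together with faithfulness of Yoneda, and for (ii) you conjugate $C(\phi)$ by the components of $\alpha$ to obtain $y_{\mathcal E}(e)\iso y_{\mathcal E}(e')$ and then use that Yoneda reflects isomorphisms; your lifting of the inverse and the check of $gf$ and $fg$ simply spell out what the paper compresses into ``Yoneda implies $e\iso e'$''.
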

\begin{proof}
If $p(f)=p(g)$ for $f,g:e\to e'$, naturality gives
\[
\alpha_{e'}y(f)=Cp(f)\alpha_e=Cp(g)\alpha_e=\alpha_{e'}y(g),
\]
so $y(f)=y(g)$ and $f=g$. If $u:p(e)\iso p(e')$, then $C(u)$ is an isomorphism and hence $y(e)\iso y(e')$; Yoneda implies $e\iso e'$.
\end{proof}

\begin{corollary}
If some $b\in\mathcal B$ has nonisomorphic realizations $e,e'$ with $p(e)=p(e')=b$, there is no functor $C:\mathcal B\to\PSh(\mathcal E)$ and natural isomorphism $y_{\mathcal E}\cong Cp$.
\end{corollary}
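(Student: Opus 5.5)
The plan is to argue by contraposition from the Yoneda reification obstruction theorem just proved. Suppose, for contradiction, that there were a functor $C:\mathcal B\to\PSh(\mathcal E)$ and a natural isomorphism $\alpha:y_{\mathcal E}\xRightarrow{\sim}Cp$. Then $(C,\alpha)$ is precisely a representable behavior-code factorization in the sense of the preceding definition. The theorem therefore applies, and part~(ii) says that $p(e)\iso p(e')$ in $\mathcal B$ forces $e\iso e'$ in $\mathcal E$.

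The key step is to feed the hypothesis into part~(ii). By assumption $p(e)=p(e')=b$, so the identity $\mathrm{id}_b:p(e)\to p(e')$ is an isomorphism in $\mathcal B$. Part~(ii) then gives $e\iso e'$, contradicting the choice of $e,e'$ as nonisomorphic realizations. Hence no such pair $(C,\alpha)$ exists.

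Should one prefer to see the mechanism directly rather than cite~(ii), the chain is as follows. The component isomorphisms give
\[
  y_{\mathcal E}(e)\xrightarrow{\alpha_e}C(b)\xrightarrow{\alpha_{e'}^{-1}}y_{\mathcal E}(e').
\]
This is an isomorphism of representables, and full faithfulness of Yoneda reflects it to an isomorphism $e\iso e'$.

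There is no genuine obstacle here. The only point needing care is that the hypothesis is strict equality $p(e)=p(e')$, which is a special case of isomorphism via the identity arrow. The faithfulness clause~(i) is not needed for this corollary.
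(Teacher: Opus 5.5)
Your proposal is correct and matches the paper's proof, which likewise obtains $e\iso e'$ from such a factorization by citing the preceding theorem. Your direct unwinding via $\alpha_{e'}^{-1}\alpha_e:y_{\mathcal E}(e)\to y_{\mathcal E}(e')$, combined with the fact that the fully faithful Yoneda embedding reflects isomorphisms, is that same argument written out explicitly.
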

\begin{proof}
Such a factorization would imply $e\iso e'$ by the preceding theorem.
\end{proof}
\section{Behaviorwise aggregation and its category of elements}
Assume $\mathcal E$ is a small groupoid, $\mathcal B$ is a set regarded as a discrete category, and $p:\mathcal E\to\mathcal B$. Let $\mathcal E_b$ be the fiber groupoid and define
\[
\mathcal A:=\Lan_p(y_{\mathcal E}):\mathcal B\to\PSh(\mathcal E).
\]

\begin{theorem}[Behaviorwise aggregate code]\label{thm:aggregate}
For $b\in\mathcal B$ and $x\in\mathcal E$,
\[
\mathcal A(b)(x)\cong
\begin{cases}
\{*\},&p(x)=b,\\
\varnothing,&p(x)\ne b.
\end{cases}
\]
Thus $\mathcal A(b)$ is the characteristic presheaf of the full fiber $\mathcal E_b$. It is generally nonrepresentable and does not select an individual realization. Nevertheless, its category of elements is canonically isomorphic to the fiber groupoid:
\[
\int_{\mathcal E}\mathcal A(b)\cong\mathcal E_b.
\]
Hence the aggregate is one presheaf whose category of elements recovers the objects and automorphisms of the fiber.
\end{theorem}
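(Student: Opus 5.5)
I would compute the left Kan extension pointwise with the colimit formula. Since $\PSh(\mathcal E)$ is cocomplete and $\mathcal E$ is small, $\Lan_p(y_{\mathcal E})$ exists and is given by
\[
  \mathcal A(b)\cong\colim_{(e,\,u:p(e)\to b)\in(p\downarrow b)} y_{\mathcal E}(e).
\]
Because $\mathcal B$ is discrete, an arrow $p(e)\to b$ exists only when $p(e)=b$, and then it is the identity. So the comma category $(p\downarrow b)$ is isomorphic to the fiber $\mathcal E_b$, and $\mathcal A(b)\cong\colim_{e\in\mathcal E_b}y_{\mathcal E}(e)$. The next step evaluates at $x\in\mathcal E$. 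Colimits in presheaf categories are computed pointwise, so
\[
  \mathcal A(b)(x)\cong\colim_{e\in\mathcal E_b}\mathcal E(x,e).
\]

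To compute this colimit of sets, I would take the disjoint union $\coprod_{e\in\mathcal E_b}\mathcal E(x,e)$ modulo the relation $f\sim gf$ for $g:e\to e'$ in $\mathcal E_b$.

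\emph{Case $p(x)\neq b$.} Every $f:x\to e$ satisfies $p(x)=p(e)$, because $\mathcal B$ is discrete. So each hom-set $\mathcal E(x,e)$ with $p(e)=b$ is empty, and the colimit is $\varnothing$.

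\emph{Case $p(x)=b$.} The set is inhabited by $\mathrm{id}_x$. Any two elements $f:x\to e$ and $f':x\to e'$ are identified: since $\mathcal E$ is a groupoid, $g:=f'f^{-1}:e\to e'$ lies in $\mathcal E_b$, again by discreteness, and satisfies $gf=f'$. Hence the colimit is a singleton.

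This gives the displayed case formula. The one point to check carefully is that the fiber $\mathcal E_b$ is a full subgroupoid, i.e.\ every arrow of $\mathcal E$ between objects over $b$ lies over $\mathrm{id}_b$. That is automatic for a discrete base, and it is what makes the single relation $f\sim gf$ suffice.

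Next I would record the functoriality in $x$. For $h:x'\to x$, the restriction map $\mathcal A(b)(h)$ sends $[f]\mapsto[fh]$. Therefore $\mathcal A(b)$ is the subterminal presheaf that is $\{*\}$ on $\mathcal E_b$ and $\varnothing$ elsewhere; this is well defined since there are no arrows between different fibers.

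For the category of elements, the objects of $\int_{\mathcal E}\mathcal A(b)$ are pairs $(x,*)$ with $p(x)=b$. A morphism $(x',*)\to(x,*)$ is an arrow $h:x'\to x$ with $\mathcal A(b)(h)(*)=*$, which always holds. So the forgetful functor $\int_{\mathcal E}\mathcal A(b)\to\mathcal E$ is injective on objects and fully faithful onto the full subcategory $\mathcal E_b$. This yields the canonical isomorphism $\int_{\mathcal E}\mathcal A(b)\cong\mathcal E_b$.

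For nonrepresentability, I would argue as follows. A representable $y(e)$ has $y(e)(e)\ni\mathrm{id}_e$, and $y(e)(e)=\mathcal E(e,e)$ has more than one element whenever $e$ has a nontrivial automorphism. If $\mathcal A(b)\cong y(e)$, then $e\in\mathcal E_b$, and $\mathcal A(b)$ would take values $\{*\}$ or $\varnothing$, so every hom-set $\mathcal E(x,e)$ would have at most one element. In addition, every $x\in\mathcal E_b$ would have to admit an arrow to $e$, making $\mathcal E_b$ connected with trivial automorphisms. So $\mathcal A(b)$ is representable exactly when $\mathcal E_b$ is equivalent to a point. In general, with several isomorphism classes or nontrivial automorphisms in the fiber, it is not representable.

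I expect no step to be a serious obstacle. The only delicate point is the word ``generally'': I would state precisely that representability holds iff $\mathcal E_b$ is contractible, i.e.\ nonempty with a unique arrow between any two objects.
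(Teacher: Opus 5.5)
Your proposal is correct and follows the paper's proof essentially step for step. You compute the pointwise colimit $\colim_{e\in\mathcal E_b}\mathcal E(x,e)$, get emptiness off the fiber from discreteness of $\mathcal B$, obtain a singleton on the fiber by identifying $f$ and $f'$ via $f'f^{-1}$ in the groupoid, and read off $\int_{\mathcal E}\mathcal A(b)\cong\mathcal E_b$. Your additional criterion, that the aggregate is representable iff $\mathcal E_b$ is contractible, is also correct; the paper proves it separately in the corollary that follows.
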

\begin{proof}
Pointwise left Kan extension gives
\[
\mathcal A(b)(x)=\colim_{e\in\mathcal E_b}\Hom_{\mathcal E}(x,e).
\]
If $p(x)\ne b$, the set is empty because $\mathcal B$ is discrete. If $p(x)=b$, the identity of $x$ supplies a point. For $f:x\to e$ and $g:x\to e'$, the arrow $h=gf^{-1}:e\to e'$ in $\mathcal E_b$ identifies the two points in the colimit, so it is a singleton.

An object of $\int_{\mathcal E}\mathcal A(b)$ is an object $x$ with $p(x)=b$ together with the unique element of $\mathcal A(b)(x)$; its morphisms are the morphisms of $\mathcal E_b$. This yields the final isomorphism.
\end{proof}

\begin{corollary}[Representability criterion]
The aggregate $\mathcal A(b)$ is representable iff the fiber $\mathcal E_b$ is contractible.
\end{corollary}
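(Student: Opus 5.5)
The plan is to use \cref{thm:aggregate}, which identifies $\mathcal A(b)$ with the characteristic presheaf of the fiber, namely $\mathcal A(b)(x)\cong\{*\}$ if $p(x)=b$ and $\varnothing$ otherwise. Representability can then be tested by the Yoneda lemma on the category of elements. A presheaf $F$ on $\mathcal E$ is representable if and only if $\int_{\mathcal E}F$ has a terminal object. Since $\mathcal A(b)$ is subterminal, its category of elements is the full subcategory on the fiber, and by \cref{thm:aggregate} this is canonically $\mathcal E_b$. So the claim reduces to the statement that the groupoid $\mathcal E_b$ has a terminal object if and only if it is contractible. Here \enquote{contractible} means nonempty, with exactly one morphism between any two objects, i.e.\ equivalent to the terminal category.

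For the direction from representable to contractible, suppose $\alpha:y_{\mathcal E}(e)\iso\mathcal A(b)$. Evaluating at $e$ gives the element $\alpha_e(\mathrm{id}_e)$, so $\mathcal A(b)(e)\neq\varnothing$ and hence $p(e)=b$; in particular the fiber is nonempty. For any $x$, we have $\Hom_{\mathcal E}(x,e)\cong\mathcal A(b)(x)$.
\begin{itemize}
\item If $p(x)\neq b$, the right-hand side is empty, so there is no morphism $x\to e$.
\item If $p(x)=b$, the right-hand side is a singleton, so there is exactly one morphism $x\to e$.
\end{itemize}
Now take $x,x'$ in $\mathcal E_b$. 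Every arrow $x\to x'$ composes with the unique arrow $x'\to e$ to give the unique arrow $x\to e$. Because $\mathcal E$ is a groupoid, all these arrows are invertible, so any $f:x\to x'$ is forced to equal $u_{x'}^{-1}u_x$, where $u_x,u_{x'}$ are the unique arrows to $e$. Hence there is exactly one arrow between any two objects of $\mathcal E_b$, and $\mathcal E_b$ is contractible.

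For the converse, assume $\mathcal E_b$ is nonempty with a unique arrow between any two objects, and pick $e\in\mathcal E_b$. The aim is a natural isomorphism $y_{\mathcal E}(e)\iso\mathcal A(b)$, checked objectwise.
\begin{itemize}
\item If $p(x)=b$, contractibility makes $\Hom(x,e)$ a singleton, matching $\mathcal A(b)(x)=\{*\}$.
\item If $p(x)\neq b$, a morphism $x\to e$ would give $p(x)=p(e)=b$, since $\mathcal B$ is discrete; so $\Hom(x,e)=\varnothing=\mathcal A(b)(x)$.
\end{itemize}
Naturality is automatic because the target presheaf is subterminal: any two maps into a presheaf whose values have at most one element agree.

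I expect the only point needing care to be the meaning of \enquote{contractible}. It must include nonemptiness, since the empty fiber gives the empty presheaf, which is not representable. It must also include the absence of nontrivial automorphisms, since a connected fiber with nontrivial automorphisms is not contractible. I will state this convention explicitly; the rest is direct Yoneda bookkeeping.
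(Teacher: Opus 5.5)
Your proof is correct and follows the paper's argument. A representing object $e$ has $\Hom(x,e)$ a singleton on the fiber and empty off it, which makes $e$ terminal in the groupoid $\mathcal E_b$; conversely, a terminal object of the fiber represents the characteristic presheaf. You additionally make explicit two points the paper leaves tacit: that the representing object lies over $b$ (via $\alpha_e(\mathrm{id}_e)$), and that naturality is automatic because $\mathcal A(b)$ is subterminal.
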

\begin{proof}
If $\mathcal A(b)\cong y(e)$, then $\Hom(x,e)$ is a singleton for every $x\in\mathcal E_b$ and empty outside the fiber, so $e$ is terminal in the groupoid and the fiber is contractible. Conversely, a terminal object represents the terminal presheaf on its fiber.
\end{proof}

\begin{remark}
The category-of-elements construction recovers the fiber groupoid from the aggregate and therefore retains its automorphism data. Space-valued codes may still be useful for homotopical refinements.
\end{remark}
\section{Presheaf completion of proof-level codes}

\begin{theorem}[Free cocompletion by presheaves]
Let $\mathcal E$ be small. Yoneda
\[
y_{\mathcal E}:\mathcal E\to\PSh(\mathcal E)
\]
is fully faithful. The presheaf category is cocomplete, and for every cocomplete category $\mathcal C$, precomposition with Yoneda induces an equivalence
\[
\operatorname{Fun}_{\mathrm{cocont}}(\PSh(\mathcal E),\mathcal C)
\simeq
\operatorname{Fun}(\mathcal E,\mathcal C).
\]
\end{theorem}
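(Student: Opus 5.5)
The plan is to prove the three assertions separately: full faithfulness of Yoneda, cocompleteness of $\PSh(\mathcal E)$, and the universal property of $y_{\mathcal E}$ among cocomplete targets.

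\emph{Full faithfulness.} For $e,e'\in\mathcal E$, the Yoneda lemma gives a bijection
\[
\Hom_{\PSh(\mathcal E)}(y_{\mathcal E}e,y_{\mathcal E}e')\cong y_{\mathcal E}(e')(e)=\Hom_{\mathcal E}(e,e').
\]
This bijection sends a natural transformation $\theta$ to $\theta_e(\mathrm{id}_e)$. Its inverse is postcomposition, so it is inverse to the action of $y_{\mathcal E}$ on morphisms.

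\emph{Cocompleteness.} Colimits in $[\mathcal E^{\mathrm{op}},\mathbf{Set}]$ are computed objectwise. Since $\mathcal E$ is small and $\mathbf{Set}$ is cocomplete, for any small diagram $D$ the presheaf
\[
x\mapsto\colim_i D_i(x)
\]
is a presheaf with the evident functorial action. It is a colimit because natural transformations out of it are determined componentwise.

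\emph{Universal property.} Write $\mathcal C$ for the cocomplete target. Define the inverse of restriction $\Phi\mapsto\Phi\circ y_{\mathcal E}$ as follows. For $F:\mathcal E\to\mathcal C$, set
\[
\Lan_yF(X):=\colim_{(x,s)\in\int_{\mathcal E}X}F(x).
\]
This colimit exists because $\int_{\mathcal E}X$ is small. There are three steps.

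First, show the density formula: every presheaf is canonically the colimit of representables,
\[
X\cong\colim_{(x,s)\in\int X}y_{\mathcal E}(x).
\]
Via Yoneda, a cocone from this diagram to $Z$ is a compatible family $s\mapsto z_s\in Z(x)$, that is, a natural map $X\to Z$.

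Second, show that $\Lan_yF$ is a left adjoint, with right adjoint
\[
c\mapsto\Hom_{\mathcal C}(F(-),c).
\]
A cocone from the diagram $F\circ\pi:\int X\to\mathcal C$ to $c$ is exactly a natural transformation $X\to\Hom_{\mathcal C}(F-,c)$. Hence $\Lan_yF$ is cocontinuous.

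Third, $\Lan_yF\circ y_{\mathcal E}\cong F$, because $\int y(e)$ has terminal object $(e,\mathrm{id}_e)$.

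For essential surjectivity and full faithfulness of restriction, let $\Phi$ be cocontinuous. Applying $\Phi$ to the density formula gives
\[
\Phi\cong\Lan_y(\Phi y_{\mathcal E}),
\]
naturally in $\Phi$. Moreover, a natural transformation $\Phi\Rightarrow\Psi$ between cocontinuous functors is determined by its restriction along $y_{\mathcal E}$, and any transformation $\Phi y\Rightarrow\Psi y$ extends uniquely by the universal property of the colimits $\colim_{\int X}$. Therefore restriction is fully faithful and essentially surjective, hence an equivalence.

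The main obstacle is the bookkeeping of naturality. The isomorphism $\Phi\cong\Lan_y(\Phi y)$ must be natural both in $X$ and in $\Phi$. The plan is to handle this entirely through the density formula: establish it with its canonical cocone once, and then derive every comparison from uniqueness of maps out of colimits. Alternatively, this step can be taken as the standard free-cocompletion theorem cited from \cite{maclane,maclane-moerdijk}.
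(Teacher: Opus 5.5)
Your proposal is correct and follows the same route as the paper: extend $F$ by the left Kan extension $\colim_{\int X}F$, then use density of representables to show that a cocontinuous functor is recovered, up to isomorphism, from its restriction along $y_{\mathcal E}$. You also correctly supply steps the paper only asserts: objectwise cocompleteness, cocontinuity of $\Lan_yF$ via its right adjoint $c\mapsto\Hom_{\mathcal C}(F(-),c)$, and full faithfulness of restriction on natural transformations.
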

\begin{proof}
Full faithfulness is Yoneda. A functor $F:\mathcal E\to\mathcal C$ extends by left Kan extension,
\[
\overline F(X)=\colim_{(e,x)\in\int X}F(e),
\]
which preserves colimits and satisfies $\overline F y\cong F$. Conversely, every presheaf is the colimit of its category of elements, so a cocontinuous functor is determined by its values on representables.
\end{proof}

\begin{proposition}[Projective probes]
Each representable $y(e)$ is projective with respect to pointwise surjective natural transformations, and the representables form a strong generating family of $\PSh(\mathcal E)$.
\end{proposition}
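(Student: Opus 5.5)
The proof splits into two independent claims, and both reduce to the Yoneda lemma and the pointwise description of $\PSh(\mathcal E)=[\mathcal E^{\mathrm{op}},\mathbf{Set}]$.

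\emph{Projectivity.} Let $\phi:X\to Y$ be a natural transformation with every component $\phi_x$ surjective, and let $g:y(e)\to Y$. By Yoneda, $g$ corresponds to an element $g_e(\mathrm{id}_e)\in Y(e)$. Since $\phi_e$ is surjective, I pick $\xi\in X(e)$ with $\phi_e(\xi)=g_e(\mathrm{id}_e)$. Let $\tilde g:y(e)\to X$ be the map corresponding to $\xi$ under Yoneda. Naturality of the Yoneda bijection in the presheaf variable gives $\phi\circ\tilde g\leftrightarrow\phi_e(\xi)=g_e(\mathrm{id}_e)$, hence $\phi\tilde g=g$. Only one element is chosen, so no choice principle beyond the surjectivity itself is needed.

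\emph{Strong generation.} I must show that $\{y(e)\}$ is jointly detecting in the following sense. If $m:X\to Y$ is a monomorphism and every map $y(e)\to Y$ factors through $m$, then $m$ is an isomorphism. I would also record the weaker separating property, since some authors include it in the definition. Monomorphisms in $\PSh(\mathcal E)$ are the componentwise injective transformations, because limits are computed pointwise. Via Yoneda, maps $y(e)\to Y$ are the elements of $Y(e)$, and a factorization through $m$ is an element of $X(e)$ mapping to the given one. So the factorization hypothesis says exactly that each $m_e$ is surjective. Combined with injectivity, each $m_e$ is bijective, and a pointwise bijective natural transformation is an isomorphism since inverses are automatically natural. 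For separation, if $f,g:X\to Y$ agree after precomposition with every $y(e)\to X$, then $f_e(\xi)=g_e(\xi)$ for every element $\xi$, so $f=g$. Alternatively, strong generation follows from the density already used in the free-cocompletion theorem, namely that every presheaf is a colimit of representables indexed by $\int X$. The elementwise argument is more direct, though.

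\emph{Main obstacle.} The only delicate point is conventions: whether ``strong generating family'' is taken in the extremal-monomorphism form or the epimorphic-family form. I would handle the second form by noting that the canonical map $\coprod_{(e,\xi)\in\int X}y(e)\to X$ is componentwise surjective, hence an epimorphism, indeed a regular one in a topos. This covers whichever definition is intended.
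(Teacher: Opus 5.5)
Your proof is correct and follows the paper's route: Yoneda identifies maps $y(e)\to X$ with elements of $X(e)$, which gives both the lifting against pointwise surjections and the elementwise separation argument. The only difference is the form of strength you check. The paper uses joint conservativity: if every $\Hom(y(e),f)$ is bijective, then each $f_e$ is bijective, so $f$ is an isomorphism. You check the extremal-monomorphism form and the strong-epimorphic-cover form instead, and both reduce to the same componentwise argument.
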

\begin{proof}
Yoneda identifies $\Hom(y(e),X)\to\Hom(y(e),Y)$ with $X(e)\to Y(e)$. If $f,g:X\to Y$ differ, they differ on some $x\in X(e)$, corresponding to a map $y(e)\to X$ that separates them.  If $\Hom(y(e),f)$ is bijective for every $e$, then every component $f_e$ is bijective, so $f$ is an isomorphism; thus the family is strong.
\end{proof}
\section{Fiberwise presheaf completion}
Let $\mathcal B$ be a category and let $F:\mathcal B^{\mathrm{op}}\to\mathbf{Cat}$ be a pseudofunctor with small fibers, or work in a fixed universe enlargement. Its Grothendieck construction is a fibration over $\mathcal B$ with fiber $F(b)$.

\begin{construction}[Fiberwise code completion]
Define
\[
\widehat F(b):=\PSh(F(b)).
\]
For $u:b'\to b$, reindexing $F(u):F(b)\to F(b')$ induces
\[
\widehat F(u):=F(u)_!:=\Lan_{F(u)^{\mathrm{op}}}:\PSh(F(b))\to\PSh(F(b')).
\]
Canonical comparison isomorphisms for iterated left Kan extensions make $\widehat F$ a pseudofunctor. Co-Yoneda gives a pseudonatural transformation $y_F:F\Rightarrow\widehat F$ with
\[
F(u)_!y_{F(b)}(e)\cong y_{F(b')}(F(u)e).
\]
\end{construction}

Let $\mathbf{Cocont}$ denote the 2-category of cocomplete categories, cocontinuous functors, and natural transformations.

\begin{theorem}[Fiberwise free cocompletion]\label{thm:fiberwise}
For a pseudofunctor $G:\mathcal B^{\mathrm{op}}\to\mathbf{Cocont}$, restriction along $y_F$ induces an equivalence between cocontinuous pseudonatural transformations $\widehat F\Rightarrow G$ and arbitrary pseudonatural transformations $F\Rightarrow UG$, including the corresponding modification categories.
\end{theorem}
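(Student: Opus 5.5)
The plan is to reduce the statement to the fiberwise application of the free cocompletion theorem, and then check that the resulting pointwise equivalences assemble coherently over $\mathcal B$. Given a pseudonatural transformation $\phi:F\Rightarrow UG$, with components $\phi_b:F(b)\to G(b)$ and invertible 2-cells $\phi_u:G(u)\phi_b\cong\phi_{b'}F(u)$, I would define $\overline\phi_b:=\Lan_{y_{F(b)}}\phi_b:\PSh(F(b))\to G(b)$. By the free cocompletion theorem this is cocontinuous and satisfies $\overline\phi_b\,y_{F(b)}\cong\phi_b$; this holds because each $G(b)$ is cocomplete. Conversely, restriction sends a cocontinuous pseudonatural $\psi:\widehat F\Rightarrow G$ to $\psi\circ y_F$, composing the pseudonaturality cells of $\psi$ with those of $y_F$ from the Construction.

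The main step is to build the pseudonaturality 2-cells of $\overline\phi$. For $u:b'\to b$ we need a comparison
\[
G(u)\,\overline\phi_b\;\cong\;\overline\phi_{b'}\,F(u)_!
\]
of functors $\PSh(F(b))\to G(b')$. Both sides are cocontinuous. The left side is cocontinuous since $G(u)$ is a morphism of $\mathbf{Cocont}$. The right side is cocontinuous since $F(u)_!$ is a left adjoint, namely to restriction along $F(u)^{\mathrm{op}}$. By the uniqueness half of the free cocompletion theorem, it therefore suffices to give a natural isomorphism after precomposing with $y_{F(b)}$. That isomorphism is the composite
\[
G(u)\overline\phi_b y_{F(b)}\cong G(u)\phi_b\overset{\phi_u}{\cong}\phi_{b'}F(u)\cong\overline\phi_{b'}y_{F(b')}F(u)\cong\overline\phi_{b'}F(u)_!y_{F(b)},
\]
where the last isomorphism uses the co-Yoneda cell of $y_F$. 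Since the equivalence $\operatorname{Fun}_{\mathrm{cocont}}(\PSh(F(b)),G(b'))\simeq\operatorname{Fun}(F(b),G(b'))$ is fully faithful on 2-cells, this composite lifts uniquely to the required comparison.

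The expected obstacle is the coherence axioms: compatibility with the composition constraints of $F$, $\widehat F$ and $G$, and with the units. I would handle them uniformly by the same uniqueness argument. Each axiom is an equation between 2-cells of cocontinuous functors out of some $\PSh(F(b))$. By full faithfulness, such an equation may be checked after whiskering with $y_{F(b)}$. After whiskering it becomes the corresponding axiom for $\phi$, pasted with the pseudonaturality axioms of $y_F$, and the latter hold by the canonical Kan-extension comparisons. The unit axiom is treated the same way.

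For modifications, a modification $\overline\phi\Rrightarrow\overline\phi'$ is a family of transformations $\overline\phi_b\Rightarrow\overline\phi'_b$ satisfying a compatibility condition. These families correspond bijectively, by fiberwise full faithfulness, to families $\phi_b\Rightarrow\phi'_b$. The compatibility condition again transfers across whiskering with $y_F$. This shows that restriction is fully faithful on modification categories. Essential surjectivity follows from the construction of $\overline\phi$ together with $\overline\phi\,y_F\cong\phi$, which is an invertible modification, so together these give the claimed equivalence. Size issues are handled by the hypothesis that the fibers are small or that a universe enlargement is fixed, so that each $\PSh(F(b))$ is defined and the free cocompletion theorem applies fiberwise.
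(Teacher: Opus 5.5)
Your proposal is correct and follows essentially the same route as the paper. In both, each component is extended by the free cocompletion theorem, and the pseudonaturality cells come from comparing the cocontinuous functors $G(u)\overline\phi_b$ and $\overline\phi_{b'}F(u)_!$ on representables via $\phi_u$ and co-Yoneda, with density of representables giving uniqueness and coherence; you simply spell out the coherence, modification, and essential-surjectivity checks that the paper compresses into a single sentence.
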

\begin{proof}
Extend each component $F(b)\to G(b)$ by the free cocompletion theorem. For $u:b'\to b$, compare the cocontinuous functors
\[
G(u)\bar\alpha_b,\qquad \bar\alpha_{b'}F(u)_!: \PSh(F(b))\to G(b').
\]
They agree on representables by pseudonaturality of $\alpha$ and co-Yoneda. Density of representables extends the comparison uniquely to all presheaves and forces the coherence laws. Restriction and extension are inverse up to equivalence.
\end{proof}

\begin{corollary}[Representable quotation and transport]
The Grothendieck construction of $\widehat F$ is a fibration of cocomplete code categories over $\mathcal B$, and $y_F$ induces a cartesian functor from the original proof fibration. Yoneda represents each proof separately in its fiber, and reindexing carries representables to representables.
\end{corollary}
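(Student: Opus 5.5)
The corollary has three parts: the Grothendieck construction $\int\widehat F\to\mathcal B$ is a fibration whose fibers $\PSh(F(b))$ are cocomplete; $y_F$ induces a cartesian functor $\int F\to\int\widehat F$ over $\mathcal B$; and this functor sends each proof to a representable that is carried to representables by reindexing. I would prove them in that order, using only the construction of $\widehat F$ and standard facts about Grothendieck constructions.

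\emph{Fibration.} By the construction preceding \cref{thm:fiberwise}, $\widehat F$ is a pseudofunctor $\mathcal B^{\mathrm{op}}\to\mathbf{Cat}$. Each value $\PSh(F(b))$ is cocomplete because $F(b)$ is small, or small relative to the fixed universe enlargement. Each reindexing functor $F(u)_!$ is a left Kan extension, hence a left adjoint to restriction, hence cocontinuous. So $\widehat F$ lands in $\mathbf{Cocont}$. The Grothendieck construction of any pseudofunctor into $\mathbf{Cat}$ is a cloven fibration with fibers its values, where the cartesian lift of $u:b'\to b$ at $X$ is the arrow $(u,\mathrm{id}):(b',F(u)_!X)\to(b,X)$. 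This gives the first claim.

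\emph{Cartesian functor.} The pseudonatural transformation $y_F:F\Rightarrow\widehat F$ induces a functor $\int y_F:\int F\to\int\widehat F$ over $\mathcal B$. On objects it is $(b,e)\mapsto(b,y_{F(b)}(e))$. An arrow $(u,f):(b',e')\to(b,e)$, with $f:e'\to F(u)e$, is sent to $(u,\phi)$, where $\phi$ is the composite
\[
y(e')\xrightarrow{y(f)} y(F(u)e)\xrightarrow{\;\sim\;} F(u)_!y(e),
\]
and the second map is the co-Yoneda comparison. Functoriality follows from the coherence of the pseudonaturality constraints of $y_F$. For cartesianness, recall that in a Grothendieck construction an arrow $(u,f)$ is cartesian exactly when its fiber component $f$ is invertible. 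A cartesian arrow of $\int F$ therefore has $f$ invertible. Then $\phi$ is a composite of the isomorphism $y(f)$ with an isomorphism, so $\phi$ is invertible and the image arrow is cartesian.

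\emph{Representables.} The last sentence of the corollary is immediate: each $e$ goes to the representable $y_{F(b)}(e)$ in its own fiber, and the co-Yoneda isomorphism $F(u)_!y(e)\cong y(F(u)e)$ says reindexing carries representables to representables.

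The main obstacle is the coherence check that $\int y_F$ respects composition. I would take the pseudonaturality axioms of the co-Yoneda comparison from the construction as given. Then this check reduces to the standard fact that a pseudonatural transformation induces a functor between Grothendieck constructions. Because the corollary is stated up to the usual coherence, I would not reprove that fact.
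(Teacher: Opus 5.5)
Your proposal is correct and follows the paper's route. The Grothendieck construction of the pseudofunctor $\widehat F$ is a fibration, and the co-Yoneda isomorphism $F(u)_!y(e)\cong y(F(u)e)$ makes the induced functor $\int y_F$ preserve cartesian arrows. You also spell out details the paper's one-line proof leaves implicit: cocontinuity of $F(u)_!$ as a left adjoint, and the criterion that an arrow is cartesian exactly when its fiber component is invertible.
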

\begin{proof}
The Grothendieck construction of a pseudofunctor is a fibration, and the pseudonaturality isomorphism
$F(u)_!y(e)\cong y(F(u)e)$ shows that $y_F$ preserves the chosen cartesian lifts.
\end{proof}

\begin{corollary}[Representable quotation, aggregate codes, and fiberwise completion]\label{thm:three-levels}
Let $p:\mathcal E\to\mathcal B$ be a functor from a small groupoid to a discrete behavior set, with some nonempty, noncontractible fiber.
\begin{enumerate}[label=(\alph*)]
\item Representable quotation $y_{\mathcal E}$ does not factor through $\mathcal B$.
\item The behaviorwise aggregate $\Lan_p(y_{\mathcal E})(b)$ is the characteristic presheaf of $\mathcal E_b$, and its category of elements is isomorphic to $\mathcal E_b$.
\item Fiberwise presheaf completion embeds each fiber fully faithfully by Yoneda and freely adjoins all presheaves, with cocontinuous reindexing.
\end{enumerate}
Both the representables and the terminal presheaf occur in the fiberwise completion.
\end{corollary}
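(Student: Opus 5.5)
The corollary assembles three earlier results, so the plan is to derive each item from the corresponding theorem and then check the final sentence directly. Fix $b\in\mathcal B$ whose fiber $\mathcal E_b$ is nonempty and noncontractible.

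For (a), I argue by contradiction. Suppose $y_{\mathcal E}$ factored through $\mathcal B$ up to natural isomorphism, say $y_{\mathcal E}\cong Cp$. The Yoneda reification obstruction would then force any two objects with isomorphic behaviors to be isomorphic. Since $\mathcal B$ is discrete, the fiber $\mathcal E_b$ is a full subgroupoid, so all its objects would be isomorphic. Faithfulness of $p$ would make each automorphism group in the fiber trivial, because every automorphism maps to $\mathrm{id}_b$. A nonempty groupoid that is connected and has trivial automorphism groups is contractible. This contradicts the hypothesis on $b$. One subtlety needs care: noncontractibility may arise either from disconnectedness or from nontrivial automorphisms, so both (i) and (ii) of the obstruction theorem are needed. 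That handles every fiber uniformly.

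For (b), I will restate \cref{thm:aggregate}, which applies verbatim since $\mathcal E$ is a small groupoid and $\mathcal B$ is discrete. For (c), I view $p$ as the Grothendieck construction of the pseudofunctor $b\mapsto\mathcal E_b$. Over a discrete base the only reindexings are identities, so this pseudofunctor is strict and has small fibers. The fiberwise completion is then $b\mapsto\PSh(\mathcal E_b)$. Each fiber embeds fully faithfully by Yoneda, and the free-cocompletion theorem gives the universal property. Reindexing is cocontinuous, being left Kan extension along identities (or, in general, $F(u)_!$ is a left adjoint). \Cref{thm:fiberwise} then supplies the pseudonatural packaging.

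For the final sentence, $\PSh(\mathcal E_b)$ contains the representables $y(e)$ for $e\in\mathcal E_b$. It also contains the terminal presheaf, which is exactly the restriction of the characteristic presheaf $\mathcal A(b)$ to $\mathcal E_b$ by (b). The step I expect to be the main obstacle is the precise bookkeeping between the aggregate $\mathcal A(b)\in\PSh(\mathcal E)$ and the terminal object of $\PSh(\mathcal E_b)$. The plan is to identify them through restriction along the inclusion $\mathcal E_b\mono\mathcal E$. This works because $\mathcal E$ is the coproduct of its fibers when $\mathcal B$ is discrete, so $\PSh(\mathcal E)\simeq\prod_b\PSh(\mathcal E_b)$, and under this equivalence $\mathcal A(b)$ corresponds to the terminal presheaf at $b$ and to the empty presheaf elsewhere.
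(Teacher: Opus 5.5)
Your proposal is correct and follows the paper's proof, which cites the Yoneda obstruction for (a), \cref{thm:aggregate} for (b), and \cref{thm:fiberwise} for (c), and observes that $\PSh(\mathcal E_b)$ contains both its representables and its terminal presheaf. In (a) you use both faithfulness and isomorphism-reflection, so that a fiber that is noncontractible only because of nontrivial automorphisms is also excluded; this fills in a detail the paper's one-line citation leaves implicit.
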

\begin{proof}
Part (a) is the Yoneda obstruction, part (b) is \cref{thm:aggregate}, and part (c) is \cref{thm:fiberwise}. The final assertion follows because $\PSh(\mathcal E_b)$ contains both its representables and its terminal presheaf.
\end{proof}
\section{Proof fibers and canonical proofs}

\begin{definition}
A proof realization family over $\mathcal B$ is a functor $p:\mathcal E\to\mathcal B$ whose strict fibers $\mathcal E_b$ are groupoids. Objects of $\mathcal E_b$ are proofs of $b$ and vertical morphisms are proof isomorphisms.
\end{definition}

\begin{definition}
A proof $e\in\mathcal E_b$ is canonical in the fiber if it is terminal in $\mathcal E_b$.
\end{definition}

\begin{proposition}[Canonical proof criterion]
For a groupoid $\mathcal G$, the following are equivalent:
\begin{enumerate}[label=(\roman*)]
\item $\mathcal G$ has a terminal object;
\item $\mathcal G$ is nonempty and has a unique morphism between every two objects;
\item $\mathcal G\to\mathbf 1$ is an equivalence.
\end{enumerate}
\end{proposition}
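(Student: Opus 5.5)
I will prove the cycle of implications (i)$\Rightarrow$(ii)$\Rightarrow$(iii)$\Rightarrow$(i). Throughout, the only structural input is that every morphism of $\mathcal G$ is invertible.

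\emph{(i)$\Rightarrow$(ii).} Let $t$ be terminal in $\mathcal G$. Then $\mathcal G$ is nonempty. For objects $x,y$ there are unique arrows $a:x\to t$ and $b:y\to t$, and $b$ is invertible, so $b^{-1}a:x\to y$ exists. If $f,g:x\to y$, then $bf$ and $bg$ are both arrows $x\to t$. Terminality forces $bf=bg$, and cancelling the isomorphism $b$ gives $f=g$. Hence there is exactly one morphism between any two objects.

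\emph{(ii)$\Rightarrow$(iii).} Let $!:\mathcal G\to\mathbf 1$ be the unique functor, pick an object $x_0$, and let $s:\mathbf 1\to\mathcal G$ select it. Then $!\circ s=\mathrm{id}_{\mathbf 1}$. For $s\circ!\cong\mathrm{id}_{\mathcal G}$, take as component at $x$ the unique arrow $x\to x_0$. Naturality squares commute automatically because parallel arrows coincide. Alternatively, $!$ is fully faithful, since each hom-set is a singleton exactly as $\Hom_{\mathbf 1}(*,*)$ is, and it is essentially surjective because $\mathcal G\neq\varnothing$.

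\emph{(iii)$\Rightarrow$(i).} An equivalence is fully faithful and essentially surjective. Essential surjectivity onto $\mathbf 1$ means $\mathcal G$ is nonempty; pick any object $t$. Full faithfulness means $\Hom_{\mathcal G}(x,t)\to\Hom_{\mathbf 1}(*,*)$ is a bijection, so each $\Hom_{\mathcal G}(x,t)$ is a singleton. Thus $t$ is terminal.

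None of these steps presents a real obstacle. The only point requiring care is in (i)$\Rightarrow$(ii), where existence of arrows $x\to y$ genuinely uses invertibility; for a general category, a terminal object does not give arrows between arbitrary objects. That is where the groupoid hypothesis enters, and I will flag it explicitly there.
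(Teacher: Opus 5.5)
Your proof is correct and follows essentially the same argument as the paper: invertibility of the arrows into the terminal object gives existence and uniqueness of arrows between any two objects, and (ii)$\Leftrightarrow$(iii) is the fully-faithful-plus-essentially-surjective criterion for $\mathcal G\to\mathbf 1$. The only difference is organizational. You run the cycle (i)$\Rightarrow$(ii)$\Rightarrow$(iii)$\Rightarrow$(i), reading terminality in (iii)$\Rightarrow$(i) directly off full faithfulness and giving an explicit pseudo-inverse for (ii)$\Rightarrow$(iii). The paper instead proves (i)$\Leftrightarrow$(ii) and then (ii)$\Leftrightarrow$(iii).
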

\begin{proof}
If $t$ is terminal, every object has a unique arrow to $t$, hence by invertibility a unique arrow from $t$. Any arrow $x\to y$ is forced to be $x\to t\to y$.  Conversely, under condition~(ii), choose an object $t$; the unique arrows $x\to t$ make it terminal.  A functor from $\mathcal G$ to the terminal groupoid is an equivalence exactly when it is essentially surjective and fully faithful, which here is condition~(ii).
\end{proof}

\begin{corollary}
A behavior has a canonical proof if and only if its proof groupoid is contractible. Connectedness alone gives uniqueness only up to nonunique isomorphism; nontrivial automorphisms obstruct canonicity.
\end{corollary}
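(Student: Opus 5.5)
This is a direct translation of the preceding proposition, applied to the fiber groupoid $\mathcal G=\mathcal E_b$ of the proof realization family $p:\mathcal E\to\mathcal B$. By definition, a behavior $b$ has a canonical proof exactly when $\mathcal E_b$ has a terminal object. I will read ``contractible'' as ``equivalent to the terminal groupoid $\mathbf 1$'', the same reading used in the representability criterion after \cref{thm:aggregate}. The equivalence (i)$\Leftrightarrow$(iii) of the canonical proof criterion then gives the first sentence at once.

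For the second sentence, I will compare connectedness with condition (ii). A connected groupoid is nonempty and has at least one morphism between any two objects, so any two proofs of $b$ are isomorphic. Condition (ii), however, requires exactly one morphism between any two objects. Uniqueness of the comparison isomorphism is what fails in general.

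To show that nontrivial automorphisms are precisely the obstruction, take objects $x,y$ in a connected groupoid and fix $f:x\to y$. Composition with $f$ gives a bijection $\Aut(x)\to\Hom(x,y)$, $g\mapsto fg$. Hence $\Hom(x,y)$ is a singleton exactly when $\Aut(x)$ is trivial. It follows that a connected groupoid satisfies (ii) if and only if every automorphism group is trivial, and by connectedness it suffices to check this at a single object.

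So if some proof of $b$ has a nonidentity automorphism, (ii) fails and $b$ has no canonical proof. An example is one object with automorphism group $\mathbb Z/2$: it is connected but has no terminal object. The conclusion is that a connected fiber determines its proofs only up to non-unique isomorphism.

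I expect no step to be a real obstacle; the only care needed is to state plainly which notion of ``contractible'' is being used.
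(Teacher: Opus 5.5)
Your proposal is correct and takes the paper's route: the paper's proof simply applies the equivalence of conditions (i)--(iii) of the canonical proof criterion to the proof fiber $\mathcal E_b$. Your additional bijection $\Aut(x)\to\Hom(x,y)$, $g\mapsto fg$, spells out why connectedness plus trivial automorphisms is exactly condition (ii), a point the paper leaves implicit.
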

\begin{proof}
Apply the equivalence of the three conditions in the preceding proposition to the proof fiber.
\end{proof}
The invariants $\pi_0(\mathcal E_b)$ and $\Aut_{\mathcal E_b}(e)$ distinguish proof-identity classes from symmetries of one proof.
\section{Transport as a fibration}

\begin{definition}
A functor $p:\mathcal E\to\mathcal B$ is a Grothendieck fibration if for every $u:b'\to p(e)$ there is a cartesian arrow $\bar u:e'\to e$ over $u$.
\end{definition}
A cartesian lift is a proof transported along $u$. It is unique up to unique vertical isomorphism, so transport is not a literal function until a cleavage is chosen.

\begin{proposition}[Uniqueness of transport]
If $\bar u:e_1\to e$ and $\bar u':e_2\to e$ are cartesian lifts of the same base arrow, there is a unique vertical isomorphism $\theta:e_1\to e_2$ with $\bar u'\theta=\bar u$.
\end{proposition}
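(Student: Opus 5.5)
The argument uses only the universal property of cartesian arrows, applied twice, together with the fact that both lifts lie over the same base arrow $u:b'\to b$, where $b=p(e)$.

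\emph{Existence.} Apply the cartesian property of $\bar u'$ to $\bar u$. The arrow $\bar u:e_1\to e$ lies over $u$, and $u$ factors as $u=u\circ\mathrm{id}_{b'}$. Cartesianness of $\bar u'$ therefore gives a unique $\theta:e_1\to e_2$ with $p(\theta)=\mathrm{id}_{b'}$ and $\bar u'\theta=\bar u$. This $\theta$ is vertical by construction.

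\emph{Inverse.} Symmetrically, cartesianness of $\bar u$ gives a unique vertical $\theta':e_2\to e_1$ with $\bar u\theta'=\bar u'$.

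\emph{Invertibility and uniqueness.} The composite $\theta'\theta:e_1\to e_1$ lies over $\mathrm{id}_{b'}$ and satisfies $\bar u\,\theta'\theta=\bar u'\theta=\bar u$. The identity of $e_1$ satisfies the same two conditions. The uniqueness clause of cartesianness for $\bar u$, applied to the factorization of $\bar u$ through itself over $\mathrm{id}_{b'}$, then forces $\theta'\theta=\mathrm{id}_{e_1}$. The same argument gives $\theta\theta'=\mathrm{id}_{e_2}$, so $\theta$ is an isomorphism. Uniqueness of $\theta$ is exactly the uniqueness clause used in the existence step.

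The only point requiring care is the form of cartesianness in the definition. The paper says only that a ``cartesian arrow'' exists, without stating the universal property, so I would read it in the standard Grothendieck sense: $\bar u:e'\to e$ is cartesian if for every $g:e''\to e$ and every $w$ with $p(g)=p(\bar u)\,w$, there is a unique $h$ with $p(h)=w$ and $\bar u h=g$. Under the weaker prefibration convention, where only vertical factorizations are required, the proof is unchanged, because every factorization above is taken over $w=\mathrm{id}_{b'}$. So the only step needing attention is to state the definition explicitly, after which the argument is routine.
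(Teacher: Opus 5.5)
Your proof is correct and follows the same route as the paper. You apply cartesianness of $\bar u'$ to $\bar u$ to obtain the vertical $\theta$, reverse the roles to get $\theta'$, and use the uniqueness clause to force both composites to be identities. Your observation that only vertical factorizations over $\mathrm{id}_{b'}$ are used, so the argument also holds under the weaker prefibration convention, is an accurate clarification that the paper leaves implicit.
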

\begin{proof}
Cartesianness of $\bar u'$ applied to $\bar u$ gives a unique vertical $\theta$; reversing the roles gives $\psi$. Uniqueness in the universal property yields $\psi\theta=1$ and $\theta\psi=1$.
\end{proof}
\section{Monodromy and coherent proof families}
For fibrations and cleavages, see \cite{streicher,maclane}.
Assume $\mathcal B$ is a connected groupoid and $p:\mathcal E\to\mathcal B$ a cloven fibration in groupoids. A cleavage determines a pseudofunctor $F:\mathcal B^{\mathrm{op}}\to\mathbf{Gpd}$. A loop $\gamma:b\to b$ induces an autoequivalence $\gamma^*:\mathcal E_b\to\mathcal E_b$.

\begin{definition}[Homotopy fixed point]
A homotopy fixed point is an object $e\in\mathcal E_b$ with isomorphisms $\theta_\gamma:\gamma^*e\iso e$ for all loops $\gamma$, satisfying unit and multiplication coherence.
\end{definition}

\begin{theorem}[Sections as homotopy fixed points]
The groupoid of cartesian sections of $p$ is equivalent to the homotopy fixed-point groupoid of the monodromy action on $\mathcal E_b$.
\end{theorem}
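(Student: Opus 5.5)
We construct explicit functors in both directions between the groupoid $\Gamma(p)$ of cartesian sections of $p$ and the homotopy fixed-point groupoid $\mathcal E_b^{h\pi}$, where $\pi:=\Aut_{\mathcal B}(b)$ acts on $\mathcal E_b$ through the cleavage. Since $p$ is a fibration in groupoids, every arrow of $\mathcal E$ is cartesian. Hence a cartesian section is simply a functor $s:\mathcal B\to\mathcal E$ with $ps=\mathrm{id}$, and a morphism of sections is a vertical natural transformation. Because $\mathcal B$ is a connected groupoid, we fix the base object $b$ and, for each $c\in\mathcal B$, an arrow $\lambda_c:b\to c$, with $\lambda_b=1_b$. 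These choices exhibit the inclusion $\pi\hookrightarrow\mathcal B$ of the one-object groupoid as an equivalence. The plan has three steps: reduce to the one-object groupoid $\pi$, identify sections over $\pi$ with homotopy fixed points, and check that the identification is functorial on morphisms.

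\emph{Restriction.} For a section $s$, put $e:=s(b)\in\mathcal E_b$. For a loop $\gamma$, the arrow $s(\gamma):e\to e$ lies over $\gamma$. By the universal property of the chosen cartesian lift $\bar\gamma:\gamma^*e\to e$, the arrow $s(\gamma)$ factors uniquely as $\bar\gamma\circ\theta'_\gamma$ with $\theta'_\gamma$ vertical. Since the lift runs $\gamma^*e\to e$ and $s(\gamma)$ runs $e\to e$, the vertical comparison is $\theta'_\gamma:e\to\gamma^*e$, an isomorphism in the groupoid fiber. We set $\theta_\gamma:=(\theta'_\gamma)^{-1}:\gamma^*e\iso e$. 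Functoriality of $s$ on composites $\gamma\delta$, combined with the cleavage coherence isomorphisms $(\gamma\delta)^*\cong\delta^*\gamma^*$ of the pseudofunctor $F$, yields the multiplication coherence; $s(1_b)=1$ together with the unit constraint yields the unit coherence. A vertical transformation $\eta:s\Rightarrow s'$ restricts to $\eta_b$, and naturality at the loops shows that $\eta_b$ commutes with the $\theta$'s. This defines a functor $\Gamma(p)\to\mathcal E_b^{h\pi}$.

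\emph{Inverse.} Conversely, from a homotopy fixed point $(e,\theta)$ we build a section $s$ on all of $\mathcal B$. On objects, set $s(c):=$ the codomain of an arbitrary lift of $\lambda_c$ starting at $e$; concretely, use the cleavage for $\lambda_c^{-1}$ and invert the resulting cartesian arrow. On an arrow $f:c\to c'$, the composite $\lambda_{c'}^{-1}f\lambda_c$ is a loop $\gamma$ at $b$. We define $s(f)$ by conjugating the arrow $\bar\gamma\circ\theta_\gamma^{-1}$ with the chosen lifts of $\lambda_c$ and $\lambda_{c'}$. The coherence of $\theta$ then gives $s(gf)=s(g)s(f)$ and $s(1)=1$. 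On the full subgroupoid on $b$, this reproduces $(e,\theta)$ exactly. Conversely, for a section $s$, the arrows $s(\lambda_c)$ assemble into a vertical natural isomorphism from $s$ to the reconstruction of its restriction. Hence the two functors are mutually inverse up to natural isomorphism. Full faithfulness can alternatively be checked directly: a vertical transformation is determined by its component at $b$, by naturality along the $\lambda_c$, and every compatible component at $b$ extends.

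\emph{Main obstacle.} We expect the main difficulty to be the bookkeeping of the cleavage coherence isomorphisms in the multiplication law. The cleavage makes $F$ a pseudofunctor, not a functor, so $\theta_{\gamma\delta}$ must be compared with $\theta_\gamma\circ\gamma^*(\theta_\delta)$ only after inserting $(\gamma\delta)^*e\cong\delta^*\gamma^*e$; note also the variance that the contravariant indexing imposes. We would handle this by avoiding explicit cleavage formulas wherever possible. Each identity is verified by composing with the relevant cartesian arrow, which reduces it to the equation $s(\gamma\delta)=s(\gamma)s(\delta)$ in $\mathcal E$; uniqueness of vertical factorizations through cartesian arrows, as in the uniqueness-of-transport proposition, then forces the coherence.
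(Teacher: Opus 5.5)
Your proposal is correct and follows the paper's proof. Your $\lambda_c$ is the inverse of the paper's $\tau_c\colon c\to b$, so your $s(c)=(\lambda_c^{-1})^*e$ and your conjugation along the loop $\lambda_{c'}^{-1}f\lambda_c$ are exactly the paper's $s(c)=\tau_c^*e$ and its loop $\tau_d u\tau_c^{-1}$. You give more detail than the paper does: the coherence check via unique vertical factorization, the comparison isomorphism built from the $s(\lambda_c)$, and full faithfulness. The one nit is that ``reproduces $(e,\theta)$ exactly'' needs either a normalized cleavage or the identity lift at $b$; your ``arbitrary lift'' already permits the latter.
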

\begin{proof}
A cartesian section supplies $e=s(b)$ and coherent isomorphisms $\gamma^*e\iso e$. Conversely, choose for each $c$ an arrow $\tau_c:c\to b$ with $\tau_b=1$. Given $(e,\theta)$, set $s(c)=\tau_c^*e$. For $u:c\to d$, the loop $\tau_du\tau_c^{-1}$ and $\theta$ produce the required cartesian comparison. Coherence gives functoriality, and different choices of $\tau_c$ yield isomorphic sections.
\end{proof}

\begin{corollary}[Discrete-fiber obstruction]
If $\mathcal E_b$ is a discrete set $P$ and $G=\Aut_{\mathcal B}(b)$ acts on $P$, a cartesian section exists iff $P$ has a $G$-fixed point.
\end{corollary}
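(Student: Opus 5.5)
The plan is to deduce the corollary from the preceding theorem on sections as homotopy fixed points, by specializing that theorem to a discrete fiber.

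\emph{Step 1: the monodromy action becomes a set action.} When $\mathcal E_b$ is the discrete set $P$, every autoequivalence $\gamma^*:P\to P$ is a bijection. Isomorphic objects of a discrete category are equal, and the pseudofunctoriality constraints $(\gamma\delta)^*\iso\delta^*\gamma^*$ are isomorphisms between functors into $P$, so they are identities. Thus the monodromy is an honest action of $G=\Aut_{\mathcal B}(b)$ on $P$. I would take this to be the action meant in the statement; a left versus right convention only changes $\gamma$ into $\gamma^{-1}$ and leaves the fixed points unchanged.

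\emph{Step 2: identify the homotopy fixed points.} Since $P$ is discrete, an isomorphism $\theta_\gamma:\gamma^*e\iso e$ exists exactly when $\gamma^*e=e$, and it is then the identity. So $\theta$ is uniquely determined, and the unit and multiplication coherence conditions hold automatically, because all the morphisms involved are identities. Morphisms between homotopy fixed points are vertical isomorphisms $e\to e'$ in $P$, which again forces $e=e'$. Hence the homotopy fixed-point groupoid is the discrete set $P^G$.

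\emph{Step 3: conclude.} By the theorem, the groupoid of cartesian sections is equivalent to $P^G$. An equivalence of groupoids preserves nonemptiness, so a cartesian section exists if and only if $P^G\neq\varnothing$.

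\emph{Main obstacle.} Two points need care. First, I must check that the equivalence in the theorem really does not depend on the cleavage or on the chosen arrows $\tau_c$; the theorem asserts only that different choices give isomorphic sections, and that is enough for the existence question. Second, I must confirm that a pseudo-action on a discrete category is strict, which is the argument in Step 1. To keep the result independent of subtleties in the theorem, I would also give a direct argument. Given a cartesian section $s$, the cartesian lift of a loop $\gamma$ with codomain $s(b)$ has domain $\gamma^*s(b)$. The arrow $s(\gamma):s(b)\to s(b)$ is another cartesian arrow over $\gamma$ with codomain $s(b)$, so by uniqueness of transport $\gamma^*s(b)\iso s(b)$ vertically, and discreteness gives $\gamma^*s(b)=s(b)$. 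Hence $s(b)$ is a fixed point. For the converse, I would use the construction $s(c)=\tau_c^*e$ from the proof of the theorem.
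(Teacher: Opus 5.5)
Your proposal is correct and takes the same approach as the paper. The paper's proof notes that for a discrete fiber a homotopy fixed point is exactly an element of $P$ fixed by every member of $G$, and then applies the sections-as-homotopy-fixed-points theorem; your Steps 1--3 spell this out (strictness of the pseudo-action, uniqueness of $\theta_\gamma$, discreteness of the fixed-point groupoid), and your direct argument is an optional independent check rather than a needed step.
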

\begin{proof}
For a discrete fiber, a homotopy fixed point is precisely an element fixed by every member of $G$; now apply the preceding theorem.
\end{proof}

\begin{example}[A locally nonempty family without a coherent proof]
Let $\mathcal B=BC_2$ and let the fiber be the discrete set $\{p_0,p_1\}$, swapped by the nontrivial group element. Every fiber is nonempty, but no fixed point and hence no cartesian section exists. Pulling back along the universal principal $C_2$-cover, equivalently along the trivial subgroup, removes the monodromy and yields two sections.
\end{example}
\section{Local proof fragments and descent}
For stacks and descent, see \cite{maclane-moerdijk,coquand-mannaa-ruch}.
Let $(\mathcal B,J)$ be a site presented by a pretopology and admitting the fiber products needed to form double and triple overlaps, and let $p:\mathcal E\to\mathcal B$ be fibered in groupoids. For a cover $\{U_i\to U\}$, the descent groupoid consists of local objects $e_i\in\mathcal E_{U_i}$ and transition isomorphisms on $U_i\times_UU_j$ satisfying the cocycle equation on triple overlaps.

\begin{definition}
The fibration $p$ is a stack if for every cover the restriction functor
\[
\mathcal E_U\to\operatorname{Desc}(\{U_i\to U\},\mathcal E)
\]
is an equivalence of groupoids.
\end{definition}

\begin{proposition}[Local-to-global proof existence]
For a proof stack, a global proof over $U$ exists if and only if there is a descent datum on some cover of $U$.
\end{proposition}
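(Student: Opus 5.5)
The plan is to prove the two directions separately, using the stack condition, i.e.\ that the restriction functor $\mathcal E_U\to\operatorname{Desc}(\{U_i\to U\},\mathcal E)$ is an equivalence of groupoids for every cover. Throughout, a \emph{global proof over $U$} means an object of the fiber $\mathcal E_U$. The statement is read with ``descent datum on some cover'' meaning an object of $\operatorname{Desc}(\{U_i\to U\},\mathcal E)$ for some covering family of $U$ in the pretopology.

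For the forward direction, suppose $e\in\mathcal E_U$. I would use the identity (maximal) cover $\{1_U:U\to U\}$, which every pretopology contains. Alternatively, any chosen cover works by applying the restriction functor to $e$. Concretely, I would choose cartesian lifts $e|_{U_i}$ of the cover maps; these exist because $p$ is fibered in groupoids and hence a Grothendieck fibration. On overlaps I would take the canonical comparison isomorphisms $(e|_{U_i})|_{U_i\times_UU_j}\cong e|_{U_i\times_UU_j}\cong (e|_{U_j})|_{U_i\times_UU_j}$. These are supplied by uniqueness of cartesian lifts up to unique vertical isomorphism, as in the proposition on uniqueness of transport. The cocycle condition on triple overlaps follows from the same uniqueness, since both composites are vertical isomorphisms compatible with cartesian arrows into $e$. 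This direction does not use the stack hypothesis at all.

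For the converse, suppose a descent datum $(e_i,\phi_{ij})$ on a cover $\{U_i\to U\}$ is given. By the stack hypothesis the restriction functor for this cover is an equivalence, in particular essentially surjective. So there is $e\in\mathcal E_U$ whose restriction is isomorphic, in the descent groupoid, to $(e_i,\phi_{ij})$. That $e$ is the required global proof. If the cover were empty, the descent groupoid is trivial, and essential surjectivity still yields an object of $\mathcal E_U$, so no case split is needed.

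I expect the main obstacle to be bookkeeping rather than substance. The restriction functor depends on a choice of cartesian lifts and pullbacks. I would show that different choices give isomorphic descent data, so that essential surjectivity is choice-independent; I would handle this by appealing to uniqueness of cartesian lifts once more. I would also note that the equivalence gives more than existence: the global proof is unique up to isomorphism compatible with the local data, because the restriction functor is fully faithful.
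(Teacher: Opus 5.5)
Your proposal is correct and matches the paper's proof: for the forward direction you restrict a global object along a cover and use the canonical overlap comparisons to get a descent datum, and for the converse you use essential surjectivity of the restriction functor supplied by the stack condition, with full faithfulness giving uniqueness up to compatible isomorphism. Your additional remarks (using the identity cover, the empty cover, and independence of the choice of cartesian lifts) are harmless refinements of the same argument.
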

\begin{proof}
If a global object exists, its restrictions and the canonical overlap comparisons form a descent datum.  Conversely, the stack condition says that the restriction functor to the descent groupoid is essentially surjective, so every descent datum is effective and is represented by a global object.  Full faithfulness gives uniqueness up to a unique compatible isomorphism once the datum is fixed.
\end{proof}

\begin{example}[Cocycle obstruction on triple overlap]
Suppose local fragments are isomorphic to a proof $P$ with automorphism group $G$. On a cover with nonempty triple overlap, choose transition automorphisms $g_{ij}$. They define a descent datum only if
\[
g_{ik}=g_{jk}g_{ij}
\]
on each triple overlap, with the convention determined by pullback order. Violation of this equation obstructs gluing because there is no descent datum. A nontrivial but valid $1$-cocycle need not be an obstruction: in a stack it may glue to a globally twisted object rather than to a chosen trivialization.
\end{example}
\section{Occurrence networks and minimal joint support}

\begin{definition}[Finite acyclic occurrence network]
Such a network $P$ consists of finite sets $V_P$ of fact occurrences and $E_P$ of rule occurrences, ordered input lists $\operatorname{in}(e)$ and output lists $\operatorname{out}(e)$ in $V_P$, and a set $S_P\subseteq V_P$ of source occurrences, subject to:
\begin{enumerate}
\item the directed incidence relation is acyclic;
\item no source occurrence lies in the output of a rule;
\item every nonsource occurrence has at least one producing rule.
\end{enumerate}
\end{definition}
For a finite set $S$, let $\operatorname{Ant}(S)$ be the set of inclusion-antichains in $\mathcal P(S)$.

\begin{definition}[Minimal joint-support antichain]
Define $\Sigma_P(v)\in\operatorname{Ant}(S_P)$ recursively along a topological order. For a source $s$,
\[
\Sigma_P(s)=\{\{s\}\}.
\]
For a nonsource $v$,
\[
\Sigma_P(v)=\minsub\bigcup_{\substack{e\in E_P\\v\in\operatorname{out}(e)}}
\left\{A_1\cup\cdots\cup A_m:
\operatorname{in}(e)=(v_1,\dots,v_m),\ A_i\in\Sigma_P(v_i)\right\}.
\]
For a nullary rule ($m=0$), the empty family has one tuple of choices and its union is $\varnothing$, so the braced family is $\{\varnothing\}$.
\end{definition}

\begin{proposition}[Support correctness]
For every occurrence $v$, $\Sigma_P(v)$ is the antichain of inclusion-minimal source sets from which $v$ can be generated inside $P$.
\end{proposition}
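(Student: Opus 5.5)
The plan is to make precise what ``$v$ can be generated from $A$ inside $P$'' means, and then to prove the statement by induction along the same topological order used to define $\Sigma_P$.

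For $A\subseteq S_P$, let $\operatorname{Cl}_P(A)\subseteq V_P$ be the least set of occurrences that contains every source in $A$ and contains $\operatorname{out}(e)$ whenever it contains every entry of $\operatorname{in}(e)$. Say that $v$ is generated from $A$ when $v\in\operatorname{Cl}_P(A)$. This relation is monotone in $A$, so the set $\mathcal U(v):=\{A\subseteq S_P: v\in\operatorname{Cl}_P(A)\}$ is an up-set in $\mathcal P(S_P)$. Since $S_P$ is finite, the claim is equivalent to
\[
\mathcal U(v)=\uparrow\Sigma_P(v).
\]
Indeed, a finite up-set is the upward closure of its set of minimal elements, and an antichain is recovered from its upward closure by taking minimal elements.

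\textbf{Step 1 (one-step characterization of the closure).} First I will show that
\[
v\in\operatorname{Cl}_P(A)\iff \bigl(v\in S_P\text{ and }v\in A\bigr)\ \text{or}\ \bigl(\exists e\text{ with }v\in\operatorname{out}(e)\text{ and }\operatorname{in}(e)\subseteq\operatorname{Cl}_P(A)\bigr).
\]
To see this, build $\operatorname{Cl}_P(A)$ as the set defined recursively by this clause along the topological order. Acyclicity makes the recursion well founded, and the resulting set is closed and lies inside any closed set containing $A$, so it is the least one. A source occurrence is never in any $\operatorname{out}(e)$, so a source lies in $\operatorname{Cl}_P(A)$ exactly when it lies in $A$. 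Hence $\mathcal U(s)=\{A: s\in A\}=\uparrow\{\{s\}\}$, which is the base case.

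\textbf{Step 2 (inductive step).} Let $v$ be a nonsource, and assume $\mathcal U(v_i)=\uparrow\Sigma_P(v_i)$ for all earlier occurrences. By Step 1,
\[
\mathcal U(v)=\bigcup_{e:\,v\in\operatorname{out}(e)}\ \bigcap_{i}\mathcal U(v_i),
\qquad \operatorname{in}(e)=(v_1,\dots,v_m),
\]
where the empty intersection is all of $\mathcal P(S_P)$. Two elementary lattice facts then finish the step.
\begin{itemize}
\item For finite antichains, $\bigcap_i\uparrow\Sigma_i=\uparrow\{A_1\cup\cdots\cup A_m : A_i\in\Sigma_i\}$. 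One inclusion holds because a set containing each $A_i$ contains their union; the other by choosing, for each $i$, an $A_i\le B$. For $m=0$ both sides equal $\uparrow\{\varnothing\}$.
\item Upward closure commutes with unions, and $\uparrow\minsub\mathcal F=\uparrow\mathcal F$ for every finite family $\mathcal F$.
\end{itemize}
Together these give $\mathcal U(v)=\uparrow\Sigma_P(v)$. The case where $v$ has no producing rule cannot occur, by axiom~3 of the definition of an occurrence network.

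\textbf{Main obstacle.} The delicate point is Step 1: I must justify that the least-fixed-point reading of ``can be generated'' coincides with the recursive, one-rule-at-a-time reading. This is exactly where acyclicity and the ``no source is an output'' axiom are used. Without acyclicity, a cyclic self-justification would break the equality with the recursive definition. Without the source axiom, a source could be produced without being in $A$, and the base case would fail. I will handle both by a well-founded induction on position in the topological order. The rest of the argument is finite combinatorics of antichains.
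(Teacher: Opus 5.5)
Your proof is correct and follows the paper's route. The paper also inducts along a topological order, uses the no-producer axiom for the source case, and handles a nonsource by splitting a generation of \(v\) at a producing rule and, conversely, composing generations of its inputs, before minimizing. Your least-closure reading of ``generated'' and the identity \(\mathcal U(v)=\uparrow\Sigma_P(v)\), via \(\uparrow\min_{\subseteq}\mathcal F=\uparrow\mathcal F\), make precise the paper's one-line ``minimization removes the nonminimal source sets.'' One small correction: Step~1 is less delicate than you suggest, because a least closure is automatically a fixed point of the one-step clause, so acyclicity is really needed for the well-foundedness of the induction in Step~2 rather than for Step~1.
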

\begin{proof}
Induct over a topological order. The source case uses the assumption that sources have no producers. Every derivation of a nonsource ends with a producing rule, and by induction its input subderivations contain supports $A_i$ as in the formula. Conversely, choosing such supports and composing their subderivations with the producing rule derives $v$. Minimization removes the nonminimal source sets.
\end{proof}

\begin{proposition}[Compositional substitution law]
Suppose a source $s$ of $P$ is replaced by an acyclic occurrence network $Q$ with distinguished output $q$.  Take the occurrences of $Q$ to be disjoint from those of $P$ before identifying $q$ with the interface occurrence formerly occupied by $s$.  In the substituted network, replace every occurrence of $s$ in a support of $P$ by a member of $\Sigma_Q(q)$, distribute unions, and remove nonminimal members.  The result is the support antichain of the substituted network.
\end{proposition}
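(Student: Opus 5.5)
I would prove the substitution law by comparing the two sides occurrence by occurrence. The tool is the support correctness proposition: in any finite acyclic occurrence network, $\Sigma$ computes the antichain of inclusion-minimal generating source sets.

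\emph{Setup.} First I check that the substituted network $P[Q/s]$ is again a finite acyclic occurrence network.
\begin{enumerate}
\item Its source set is $(S_P\setminus\{s\})\sqcup S_Q$.
\item Acyclicity holds because $Q$ is acyclic, the occurrences of $Q$ are disjoint from those of $P$, and the only edges between the two parts pass through the single interface occurrence $q$, which now has producers only in $Q$.
\item The two side conditions are preserved. The occurrence $q$ lies in the output of a rule of $Q$ or is a source of $Q$. Either way no new source becomes an output, and every nonsource still has a producer.
\end{enumerate}
Hence the support correctness proposition applies to $P[Q/s]$, to $P$ and to $Q$.

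\emph{Main induction.} I then run an induction along a topological order of $P$, transported into $P[Q/s]$ with $q$ in the position of $s$. For each occurrence $v$ of $P$, let $\Phi(\Sigma_P(v))$ denote the operation in the statement: replace $s$ by a member of $\Sigma_Q(q)$ in every way, take unions, and minimize. The claim is
\[
\Sigma_{P[Q/s]}(v)=\Phi(\Sigma_P(v)).
\]
\begin{enumerate}
\item For $v=s$, i.e.\ $v=q$: $\Sigma_{P[Q/s]}(q)=\Sigma_Q(q)$. This holds because every producer of $q$ and every ancestor of $q$ lies in $Q$. Also $\Phi(\{\{s\}\})=\Sigma_Q(q)$.
\item For any other source of $P$, both sides equal $\{\{v\}\}$.
\item For a nonsource $v$ of $P$, its producing rules are exactly the rules of $P$ producing it. So $\Sigma_{P[Q/s]}(v)$ is given by the recursive formula applied to the inductively known antichains of its inputs.
\end{enumerate}

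\emph{Algebraic lemma (main obstacle).} The nonsource step reduces to showing that $\Phi$ commutes with the operations in the recursion. Formally, $\Phi$ is a map $\operatorname{Ant}(S_P)\to\operatorname{Ant}(S_{P[Q/s]})$, and I need
\[
\Phi\Bigl(\minsub\textstyle\bigcup_e\{A_1\cup\dots\cup A_m\}\Bigr)
=\minsub\textstyle\bigcup_e\{B_1\cup\dots\cup B_m : B_i\in\Phi(\Sigma_P(v_i))\}.
\]
I would first extend $\Phi$ to arbitrary families of subsets, without minimizing. On such families it preserves unions of families, and it sends pointwise unions $\{A\cup A'\}$ to pointwise unions. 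The reason is that $s$ is substituted in every member containing it, and choosing independent replacements for $s$ in $A$ and in $A'$ gives at least the sets obtained from one common replacement. The extra sets are supersets of those common-replacement sets, so they disappear under minimization.

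The remaining point is that minimization may be applied before or after $\Phi$ without changing the result. Write $\min$ for minimization. I would prove $\min\Phi(\min\mathcal F)=\min\Phi(\mathcal F)$ from monotonicity: if $A\subseteq A'$, then every image of $A'$ under $\Phi$ contains some image of $A$. The substitutes are drawn from the same antichain, and if $s\notin A$ the image of $A$ is $A$ itself, which is contained in every image of $A'$.

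This minimization bookkeeping, where choices of supports may duplicate or exceed one another, is the step I expect to require the most care. A cleaner alternative is to argue semantically, directly through the support correctness proposition. The sets from which $v$ can be generated in $P[Q/s]$ are exactly the sets obtained from a generating set $A$ of $v$ in $P$ by replacing $s$, when $s\in A$, with a generating set of $q$ in $Q$. This holds because derivations in $P[Q/s]$ factor as a derivation in $P$ whose uses of $s$ are all discharged by derivations of $q$ in $Q$. Taking minimal elements on both sides then gives the result immediately, by the monotonicity remark above.
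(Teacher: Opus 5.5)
Your proposal is correct and follows the same route as the paper. The paper inducts along a topological order of $P$: the source $s$ yields $\Sigma_Q(q)$, the other sources are unchanged, and the nonsource step reduces to substitution commuting with the unions in the recursive support formula, followed by $\min_{\subseteq}$. Your version is more careful than the paper's one-line claim that substitution commutes with finite unions, since you correctly note that independent replacements of $s$ in different inputs agree with a single common replacement only after minimization (in your semantic aside, ``exactly'' should read ``with the same minimal elements'', because a generating set of $v$ in the substituted network may contain $Q$-sources that do not generate $q$).
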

\begin{proof}
Induct along a topological order of $P$.  At a source distinct from $s$ the support is unchanged, while at the substituted source the possible minimal supports are $\Sigma_Q(q)$.  For a nonsource occurrence, the recursive support formula forms unions of supports for the inputs of each producing rule.  Substitution commutes with these finite unions and with the union over producing rules.  Applying $\min_{\subseteq}$ at each stage removes the supports that cease to be minimal and yields the support antichain of the substituted network.
\end{proof}

\begin{example}[Support does not determine sharing]
Let $P_{\mathrm{sh}}$ contain a source $a$, one derived occurrence $x$, and two rules using $x$ to produce $y,z$. Let $P_{\mathrm{dup}}$ contain two independently produced occurrences $x_1,x_2$ used separately. Then
\[
\Sigma_{P_{\mathrm{sh}}}(y)=\Sigma_{P_{\mathrm{dup}}}(y)=\{\{a\}\},\qquad
\Sigma_{P_{\mathrm{sh}}}(z)=\Sigma_{P_{\mathrm{dup}}}(z)=\{\{a\}\},
\]
but the occurrence structures differ.
\end{example}
\section{Proof realization stacks}

\begin{definition}
A proof realization stack on a site $(\mathcal B,J)$ is a stack in groupoids $p:\mathcal E\to\mathcal B$ whose objects are proof realizations and whose vertical arrows are proof isomorphisms. A structured proof realization stack is additionally equipped with a cartesian morphism of stacks to a stack of occurrence-network structures, so that reindexing and descent isomorphisms preserve the specified occurrence data.
\end{definition}

\begin{proposition}[Reification, fibrations, and descent]
Let $p:\mathcal E\to\mathcal B$ be a category of proof realizations over behaviors. Assume:
\begin{enumerate}[label=(R\arabic*)]
\item some fiber contains two nonisomorphic realizations;
\item representable proof quotation is the Yoneda embedding $y_{\mathcal E}$, and a proposed behavior-level representable code would be a factorization $y_{\mathcal E}\cong Cp$;
\item for every base arrow and proof over its codomain, a transport is specified with the cartesian universal property, and fibers are groupoids;
\item for every cover, restriction to local proofs is an equivalence with the descent groupoid.
\end{enumerate}
Then the proposed behavior-level representable factorization in (R2) does not exist; (R3) makes $p$ a category fibered in groupoids; and (R4) makes it a stack. Behaviorwise presheaf-valued aggregate codes may still exist and do not contradict this conclusion.
\end{proposition}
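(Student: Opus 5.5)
The plan is to treat the three conclusions separately, each as a direct invocation of an earlier result or definition, and then to remark on aggregates.

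\emph{Step 1: the factorization in (R2) does not exist.} By (R1), some fiber $\mathcal E_b$ contains nonisomorphic $e,e'$ with $p(e)=p(e')=b$. A factorization $y_{\mathcal E}\cong Cp$ as in (R2) is exactly a representable behavior-code factorization in the sense of \cref{sec:yoneda-obstruction}. The Yoneda reification obstruction (part (ii)) would then force $e\iso e'$, since $p(e)=p(e')$ is in particular an isomorphism. This contradicts (R1), so the corollary following that theorem applies verbatim. One point to check is smallness of $\mathcal E$, which is needed for $\PSh(\mathcal E)$ and Yoneda. I would state it as inherited from the standing hypothesis of \cref{sec:yoneda-obstruction}, or else pass to a fixed universe enlargement.

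\emph{Step 2: (R3) makes $p$ fibered in groupoids.} The specified transports are cartesian lifts of every base arrow, so $p$ is a Grothendieck fibration by definition. Its fibers are groupoids by hypothesis. A fibration whose fibers are groupoids is what ``category fibered in groupoids'' means. If the stronger convention is intended, namely that every arrow of $\mathcal E$ is cartesian, I would derive it as follows. Factor an arbitrary $f:e\to e''$ as a cartesian lift of $p(f)$ followed by a vertical arrow. That vertical arrow is invertible because fibers are groupoids. Hence $f$ is cartesian, being a composite of a cartesian arrow and an isomorphism.

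\emph{Step 3: (R4) makes $p$ a stack.} Given Step 2, $p$ is fibered in groupoids over the site, and (R4) is precisely the defining condition of a stack: the restriction functor $\mathcal E_U\to\operatorname{Desc}$ is an equivalence for every cover. Forming descent groupoids requires double and triple overlaps, and I would take those from the standing site assumptions. The main place where care is needed is the choice of cleavage: the descent groupoid depends on chosen pullbacks only up to equivalence, and the uniqueness of transport up to vertical isomorphism shows that the stack condition is independent of that choice.

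\emph{Step 4: aggregate codes.} When $\mathcal E$ is a groupoid over a discrete behavior set, \cref{thm:aggregate} and \cref{thm:three-levels} exhibit $\Lan_p(y_{\mathcal E})(b)$ as a behaviorwise presheaf code. It is not representable when the fiber is noncontractible, so it is not a factorization of the form $y_{\mathcal E}\cong Cp$. There is therefore no contradiction with Step 1.
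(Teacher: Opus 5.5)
Your proposal is correct and matches the paper's proof step for step: (R1)--(R2) contradict the Yoneda reification obstruction, (R3) is the definition of a fibration with groupoid fibers, (R4) is the stack condition, and the aggregate remark follows from \cref{thm:aggregate,thm:three-levels}. Your extra check that every arrow is cartesian is fine, though the factorization should be $f=\bar u\circ v$ with $v$ vertical (the vertical arrow first, then the cartesian lift), which does not affect the conclusion.
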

\begin{proof}
(R1)--(R2) contradict the Yoneda reification obstruction. The cartesian lifting property in (R3) is the definition of a fibration, and groupoid fibers make it a fibration in groupoids. (R4) is the stack condition. The final sentence follows from \cref{thm:aggregate,thm:three-levels}.
\end{proof}

\begin{remark}
The proposition isolates categorical conditions for the stated reification, transport, and descent conclusions; it does not characterize sound proof theories. Correctness, normalization, and proof identity remain logic-specific structures inside the fibers.
\end{remark}
\section{Examples and boundary cases}

\subsection{A behavior with two unrelated proofs}
Let the behavior base have one object $b$ and let $\mathcal E_b$ be the discrete groupoid on $p,q$. There are two proof-identity classes and no canonical proof. Representable quotation cannot descend to $b$. The behaviorwise aggregate is the terminal presheaf on the discrete fiber; it does not select $p$ or $q$, but its category of elements is the two-object fiber itself.

\subsection{One proof up to isomorphism with internal symmetry}
Let $\mathcal E_b=BG$ for a nontrivial group $G$. There is one proof up to isomorphism but no canonical proof because the unique object has nontrivial automorphisms.

\subsection{Path-dependent transport}
Let the base be $BC_2$ and the discrete fiber consist of two nonisomorphic proofs exchanged by the generator. Transport exists along every path, but no proof is invariant under the loop; the obstruction is a failure of a homotopy fixed point.

\subsection{No quotation}
If syntax-sensitive quotation is absent, the specific obstruction in the same-level quotation theorem does not arise, because the hypotheses of that theorem are not satisfied.  If partial contexts are to descend to an extensional quotient, their domains and actions must satisfy the saturation and congruence conditions of \cref{thm:partial-congruence}.

\compactpart{III}{Error Extraction and Guarded Quantitative Semantics}

\section{Ordered error algebras and compositional upper bounds}\label{sec:error-algebra}

\begin{definition}[Ordered error algebra]
An ordered error algebra is a unital ordered monoid $(V,\star,1,\le)$: multiplication is associative with unit $1$ and monotone in each variable.  A rule interpretation assigns to each generating rule occurrence $r$ a guarded transformer or certificate $\beta(r)\in V$.  For a nonempty path $w=r_k\cdots r_1$ set
\[
  \beta(w):=\beta(r_k)\star\cdots\star\beta(r_1).
\]
For the empty path $1_x$ at any object $x$, set $\beta(1_x):=1$.
The order records weakening of certificates; commutativity is not assumed because error propagation is generally order-sensitive.
\end{definition}

\begin{theorem}[Compositional path certificate]\label{thm:ordered-path-certificate}
Let $F(\mathcal R)$ be the free category on a graph of guarded rules, and let $V$ be an ordered error algebra.  The generator assignment extends uniquely to a functor
\[
  \beta:F(\mathcal R)\longrightarrow BV.
\]
Here $BV$ is the one-object category whose endomorphism monoid is $V$.
Suppose in addition that an application-specific predicate $\operatorname{Sound}(w,v)$ satisfies
\begin{enumerate}[label=(S\arabic*)]
\item $\operatorname{Sound}(1_x,1)$ for every object $x$;
\item $\operatorname{Sound}(r,\beta(r))$ for every generating rule $r$ on its guard;
\item if $\operatorname{Sound}(w_1,v_1)$ and $\operatorname{Sound}(w_2,v_2)$ for a composable pair, then $\operatorname{Sound}(w_2w_1,v_2\star v_1)$ on the composite guard.
\end{enumerate}
Then every empty path $1_x$ is sound with certificate $1$, and every nonempty path $w=r_k\cdots r_1$ is sound with certificate
\[
 \beta(w)=\beta(r_k)\star\cdots\star\beta(r_1).
\]
\end{theorem}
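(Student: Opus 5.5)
The plan has two parts: the functorial extension, and an induction on path length for soundness.

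For the first part, I will use the universal property of the free category $F(\mathcal R)$ on a graph. Graph morphisms from the rule graph into the underlying graph of a category $\mathcal D$ correspond bijectively to functors $F(\mathcal R)\to\mathcal D$. Take $\mathcal D=BV$ and send every vertex to the unique object and every edge $r$ to $\beta(r)\in V=\operatorname{End}_{BV}(*)$. The induced functor sends $1_x$ to the identity of $*$, namely $1$. Because $F(\mathcal R)$ has as morphisms the finite composable strings of edges, it sends $w=r_k\cdots r_1$ to $\beta(r_k)\star\cdots\star\beta(r_1)$. Associativity of $\star$ makes the iterated product independent of bracketing, and the unit law handles identities, so this functor agrees with the definition of $\beta$ on paths. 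Uniqueness holds because any functor is determined by its values on generators and identities, since every nonempty morphism is a composite of generators.

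For the soundness claim, the empty-path case is (S1) verbatim. For nonempty paths I will induct on $k$. The base case $k=1$ is (S2) on the guard of $r_1$. For the step, write $w=r_{k+1}w'$ with $w'=r_k\cdots r_1$. The induction hypothesis gives $\operatorname{Sound}(w',\beta(w'))$, and (S2) gives $\operatorname{Sound}(r_{k+1},\beta(r_{k+1}))$. Then (S3) applied to the composable pair $(w',r_{k+1})$ yields $\operatorname{Sound}(w,\beta(r_{k+1})\star\beta(w'))$ on the composite guard. By functoriality this certificate equals $\beta(w)$. One could alternatively split $w$ arbitrarily as $w_2w_1$ and use strong induction; either decomposition gives the same certificate, again by associativity.

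The only delicate point, and the main obstacle, is bookkeeping of guards rather than any algebra. The statement asserts soundness "on the composite guard", and (S3) only delivers soundness on the composite guard built from the guards of the two pieces. So I must say explicitly that the guard of $w$ is defined inductively as the composite of the guard of $w'$ with that of $r_{k+1}$. I must also note that composite guards are associative in the sense needed: the guard obtained from any bracketing is the same iterated composite, or at least the claim is read relative to the left-nested bracketing used in the induction. I would state that convention explicitly and remark that monotonicity of $\star$ is not needed here; it only matters when certificates are later weakened.
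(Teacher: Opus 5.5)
Your proposal is correct and takes essentially the same route as the paper: $\beta$ is the universal extension from the free category $F(\mathcal R)$ into $BV$, with associativity and units supplied by the monoid axioms of $V$. Soundness then follows by induction on path length, using (S1) for empty paths, (S2) for generators, and (S3) for the inductive step. Your explicit convention that the guard of $w$ is the iterated composite guard is a sensible clarification that the paper leaves implicit.
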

\begin{proof}
The unique functor is the universal extension from the free category, with associativity and the unit law supplied by the monoid axioms.  The soundness assertion follows by induction on path length: every empty path is covered by (S1), a generator by (S2), and the induction step by (S3).
\end{proof}

\begin{example}[Lipschitz--additive algebra]
Equip
\[
  V=[0,\infty)\times[0,\infty]
\]
with the product order, and define
\[
  (L_2,\eta_2)\star(L_1,\eta_1)
  =(L_2L_1,L_2\eta_1+\eta_2).
\]
Here $0\cdot\infty:=0$ and $L\cdot\infty:=\infty$ for $L>0$.  With these conventions, $\star$ is associative and monotone in each variable and has unit $(1,0)$, so $V$ is an ordered error algebra.
If an implementation $\widetilde f$ approximates $f$ with uniform error $\eta_1$ and $g$ is $L_2$-Lipschitz while $\widetilde g$ approximates $g$ with error $\eta_2$, then
\[
 \|\widetilde g\widetilde f-gf\|\le L_2\eta_1+\eta_2.
\]
Thus the familiar error recurrence is monoid multiplication rather than an isolated formula.
\end{example}

\begin{proposition}[Occurrence transport of local certificates]\label{prop:certificate-transport}
Let $c_G(a)\in V$ be a certificate assigned to each occurrence $a:A\mono G$, invariant under occurrence isomorphism.  Assume that
\[
  c_G(i b)=c_D(b)
\]
for every occurrence $b:B\mono D$ and every monic context embedding $i:D\mono G$ used in a DPO step.  If $a$ is stable under $\rho:G\Rightarrow H$, then
\[
  c_H(h\bar a)=c_G(a),
\]
where $\bar a:A\mono D$ is the unique factorization satisfying $d\bar a=a$ for the context span $G\xleftarrow dD\xrightarrow hH$.
\end{proposition}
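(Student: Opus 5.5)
The argument is a two-step chase through the context object $D$. It applies the compatibility hypothesis once on each leg of the track span $G\xleftarrow{d}D\xrightarrow{h}H$.

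First, stability of $a$ under $\rho$ gives, by \cref{thm:maximal-step}, that $A\le d(D)$. Hence there is a unique monomorphism $\bar a:A\mono D$ with $d\bar a=a$; it is monic because $a=d\bar a$ is monic. Thus $\bar a$ is an occurrence in $D$.

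Second, apply the hypothesis $c_G(ib)=c_D(b)$ with $i=d$ and $b=\bar a$. This yields $c_G(a)=c_G(d\bar a)=c_D(\bar a)$.

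Third, apply the same hypothesis on the right leg with $i=h:D\mono H$, which is monic because the rule is linear. This yields $c_H(h\bar a)=c_D(\bar a)$. Combining the two equalities gives $c_H(h\bar a)=c_G(a)$.

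Isomorphism invariance of $c$ shows that the result does not depend on the chosen representative of the subobject $A$, nor on the chosen representative of $\rho_{\#}(A)$ in \cref{thm:canonical-step}. Different factorizations differ by an isomorphism of $A$ over $D$, and the corollary on uniqueness of intact residual gives the same conclusion.

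The main point to check is that the hypothesis is meant to cover both legs $d$ and $h$ as ``context embeddings used in a DPO step''. I read the phrase that way, since $h$ embeds the context into the target just as $d$ embeds it into the source. If only $d$ were covered, the statement would fail, and one would need to assume compatibility along $h$ explicitly.
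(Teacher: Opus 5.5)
Your proof is correct and is essentially the paper's argument: you factor $a$ through $d$ via \cref{thm:maximal-step}, then apply the context-embedding hypothesis along both legs $d:D\mono G$ and $h:D\mono H$ to get $c_G(a)=c_D(\bar a)=c_H(h\bar a)$. The paper's proof reads the hypothesis the same way you do, treating both $D\mono G$ and $D\mono H$ as the context embeddings to which it applies.
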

\begin{proof}
By \cref{thm:maximal-step}, $a$ factors uniquely through the context $D$.  Both its source and target occurrences are obtained from $\bar a$ by the context embeddings $D\mono G$ and $D\mono H$.  The stated context-embedding condition and isomorphism invariance give the equality.
\end{proof}

\begin{remark}
The proposition applies only to intact residuals under the displayed context-embedding condition; deleted or internally rewritten subpresentations need not have unchanged error.
\end{remark}
\section{Partial affine realizations and propagated error}

\subsection{Partial affine rules}
Let $X\subseteq\mathbb R$. A partial map is always understood together with its actual domain.

\begin{definition}[Partial affine rule]
A partial affine rule on $X$ is a triple
\[
r=(G_r,a_r,b_r),
\]
where $G_r\subseteq X$, $a_r,b_r\in\mathbb R$, and $a_rx+b_r\in X$ for all $x\in G_r$. It denotes $f_r:G_r\to X$, $f_r(x)=a_rx+b_r$.
\end{definition}
For $r$ followed by $s$, define
\begin{equation}\label{eq:partial-affine-compose}
s\circ r=\bigl(G_r\cap f_r^{-1}(G_s),\ a_sa_r,\ a_sb_r+b_s\bigr).
\end{equation}

\begin{proposition}[Category of partial affine endomorphisms]\label{thm:partial-affine-category}
The partial affine rules on $X$, with composition \eqref{eq:partial-affine-compose} and identity $(X,1,0)$, form a one-object category $\mathbf{PAff}(X)$. The represented partial map of a composite is the composite of the represented partial maps.
\end{proposition}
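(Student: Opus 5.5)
The plan is to verify the category axioms directly from the composition formula \eqref{eq:partial-affine-compose}, treating a rule as the pair consisting of its guard and its affine coefficients. First I would check that the composite is again a partial affine rule. For $x\in G_r\cap f_r^{-1}(G_s)$ we have $f_r(x)\in G_s$, hence $f_s(f_r(x))\in X$. Expanding,
\[
  a_s(a_rx+b_r)+b_s=(a_sa_r)x+(a_sb_r+b_s),
\]
so the composite's coefficients send its guard into $X$. The same computation proves the second assertion. The represented map of $s\circ r$ is defined exactly on $\{x\in G_r: f_r(x)\in G_s\}$, which is the domain of the composite partial map $f_s\circ f_r$, and there it agrees with $f_s\circ f_r$.

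Next come the identity laws. The identity $(X,1,0)$ is a partial affine rule because $x\mapsto x$ maps $X$ into $X$. For $r\circ\mathrm{id}$ the guard is $X\cap \mathrm{id}^{-1}(G_r)=G_r$ and the coefficients are $(a_r\cdot1,\ a_r\cdot0+b_r)=(a_r,b_r)$. For $\mathrm{id}\circ r$ the guard is $G_r\cap f_r^{-1}(X)=G_r$, using $f_r(G_r)\subseteq X$, and the coefficients are $(a_r,\ b_r)$.

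The main step is associativity, and I expect the guard to be the only point needing care. For rules $r,s,t$, the coefficient component is the associativity of affine composition. Both bracketings give multiplier $a_ta_sa_r$. For the constant terms, $a_t(a_sb_r+b_s)+b_t$ and $a_ta_sb_r+(a_tb_s+b_t)$ coincide. The guard of $(t\circ s)\circ r$ is
\[
  G_r\cap f_r^{-1}\bigl(G_s\cap f_s^{-1}(G_t)\bigr).
\]
The guard of $t\circ(s\circ r)$ is
\[
  \bigl(G_r\cap f_r^{-1}(G_s)\bigr)\cap f_{s\circ r}^{-1}(G_t),
\]
where $f_{s\circ r}$ is evaluated on $G_r\cap f_r^{-1}(G_s)$. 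Both sets equal $\{x\in G_r: f_r(x)\in G_s,\ f_s(f_r(x))\in G_t\}$. Here I use that preimage commutes with intersection, and that $f_{s\circ r}=f_s\circ f_r$ on its guard by the first step.

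One remark would accompany this. A rule is identified with its triple, so two rules with the same guard but different coefficients remain distinct morphisms even if they agree as functions on the guard, for instance when the guard is a single point. The category is therefore one of presentations, and the statement that composites of rules represent composites of maps is exactly the functoriality of the represented-map assignment into partial endomaps of $X$ established in the first step.
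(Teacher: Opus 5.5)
Your proposal is correct and follows the same route as the paper: a direct computation showing $f_{s\circ r}=f_s\circ f_r$ on $G_r\cap f_r^{-1}(G_s)$, and that both bracketings of $t,s,r$ have guard $G_r\cap f_r^{-1}(G_s)\cap(f_s\circ f_r)^{-1}(G_t)$ with coefficients $a_ta_sa_r$ and $a_ta_sb_r+a_tb_s+b_t$. You also verify closure of composition and the two unit laws explicitly, which the paper's proof leaves implicit.
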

\begin{proof}
For $x$ in the displayed domain,
\[
f_{s\circ r}(x)=a_s(a_rx+b_r)+b_s=f_s(f_r(x)).
\]
With a third rule $t$, both parenthesizations have domain
\[
G_r\cap f_r^{-1}(G_s)\cap(f_s\circ f_r)^{-1}(G_t)
\]
and coefficients $a_ta_sa_r$ and $a_ta_sb_r+a_tb_s+b_t$.
\end{proof}

\subsection{Interpretation of a rule category}
Let $F(\mathcal R)$ be the free category on a directed graph of rule symbols.

\begin{corollary}[Extension from generators]
An assignment of a partial affine rule to every generating arrow extends uniquely to a functor
\[
\mathcal I:F(\mathcal R)\to\mathbf{PAff}(X).
\]
For a composable word $w=r_k\cdots r_1$,
\[
\mathcal I(w)=\mathcal I(r_k)\circ\cdots\circ\mathcal I(r_1).
\]
\end{corollary}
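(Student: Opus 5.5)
The argument is the universal property of the free category $F(\mathcal R)$ on a directed graph, specialized to the one-object target $\mathbf{PAff}(X)$ from \cref{thm:partial-affine-category}. A graph morphism from the rule graph into the underlying graph of $\mathbf{PAff}(X)$ is exactly an assignment of a partial affine rule to each generating arrow, since all vertices go to the unique object. So I first check that $\mathbf{PAff}(X)$ is indeed a category, which \cref{thm:partial-affine-category} already provides. The needed facts are associativity of \eqref{eq:partial-affine-compose}, including equality of the domains, and the unit laws for $(X,1,0)$.

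For the unit laws, the computation is as follows. For $r=(G_r,a_r,b_r)$, composing with the identity on either side gives domain $G_r\cap f_r^{-1}(X)=G_r$, using that $f_r$ maps $G_r$ into $X$, or domain $X\cap G_r=G_r$, with unchanged coefficients.

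Next I define $\mathcal I$ on morphisms of $F(\mathcal R)$, which are composable words. The empty word $1_x$ goes to $(X,1,0)$. A nonempty word $w=r_k\cdots r_1$ goes to $\mathcal I(r_k)\circ\cdots\circ\mathcal I(r_1)$, which is well defined independently of bracketing by associativity. Functoriality on concatenation $w_2w_1$ then follows from associativity together with the unit laws in the empty cases.

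For uniqueness, any functor agreeing on generators must send identities to the identity. It must also send each word to the composite of the images of its letters, because every word is an iterated composite of generators. This forces the displayed formula.

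The only point needing care, and the one step I would regard as the main obstacle, is that the domain component must also compose associatively. Associativity is not just a matter of equal coefficients. This is exactly the triple-domain identity recorded in the proof of \cref{thm:partial-affine-category}, so I would cite it rather than recompute it. Everything else is routine.
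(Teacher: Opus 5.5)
Your proof is correct and follows the paper's route: the paper likewise sends identity paths to $(X,1,0)$ and nonempty paths to the displayed composite, obtains functoriality from associativity and the unit laws of \cref{thm:partial-affine-category}, and obtains uniqueness from the universal property of the free category. Your explicit unit-law check (domains $G_r\cap f_r^{-1}(X)=G_r$ and $X\cap G_r=G_r$ with unchanged coefficients) is a useful addition, because the paper's proof of that proposition verifies only the composite formula and associativity.
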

\begin{proof}
The identity path is sent to $(X,1,0)$ and a nonempty path is sent to the indicated composite.  Associativity and the identity laws in \cref{thm:partial-affine-category} make this assignment functorial, and the universal property of the free category makes it unique.
\end{proof}

\begin{remark}
The source category $F(\mathcal R)$ retains distinct rule paths, but $\mathcal I$ need not be faithful: different paths may have the same partial-affine image.  With literal triples, equality of images means equality of the composite domains and both coefficients.  After quotienting by extensional equality, equality of images means that the represented partial maps have the same domain and agree there.
\end{remark}

\subsection{Affine majorants and propagated error}
\begin{definition}[Affine majorant]
Let $F:G\to X$ be a partial map with $X\subseteq[0,\infty)$. An affine majorant is a pair $(a,b)$ with $a,b\ge0$ such that
\[
F(x)\le ax+b\qquad(x\in G).
\]
\end{definition}

\begin{proposition}[Composition of affine majorants]
If $F$ has majorant $(a_F,b_F)$ and $H$ has majorant $(a_H,b_H)$ with nonnegative coefficients, then $H\circ F$ has majorant
\[
(a_Ha_F,a_Hb_F+b_H)
\]
on $G_F\cap F^{-1}(G_H)$.
\end{proposition}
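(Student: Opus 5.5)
The plan is a direct pointwise verification on the composite domain $G_F\cap F^{-1}(G_H)$; no earlier structural result is needed beyond the domain convention of \eqref{eq:partial-affine-compose}. First I will unpack the domain. A point $x$ lies in $G_F\cap F^{-1}(G_H)$ exactly when $x\in G_F$ and $F(x)\in G_H$, so $H(F(x))$ is defined. Both majorant inequalities are then available: at $x$ for $F$, and at $F(x)$ for $H$.

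Next I will chain the two inequalities. The majorant for $H$ at the point $F(x)\in G_H$ gives
\[
  H(F(x))\le a_H F(x)+b_H .
\]
Since $a_H\ge0$, multiplication by $a_H$ preserves the inequality $F(x)\le a_Fx+b_F$. Substituting yields
\[
  H(F(x))\le a_H(a_Fx+b_F)+b_H=(a_Ha_F)\,x+(a_Hb_F+b_H).
\]

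Finally I will check that the resulting pair is a legitimate affine majorant in the sense of the definition, that is, that both coefficients are nonnegative. This holds because products and sums of nonnegative reals are nonnegative. I will also note that $H\circ F$ takes values in $X\subseteq[0,\infty)$, so the definition applies to it.

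The only step needing care is the use of $a_H\ge0$ to preserve the inequality under scaling. This is exactly where the nonnegativity hypothesis enters; with a negative slope the monotone substitution would fail. I do not expect a genuine obstacle. I will add a remark that the composite majorant coincides with the Lipschitz--additive multiplication $(L_2,\eta_2)\star(L_1,\eta_1)$ restricted to finite values, so majorants compose as in the ordered error algebra of \cref{thm:ordered-path-certificate}.
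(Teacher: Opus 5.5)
Your proposal is correct and matches the paper's proof: on $G_F\cap F^{-1}(G_H)$ you apply $H$'s majorant at $F(x)$, then use $a_H\ge0$ to substitute $F(x)\le a_Fx+b_F$. Your extra checks are harmless additions that the paper leaves implicit, namely that the new coefficients are nonnegative and that the new pair equals $(a_H,b_H)\star(a_F,b_F)$ in the Lipschitz--additive algebra.
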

\begin{proof}
On the composite domain, nonnegativity of $a_H$ gives
\[
H(F(x))\le a_HF(x)+b_H
\le a_H(a_Fx+b_F)+b_H.
\]
\end{proof}

\begin{proposition}[Additive error propagation]
Suppose
\[
F(x)=a_Fx+b_F+e_F(x),\quad |e_F(x)|\le\eta_F,
\]
and
\[
H(y)=a_Hy+b_H+e_H(y),\quad |e_H(y)|\le\eta_H.
\]
Then
\[
H(F(x))=a_Ha_Fx+(a_Hb_F+b_H)+e_{H\circ F}(x)
\]
with
\[
|e_{H\circ F}(x)|\le |a_H|\eta_F+\eta_H.
\]
\end{proposition}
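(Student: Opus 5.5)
The plan is a direct substitution followed by the triangle inequality. I will treat $x$ as a point of the composite domain $G_F\cap F^{-1}(G_H)$, so that $y=F(x)$ is a legitimate argument for $H$. The coefficients are real, and the bounds on $e_F$ and $e_H$ are assumed to hold on the respective guards of $F$ and $H$.

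First I substitute the decomposition of $F(x)$ into the decomposition of $H$ at $y=F(x)$. This gives
\[
H(F(x))=a_H\bigl(a_Fx+b_F+e_F(x)\bigr)+b_H+e_H(F(x)).
\]
Expanding the right-hand side produces the affine part $a_Ha_Fx+(a_Hb_F+b_H)$. It also identifies the remainder explicitly as
\[
e_{H\circ F}(x):=a_He_F(x)+e_H(F(x)).
\]
I take this identity as the definition of $e_{H\circ F}$, so the displayed decomposition of $H(F(x))$ holds by construction.

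Next I bound the remainder with the triangle inequality:
\[
|e_{H\circ F}(x)|\le |a_H|\,|e_F(x)|+|e_H(F(x))|\le |a_H|\eta_F+\eta_H.
\]
The first term uses the hypothesis at $x\in G_F$. The second uses the hypothesis at the point $F(x)$, which lies in the guard of $H$ because of how the composite domain was chosen.

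The only point needing care is this domain bookkeeping: the bound on $e_H$ is used at the point $F(x)$, not at $x$. Unlike the affine-majorant proposition, no sign assumption on $a_H$ is needed, because the absolute value $|a_H|$ appears explicitly in the bound.
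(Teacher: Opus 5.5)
Your proof is correct and follows the paper's argument: on the composite domain you identify $e_{H\circ F}(x)=a_He_F(x)+e_H(F(x))$ and apply the triangle inequality. Your remark that the bound on $e_H$ is used at the point $F(x)$ matches the paper's restriction to the composite domain.
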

\begin{proof}
For $x$ in the composite domain,
\[
e_{H\circ F}(x)=a_He_F(x)+e_H(F(x)).
\]
The triangle inequality gives the stated bound.
\end{proof}

\begin{corollary}[Error bound for a word]
For $w=r_k\cdots r_1$ with affine parts $(a_i,b_i)$ and additive error bounds $\eta_i$,
\[
\eta_w\le \eta_k+|a_k|\eta_{k-1}+|a_ka_{k-1}|\eta_{k-2}+\cdots+|a_k\cdots a_2|\eta_1.
\]
\end{corollary}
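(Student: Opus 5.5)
We argue by induction on the word length $k$, applying the preceding additive error propagation proposition once per step. Put $F_j:=f_{r_j}\circ\cdots\circ f_{r_1}$ on the composite guard from \eqref{eq:partial-affine-compose}. For each $j$, write $F_j(x)=A_jx+B_j+e_j(x)$, where $(A_j,B_j)$ are the affine coefficients of the partial-affine composite $\mathcal I(r_j\cdots r_1)$, so that $A_j=a_j\cdots a_1$. Let $\eta_{w_j}:=\sup_x|e_j(x)|$ over the composite guard, with $w_j=r_j\cdots r_1$. The claim to prove is
\[
\eta_{w_j}\le\sum_{i=1}^{j}\Bigl(\prod_{l=i+1}^{j}|a_l|\Bigr)\eta_i,
\]
where the empty product equals $1$. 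For $j=k$ this is exactly the displayed bound.

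For the base case $j=1$, the bound reduces to $\eta_{w_1}\le\eta_1$, which is the hypothesis on $r_1$. For the inductive step, take $F=F_{j}$ with affine part $(A_j,B_j)$ and error bound $\eta_{w_j}$, and take $H=f_{r_{j+1}}$ with affine part $(a_{j+1},b_{j+1})$ and error bound $\eta_{j+1}$. The additive propagation proposition then gives, on the composite domain $G_{F_j}\cap F_j^{-1}(G_{r_{j+1}})$, the estimate
\[
\eta_{w_{j+1}}\le|a_{j+1}|\,\eta_{w_j}+\eta_{j+1}.
\]
This domain is the guard of $w_{j+1}$ by \cref{thm:partial-affine-category}. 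Substituting the inductive bound and distributing $|a_{j+1}|$ over the sum gives $\sum_{i\le j}\bigl(\prod_{l=i+1}^{j+1}|a_l|\bigr)\eta_i+\eta_{j+1}$, which is the claim at $j+1$.

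An alternative packaging is \cref{thm:ordered-path-certificate} in the Lipschitz--additive algebra. Use the certificates $(|a_i|,\eta_i)$ and let $\operatorname{Sound}(w,(L,\eta))$ mean that the affine part of $w$ has modulus at most $L$ and the residual of $w$ is bounded by $\eta$ on the guard of $w$. Condition (S3) is then exactly the propagation proposition, and the second coordinate of $\beta(w)$ unfolds to the stated sum.

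The main obstacle is bookkeeping rather than estimation. First, the residuals must be evaluated on the intersection guard and not on each $G_{r_i}$ separately; the propagation proposition requires $F(x)\in G_H$, which is what membership in the composite domain guarantees. Second, the affine part of the composite must coincide with the literal partial-affine composite coefficients, so that the residual is well defined. Both points follow from the composition formula \eqref{eq:partial-affine-compose}. One also checks that infinite $\eta_i$ cause no trouble under the convention $0\cdot\infty=0$.
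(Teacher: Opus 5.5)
Your induction is correct and matches the paper's argument, which iterates the two-rule additive propagation recurrence along the word starting from $r_1$; the Lipschitz--additive packaging via \cref{thm:ordered-path-certificate} is a valid repackaging of the same recurrence. One small correction: the induction runs on the domain of the composite of the actual perturbed maps, $G_{F_j}\cap F_j^{-1}(G_{r_{j+1}})$, which in general differs from the partial-affine guard of \eqref{eq:partial-affine-compose} (that guard is computed from the exact affine parts), so you should drop the appeal to \cref{thm:partial-affine-category} for the guard---this does not affect the bound.
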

\begin{proof}
Iterate the two-rule recurrence in the preceding proposition along the word, starting with the first rule.
\end{proof}
\section{Ranked descent certificates}

\subsection{Decrease for an affine majorant}
\begin{definition}[Affine decrease threshold]
If $F:G\to[0,\infty)$ has affine majorant $(a,b)$ with $0\le a<1$, define
\[
\theta(F;a,b)=\frac{b}{1-a}.
\]
\end{definition}

\begin{proposition}[Soundness of the threshold]
If $x\in G$ and $x>\theta(F;a,b)$, then $F(x)<x$.
\end{proposition}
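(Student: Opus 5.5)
The argument is a direct chain of inequalities from the affine majorant. Fix $x\in G$ with $x>\theta(F;a,b)=b/(1-a)$. The majorant hypothesis gives $F(x)\le ax+b$, so it suffices to prove the strict inequality $ax+b<x$. That inequality is equivalent to $b<(1-a)x$.

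Since $0\le a<1$, the factor $1-a$ is strictly positive. Multiplying $x>b/(1-a)$ by $1-a$ therefore preserves the strict inequality and gives $(1-a)x>b$. Rearranging yields $ax+b<x$. Combining with the majorant bound,
\[
F(x)\le ax+b<x.
\]

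There is no genuine obstacle. The only point needing care is that the hypothesis $a<1$ is what makes $1-a>0$, so the threshold is finite and the direction of the inequality is preserved when we multiply. Nonnegativity of $x$ (from $x\in G\subseteq X\subseteq[0,\infty)$ in the majorant setting) is not needed for this step; it only ensures that $\theta\ge0$ is meaningful.
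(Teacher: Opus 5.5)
Your proof is correct and is the same as the paper's: multiplying $x>b/(1-a)$ by $1-a>0$ gives $ax+b<x$, which chains with the majorant bound $F(x)\le ax+b$. Your closing aside is slightly off, since $\theta\ge0$ comes from $b\ge0$ and $a<1$ rather than from $x\ge0$, but this plays no role in the argument.
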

\begin{proof}
Since $x>b/(1-a)$ and $1-a>0$, one has $(1-a)x>b$, equivalently $ax+b<x$.  The majorant then gives $F(x)\le ax+b<x$.
\end{proof}

\subsection{Ranked transition systems}
\begin{definition}[Ranked deterministic transition system]
A ranked deterministic transition system is a triple $(X,T,\rho)$ with $T:X\to X$ and $\rho:X\to W$ into a well-ordered set $W$.
\end{definition}

\begin{definition}[Descending path datum]
A descending path datum is a pair $(D,k)$ with $D\subseteq X$, $k\ge1$, and
\[
\rho(T^k(x))<\rho(x)\qquad(x\in D).
\]
A family $\{(D_i,k_i)\}_{i\in I}$ is a descending cover outside $B\subseteq X$ if $X\setminus B\subseteq\bigcup_iD_i$.
\end{definition}

\begin{theorem}[Well-founded cover theorem]\label{thm:well-founded-cover}
If a ranked deterministic transition system has a descending cover outside $B$, then every orbit meets $B$.
\end{theorem}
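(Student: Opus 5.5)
The argument is by contradiction, using well-foundedness of $W$ to pick a minimal rank along a bad orbit.

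Suppose some $x_0\in X$ has an orbit $x_n:=T^n(x_0)$, $n\ge0$, that never meets $B$. Consider the set of ranks
\[
  R:=\{\rho(x_n):n\ge0\}\subseteq W.
\]
It is nonempty, so it has a least element $w_0=\rho(x_N)$ for some $N$.

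Since $x_N\notin B$, the cover hypothesis $X\setminus B\subseteq\bigcup_iD_i$ gives an index $i$ with $x_N\in D_i$. The descending path datum $(D_i,k_i)$ then yields
\[
  \rho\bigl(T^{k_i}(x_N)\bigr)<\rho(x_N)=w_0 .
\]
But $T^{k_i}(x_N)=x_{N+k_i}$ lies on the same orbit, so its rank belongs to $R$. This contradicts the minimality of $w_0$, so every orbit meets $B$.

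An equivalent formulation avoids the minimum. Define inductively $n_0=0$ and $n_{j+1}=n_j+k_{i_j}$, where $i_j$ is chosen with $x_{n_j}\in D_{i_j}$. This produces a strictly decreasing infinite sequence $\rho(x_{n_j})$ in $W$, which is impossible in a well-ordered set. The choice of $i_j$ requires only dependent choice, and the minimal-element version above avoids even that.

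No step is a real obstacle; this is essentially the standard well-founded descent argument. The only care needed is that $k_i\ge1$ is arbitrary, so the descent is along a subsequence of the orbit and not step by step. The minimal-rank formulation handles this automatically, because every point $x_{N+k}$ of the orbit stays in $R$.
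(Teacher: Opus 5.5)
Your proof is correct and is essentially the paper's argument. The paper iterates covering data along an orbit that avoids $B$ to produce an infinite strictly descending sequence of ranks in $W$, which is exactly your second formulation; your primary least-rank version is the equivalent minimal-element form of well-foundedness and, as you note, avoids the dependent choices made in that iteration.
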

\begin{proof}
If an orbit avoided $B$, repeatedly choose a covering datum for the current point and advance by its positive iterate length. This yields an infinite strictly descending sequence in the well-order $W$, impossible.
\end{proof}

\begin{corollary}[Strong-induction form]\label{cor:strong-induction}
Let $T:\Npos\to\Npos$. If some $N\ge2$ and a descending cover satisfy $T^{k_i}(n)<n$ on $D_i$, and the $D_i$ cover all $n\ge N$, every orbit reaches $\{1,\dots,N-1\}$.
\end{corollary}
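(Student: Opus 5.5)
The plan is to reduce \cref{cor:strong-induction} to \cref{thm:well-founded-cover} by choosing the ranked system and the bad set explicitly. I take the ranked deterministic transition system $(X,T,\rho)=(\Npos,T,\mathrm{id})$. The rank $\rho$ is the identity map into the well-ordered set $W=\Npos$ with its usual order. The bad set is $B=\{1,\dots,N-1\}$, which is nonempty because $N\ge2$.

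Next I check that the given family $\{(D_i,k_i)\}$ consists of descending path data in the sense of the definition. Each $k_i$ must be a positive integer, which I read as implicit in writing an iterate $T^{k_i}$ of a descending datum. The hypothesis $T^{k_i}(n)<n$ for $n\in D_i$ is literally $\rho(T^{k_i}(n))<\rho(n)$. The covering hypothesis says $\bigcup_i D_i\supseteq\{n:n\ge N\}=\Npos\setminus B$. So the family is a descending cover outside $B$.

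\Cref{thm:well-founded-cover} then says every orbit of $T$ meets $B$. In other words, every orbit reaches $\{1,\dots,N-1\}$, which is the claim.

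I do not expect a real obstacle here. The only point needing care is the convention that $k_i\ge1$, since a datum with $k_i=0$ could never satisfy the strict inequality $T^{0}(n)=n<n$. Such a datum would therefore have empty $D_i$ and could be discarded without affecting the cover. I would also note that $\Npos$ is well ordered by the usual order, so the infinite-descent argument in the theorem applies.
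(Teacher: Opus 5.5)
Your proposal is correct and is the paper's proof: apply \cref{thm:well-founded-cover} with rank $\rho(n)=n$ into the well-ordered set $\Npos$ and exceptional set $B=\{1,\dots,N-1\}$. Your remarks on $k_i\ge1$ and on the well-ordering of $\Npos$ make explicit what the paper leaves implicit.
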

\begin{proof}
Apply \cref{thm:well-founded-cover} with rank $\rho(n)=n$ and exceptional set $B=\{1,\dots,N-1\}$.
\end{proof}

\subsection{Finite rule systems}
Suppose primitive rule domains partition $X$, so every state determines a unique legal word of every finite length. If a word $w$ has domain $G_w$ and a contractive affine majorant, then $D_w(N)=\{x\in G_w:x\ge N\}$ is descending once $N$ exceeds its threshold.

\begin{proposition}[Refinement monotonicity]
Let $W_k$ be the legal words of length $k$. Mark a word terminal when one of its prefixes defines a descending domain above a fixed threshold $N$. Every extension of a terminal word is terminal; hence the union of terminal length-$k$ domains increases with $k$.
\end{proposition}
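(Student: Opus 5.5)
The plan is to make precise what ``terminal'' means and then derive both claims directly from the descent condition in the definition of a descending path datum. Fix the threshold $N$. Call a legal word $w$ of length $j$ \emph{descending above $N$} when the set $D_w(N)=\{x\in G_w:x\ge N\}$ satisfies $T^{j}(x)<x$ for every $x$ in it, where $T$ is the deterministic step determined by the partition of primitive domains. Call a word $v$ terminal when some prefix $w$ of $v$ (with $v=uw$ for a suffix $u$ applied afterwards) is descending above $N$. For each $k$ let $\mathcal T_k\subseteq W_k$ be the set of terminal words, and set $U_k:=\bigcup_{v\in\mathcal T_k}D_{w(v)}(N)$, where $w(v)$ denotes a chosen descending prefix. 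The claim to prove is that $U_k\subseteq U_{k+1}$.

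The first assertion is purely combinatorial. If $v$ is terminal through a prefix $w$, then $w$ is still a prefix of every extension $sv$ with $s$ a primitive rule legal after $v$. So $sv$ is terminal, and the same prefix witnesses it. Nothing analytic enters at this step.

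For monotonicity of the unions, take $x\in U_k$, so that $x\in D_{w}(N)$ for a descending prefix $w$ of some terminal $v\in W_k$. Since the primitive domains partition $X$, the state $x$ determines a unique legal word of every length. In particular, the word of length $k$ starting at $x$ has $w$ as a prefix, because $x\in G_w$ forces the first $|w|$ letters. The length-$(k+1)$ word $v'$ at $x$ extends this length-$k$ word. By the first assertion $v'$ is terminal, and it is witnessed by the same prefix $w$, with $x\in D_w(N)$. Hence $x\in U_{k+1}$.

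The one delicate point is the convention for which domain a terminal word contributes: its own $G_v\cap[N,\infty)$, or that of its descending prefix. I will adopt the prefix domain as above. If the paper intends the word's own domain $G_v$, I will instead note two facts. First, the $G_v$ for $v\in W_k$ partition $X$, and each $G_{sv}\subseteq G_v$, so the extensions of $v$ cover $G_v$. Second, each point of $G_v$ lies in $G_{sv}$ for the unique legal $s$. This gives the same inclusion. I will also remark that the descent of $w$ on $D_w(N)$ is unaffected by appending letters, since descent is tested only at $T^{|w|}$. Consequently the terminal domains remain descending path data in the sense of \cref{thm:well-founded-cover}.
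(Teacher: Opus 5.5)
Your argument is correct and is essentially the paper's proof: the descending prefix persists under extension, and since the primitive domains partition the state space, each state of a terminal length-$k$ domain has a unique legal length-$(k+1)$ extension, which is again terminal (the paper implicitly uses the word's own domain, i.e.\ your second convention). One small bookkeeping point: with a fixed choice $w(v')$ you need $x\in D_{w(v')}(N)$ rather than $x\in D_w(N)$, and this holds because $x\in G_{v'}\subseteq G_{w(v')}$ and $x\ge N$.
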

\begin{proof}
The witnessing descending prefix of a terminal word remains a prefix of every extension.  Since the primitive domains partition the state space, each state in a terminal length-$k$ domain has a unique legal length-$(k+1)$ extension, which is again terminal.
\end{proof}

\subsection{Relation to occurrence-sensitive error semantics}
The partial affine action and the occurrence model record different data. Presentations may have the same partial affine action while differing in sharing or quotation, and isomorphic occurrence networks may receive different numerical interpretations. Neither determines the other.

\section{Behaviorwise quantitative reflection}\label{sec:quant-reflection}

\subsection{The discrete universal property}
\begin{definition}[Behaviorwise reflection]
Let $O:E\to B$ be a function and $A:E\to[0,\infty]$ a magnitude.  Define
\[
  Q_A(b):=\inf\{A(e):O(e)=b\},
\]
with the convention $\inf\varnothing=\infty$.
\end{definition}

\begin{theorem}[Largest behavior-level lower bound]\label{thm:behavior-reflection}
The function $Q_A$ satisfies $Q_A\circ O\le A$.  If $H:B\to[0,\infty]$ satisfies $H\circ O\le A$, then $H\le Q_A$.  Hence $Q_A$ is the greatest quantitative statement depending only on behavior and valid for every realization of that behavior.
\end{theorem}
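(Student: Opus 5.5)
The proof is a direct unwinding of the definition of $Q_A$ as an infimum in the complete lattice $[0,\infty]$, so I would argue pointwise and elementwise, with no categorical machinery. The two claims are the lower-bound property $Q_A\circ O\le A$ and the maximality property $H\le Q_A$. Together they show that $Q_A$ is the greatest element of the set of functions $H:B\to[0,\infty]$ satisfying $H\circ O\le A$, ordered pointwise. That is exactly the final sentence of the statement.

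For the first claim, fix $e\in E$ and put $b=O(e)$. Then $e$ belongs to the fiber $\{e'\in E:O(e')=b\}$, so this fiber is nonempty. An infimum is bounded above by each member of the set, so
\[
  Q_A(O(e))=\inf\{A(e'):O(e')=b\}\le A(e).
\]

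For the second claim, fix $b\in B$ and suppose $H\circ O\le A$. I would split into two cases according to whether the fiber over $b$ is empty.
\begin{enumerate}
\item If the fiber is empty, the convention $\inf\varnothing=\infty$ gives $Q_A(b)=\infty$, and $H(b)\le\infty$ holds trivially because $\infty$ is the top of $[0,\infty]$.
\item If the fiber is nonempty, then for each $e$ with $O(e)=b$ the hypothesis gives $H(b)=H(O(e))\le A(e)$. Thus $H(b)$ is a lower bound for $\{A(e):O(e)=b\}$, and since the infimum is the greatest lower bound, $H(b)\le Q_A(b)$.
\end{enumerate}

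The only point needing care is the empty-fiber case, where the convention $\inf\varnothing=\infty$ is exactly what lets maximality go through. Otherwise this is the Galois-connection and right-adjoint characterization of the pointwise infimum along $O$, as a discrete instance of a right Kan extension. I expect no real obstacle.
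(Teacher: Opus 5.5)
Your proof is correct and follows essentially the same route as the paper: $A(e)$ belongs to the set whose infimum defines $Q_A(O(e))$, and $H(b)$ is a lower bound for the fiber values, so it lies below their infimum. Your explicit empty-fiber case is a harmless refinement of what the paper leaves implicit: every value is vacuously a lower bound for $\varnothing$, whose infimum in $[0,\infty]$ is $\infty$.
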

\begin{proof}
For $e\in E$, the value $A(e)$ is one member of the set whose infimum defines $Q_A(O(e))$.  Conversely, if $H(O(e))\le A(e)$ for every $e$ in the fiber over $b$, then $H(b)$ is below their infimum.
\end{proof}

\begin{corollary}[Right Kan description]
Regard $E,B$ as discrete categories and $[0,\infty]$ as the poset category with its usual order.  Then
\[
  Q_A=\Ran_O A.
\]
\end{corollary}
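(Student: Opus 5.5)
The plan is to unwind the definition of the right Kan extension in the degenerate setting where $E$ and $B$ are discrete categories and the target $[0,\infty]$ is a complete poset. In that setting every structure reduces to objectwise order data. I will first recall that $[0,\infty]$, viewed as a poset category, is complete: every subset has an infimum, with $\inf\varnothing=\infty$ as the empty meet, i.e.\ the top element. Hence the pointwise right Kan extension exists and is given by the limit formula
\[
  (\Ran_OA)(b)=\lim_{(b\downarrow O)}A\circ\pi ,
\]
where $\pi:(b\downarrow O)\to E$ is the projection.

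Next I will identify the comma category. Since $B$ is discrete, an object $(e,u:b\to O(e))$ exists only when $O(e)=b$, and then $u$ is the identity. So $(b\downarrow O)$ is the discrete category on the fiber $\{e:O(e)=b\}$. A limit of a diagram indexed by a discrete category in a poset is the meet of its values. This gives $(\Ran_OA)(b)=\inf\{A(e):O(e)=b\}=Q_A(b)$, including the empty-fiber case.

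As a cross-check, and to avoid relying on the pointwise-formula machinery, I would also verify the universal property directly using \cref{thm:behavior-reflection}. Functors $E\to[0,\infty]$ and $B\to[0,\infty]$ are just functions, because discrete categories have only identities. A natural transformation $H\circ O\Rightarrow A$ exists, and is then unique, exactly when $H\circ O\le A$ pointwise. So the universal property of $\Ran_OA$ says precisely: it is the greatest $H$ with $H\circ O\le A$, with counit the inequality $Q_A\circ O\le A$. Both conditions are the content of \cref{thm:behavior-reflection}. Uniqueness up to isomorphism in a poset category is equality, so $Q_A=\Ran_OA$ on the nose.

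There is no real obstacle. The only point needing care is the empty fiber, where the empty limit must be the top element $\infty$ rather than undefined. This matches the stated convention $\inf\varnothing=\infty$, and it is the reason for working in $[0,\infty]$ rather than $[0,\infty)$.
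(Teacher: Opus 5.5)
Your proposal is correct. Your cross-check is exactly the paper's proof. In the order-enriched setting, the universal property of $\Ran_OA$ reads $H\le\Ran_OA \iff HO\le A$, and \cref{thm:behavior-reflection} says $Q_A$ has precisely this property.

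Your primary route is genuinely different. You invoke the pointwise limit formula over $(b\downarrow O)$ and note that discreteness of $B$ and $E$ collapses the comma category to the discrete fiber. The limit is then the fiberwise infimum, with $\infty$ as the empty meet. This computes the value directly, and it is the discrete special case of the comma-category formula the paper uses later in \cref{thm:fibrational-reflection}. It does, however, rely on the general existence-and-pointwise-formula theorem, which needs $[0,\infty]$ complete and $E$ small.

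The paper's route needs none of that machinery. \cref{thm:behavior-reflection} already exhibits the greatest $H$ with $HO\le A$, and this simultaneously proves that the Kan extension exists and identifies it. Your remark that isomorphic objects of the poset $[B,[0,\infty]]$ are equal is what makes the identification hold on the nose; the paper leaves this implicit.
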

\begin{proof}
The order-enriched universal property of the right Kan extension is
\[
  H\le\Ran_OA\quad\Longleftrightarrow\quad HO\le A,
\]
which is the universal property in \cref{thm:behavior-reflection}.
\end{proof}

\subsection{Non-discrete behaviors and transport}
\begin{proposition}[Fibrational fiber formula]\label{thm:fibrational-reflection}
Let $O:\mathcal E\to\mathcal B$ be a Grothendieck fibration and let $A:\mathcal E\to[0,\infty]$ be a functor.  Whenever the right Kan extension exists, its pointwise value is
\[
 (\Ran_OA)(b)
 =\inf_{(u:b\to Oe)\in(b\downarrow O)}A(e)
 =\inf_{e\in\mathcal E_b}A(e).
\]
\end{proposition}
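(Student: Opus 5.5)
The plan has two parts: first identify the pointwise right Kan extension with the comma-category infimum, then use cartesian lifts to collapse that infimum onto the strict fiber.

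For the first equality, recall that $[0,\infty]$ is a complete poset category, so limits there are infima. The pointwise right Kan extension of $A$ along $O$ is, by the standard limit formula, the limit of $A\circ\pi$ over the comma category $(b\downarrow O)$, where $\pi$ sends $(e,u:b\to O(e))$ to $e$. A limit of a diagram in a poset is the infimum of its objects. This gives
\[
(\Ran_OA)(b)=\inf_{(u:b\to Oe)}A(e).
\]
This step only restates the limit formula in the poset case; the assumption that the Kan extension exists pointwise is exactly what licenses it.

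For the second equality, write $I_1$ for the comma infimum and $I_2=\inf_{e\in\mathcal E_b}A(e)$, and prove $I_1\le I_2$ and $I_2\le I_1$.

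\textbf{Showing $I_1\le I_2$.} Every $e\in\mathcal E_b$ gives the object $(e,1_b)$ of $(b\downarrow O)$. So $I_2$ is an infimum over a subfamily of the values entering $I_1$, and therefore $I_1\le I_2$.

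\textbf{Showing $I_2\le I_1$.} Take an object $(e,u:b\to O(e))$ of the comma category. Since $O$ is a Grothendieck fibration, there is a cartesian arrow $\bar u:e'\to e$ with $O(\bar u)=u$, so $O(e')=b$ and $e'\in\mathcal E_b$. Functoriality of $A$ into the poset $[0,\infty]$ turns the arrow $\bar u$ into $A(e')\le A(e)$. Hence $I_2\le A(e')\le A(e)$, and taking the infimum over the comma category gives $I_2\le I_1$.

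The main obstacle is a variance convention: whether "functor into $[0,\infty]$" means that arrows $x\to y$ give $A(x)\le A(y)$. Only with that reading does the cartesian lift yield $A(e')\le A(e)$. I will state the convention explicitly, namely that $[0,\infty]$ has its usual order with an arrow $s\to t$ iff $s\le t$, as in the definition of the realization--error datum. I will also note that only the existence of some lift over $u$ is used, not its cartesian universal property. Finally, empty fibers give $\infty$ on both sides, consistent with $\inf\varnothing=\infty$.
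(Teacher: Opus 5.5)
Your proposal is correct and matches the paper's proof step by step: the pointwise limit formula over $(b\downarrow O)$ in the complete poset $[0,\infty]$ gives the first equality, the strict-fiber objects $(e,1_b)$ give one inequality, and a cartesian lift $\bar u:e'\to e$ over $u$ together with functoriality of $A$ (arrows $s\to t$ meaning $s\le t$) gives the other. Your extra remarks are accurate refinements rather than a different route: only the existence of some arrow over $u$ is actually used, and empty fibers give $\infty$ on both sides.
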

\begin{proof}
The first equality is the pointwise formula for a right Kan extension into a complete poset.  Strict-fiber objects form a subset of the comma objects, so the first infimum is at most the second.  Conversely, for $(e,u)$ choose a cartesian lift $\bar u:u^*e\to e$ with $u^*e\in\mathcal E_b$.  Functoriality gives $A(u^*e)\le A(e)$.  Hence the fiber infimum is below every comma value, proving the reverse inequality.
\end{proof}

\begin{remark}
Without the fibration hypothesis the comma category, not the strict fiber, controls the right Kan extension.  This distinction matters when a behavior morphism permits reindexing into cheaper realizations not literally lying over the chosen object.
\end{remark}

\subsection{Intensional and quantitative aggregation}
\begin{proposition}[Behaviorwise code aggregation and quantitative reflection]\label{thm:two-aggregations}
Let $p:\mathcal E\to\mathcal B$ be a functor from a small groupoid to a discrete behavior set, and let $A:\operatorname{Ob}(\mathcal E)\to[0,\infty]$ be isomorphism-invariant.  Then:
\begin{enumerate}[label=(\alph*)]
\item the category of elements of the left Kan aggregate $\Lan_p(y_{\mathcal E})(b)$ is isomorphic to the realization fiber $\mathcal E_b$;
\item the right Kan reflection $(\Ran_pA)(b)=\inf_{e\in\mathcal E_b}A(e)$ depends on the fiber only through this infimum.
\end{enumerate}
Neither construction determines the other, and both may be used simultaneously.
\end{proposition}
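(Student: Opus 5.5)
Part (a) is essentially a restatement of \cref{thm:aggregate}, and I would simply invoke it. The setting matches exactly: $\mathcal E$ is a small groupoid, $\mathcal B$ is discrete, and the pointwise left Kan extension $\Lan_p(y_{\mathcal E})(b)$ is the characteristic presheaf of $\mathcal E_b$. That theorem already gives $\int_{\mathcal E}\Lan_p(y_{\mathcal E})(b)\cong\mathcal E_b$.

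For part (b), the plan has three steps.

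\emph{Step 1: make $A$ a functor.} Since $\mathcal E$ is a groupoid, every morphism $f:e\to e'$ is an isomorphism, so isomorphism-invariance gives $A(e)=A(e')$. Hence $A$ extends to a functor $\mathcal E\to[0,\infty]$, because $A(e)\le A(e')$ holds whenever a morphism exists.

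\emph{Step 2: identify the comma category with the fiber.} Because $[0,\infty]$ is a complete poset, the pointwise right Kan extension exists. It is given by the infimum over $(b\downarrow p)$. Since $\mathcal B$ is discrete, an object $(u:b\to p(e))$ of $(b\downarrow p)$ forces $p(e)=b$ and $u=1_b$. So the comma category has the same objects as the fiber $\mathcal E_b$, and the infimum is $\inf_{e\in\mathcal E_b}A(e)$.

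\emph{Step 3: cross-check.} Alternatively, I would note that a functor into a discrete category from a groupoid is a fibration, with identity cartesian lifts. Then \cref{thm:fibrational-reflection} gives the same formula, and it also agrees with the set-level $Q_A$ of \cref{thm:behavior-reflection} applied to object sets. The phrase ``depends on the fiber only through this infimum'' is then just the observation that the value is this single number.

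The remaining assertion, that neither construction determines the other, is the only nonformal point. I would prove it with two small examples on a one-point base $\mathcal B=\{b\}$.

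\emph{(a) does not determine (b).} Take the fiber to be the single object $e$, and let $A(e)=0$ versus $A(e)=1$. The aggregates, and hence the categories of elements, are identical, but the reflections differ.

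\emph{(b) does not determine (a).} Take the fiber to be one object versus two nonisomorphic objects, with $A\equiv 0$ in both cases. The reflections agree (both are $0$), but the categories of elements are $\mathbf 1$ versus the discrete two-object groupoid, which are not isomorphic.

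Finally, ``both may be used simultaneously'' holds because both constructions are defined from the same data $(p,A)$ without any choice of representative. The main obstacle, mild here, is making the existence and pointwise form of $\Ran_pA$ precise when $A$ is given only on objects. Step 1, extending $A$ to a functor via isomorphism-invariance in the groupoid, handles this.
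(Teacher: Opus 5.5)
Your proposal is correct and takes essentially the same route as the paper. Part (a) is cited from \cref{thm:aggregate}, part (b) is the discrete right Kan formula, and the non-determination claim is shown by varying the fiber at a fixed infimum and by varying $A$ on a fixed fiber. Your extra details (using isomorphism-invariance to make $A$ functorial on the groupoid, and noting that discreteness of $\mathcal B$ collapses $(b\downarrow p)$ to $\mathcal E_b$) make explicit what the paper's one-line appeal to the discrete Kan formula leaves implicit.
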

\begin{proof}
Part (a) is \cref{thm:aggregate}.  Part (b) is the discrete right Kan formula.  Fibers with identical infima can have different components and automorphism groups, while the same fiber can support different magnitude functions, so neither output determines the other.
\end{proof}

\section{Quotient costs, dual tests, and uncertainty}\label{sec:quotient-duality}
For quotient-space duality and weak-star compactness, see \cite{conway,folland}.

\subsection{Quotient duality}
All suprema of nonnegative quantities in this and the subsequent quantitative sections are taken in the complete lattice $[0,\infty]$; in particular, $\sup\varnothing=0$.
\begin{theorem}[Quotient-cost duality]\label{thm:quotient-duality}
Let $X$ be a complex normed space, $B$ a finite-dimensional complex normed vector space, and $T:X\onto B$ a continuous linear map.  Define
\[
  q_T(b):=\inf\{\|x\|_X:Tx=b\}.
\]
Then $q_T$ is the quotient norm induced by $T$ and
\[
  q_T(b)=\sup_{\varphi\in B^*,\,\varphi\ne0}
  \frac{|\varphi(b)|}{\|T^*\varphi\|_{X^*}}.
\]
\end{theorem}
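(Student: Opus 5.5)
The plan is to identify $q_T$ with the quotient norm on $X/\ker T$ and then apply Hahn--Banach duality for quotients. Since $T$ is continuous, $N:=\ker T$ is a closed subspace. Since $T$ is surjective, it induces a linear bijection $\widetilde T:X/N\to B$. By definition,
\[
  q_T(b)=\inf\{\|x\|:x\in\widetilde T^{-1}(b)\}=\|\widetilde T^{-1}b\|_{X/N},
\]
so $q_T$ is the quotient norm transported to $B$. It is a genuine norm, not merely a seminorm, because $N$ is closed. No finite-dimensionality is needed for this first step. Finite-dimensionality of $B$ is used later: it makes every linear functional on $B$ continuous, and it makes $q_T$ equivalent to $\|\cdot\|_B$. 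Consequently every $\varphi\in B^*$ is also continuous for $q_T$.

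For the easy inequality, fix $\varphi\ne0$ and any $x$ with $Tx=b$. Then
\[
  |\varphi(b)|=|(T^*\varphi)(x)|\le\|T^*\varphi\|\,\|x\|.
\]
We also need $T^*\varphi\neq0$. This holds because $T$ is surjective: if $\varphi\circ T=0$ then $\varphi=0$. Taking the infimum over $x$ gives $|\varphi(b)|/\|T^*\varphi\|\le q_T(b)$. For $b=0$ both sides vanish, with $\sup\varnothing$ or the value $0$ interpreted in $[0,\infty]$ as the paper stipulates. This also covers the degenerate case $B=0$.

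For the reverse inequality, fix $b\ne0$. Apply Hahn--Banach on the normed space $(B,q_T)$ to obtain $\psi\in B^*$ with $\psi(b)=q_T(b)$ and $|\psi(c)|\le q_T(c)$ for all $c$. The key estimate is $\|T^*\psi\|_{X^*}\le1$. To see it, note that
\[
  |\psi(Tx)|\le q_T(Tx)\le\|x\|
\]
for every $x$, since $x$ itself is one of the preimages of $Tx$. Hence the ratio at $\psi$ is at least $q_T(b)$, which gives equality. The supremum is then attained.

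The only delicate point is justifying the continuity of $\psi$ and of $T^*\psi$. Here $T^*\psi$ is continuous because it is bounded by $\|x\|$ by the estimate above. Continuity of $\psi$ with respect to $\|\cdot\|_B$ is automatic since $B$ is finite-dimensional. Working in $[0,\infty]$ is harmless because $q_T(b)<\infty$ by surjectivity.
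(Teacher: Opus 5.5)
Your argument is correct and is essentially the paper's proof: both identify $q_T$ with the transported quotient norm on $X/\ker T$ and then use Hahn--Banach norm duality. The only difference is that you check directly, via the easy inequality and the estimate $\|T^*\psi\|_{X^*}\le 1$, that $\varphi\mapsto\|T^*\varphi\|_{X^*}$ is the dual norm of $q_T$, whereas the paper gets this by citing the isometric identification of $(X/\ker T)^*$ with $(\ker T)^\perp$; your handling of where finite-dimensionality enters and of the degenerate cases $b=0$ and $B=0$ is also sound.
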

\begin{proof}
The map $T$ induces a linear isomorphism $\widetilde T:X/\ker T\to B$, and $q_T$ is the transported quotient norm.  The dual of $X/\ker T$ is isometrically the annihilator $(\ker T)^\perp\subseteq X^*$.  Under $\widetilde T$, a functional $\varphi\in B^*$ corresponds to $T^*\varphi$, so the dual norm of $q_T$ is $\varphi\mapsto\|T^*\varphi\|$.  Finite-dimensional norm duality yields the formula.
\end{proof}

\begin{proposition}[Attainment in dual realization spaces]\label{prop:dual-attainment}
Suppose $X=Y^*$, the map $T$ is weak-star continuous, and $B$ is finite-dimensional.  Then the infimum defining $q_T(b)$ is attained.
\end{proposition}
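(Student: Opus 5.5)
Since $T$ is surjective, $q_T(b)<\infty$ for every $b\in B$. The plan is a standard weak-star compactness argument. Fix $b$ and set $c:=q_T(b)$. By definition of the infimum, choose $x_n\in Y^*$ with $Tx_n=b$ and $\|x_n\|\le c+1/n$. The fiber $T^{-1}(b)$ is the preimage of a point under a weak-star continuous map. Since $B$ is finite-dimensional and Hausdorff, points are closed, so this fiber is weak-star closed in $Y^*$. The closed balls $\{\|x\|\le c+1/n\}$ are weak-star compact by Banach--Alaoglu.

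\emph{Step 1.} Set $K_n:=T^{-1}(b)\cap\{x:\|x\|\le c+1/n\}$. This is a decreasing sequence of nonempty weak-star closed subsets of the weak-star compact ball $\{\|x\|\le c+1\}$. \emph{Step 2.} By the finite intersection property, $\bigcap_n K_n\neq\varnothing$. Any $x$ in this intersection satisfies $Tx=b$ and $\|x\|\le c$, hence $\|x\|=c$, and the infimum is attained. Working with these nested compact sets means we never need to extract a weak-star convergent subsequence, so no metrizability of the ball is required. (Equivalently, one could take a weak-star cluster point of a net $(x_n)$ and use weak-star lower semicontinuity of the dual norm.)

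The only point needing care, and the expected main obstacle, is weak-star closedness of the fiber. Weak-star continuity of $T$ into finite-dimensional $B$ means each coordinate $\varphi\circ T$ ($\varphi\in B^*$) is weak-star continuous. Each such coordinate is therefore evaluation at some element of $Y$. Hence $T^{-1}(b)=\bigcap_{\varphi}\{x:\varphi(Tx)=\varphi(b)\}$, an intersection of weak-star closed sets.

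Once the infimum is attained, the supremum formula of \cref{thm:quotient-duality} also shows that $c$ is realized by an actual minimal-norm element. This is what the later compact-group applications will use with $Y=L^1(G)$.
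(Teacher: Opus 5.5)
Your argument is correct and is essentially the paper's. Both proofs use Banach--Alaoglu, weak-star closedness of the fiber $T^{-1}(b)$, and weak-star closedness of the norm balls (equivalently, weak-star lower semicontinuity of the norm); you package the compactness step as a finite-intersection argument on the nested sets $K_n$, where the paper extracts a convergent subnet. Your closing remark is slightly off, since it is the compactness argument, not the supremum formula of \cref{thm:quotient-duality}, that produces the minimal-norm element, but this does not affect the proof.
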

\begin{proof}
Choose $x_n$ with $Tx_n=b$ and $\|x_n\|\downarrow q_T(b)$.  A fixed weak-star compact ball contains the tail.  A convergent subnet has limit $x$; weak-star continuity gives $Tx=b$, and weak-star lower semicontinuity of the norm gives $\|x\|\le q_T(b)$.
\end{proof}

\subsection{Restricted tests and quantitative completeness}
Let $\mathcal C\subseteq B^*$ be a chosen test family and put
\[
  q_{\mathcal C}(b):=
  \sup_{\substack{\varphi\in\mathcal C\\T^*\varphi\ne0}}
  \frac{|\varphi(b)|}{\|T^*\varphi\|}.
\]
If the indexing set is empty, the supremum is defined to be $0$.
\begin{proposition}[Restricted probes give lower bounds]
For every $b$, $q_{\mathcal C}(b)\le q_T(b)$.  Equality for all $b$ holds if and only if the normalized tests in $\mathcal C$ are norming for the quotient space $(B,q_T)$.
\end{proposition}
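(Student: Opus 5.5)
The proof will rest on \cref{thm:quotient-duality}: both $q_{\mathcal C}$ and $q_T$ are computed with the same ratio $|\varphi(b)|/\|T^*\varphi\|$, and only the set of admissible $\varphi$ differs.

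For the inequality $q_{\mathcal C}(b)\le q_T(b)$, I compare the two index sets. Every $\varphi\in\mathcal C$ with $T^*\varphi\ne0$ is a nonzero element of $B^*$. Hence the supremum defining $q_{\mathcal C}(b)$ runs over a subset of the index set in \cref{thm:quotient-duality}. A supremum over a subset is no larger, and this holds also when the subset is empty, since $\sup\varnothing=0$. I will also note that $\varphi\ne0$ already forces $T^*\varphi\ne0$: because $T$ is surjective, $T^*$ is injective. So the restriction $T^*\varphi\ne0$ removes only $\varphi=0$.

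For the characterization, I first fix the meaning of \emph{norming}. By \cref{thm:quotient-duality}, the dual norm of $q_T$ on $B^*$ is $\varphi\mapsto\|T^*\varphi\|$. Normalizing each nonzero test gives $\psi=\varphi/\|T^*\varphi\|$, which lies on the unit sphere of $(B,q_T)^*$. Since the ratio is homogeneous, $q_{\mathcal C}(b)=\sup_\psi|\psi(b)|$ over these normalized tests. The family is norming for $(B,q_T)$ exactly when this supremum equals $q_T(b)$ for every $b$. With this definition the equivalence is essentially immediate: equality for all $b$ is the same statement as the normalized family being norming.

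The main obstacle is making sure this definition of norming matches the paper's intent. If a geometric reformulation is wanted, I would add a bipolar argument. Norming is then equivalent to the absolutely convex hull of the normalized tests being dense in the dual unit ball, and in finite dimensions this means its closure equals the dual unit ball. I expect to use Hahn--Banach in the finite-dimensional space $(B,q_T)$: a vector $b$ with $q_T(b)=1$ but $\sup|\psi(b)|<1$ would be separated from the polar of the normalized family. I would present this only as a remark, keeping the main proof at the level of the sup comparison.
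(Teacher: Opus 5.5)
Your proof is correct and is essentially the paper's argument: the inequality compares a supremum over a subset of the index set in \cref{thm:quotient-duality} with the full supremum, with $\sup\varnothing=0$ covering the empty case, and the equivalence is the definition of a norming subset of the dual unit sphere once each test is normalized by the dual norm $\|T^*\varphi\|$. The bipolar remark is not needed; if you keep it, note that a $b$ with $q_T(b)=1$ and $\sup|\psi(b)|<1$ actually lies in the polar of the normalized family, and what $b$ separates is a norm-one functional attaining $q_T(b)$ from the closed absolutely convex hull of the normalized tests.
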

\begin{proof}
Each displayed ratio is bounded by the full supremum in \cref{thm:quotient-duality}.  Equality for every vector is the definition of a norming subset of the dual unit sphere.
\end{proof}

\begin{remark}[Reconstruction versus quantitative completeness]
A test family can be norming for $q_T$ while failing to reconstruct any particular $x$ in the fiber $T^{-1}(b)$.  Thus the observations can determine $q_T(b)$ even when the non-reconstruction results of \cref{sec:information-loss} still apply.
\end{remark}

\subsection{Robust uncertainty}
Let $U\subseteq B$ be nonempty, compact, and convex, and define its support function on the underlying real vector space by
\[
  \sigma_U(\varphi)=\sup_{u\in U}\operatorname{Re}\varphi(u).
\]
\begin{theorem}[Robust quotient reflection]\label{thm:robust-quotient}
For $b\in B$,
\[
 \inf_{u\in U}q_T(b+u)
 =\sup_{\|T^*\varphi\|\le1}
 \bigl(\operatorname{Re}\varphi(b)-\sigma_U(-\varphi)\bigr).
\]
\end{theorem}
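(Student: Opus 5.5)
The plan is to write the left-hand side as a min--sup problem and apply Sion's minimax theorem \cite{sion}, exchanging the infimum over $U$ with the supremum over dual tests. By \cref{thm:quotient-duality}, with the homogeneous ratio rewritten on the dual unit ball of $q_T$, we have for every $c\in B$
\[
  q_T(c)=\sup_{\varphi\in K}\operatorname{Re}\varphi(c),
  \qquad K:=\{\varphi\in B^*:\|T^*\varphi\|_{X^*}\le1\}.
\]
The passage from $|\varphi(c)|$ to $\operatorname{Re}\varphi(c)$ uses the invariance of $K$ under multiplication by unimodular scalars. Since $T$ is surjective onto finite-dimensional $B$, $T^*$ is injective, so $\varphi\mapsto\|T^*\varphi\|$ is a norm on the finite-dimensional space $B^*$. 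Hence $K$ is a compact convex symmetric set.

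Substituting $c=b+u$ gives
\[
  \inf_{u\in U}q_T(b+u)=\inf_{u\in U}\sup_{\varphi\in K}\Phi(u,\varphi),
  \qquad \Phi(u,\varphi):=\operatorname{Re}\varphi(b)+\operatorname{Re}\varphi(u).
\]
The function $\Phi$ is real-bilinear up to the constant term, continuous on $U\times K$, affine (so convex) in $u$, and affine (so concave) in $\varphi$. Both $U$ and $K$ are compact and convex in finite-dimensional real vector spaces. Sion's theorem therefore permits the exchange
\[
  \inf_{u\in U}\sup_{\varphi\in K}\Phi=\sup_{\varphi\in K}\inf_{u\in U}\Phi .
\]

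For fixed $\varphi$, the inner infimum is
\[
  \operatorname{Re}\varphi(b)+\inf_{u\in U}\operatorname{Re}\varphi(u)
  =\operatorname{Re}\varphi(b)-\sup_{u\in U}\operatorname{Re}(-\varphi)(u)
  =\operatorname{Re}\varphi(b)-\sigma_U(-\varphi).
\]
This is exactly the displayed right-hand side. The outer supremum runs over $\|T^*\varphi\|\le1$, matching the constraint in the statement.

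The only delicate step is the first one: replacing the ratio formula of \cref{thm:quotient-duality} by a supremum of $\operatorname{Re}\varphi(c)$ over the compact ball $K$. This needs two facts. First, the ball $K$ is the dual unit ball of $(B,q_T)$, which requires injectivity of $T^*$ so that $\|T^*\cdot\|$ is a genuine norm and $K$ is compact. Second, the rotation argument must be applied with care over $\mathbb C$: choosing $\lambda$ with $|\lambda|=1$ and $\lambda\varphi(c)=|\varphi(c)|$ shows the two suprema agree. Once this is done, the minimax step is routine, since all compactness and convexity hypotheses of Sion's theorem hold in finite dimensions.
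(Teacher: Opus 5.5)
Your proposal is correct and follows essentially the same route as the paper: rewrite $q_T(b+u)$ via \cref{thm:quotient-duality} as a supremum of $\operatorname{Re}\varphi(b+u)$ over the ball $\|T^*\varphi\|\le1$, which is compact because $T^*$ is injective. You then apply Sion's minimax theorem and identify the inner infimum as $\operatorname{Re}\varphi(b)-\sigma_U(-\varphi)$; your explicit rotation-and-normalization step for passing from $|\varphi(c)|/\|T^*\varphi\|$ to $\operatorname{Re}\varphi(c)$ on the ball is a step the paper's proof leaves implicit.
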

\begin{proof}
By \cref{thm:quotient-duality},
\[
q_T(b+u)=\sup_{\|T^*\varphi\|\le1}\operatorname{Re}\varphi(b+u).
\]
Since $T$ is onto, $T^*$ is injective, so $\varphi\mapsto\|T^*\varphi\|$ is a norm on the finite-dimensional space $B^*$; its closed unit ball is therefore compact.  Thus the uncertainty set and the dual feasible set are compact convex sets in finite-dimensional real spaces, and the integrand is continuous affine in each variable.  Sion's minimax theorem \cite[Theorem~3.4]{sion} permits interchange of infimum and supremum.  Finally
\[
\inf_{u\in U}\operatorname{Re}\varphi(u)=-\sigma_U(-\varphi).
\]
\end{proof}

For a real number $t$, write $t_+=\max\{t,0\}$.
\begin{corollary}[Coordinatewise discs]
For $B=\mathbb C^m$ and $U=\{u:|u_j|\le\eta_j\}$,
\[
 \inf_{u\in U}q_T(b+u)
 =\sup_{c\ne0}
 \frac{\bigl(|\langle b,c\rangle|-\sum_j\eta_j|c_j|\bigr)_+}
 {\|T^*c\|}.
\]
\end{corollary}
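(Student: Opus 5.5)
We specialize \cref{thm:robust-quotient} to $B=\mathbb C^m$ and the polydisc $U=\{u:|u_j|\le\eta_j\}$. First we check the hypotheses: $U$ is a product of closed discs, hence nonempty, compact and convex. Next we identify $B^*$ with $\mathbb C^m$ through $\varphi_c(b)=\langle b,c\rangle=\sum_j b_j\overline{c_j}$; the exact choice of conjugation does not matter, since only moduli will appear. We write $\|T^*c\|$ for $\|T^*\varphi_c\|$. The robust formula then reads
\[
 \inf_{u\in U}q_T(b+u)=\sup_{\|T^*c\|\le1}\Bigl(\operatorname{Re}\langle b,c\rangle-\sigma_U(-c)\Bigr).
\]

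\emph{Support function.} We compute $\sigma_U(-c)=\sup_{|u_j|\le\eta_j}\operatorname{Re}\sum_j(-u_j\overline{c_j})=\sum_j\eta_j|c_j|$. The supremum splits coordinatewise, and in each coordinate it is attained at $u_j=-\eta_j c_j/|c_j|$ when $c_j\ne0$.

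\emph{Phase optimization.} The feasible set $\{\|T^*c\|\le1\}$ and the penalty $\sum_j\eta_j|c_j|$ are both invariant under $c\mapsto e^{i\theta}c$. Optimizing over $\theta$ therefore replaces $\operatorname{Re}\langle b,c\rangle$ by $|\langle b,c\rangle|$, and the right side becomes $\sup_{\|T^*c\|\le1}\bigl(|\langle b,c\rangle|-\sum_j\eta_j|c_j|\bigr)$.

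\emph{Homogenization and the positive part.} The value $c=0$ is feasible and contributes $0$, so the supremum equals the supremum of $0$ together with the values at nonzero feasible $c$. For $c\ne0$ we have $T^*c\ne0$, because $T^*$ is injective ($T$ is onto). The bracket is positively homogeneous of degree one. Hence
\[
\sup_{0<\|T^*c\|\le1}(\cdots)_+=\sup_{c\ne0}\frac{(\cdots)_+}{\|T^*c\|}.
\]
For each $c\ne0$ and $t\in(0,1]$, the value of $(\cdots)$ at $tc/\|T^*c\|$ is $t$ times its value at $c/\|T^*c\|$. If that value is positive, the maximum over $t$ occurs at $t=1$; if it is nonpositive, the best choice is $c=0$, which gives $0$. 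This accounts for the positive part and the division by $\|T^*c\|$.

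The main obstacle is only bookkeeping: keeping the conjugation convention consistent when computing $\sigma_U(-c)$, and justifying the $\max\{0,\cdot\}$ via the feasible point $c=0$. Everything else is a direct substitution into \cref{thm:robust-quotient}.
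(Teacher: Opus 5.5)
Your proposal is correct and follows the paper's proof essentially step for step. Both arguments compute the support function of the polydisc as $\sum_j\eta_j|c_j|$, rotate $c$ by a unimodular scalar to turn $\operatorname{Re}\langle b,c\rangle$ into $|\langle b,c\rangle|$, use injectivity of $T^*$ and positive homogeneity to normalize $\|T^*c\|$, and let the feasible point $c=0$ produce the positive part; the paper writes $\sigma_U(c)$ where you compute $\sigma_U(-c)$, which agree because $U=-U$.
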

\begin{proof}
The support function of $U$ is $\sigma_U(c)=\sum_j\eta_j|c_j|$.  Since $T$ is onto, $T^*$ is injective.  In the formula of \cref{thm:robust-quotient}, rotate a nonzero $c$ by a scalar of modulus one so that the real part of the pairing is its absolute value, and then use homogeneity to normalize $\|T^*c\|$ to one.  The feasible choice $c=0$ accounts for the positive part.
\end{proof}

\section{Guarded transfer and approximate completeness}\label{sec:guarded-transfer}

\begin{proposition}[Test-dominated transfer]\label{thm:test-transfer}
Let $X_{\mathrm{ext}}$ be an external realization class with observation $o:X_{\mathrm{ext}}\to B$ and magnitude $A_{\mathrm{ext}}$.  Let $\lambda$ be a norm on $B^*$.  Suppose that on the stated guard,
\[
  |\varphi(o(x))|\le A_{\mathrm{ext}}(x)\,\lambda(\varphi)
  \qquad(\varphi\in B^*).
\]
Then
\[
  A_{\mathrm{ext}}(x)\ge q_\lambda(o(x)),\qquad
  q_\lambda(b):=\sup_{\varphi\ne0}
  \frac{|\varphi(b)|}{\lambda(\varphi)}.
\]
If $\lambda(\varphi)=\|T^*\varphi\|$ for a quotient realization map $T:X\onto B$, then $q_\lambda=q_T$.
\end{proposition}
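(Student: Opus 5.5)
The first claim is a direct rearrangement of the hypothesis. Fix $x$ in the guard and write $b=o(x)$.

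Let $\varphi\in B^*$ with $\varphi\ne0$. Since $\lambda$ is a norm on $B^*$, we have $\lambda(\varphi)>0$. Dividing the hypothesis by $\lambda(\varphi)$ gives
\[
  \frac{|\varphi(b)|}{\lambda(\varphi)}\le A_{\mathrm{ext}}(x).
\]
This inequality is meaningful in $[0,\infty]$ even when $A_{\mathrm{ext}}(x)=\infty$, where it is trivially true. Taking the supremum over all nonzero $\varphi$ in the complete lattice $[0,\infty]$ yields $q_\lambda(b)\le A_{\mathrm{ext}}(x)$. If $B=0$, the indexing set is empty and the supremum is $0$ by the convention $\sup\varnothing=0$ fixed at the start of \cref{sec:quotient-duality}. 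The only care needed here is to note that strict positivity of $\lambda$ on nonzero functionals is what justifies the division.

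For the identification, assume $\lambda(\varphi)=\|T^*\varphi\|_{X^*}$ with $T:X\onto B$ continuous, linear and surjective, as in \cref{thm:quotient-duality}. We first check that $\lambda$ is really a norm. It is a seminorm because $T^*$ is linear. It is definite because surjectivity of $T$ makes $T^*$ injective: if $T^*\varphi=0$ then $\varphi(Tx)=0$ for all $x$, so $\varphi=0$. Once $\lambda$ is a norm, the defining formula for $q_\lambda$ coincides term by term with the dual formula
\[
  q_T(b)=\sup_{\varphi\ne0}\frac{|\varphi(b)|}{\|T^*\varphi\|}
\]
in \cref{thm:quotient-duality}, so $q_\lambda=q_T$.

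None of these steps is a real obstacle. The one point worth stating carefully is that the equality relies on the full quotient duality of \cref{thm:quotient-duality}, not merely on the inequality $q_\lambda\le q_T$. That duality is where finite-dimensionality of $B$ enters. Everything else is bookkeeping in $[0,\infty]$ together with the empty-supremum convention.
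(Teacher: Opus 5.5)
Your proposal is correct and follows the paper's own proof: divide the hypothesis by $\lambda(\varphi)>0$ for $\varphi\ne0$, take the supremum in $[0,\infty]$, and read off $q_\lambda=q_T$ from the dual formula of \cref{thm:quotient-duality}. Your additional checks are harmless bookkeeping that the paper leaves implicit: definiteness of $\varphi\mapsto\|T^*\varphi\|$ via injectivity of $T^*$, the case $A_{\mathrm{ext}}(x)=\infty$, and the empty-supremum case $B=0$.
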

\begin{proof}
Divide the assumed inequality by $\lambda(\varphi)$ for $\varphi\ne0$ and take the supremum.  The final assertion is \cref{thm:quotient-duality}.
\end{proof}

\begin{definition}[Approximate quantitative completeness]
An external model is approximately complete for $q:B\to[0,\infty]$ if for every $b$ with $q(b)<\infty$ and every $\eta>0$ there is $x$ such that
\[
  o(x)=b,
  \qquad A_{\mathrm{ext}}(x)\le q(b)+\eta.
\]
\end{definition}

\begin{proposition}[Infimum formula from soundness and completeness]
If \cref{thm:test-transfer} gives $A_{\mathrm{ext}}(x)\ge q(o(x))$ and the external model is approximately complete, then
\[
  q(b)=\inf_{o(x)=b}A_{\mathrm{ext}}(x).
\]
The infimum need not be attained in the external model.
\end{proposition}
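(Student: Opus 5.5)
The proof is a two-sided squeeze. We show that $q(b)$ is at most the infimum $\inf_{o(x)=b}A_{\mathrm{ext}}(x)$ and at least that infimum, treating separately the case in which the fiber $o^{-1}(b)$ is empty.

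For the lower bound $q(b)\le\inf_{o(x)=b}A_{\mathrm{ext}}(x)$, fix any $x$ with $o(x)=b$. The soundness hypothesis supplied by \cref{thm:test-transfer} gives $A_{\mathrm{ext}}(x)\ge q(o(x))=q(b)$. Since this holds for every member of the fiber, $q(b)$ lies below the infimum. This is the same argument as the first half of \cref{thm:behavior-reflection}, with $E=X_{\mathrm{ext}}$, $O=o$ and $H=q$. The inequality holds trivially when the fiber is empty, because then the infimum is $\infty$.

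For the reverse inequality, first suppose $q(b)<\infty$. Approximate completeness provides, for each $\eta>0$, some $x_\eta$ with $o(x_\eta)=b$ and $A_{\mathrm{ext}}(x_\eta)\le q(b)+\eta$. The infimum is therefore at most $q(b)+\eta$ for every $\eta$, and letting $\eta\to0$ gives $\inf\le q(b)$. If instead $q(b)=\infty$, the inequality $\inf\le\infty$ holds trivially in $[0,\infty]$. This covers in particular the case of an empty fiber. There the completeness hypothesis, applied contrapositively, forces $q(b)=\infty$, so the two sides agree under the convention $\inf\varnothing=\infty$.

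The only real subtlety is the bookkeeping in these extended-value and empty-fiber cases. One point to record is that approximate completeness is stated only for $b$ with $q(b)<\infty$, so that the case $q(b)=\infty$ needs no witness.

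For the non-attainment remark, it suffices to cite \cref{prop:dual-attainment}. It shows that attainment is a compactness phenomenon, and it need not hold without a weak-star structure. A simple example makes this explicit. Take the external model to be the realizations $x_n$ with $o(x_n)=b$ and $A_{\mathrm{ext}}(x_n)=q(b)+1/n$. This model is approximately complete, yet no single realization attains the value $q(b)$.
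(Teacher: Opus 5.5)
Your squeeze argument is correct and is the paper's proof: soundness gives $q(b)\le\inf_{o(x)=b}A_{\mathrm{ext}}(x)$, approximate completeness gives the reverse inequality when $q(b)<\infty$ after letting $\eta\downarrow0$, and the case $q(b)=\infty$, including an empty fiber, is settled by the convention $\inf\varnothing=\infty$. The paper gives no argument for the non-attainment remark; citing \cref{prop:dual-attainment} does not help, since it proves attainment rather than its failure, and your toy model must populate every fiber (e.g.\ all pairs $(b',t)$ with $t>q(b')$, $o(b',t)=b'$, $A_{\mathrm{ext}}(b',t)=t$) because approximate completeness is required at every $b'$ with $q(b')<\infty$, such as $b'=0$.
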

\begin{proof}
Soundness gives $q(b)\le\inf_{o(x)=b}A_{\mathrm{ext}}(x)$.  If $q(b)<\infty$, approximate completeness gives the reverse inequality after letting $\eta\downarrow0$.  If $q(b)=\infty$, soundness forces every magnitude in the fiber to be infinite; with $\inf\varnothing=\infty$, the same equality holds for an empty fiber.
\end{proof}

\begin{theorem}[Guarded quantitative realization principle]\label{thm:guarded-bounds}
For a realization--error datum, assume:
\begin{enumerate}[label=(G\arabic*)]
\item $U(r)$ is a sound compositional upper certificate constructed on the actual rule/domain guard;
\item $o=O\circ\Err$ satisfies a test-dominated transfer inequality with quotient cost $q$;
\item $o$ and $A\circ\Err$ are invariant under the realization isomorphisms being quotiented.
\end{enumerate}
Then every guarded realization satisfies
\[
  q(o(r))\le A(\Err(r))\le U(r).
\]
The lower endpoint is the greatest conclusion available from $o(r)$ alone when $q$ is the behaviorwise reflection; the upper endpoint may still depend on occurrence structure and path history.  The lower inequality and its middle term descend to the realization groupoid by (G3).  The full numerical sandwich descends to a further quotient only when $U$ is invariant under the isomorphisms used in that quotient.
\end{theorem}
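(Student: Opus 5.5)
The proof splits into the two inequalities, the optimality claim, and the descent assertions.

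For the upper inequality $A(\Err(r))\le U(r)$, I would take (G1) as supplying exactly a sound upper certificate in the sense of the occurrence-based upper certificate definition. If $U$ is built compositionally, \cref{thm:ordered-path-certificate} gives soundness on the composite guard: (S1)--(S3) hold on the actual rule and domain guard, so the product certificate along the realizing path is sound. I would stress that this step is applied only on the stated guard, and that nothing here needs behavior-level information.

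For the lower inequality, I would apply \cref{thm:test-transfer} with $X_{\mathrm{ext}}$ the guarded realizations, $o=O\circ\Err$ and $A_{\mathrm{ext}}=A\circ\Err$. Hypothesis (G2) is precisely the test-dominated inequality $|\varphi(o(r))|\le A(\Err(r))\lambda(\varphi)$. Hence $q(o(r))=q_\lambda(o(r))\le A(\Err(r))$, and when $\lambda(\varphi)=\|T^*\varphi\|$, \cref{thm:quotient-duality} identifies $q$ with $q_T$. Chaining with the upper bound, as in \cref{prop:basic-sandwich}, gives the displayed sandwich.

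For the optimality clause, assume $q$ is the behaviorwise reflection $Q_{A\circ\Err}$ of the magnitude along $o$. Then \cref{thm:behavior-reflection} says that any $H$ with $H\circ o\le A\circ\Err$ satisfies $H\le q$. So $q(o(r))$ is the greatest lower bound depending only on $o(r)$. In the non-discrete case I would instead invoke \cref{thm:fibrational-reflection}, under the functoriality hypotheses on $A$.

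For descent, (G3) makes $o$ and $A\circ\Err$ constant on isomorphism classes, so they factor through the set of components of the realization groupoid. The inequality $q\circ o\le A\circ\Err$ then holds classwise. For a coarser quotient, the right-hand inequality $A\circ\Err\le U$ is well defined on classes only if $U$ is constant on them, so invariance of $U$ is exactly what is needed; this is a representative-independence check as in \cref{thm:partial-congruence}.

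The main obstacle is bookkeeping rather than mathematics: the guard used in (G1) and the guard used in (G2) must be the same realization subclass. I would state the theorem on their intersection, "every guarded realization", and check that the composite guard of \cref{thm:ordered-path-certificate} lies within it.
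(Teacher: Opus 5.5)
Your proposal is correct and follows the paper's proof essentially step for step: the lower inequality comes from \cref{thm:test-transfer} with $o=O\circ\Err$ and $A_{\mathrm{ext}}=A\circ\Err$, the upper inequality is (G1), optimality is \cref{thm:behavior-reflection}, and descent uses (G3), with invariance of $U$ needed only for a further quotient. One small correction to your non-discrete aside: there, optimality comes from the universal property of $\Ran_OA$ itself, not from the fiber formula of \cref{thm:fibrational-reflection}, which computes the value and additionally requires $O$ to be a Grothendieck fibration.
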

\begin{proof}
The left inequality is \cref{thm:test-transfer}; the right inequality is (G1).  Optimality of the lower endpoint is \cref{thm:behavior-reflection}.  Assumption (G3) makes the lower and middle quantities well defined on the chosen realization-isomorphism classes.  No invariance of the occurrence-sensitive quantity $U$ is needed for the realization-level inequality; invariance of $U$ is required only if that endpoint is also to be viewed on a quotient.
\end{proof}

\subsection{Finite constraints and global realizations}
\begin{proposition}[Weak-star finite-intersection principle]\label{thm:weakstar-fip}
Let $X=Y^*$ and let $\ell_j:X\to\mathbb C$ $(j\in J)$ be weak-star continuous linear tests, equivalently evaluations against elements of $Y$.  Fix coefficients $(a_j)$ and $M\ge0$.  If for every finite $F\subseteq J$ there is $x_F\in X$ with
\[
  \|x_F\|\le M,\qquad \ell_j(x_F)=a_j\quad(j\in F),
\]
then there is one $x\in X$ satisfying all constraints and $\|x\|\le M$.
\end{proposition}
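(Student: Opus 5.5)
The plan is a standard compactness argument in the weak-star topology on the closed ball $M\,\mathrm{Ball}(X)$. For each $j\in J$ I put
\[
  K_j:=\{x\in X:\ \|x\|\le M,\ \ell_j(x)=a_j\}.
\]
The goal is to show that $\bigcap_{j\in J}K_j\neq\varnothing$. Any point of this intersection is the required simultaneous solution with $\|x\|\le M$.

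The first step is to record that each $K_j$ is weak-star closed. The ball $\{\|x\|\le M\}$ is weak-star closed, since the dual norm is a supremum of the weak-star continuous maps $x\mapsto|x(y)|$ over $\|y\|\le1$. The level set $\ell_j^{-1}(a_j)$ is weak-star closed because $\ell_j$ is weak-star continuous by hypothesis; such $\ell_j$ is evaluation at some $y_j\in Y$, although only continuity is needed here. By Banach--Alaoglu the ball $\{\|x\|\le M\}$ is weak-star compact. Hence every $K_j$ is a closed subset of one fixed compact Hausdorff space.

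The second step is the finite intersection property. For a finite $F\subseteq J$, the hypothesis supplies $x_F$ with $\|x_F\|\le M$ and $\ell_j(x_F)=a_j$ for all $j\in F$. Thus $x_F\in\bigcap_{j\in F}K_j$, and this finite intersection is nonempty. Compactness of the ball then gives $\bigcap_{j\in J}K_j\neq\varnothing$. If $J$ is empty, any element of the ball works, for example $x=0$.

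The only point needing care, which I do not expect to be a real obstacle, is the closedness of the norm ball in the weak-star topology, i.e. weak-star lower semicontinuity of the norm. This is the same fact already used in \cref{prop:dual-attainment}, and I will cite it rather than reprove it. No metrizability or sequence extraction is needed, so the argument works for arbitrary index sets $J$. This is why the statement is phrased via finite subsystems rather than sequences.
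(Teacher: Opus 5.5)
Your proposal is correct and follows the same route as the paper: Banach--Alaoglu compactness of the radius-$M$ ball, weak-star closedness of the constraint sets, the finite-intersection property from the finite subsystems, and compactness to obtain a common point. Indexing the closed sets by single $j$ rather than by finite $F$, and treating $J=\varnothing$ separately, are harmless variations on the same argument.
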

\begin{proof}
The radius-$M$ ball is weak-star compact.  For finite $F$, the set of points satisfying the constraints indexed by $F$ is weak-star closed and nonempty.  Finite intersections correspond to unions of finite index sets, so the family has the finite-intersection property.  Compactness gives a common point.
\end{proof}

\begin{remark}
This is a compactness theorem for linear constraints, not stack descent.  Its gluing mechanism is weak-star compactness rather than a Grothendieck topology and cocycle data.
\end{remark}

\compactpart{IV}{Compact Frequency-Orbit Error Observers}

\section{Continuous and discrete orbit compactifications}\label{sec:orbits}
For the continuous and discrete Kronecker--Weyl theorems, including frequencies with rational relations, see \cite{bailleul}; for harmonic-analysis background, see \cite{rudin,katznelson}.
Write $\mathbb T=\{z\in\mathbb C:|z|=1\}$, and normalize Haar measure on every compact group to have total mass one.

\subsection{Continuous frequency flows}
Fix distinct real frequencies $\Gamma=(\gamma_1,\dots,\gamma_m)$ and define
\[
 \Phi_\Gamma:\mathbb R\to\mathbb T^m,
 \qquad
 \Phi_\Gamma(y)=(e^{i\gamma_1y},\dots,e^{i\gamma_my}).
\]
Let $G_\Gamma=\overline{\Phi_\Gamma(\mathbb R)}$ and let $\chi_j(z)=z_j$ be the restricted coordinate characters.  We write
\[
 q_\Gamma:=q_{G_\Gamma,\chi},\qquad \lambda_\Gamma:=\lambda_{G_\Gamma,\chi}.
\]

\begin{proposition}[Continuous Kronecker--Weyl averaging]\label{prop:continuous-KW}
For $\phi\in C(G_\Gamma)$,
\[
 \lim_{Y\to\infty}\frac1Y\int_0^Y\phi(\Phi_\Gamma(y))\,dy
 =\int_{G_\Gamma}\phi\,dm_\Gamma.
\]
Moreover,
\[
 G_\Gamma^\perp=\{n\in\mathbb Z^m:n\cdot\Gamma=0\}.
\]
Thus $G_\Gamma=\mathbb T^m$ if and only if the frequencies are linearly independent over $\mathbb Q$.
\end{proposition}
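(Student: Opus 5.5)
The plan is to prove the averaging formula first for characters, extend it to $C(G_\Gamma)$ by density, and then read off the annihilator from the resulting orthogonality relations.

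\emph{Step 1: characters of $\mathbb T^m$.} Every continuous character of the closed subgroup $G_\Gamma\le\mathbb T^m$ is the restriction of a character $z\mapsto z^n:=\prod_j z_j^{n_j}$ of $\mathbb T^m$ with $n\in\mathbb Z^m$; this uses Pontryagin duality for closed subgroups of compact abelian groups. For such a character,
\[
\phi(\Phi_\Gamma(y))=e^{i(n\cdot\Gamma)y}.
\]
If $n\cdot\Gamma=0$, the Ces\`aro average equals $1$. If $n\cdot\Gamma\ne0$, the average is
\[
\frac{e^{i(n\cdot\Gamma)Y}-1}{i(n\cdot\Gamma)Y},
\]
which tends to $0$.

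\emph{Step 2: identify the annihilator.} Since $\Phi_\Gamma(\mathbb R)$ is dense in $G_\Gamma$ and characters are continuous, $z^n$ is identically $1$ on $G_\Gamma$ exactly when $e^{i(n\cdot\Gamma)y}=1$ for all $y\in\mathbb R$. That happens exactly when $n\cdot\Gamma=0$, which gives
\[
G_\Gamma^\perp=\{n\in\mathbb Z^m:n\cdot\Gamma=0\}.
\]
For Haar measure $m_\Gamma$, orthogonality of characters gives $\int_{G_\Gamma}z^n\,dm_\Gamma=1$ when the restricted character is trivial and $0$ otherwise. By Step 1 the two sides of the averaging formula therefore agree on every restricted character.

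\emph{Step 3: extend to $C(G_\Gamma)$.} The two sides agree on linear combinations of restricted characters. These restrictions form a unital self-adjoint subalgebra of $C(G_\Gamma)$ that separates points, because the coordinate functions $z_j$ already do. By Stone--Weierstrass this subalgebra is uniformly dense. Both functionals
\[
\phi\mapsto\frac1Y\int_0^Y\phi(\Phi_\Gamma(y))\,dy
\qquad\text{and}\qquad
\phi\mapsto\int_{G_\Gamma}\phi\,dm_\Gamma
\]
have norm at most $1$ in the sup norm, uniformly in $Y$. A $3\varepsilon$ argument then passes the limit to all $\phi\in C(G_\Gamma)$. Using Stone--Weierstrass here means Step 1 only needs the characters of $\mathbb T^m$ for the averaging part, not the full duality statement.

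\emph{Step 4: the criterion $G_\Gamma=\mathbb T^m$.} A closed subgroup $H\le\mathbb T^m$ equals $\mathbb T^m$ iff $H^\perp=0$. This again rests on duality: if $H$ were proper, the quotient $\mathbb T^m/H$ would be a nontrivial compact abelian group, so it has a nontrivial character, and pulling that back gives a nonzero element of $H^\perp$. By Step 2, $G_\Gamma^\perp=0$ means no nonzero integer vector $n$ has $n\cdot\Gamma=0$. Clearing denominators, this is linear independence of $\gamma_1,\dots,\gamma_m$ over $\mathbb Q$.

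The main obstacle is the appeal to Pontryagin duality in Steps 2 and 4, namely that a proper closed subgroup has nontrivial annihilator. I would cite \cite{rudin} for this rather than reprove it.
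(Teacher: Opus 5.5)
Your proposal is correct and follows essentially the same route as the paper: evaluate each restricted character $z^n$ along the orbit as $e^{i(n\cdot\Gamma)y}$, check that its time average and its Haar integral are both $1$ when $n\cdot\Gamma=0$ and both $0$ otherwise, extend to $C(G_\Gamma)$ by Stone--Weierstrass, and read off the annihilator and the full-torus criterion. You spell out one step the paper leaves implicit, the fact that a proper closed subgroup of $\mathbb T^m$ has a nonzero annihilator, which is needed only for the direction from $\mathbb Q$-independence to $G_\Gamma=\mathbb T^m$. Your Step~2 needs no duality at all, only density of the orbit; and even in Step~4 duality could be avoided, since the averages then also converge to the Haar integral of $\mathbb T^m$, forcing $m_\Gamma=m_{\mathbb T^m}$ and hence $G_\Gamma=\mathbb T^m$.
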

\begin{proof}
For $n\in\mathbb Z^m$, the restricted character $z^n$ has orbit value $e^{i(n\cdot\Gamma)y}$.  Its time average and Haar integral are both one when $n\cdot\Gamma=0$ and both zero otherwise.  Restricted character polynomials are uniformly dense by Stone--Weierstrass.  The annihilator description and the full-torus criterion follow.
\end{proof}

\subsection{Discrete frequency orbits}
Let $\Theta=(\theta_1,\dots,\theta_m)\in(\mathbb R/2\pi\mathbb Z)^m$ have pairwise distinct phases and define
\[
 \Psi_\Theta(n)=(e^{in\theta_1},\dots,e^{in\theta_m}),\qquad
 G_\Theta=\overline{\Psi_\Theta(\mathbb Z)}.
\]
For the restricted coordinate characters on $G_\Theta$, write
\[
 q_\Theta:=q_{G_\Theta,\chi},\qquad \lambda_\Theta:=\lambda_{G_\Theta,\chi}.
\]
\begin{proposition}[Discrete Kronecker--Weyl averaging]\label{prop:discrete-KW}
For $\phi\in C(G_\Theta)$,
\[
 \lim_{N\to\infty}\frac1N\sum_{n=0}^{N-1}\phi(\Psi_\Theta(n))
 =\int_{G_\Theta}\phi\,dm_\Theta.
\]
Furthermore,
\[
 G_\Theta^\perp=\{k\in\mathbb Z^m:k\cdot\Theta\in2\pi\mathbb Z\}.
\]
Thus $G_\Theta=\mathbb T^m$ iff $1,\theta_1/(2\pi),\dots,\theta_m/(2\pi)$ are linearly independent over $\mathbb Q$.
\end{proposition}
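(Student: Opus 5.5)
The plan mirrors the proof of \cref{prop:continuous-KW}, replacing the flow $\Phi_\Gamma$ by the homomorphism $\Psi_\Theta:\mathbb Z\to\mathbb T^m$ and time averages by Ces\`aro averages. Since $\Psi_\Theta$ is a group homomorphism, its image is a subgroup of $\mathbb T^m$, so the closure $G_\Theta$ is a closed subgroup. It is therefore a compact metrizable abelian group with normalized Haar measure $m_\Theta$. Every character of $G_\Theta$ extends to a character of $\mathbb T^m$ (Pontryagin duality for closed subgroups), so the restricted monomials $z^k$, $k\in\mathbb Z^m$, exhaust the characters of $G_\Theta$. Two monomials $z^k$ and $z^{k'}$ restrict to the same character exactly when $k-k'\in G_\Theta^\perp$.

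First I will verify the averaging formula on characters. The monomial $z^k$ evaluated along the orbit is $e^{in(k\cdot\Theta)}$. If $k\cdot\Theta\in2\pi\mathbb Z$ this equals $1$ for all $n$, so the average is $1$. Otherwise the geometric sum gives
\[
\Bigl|\frac1N\sum_{n=0}^{N-1}e^{in(k\cdot\Theta)}\Bigr|\le\frac{2}{N\,|1-e^{ik\cdot\Theta}|}\to0 .
\]
On the Haar side, $\int_{G_\Theta}z^k\,dm_\Theta$ is $1$ if $z^k$ is trivial on $G_\Theta$ and $0$ otherwise, by character orthogonality. The character $z^k$ is trivial on $G_\Theta$ iff it is trivial on the dense subgroup $\Psi_\Theta(\mathbb Z)$, i.e.\ iff $e^{in k\cdot\Theta}=1$ for all $n$, i.e.\ iff $k\cdot\Theta\in2\pi\mathbb Z$. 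This simultaneously proves the annihilator formula and matches the two sides for every monomial.

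Next I pass to general $\phi\in C(G_\Theta)$ by density. The restricted monomials form a self-adjoint, point-separating, unital subalgebra of $C(G_\Theta)$, because the coordinates separate points. Stone--Weierstrass therefore makes trigonometric polynomials uniformly dense. Both functionals, the Ces\`aro average and $\int\cdot\,dm_\Theta$, have norm at most $1$ in the sup norm. A standard $\varepsilon/3$ argument then transfers the limit from polynomials to all of $C(G_\Theta)$.

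Finally, the full-torus criterion. $G_\Theta=\mathbb T^m$ iff $G_\Theta^\perp=\{0\}$, because a proper closed subgroup has a nontrivial annihilator by duality. Now $k\cdot\Theta\in2\pi\mathbb Z$ with $k\ne0$ means $\sum_j k_j\theta_j/(2\pi)-\ell=0$ for some $\ell\in\mathbb Z$, which is a nontrivial integer relation among $1,\theta_1/(2\pi),\dots,\theta_m/(2\pi)$. Conversely, any rational relation with the coefficient of some $\theta_j$ nonzero clears to such a $k\ne0$; a relation involving only the coefficient of $1$ is trivial. Hence trivial annihilator is equivalent to $\mathbb Q$-linear independence.

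The only step needing care is the use of duality: that characters of $G_\Theta$ all come from restricted monomials, and that $\Lambda^\perp=\{0\}$ forces $\Lambda=\mathbb T^m$. I would cite \cite{rudin} for both rather than reprove them. Everything else is the elementary geometric-sum estimate.
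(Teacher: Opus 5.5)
Your proposal is correct and follows the paper's route. Both evaluate the Ces\`aro average and the Haar integral on each restricted character $z^k$ via the geometric sum and orthogonality, read off the annihilator, and extend to all of $C(G_\Theta)$ by Stone--Weierstrass. Your explicit appeal to duality (a proper closed subgroup of $\mathbb T^m$ has nonzero annihilator) fills in a step the paper leaves implicit in the full-torus criterion, while the extension of characters from $G_\Theta$ to $\mathbb T^m$ is not actually needed for the argument.
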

\begin{proof}
Apply the geometric-sum average to each character $z^k$ and then argue by Stone--Weierstrass as in \cref{prop:continuous-KW}.
\end{proof}

\begin{remark}[Repeated phases]
In the present interpolation setup, phases equal modulo $2\pi$ are merged because they define the same character.  Alternatively, they may be retained only together with the corresponding compatibility constraints on their coefficients.  Counting periodic copies as independent probes duplicates information without changing the observation map.
\end{remark}

\begin{remark}[Relations among characters]
The annihilator lattice records the integer relations among the characters. Replacing $G_\Gamma$ or $G_\Theta$ by a full torus discards these relations and can change the quantitative reflection.
\end{remark}

\section{Compact-group interpolation and duality}\label{sec:compact-interpolation}
For character orthogonality and compact-group Fourier analysis, see \cite{rudin,katznelson}.
Let $G$ be a compact metrizable abelian group with Haar probability measure $m$, and let $\chi_1,\dots,\chi_m\in\widehat G$ be distinct characters.  For $c\in\mathbb C^m$ put
\[
 P_c(z)=\sum_{j=1}^m c_j\chi_j(z),
 \qquad
 \lambda_{G,\chi}(c)=\int_G|P_c(z)|\,dm(z).
\]
Use the Hermitian pairing $\langle a,c\rangle=\sum_ja_j\overline{c_j}$.

\begin{lemma}
The function $\lambda_{G,\chi}$ is a norm on $\mathbb C^m$.
\end{lemma}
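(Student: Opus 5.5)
The plan is to verify the three norm axioms directly from the definition $\lambda_{G,\chi}(c)=\int_G|P_c|\,dm$. Throughout, we use that each character $\chi_j:G\to\mathbb T$ is continuous, so $P_c$ is a bounded continuous function on the compact group $G$. Consequently $\lambda_{G,\chi}(c)$ is finite and nonnegative.

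Homogeneity and the triangle inequality are the routine steps. Linearity of $c\mapsto P_c$ gives $P_{\alpha c}=\alpha P_c$ and $P_{c+c'}=P_c+P_{c'}$ pointwise. Integrating $|\alpha P_c|=|\alpha||P_c|$ and $|P_c+P_{c'}|\le|P_c|+|P_{c'}|$ against the Haar probability measure $m$ then yields $\lambda(\alpha c)=|\alpha|\lambda(c)$ and $\lambda(c+c')\le\lambda(c)+\lambda(c')$.

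Definiteness is the only step with content. Suppose $\lambda_{G,\chi}(c)=0$. Then $P_c=0$ $m$-almost everywhere, so $P_c$ vanishes in $L^1(G)$ and hence in $L^2(G)$, since $G$ has finite measure. We then invoke character orthogonality on the compact abelian group $G$ with normalized Haar measure:
\[
\int_G\chi_j\overline{\chi_k}\,dm=\delta_{jk}.
\]
This holds for distinct characters because $\chi_j\overline{\chi_k}$ is then a nontrivial character, and invariance of Haar measure forces its integral to vanish. Applying this gives
\[
c_k=\int_G P_c\,\overline{\chi_k}\,dm=0
\]
for each $k$, so $c=0$.

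A slightly cleaner route for definiteness avoids the almost-everywhere step altogether. Since $P_c$ is continuous, the bound $|c_k|=\bigl|\int_G P_c\overline{\chi_k}\,dm\bigr|\le\int_G|P_c|\,dm=\lambda(c)$ holds directly, and it shows $\max_k|c_k|\le\lambda_{G,\chi}(c)$. This gives definiteness immediately, together with a useful comparison with the sup-norm on $\mathbb C^m$. I would present this estimate as the main step. The only real obstacle is justifying orthogonality, which needs two facts: that the characters are distinct, and that a nontrivial character has zero Haar integral. The latter follows from translation invariance: choose $g$ with $\chi(g)\ne1$; then $\int\chi\,dm=\chi(g)\int\chi\,dm$, so the integral is zero.
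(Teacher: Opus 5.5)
Your proof is correct and follows the paper's argument. Homogeneity and the triangle inequality come from the corresponding properties of the $L^1$ norm, and definiteness comes from orthogonality of distinct characters, which recovers $c_k=\int_G P_c\overline{\chi_k}\,dm$. Your bound $\max_k|c_k|\le\lambda_{G,\chi}(c)$ is a small quantitative sharpening of that same step, and neither it nor your almost-everywhere argument actually needs the continuity of $P_c$ that the paper invokes.
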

\begin{proof}
Absolute homogeneity and the triangle inequality follow from the corresponding properties of the $L^1$ norm.  Distinct characters are orthonormal in $L^2(G)$.  If $P_c=0$ almost everywhere, continuity gives $P_c=0$ everywhere and Fourier orthogonality gives every $c_j=0$.
\end{proof}

\begin{definition}[Compact-group interpolation norm]
Define
\[
 q_{G,\chi}(a)=\inf\left\{\|f\|_{L^\infty(G)}:
 \int_G f\overline{\chi_j}\,dm=a_j\ (1\le j\le m)\right\}.
\]
The admissible set is nonempty because $f=\sum_ja_j\chi_j$ has the required coefficients.
\end{definition}

\begin{theorem}[Compact-group interpolation duality]\label{thm:compact-duality}
For every $a\in\mathbb C^m$,
\[
 q_{G,\chi}(a)=\sup_{c\ne0}
 \frac{|\langle a,c\rangle|}{\lambda_{G,\chi}(c)}.
\]
The infimum is attained in $L^\infty(G)$.  For every $\eta>0$ there is also $f\in C(G)$ whose coefficients are $a_1,\dots,a_m$ and
\[
 \|f\|_\infty\le q_{G,\chi}(a)+\eta.
\]
\end{theorem}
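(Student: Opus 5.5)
The plan is to realize $q_{G,\chi}$ as a quotient norm and then apply \cref{thm:quotient-duality} and \cref{prop:dual-attainment}.

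\textbf{Setup.} Take $X=L^\infty(G)=L^1(G)^*$ and define $T:X\to\mathbb C^m$ by
\[
Tf=\Bigl(\int_G f\overline{\chi_j}\,dm\Bigr)_{j=1}^m .
\]
Each coordinate is the pairing of $f$ with $\overline{\chi_j}\in L^1(G)$. Hence $T$ is weak-star continuous and bounded. It is surjective because $T(\sum_j a_j\chi_j)=a$ by orthonormality of distinct characters. By definition $q_{G,\chi}=q_T$, so \cref{prop:dual-attainment} gives attainment of the infimum immediately.

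\textbf{Duality formula.} Identify $(\mathbb C^m)^*$ with $\mathbb C^m$ through $\varphi_c(a)=\langle a,c\rangle=\sum_j a_j\overline{c_j}$. Then
\[
\varphi_c(Tf)=\int_G f\,\overline{P_c}\,dm,
\]
so $T^*\varphi_c$ is the functional on $L^\infty$ given by integration against $\overline{P_c}\in L^1$. The main point is to compute $\|T^*\varphi_c\|_{(L^\infty)^*}=\|P_c\|_{L^1}=\lambda_{G,\chi}(c)$. The inequality $\le$ is Hölder's inequality. For $\ge$, test against $f=\operatorname{sgn}(P_c)$, i.e. $P_c/|P_c|$ where $P_c\ne0$ and $0$ elsewhere. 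This $f$ is measurable with norm at most $1$ and gives $\int_G |P_c|\,dm$. Note that the norm of an $L^1$ element viewed inside $(L^\infty)^*$ equals its $L^1$ norm, even though $L^\infty$ is not the predual. Substituting into \cref{thm:quotient-duality} yields the stated formula. The map $c\mapsto\varphi_c$ is conjugate-linear, but that does not affect the supremum over nonzero functionals.

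\textbf{Continuous near-optimizers.} This is the step I expect to be the main obstacle, since $C(G)$ is not a dual space and attainment may fail there. I would rerun the quotient argument with $X'=C(G)$ and the restricted map $T'$, which is still surjective because the $\chi_j$ are continuous. Then $T'^*\varphi_c$ is the measure $\overline{P_c}\,dm$ in $M(G)=C(G)^*$, whose total variation is again $\|P_c\|_{L^1}$. The lower bound holds because the total variation of an absolutely continuous measure equals the $L^1$ norm of its density; the upper bound is immediate.

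So \cref{thm:quotient-duality} gives the same supremum for the $C(G)$-quotient norm, which therefore equals $q_{G,\chi}(a)$. Being an infimum, it yields for every $\eta>0$ a continuous $f$ with $Tf=a$ and $\|f\|_\infty\le q_{G,\chi}(a)+\eta$.

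An alternative route is to approximate $\operatorname{sgn}P_c$ by continuous functions via Lusin's theorem, which uses that $G$ is metrizable and $m$ is Radon. I would prefer the measure-duality route above because it avoids this approximation.
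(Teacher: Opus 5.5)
Your proof is correct. Its first two parts match the paper's proof: you identify $q_{G,\chi}$ as the quotient norm of the coefficient map on $L^\infty(G)=L^1(G)^*$, compute $\|T^*\varphi_c\|=\|P_c\|_1$ by testing against $P_c/|P_c|$, and invoke \cref{prop:dual-attainment}.

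For the continuous near-minimizers you take a genuinely different route.
\begin{itemize}
\item The paper shows that the closed unit ball of $L^\infty(G)$ is the weak-star closure of its continuous part, using regularity, Lusin, Tietze and Hahn--Banach separation. It then approximates an $L^\infty$ minimizer by a continuous function on the finitely many coefficient functionals. Finally it removes the small coefficient error by adding a small combination of the $\chi_k$, using $T\chi_k=e_k$.
\item You apply \cref{thm:quotient-duality} a second time, with $X=C(G)$. Because the adjoint norms coincide, the $C(G)$-quotient norm equals $q_{G,\chi}$, and near-minimizers exist by the definition of an infimum.
\end{itemize}

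Your route is shorter. It avoids the density and separation argument and needs no correction step. It does not even need the Riesz identification $C(G)^*\cong M(G)$ or Lusin's theorem, because $P_c$ is itself continuous. The continuous functions $P_c/\max(|P_c|,\varepsilon)$ have sup norm at most $1$, and their pairing with $\overline{P_c}$ is $\int_G |P_c|^2/\max(|P_c|,\varepsilon)\,dm$. By dominated convergence this tends to $\|P_c\|_1$ as $\varepsilon\to 0$.

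The paper's density statement is more general: it would handle any finite family of $L^1$ test functions, not only continuous ones. That generality is not needed for this theorem.

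Finally, the sup norm of a continuous function dominates its essential supremum. So the bound you obtain in $C(G)$ gives the stated estimate on $\|f\|_\infty$ whichever of the two norms is meant.
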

\begin{proof}
Let $T:L^\infty(G)\to\mathbb C^m$ be the coefficient map.  Orthogonality gives $T(\chi_k)=e_k$, so $T$ is onto.  For $Tf=a$,
\[
 \langle a,c\rangle=\int_G f\overline{P_c}\,dm,
\]
and the adjoint functional has norm $\|P_c\|_1=\lambda_{G,\chi}(c)$; equality is attained on the unit ball by $f=P_c/|P_c|$ off the zero set.  The quotient duality theorem gives the formula.  Attainment follows from \cref{prop:dual-attainment}, since $L^\infty=(L^1)^*$ and the coefficient map is weak-star continuous.

For continuous approximate attainment, the closed $L^\infty$ ball is the weak-star closure of its continuous part.  This follows from regularity, Lusin approximation, Tietze extension, and Hahn--Banach separation \cite{conway,folland}: otherwise the measurable phase of a separating $L^1$ function could be approximated by a continuous unit-ball function, contradicting strict separation.  Approximate a minimizer on the finitely many coefficient functionals and correct the small errors by adding a small linear combination of the characters.
\end{proof}

\begin{corollary}[Basic size comparison]\label{cor:basic-size}
For every $a\in\mathbb C^m$,
\[
 \|a\|_2\le q_{G,\chi}(a)\le\|a\|_1.
\]
\end{corollary}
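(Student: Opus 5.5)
Both inequalities follow from the definition of $q_{G,\chi}$ as an infimum and from the dual formula of \cref{thm:compact-duality}. For the upper bound we exhibit one admissible function; for the lower bound we use a norm comparison.

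\emph{Upper bound.} The character polynomial $f=P_a=\sum_j a_j\chi_j$ is admissible. By orthonormality of distinct characters in $L^2(G)$ (already used in the lemma showing $\lambda_{G,\chi}$ is a norm), $\int_G f\overline{\chi_k}\,dm=a_k$. Since $|\chi_j|\equiv1$, the triangle inequality gives $\|f\|_\infty\le\sum_j|a_j|=\|a\|_1$. Hence $q_{G,\chi}(a)\le\|a\|_1$.

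\emph{Lower bound.} Take any admissible $f\in L^\infty(G)$. The functions $\chi_1,\dots,\chi_m$ are orthonormal, so Bessel's inequality gives
\[
 \|a\|_2^2=\sum_j\Bigl|\int_G f\overline{\chi_j}\,dm\Bigr|^2\le\|f\|_{L^2(G)}^2\le\|f\|_{L^\infty(G)}^2,
\]
where the last step uses that $m$ is a probability measure. Taking the infimum over admissible $f$ yields $\|a\|_2\le q_{G,\chi}(a)$.

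A dual route gives the same bound. Choose $c=a$ in \cref{thm:compact-duality}. Then Cauchy--Schwarz and orthonormality give $\lambda_{G,\chi}(a)=\|P_a\|_1\le\|P_a\|_2=\|a\|_2$, so the ratio is at least $\|a\|_2^2/\|a\|_2=\|a\|_2$ when $a\ne0$. The case $a=0$ is trivial.

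Neither step is a real obstacle. The only points to check are that the characters are genuinely orthonormal, which needs distinctness and Haar probability normalization, and that $L^\infty\subseteq L^2$ with norm at most $1$ on a probability space. Both are already fixed in the setup.
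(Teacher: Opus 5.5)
Your proof is correct. The upper bound is exactly the paper's: the interpolant $\sum_j a_j\chi_j$ is admissible and has sup norm at most $\|a\|_1$.

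For the lower bound, the paper argues only through \cref{thm:compact-duality}. It treats $a=0$ separately, then tests with $c=a$ and uses $\lambda_{G,\chi}(a)\le\|P_a\|_2=\|a\|_2$. That is precisely your second, ``dual route''. Your primary argument is instead primal. Bessel's inequality together with $\|f\|_2\le\|f\|_\infty$ bounds every admissible $f$ directly. It therefore needs neither the duality theorem nor a separate case for $a=0$.

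The two routes are in fact very close. The paper's argument uses only the easy half of the duality theorem, namely
\[
|\langle a,c\rangle|=\Bigl|\int_G f\overline{P_c}\,dm\Bigr|\le\|f\|_\infty\lambda_{G,\chi}(c)
\]
for admissible $f$. So each proof comes down to a single Cauchy--Schwarz/H\"older step. You apply it to $f$, via $\|f\|_2\le\|f\|_\infty$; the paper applies it to the test polynomial $P_a$, via $\|P_a\|_1\le\|P_a\|_2$.

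Your version is more self-contained. The paper's version fits the section's dual-certificate template, in which a lower bound is produced by exhibiting one explicit test vector. The same template is reused with $c=\mathbf 1_m$ in \cref{thm:coupled-lower-bound} and with the Steinhaus estimate in \cref{thm:full-torus-scale}. There a primal Bessel-type argument would only give the weaker $\|a\|_2$ scale.
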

\begin{proof}
For $a=0$, the zero interpolant gives $q_{G,\chi}(0)=0$.  If $a\ne0$, orthogonality gives $\lambda_{G,\chi}(a)\le\|P_a\|_2=\|a\|_2$; using $c=a$ in the dual formula gives the lower bound.  The interpolant $\sum a_j\chi_j$ gives the upper bound.
\end{proof}

\section{Continuous Abel transfer and Mellin boundary data}\label{sec:continuous-transfer}
For Mellin-transform conventions and the standard pole--asymptotic correspondence, see \cite{zagier}.  The Abel estimate in \cref{thm:continuous-abel} is used for the Mellin bounds in this section.
For measurable $F:[0,\infty)\to\mathbb C$ define
\[
 A_\infty^c(F)=\lim_{Y\to\infty}\esssup_{y\ge Y}|F(y)|.
\]

\begin{lemma}[Ces\`aro-to-Abel transfer for functions]\label{lem:cesaro-abel-cont}
If $g$ is bounded, locally integrable, and $Y^{-1}\int_0^Yg\to L$, then
\[
 \varepsilon\int_0^\infty e^{-\varepsilon y}g(y)\,dy\to L
 \qquad(\varepsilon\downarrow0).
\]
\end{lemma}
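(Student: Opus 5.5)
The plan is the classical Abelian argument: integrate by parts against the Cesàro mean. Put $G(y)=\int_0^y g(t)\,dt$. Because $g$ is bounded, say $|g|\le M$, we have $|G(y)|\le My$. For fixed $\varepsilon>0$ the boundary term $e^{-\varepsilon Y}G(Y)$ therefore tends to $0$ as $Y\to\infty$. Integration by parts, which is valid for the absolutely continuous function $G$, then gives
\[
 \varepsilon\int_0^\infty e^{-\varepsilon y}g(y)\,dy
 =\varepsilon^2\int_0^\infty e^{-\varepsilon y}G(y)\,dy
 =\varepsilon^2\int_0^\infty y e^{-\varepsilon y}\,\frac{G(y)}{y}\,dy .
\]
Both integrals converge absolutely by the bound $|G(y)|\le My$.

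Next we use the normalization $\varepsilon^2\int_0^\infty ye^{-\varepsilon y}\,dy=1$. This lets us write the difference from $L$ as
\[
 \varepsilon^2\int_0^\infty ye^{-\varepsilon y}\Bigl(\frac{G(y)}{y}-L\Bigr)dy ,
\]
and we split this integral at a cutoff $Y_0$ chosen so that $|G(y)/y-L|<\eta$ for $y\ge Y_0$. On $[Y_0,\infty)$ the contribution is at most $\eta$ in absolute value, since the weight has total mass one. On $[0,Y_0]$ the integrand is bounded by $(M+|L|)$ times the weight, so this piece is at most $(M+|L|)\,\varepsilon^2Y_0^2/2$, which tends to $0$ as $\varepsilon\downarrow0$. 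Taking $\limsup_{\varepsilon\downarrow0}$ and then letting $\eta\downarrow0$ gives the stated limit.

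The only point needing care is the justification of the integration by parts on the infinite interval. I would handle it by first working on $[0,Y]$, where it is the product rule for an absolutely continuous function times a smooth one, and then letting $Y\to\infty$ using the linear growth bound on $G$. Everything else is routine estimation with the probability weight $\varepsilon^2ye^{-\varepsilon y}\,dy$.
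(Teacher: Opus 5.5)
Your proof is correct and follows the paper's argument. You integrate by parts to get $\varepsilon^2\int_0^\infty e^{-\varepsilon y}G(y)\,dy$ and then compare with the probability weight $\varepsilon^2 y e^{-\varepsilon y}\,dy$. Your explicit split at $Y_0$ is simply the hands-on version of the paper's substitution $t=\varepsilon Y$ followed by dominated convergence.
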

\begin{proof}
Set $G(Y)=\int_0^Yg=LY+o(Y)$.  Integration by parts gives
\[
 \varepsilon\int_0^\infty e^{-\varepsilon y}g(y)\,dy
 =\varepsilon^2\int_0^\infty e^{-\varepsilon Y}G(Y)\,dY.
\]
After $t=\varepsilon Y$, the main term is $L\int_0^\infty te^{-t}dt=L$ and the remainder tends to zero by dominated convergence.
\end{proof}

\begin{theorem}[Continuous Abel coefficient lower bound]\label{thm:continuous-abel}
Let $F\in L^1_{\mathrm{loc}}([0,\infty))$ satisfy
\[
 \int_0^\infty|F(y)|e^{-\varepsilon y}\,dy<\infty
 \qquad(\varepsilon>0).
\]
For distinct frequencies $\Gamma$, assume the limits
\[
 a_j=\lim_{\varepsilon\downarrow0}
 \varepsilon\int_0^\infty F(y)e^{-\varepsilon y}e^{-i\gamma_jy}\,dy
\]
exist.  Then
\[
 A_\infty^c(F)\ge q_\Gamma(a).
\]
Moreover, $q_\Gamma(a)$ is the infimum over functions constrained only by these coefficients, the integrability guard, and the orbit group.
\end{theorem}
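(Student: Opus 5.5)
The plan is to prove the lower bound through the test-dominated transfer of \cref{thm:test-transfer}, using the norm $\lambda_\Gamma$ on coefficient vectors. By \cref{thm:compact-duality},
\[
q_\Gamma(a)=\sup_{c\ne0}\frac{|\langle a,c\rangle|}{\lambda_\Gamma(c)}.
\]
It therefore suffices to show, for every $c\in\mathbb C^m$,
\[
|\langle a,c\rangle|\le A_\infty^c(F)\,\lambda_\Gamma(c).
\]
If $A_\infty^c(F)=\infty$ there is nothing to prove, so assume $M:=A_\infty^c(F)<\infty$ and fix $\delta>0$. Choose $Y_0$ with $|F(y)|\le M+\delta$ for almost every $y\ge Y_0$.

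First form the Abel pairing. By linearity of the assumed limits,
\[
\langle a,c\rangle=\lim_{\varepsilon\downarrow0}\varepsilon\int_0^\infty F(y)e^{-\varepsilon y}\,\overline{P_c(\Phi_\Gamma(y))}\,dy,
\]
since $\overline{P_c(\Phi_\Gamma(y))}=\sum_j\overline{c_j}e^{-i\gamma_jy}$. Split the integral at $Y_0$. The piece over $[0,Y_0]$ is bounded by $\varepsilon\|F\|_{L^1[0,Y_0]}\|c\|_1$, so it tends to $0$. The tail piece is at most
\[
(M+\delta)\,\varepsilon\int_0^\infty e^{-\varepsilon y}\,|P_c(\Phi_\Gamma(y))|\,dy.
\]

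Next evaluate the limit of this tail bound. The function $g(y)=|P_c(\Phi_\Gamma(y))|$ is bounded and continuous, and it equals $\phi\circ\Phi_\Gamma$ with $\phi=|P_c|\in C(G_\Gamma)$. Continuous Kronecker--Weyl averaging (\cref{prop:continuous-KW}) gives Cesàro mean $\int_{G_\Gamma}|P_c|\,dm_\Gamma=\lambda_\Gamma(c)$. \Cref{lem:cesaro-abel-cont} then turns this into the Abel limit $\lambda_\Gamma(c)$. Hence $|\langle a,c\rangle|\le(M+\delta)\lambda_\Gamma(c)$, and letting $\delta\downarrow0$ proves the inequality, hence $A_\infty^c(F)\ge q_\Gamma(a)$.

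For the "moreover" clause, soundness is what was just proved, and it remains to show approximate completeness in the sense of the infimum formula from soundness and completeness. Given $\eta>0$, \cref{thm:compact-duality} supplies $f\in C(G_\Gamma)$ with coefficients $a$ and $\|f\|_\infty\le q_\Gamma(a)+\eta$. Set $F=f\circ\Phi_\Gamma$. This $F$ is bounded, so it satisfies the integrability guard, and $A_\infty^c(F)\le\|f\|_\infty$. Its Abel coefficients equal the Cesàro means of $(f\overline{\chi_j})\circ\Phi_\Gamma$, which are $\int f\overline{\chi_j}\,dm_\Gamma=a_j$ by Kronecker--Weyl and \cref{lem:cesaro-abel-cont}.

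The main obstacle I anticipate is making the lemma's hypotheses match. \Cref{lem:cesaro-abel-cont} needs a bounded $g$, which rules out applying it to $F$ directly; it is applied only to the trigonometric functions $|P_c|\circ\Phi_\Gamma$ and $(f\overline{\chi_j})\circ\Phi_\Gamma$, while $F$ enters only through the pointwise tail bound. A second point to handle carefully is the order of limits: the Abel limit in $\varepsilon$ is taken first, and $\delta$ is sent to $0$ afterwards.
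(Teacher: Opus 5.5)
Your proposal is correct and follows the paper's proof essentially step for step. It uses the same ingredients: the Abel pairing against $\overline{P_c\circ\Phi_\Gamma}$, discarding the weighted contribution of $[0,Y_0]$, combining Kronecker--Weyl averaging with \cref{lem:cesaro-abel-cont} to obtain $\lambda_\Gamma(c)$, applying the duality of \cref{thm:compact-duality}, and taking a continuous near-minimizer composed with $\Phi_\Gamma$ for optimality. Routing the last step through \cref{thm:test-transfer} only restates the ``divide by $\lambda_\Gamma(c)$ and take the supremum'' step, which the paper performs directly.
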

\begin{proof}
For $c\in\mathbb C^m$, linearity gives convergence of the weighted pairing to $\langle a,c\rangle$.  By \cref{prop:continuous-KW,lem:cesaro-abel-cont},
\[
 \varepsilon\int_0^\infty e^{-\varepsilon y}|P_c(\Phi_\Gamma(y))|\,dy
 \to\lambda_\Gamma(c).
\]
If $M=A_\infty^c(F)<\infty$, then beyond some $Y_0$ one has $|F|\le M+\delta$ almost everywhere.  The weighted contribution of $[0,Y_0]$ tends to zero, and therefore
\[
 |\langle a,c\rangle|\le(M+\delta)\lambda_\Gamma(c).
\]
Let $\delta\downarrow0$ and use \cref{thm:compact-duality}.  For optimality, choose a continuous near-minimizer $f$ on $G_\Gamma$ and set $F(y)=f(\Phi_\Gamma(y))$; Kronecker--Weyl averaging and \cref{lem:cesaro-abel-cont} give the coefficients.
\end{proof}

Let $E:[1,\infty)\to\mathbb C$ and fix $\beta\in\mathbb R$.  Assume
\[
 \int_1^\infty|E(x)|x^{-\sigma-1}\,dx<\infty
 \qquad(\sigma>\beta),
\]
and define
\[
 M_E(s)=\int_1^\infty E(x)x^{-s-1}\,dx,
 \qquad
 A_\beta(E):=\lim_{X\to\infty}
 \operatorname*{ess\,sup}_{x\ge X}\frac{|E(x)|}{x^\beta}.
\]
Here the essential supremum is taken with respect to $dx/x$.

\begin{corollary}[Radial Mellin coefficient transfer]\label{thm:mellin-transfer}
If the finite radial limits
\[
 r_j=\lim_{\varepsilon\downarrow0}
 \varepsilon M_E(\beta+\varepsilon+i\gamma_j)
\]
exist at distinct frequencies, then
\[
 A_\beta(E)\ge q_\Gamma(r)\ge\|r\|_2.
\]
\end{corollary}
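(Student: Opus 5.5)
The plan is to reduce to \cref{thm:continuous-abel} by the logarithmic change of variables $x=e^y$. Define
\[
 F(y):=E(e^y)e^{-\beta y}\qquad(y\ge0).
\]
Then $dx/x=dy$. Since essential suprema with respect to $dx/x$ on $[X,\infty)$ correspond to essential suprema with respect to Lebesgue measure on $[\log X,\infty)$, we get
\[
 A_\beta(E)=\lim_{Y\to\infty}\esssup_{y\ge Y}|F(y)|=A_\infty^c(F).
\]
The null sets match because the map $y\mapsto e^y$ and its inverse are locally bi-Lipschitz, so they preserve Lebesgue null sets.

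Next I will verify the integrability guard. For $\varepsilon>0$,
\[
 \int_0^\infty|F(y)|e^{-\varepsilon y}\,dy=\int_1^\infty|E(x)|x^{-(\beta+\varepsilon)-1}\,dx<\infty
\]
by the hypothesis with $\sigma=\beta+\varepsilon>\beta$. Finiteness on each bounded interval also gives $F\in L^1_{\mathrm{loc}}([0,\infty))$.

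The coefficient identification is the same substitution:
\[
 \varepsilon M_E(\beta+\varepsilon+i\gamma_j)=\varepsilon\int_0^\infty F(y)e^{-\varepsilon y}e^{-i\gamma_jy}\,dy.
\]
So the assumed radial limits $r_j$ are exactly the Abel coefficients $a_j$ of \cref{thm:continuous-abel}, taken at the same distinct frequencies $\Gamma$. That theorem then gives $A_\beta(E)=A_\infty^c(F)\ge q_\Gamma(r)$.

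For the final inequality, $q_\Gamma=q_{G_\Gamma,\chi}$ with the restricted coordinate characters. These characters are distinct on $G_\Gamma$ because $\gamma_j\ne\gamma_k$ implies $e^{i\gamma_jy}\ne e^{i\gamma_ky}$ for some $y$. \Cref{cor:basic-size} then yields $q_\Gamma(r)\ge\|r\|_2$.

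The main point requiring care is the measure-theoretic bookkeeping: matching the essential supremum conventions under the change of variables, and confirming that the Mellin integral converges absolutely along the ray $\operatorname{Re}s=\beta+\varepsilon$. Both follow directly from the stated guard, so no analytic continuation is involved.
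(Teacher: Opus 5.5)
Your proposal is correct and is the paper's proof: the substitution $F(y)=e^{-\beta y}E(e^y)$ identifies $A_\beta(E)$ with $A_\infty^c(F)$ and the radial Mellin limits with the continuous Abel coefficients, after which \cref{thm:continuous-abel} and \cref{cor:basic-size} finish the argument. Your additional checks are details the paper leaves implicit: the integrability guard, local integrability, matching of null sets, and distinctness of the restricted characters on $G_\Gamma$.
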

\begin{proof}
Put $F(y)=e^{-\beta y}E(e^y)$.  Change of variables gives
\[
 M_E(\beta+\varepsilon+i\gamma)
 =\int_0^\infty F(y)e^{-\varepsilon y}e^{-i\gamma y}\,dy,
\]
and $A_\beta(E)=A_\infty^c(F)$.  Apply \cref{thm:continuous-abel,cor:basic-size}.
\end{proof}

\begin{corollary}[Simple Mellin boundary poles]
If a meromorphic continuation agrees with the Mellin integral on the interior side of a neighborhood of $\rho_j=\beta+i\gamma_j$ and has a simple pole there with residue $r_j$, then the preceding lower bound holds.
\end{corollary}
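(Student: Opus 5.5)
The plan is to reduce the statement to \cref{thm:mellin-transfer}. That corollary needs the finite radial limits $r_j=\lim_{\varepsilon\downarrow0}\varepsilon M_E(\beta+\varepsilon+i\gamma_j)$ to exist, with $r_j$ the residue. So the only work is to show that a simple pole at $\rho_j$, approached from the interior side along a horizontal ray, produces exactly this limit. This identification is legitimate only because of the agreement hypothesis.

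Fix $j$ and let $\widetilde M$ be the meromorphic continuation. By hypothesis there is a neighbourhood $N_j$ of $\rho_j$ such that $\widetilde M=M_E$ on $N_j\cap\{\operatorname{Re}s>\beta\}$, where $M_E$ is given by the absolutely convergent integral (guaranteed for $\operatorname{Re}s>\beta$ by the integrability assumption). A simple pole with residue $r_j$ means we can write
\[
  \widetilde M(s)=\frac{r_j}{s-\rho_j}+h_j(s),
\]
with $h_j$ holomorphic, hence bounded, on a smaller disc around $\rho_j$.

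For small $\varepsilon>0$ the point $s=\beta+\varepsilon+i\gamma_j=\rho_j+\varepsilon$ lies in that disc and in the half-plane. Therefore
\[
  \varepsilon M_E(\rho_j+\varepsilon)=r_j+\varepsilon h_j(\rho_j+\varepsilon)\longrightarrow r_j .
\]
So all the radial limits exist and equal the residues. The frequencies $\gamma_j$ are distinct, since the corollary refers to the preceding setting of distinct frequencies. Then \cref{thm:mellin-transfer} gives $A_\beta(E)\ge q_\Gamma(r)\ge\|r\|_2$.

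The only delicate point is the guard. Meromorphic continuation alone says nothing about $M_E$ itself. It is the agreement on the interior side of the neighbourhood that lets the residue computation be transferred to the Abel limits of the original integral. I would state explicitly that the approach ray $\rho_j+\varepsilon$ lies inside the agreement region. Apart from that, no estimate is needed beyond boundedness of the holomorphic part $h_j$.
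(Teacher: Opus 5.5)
Your proposal is correct and matches the paper's proof: split the continuation near $\rho_j$ as $r_j/(s-\rho_j)$ plus a holomorphic part, and use interior agreement to get $\varepsilon M_E(\rho_j+\varepsilon)\to r_j$. This supplies the radial limits required by the radial Mellin coefficient transfer corollary, which then gives the lower bound.
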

\begin{proof}
Near $\rho_j$, write $M_E(s)=r_j/(s-\rho_j)+H_j(s)$.  Then $\varepsilon M_E(\rho_j+\varepsilon)\to r_j$.
\end{proof}

\begin{proposition}[Higher-order radial divergence]
If $\limsup_{\varepsilon\downarrow0}|\varepsilon M_E(\beta+\varepsilon+i\gamma)|=\infty$, then $A_\beta(E)=\infty$.  In particular, a boundary pole of order at least two, under the same interior-agreement guard, rules out $E(x)=O(x^\beta)$ in the essential-tail sense.
\end{proposition}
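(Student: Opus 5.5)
The plan is to prove the contrapositive directly from the single-coefficient estimate in the proof of \cref{thm:continuous-abel}, rather than from the full interpolation norm. Put $F(y)=e^{-\beta y}E(e^y)$ as in \cref{thm:mellin-transfer}. Then
\[
M_E(\beta+\varepsilon+i\gamma)=\int_0^\infty F(y)e^{-\varepsilon y}e^{-i\gamma y}\,dy,
\]
the integrability guard holds for every $\varepsilon>0$, and $A_\beta(E)=A_\infty^c(F)$.

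Suppose $M:=A_\infty^c(F)<\infty$ and fix $\delta>0$. Choose $Y_0$ with $|F|\le M+\delta$ almost everywhere on $[Y_0,\infty)$, and split the weighted integral at $Y_0$. The tail piece is bounded by
\[
\varepsilon\int_{Y_0}^\infty (M+\delta)e^{-\varepsilon y}\,dy\le M+\delta .
\]
The head piece is bounded by $\varepsilon\int_0^{Y_0}|F|$, which tends to $0$ because $F$ is locally integrable. Hence
\[
\limsup_{\varepsilon\downarrow0}\bigl|\varepsilon M_E(\beta+\varepsilon+i\gamma)\bigr|\le M+\delta<\infty,
\]
which contradicts the hypothesis. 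So $A_\beta(E)=\infty$. This is exactly the $m=1$ estimate inside the proof of \cref{thm:continuous-abel}, except that no coefficient limit is assumed; only $\limsup$ bounds are used, so it can be run without Kronecker--Weyl averaging.

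For the second sentence, assume that near $\rho=\beta+i\gamma$ a meromorphic continuation agrees with $M_E$ on the interior side and has a pole of order $k\ge2$. Write
\[
M_E(s)=\frac{c_k}{(s-\rho)^k}+O\bigl(|s-\rho|^{-k+1}\bigr),\qquad c_k\ne0 .
\]
Along $s=\rho+\varepsilon$ this gives $|\varepsilon M_E(\rho+\varepsilon)|\sim|c_k|\varepsilon^{1-k}\to\infty$, so the first part applies. The guard is needed here: the radial points $\rho+\varepsilon$ lie on the interior side, so the continuation's values there really are the Mellin integral.

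Finally, if $E(x)=O(x^\beta)$ in the essential-tail sense, then $A_\beta(E)<\infty$, which is a contradiction. The only step needing care is the head/tail splitting, and it is routine. The main point is simply to notice that finiteness of $A_\beta$ bounds the $\limsup$, not merely an existing limit.
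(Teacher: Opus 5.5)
Your proposal is correct and follows the paper's own argument. You assume $A_\beta(E)<\infty$, pass to $F(y)=e^{-\beta y}E(e^y)$, and run the one-frequency head/tail estimate from the proof of \cref{thm:continuous-abel} to bound the $\limsup$ by $A_\beta(E)$; a pole of order $k\ge2$ then makes $|\varepsilon M_E(\rho+\varepsilon)|\sim|c_k|\varepsilon^{1-k}$ diverge. Your remark that only $\limsup$ bounds are needed, with no assumed coefficient limit and no Kronecker--Weyl averaging, is exactly the adaptation that the paper's one-line proof leaves implicit.
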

\begin{proof}
If $A_\beta(E)<\infty$, the one-frequency proof of \cref{thm:continuous-abel} gives
\[
 \limsup_{\varepsilon\downarrow0}|\varepsilon M_E(\beta+\varepsilon+i\gamma)|\le A_\beta(E),
\]
a contradiction.  A pole of order at least two makes the left side diverge.
\end{proof}

\begin{remark}[Analytic guard]
Meromorphic continuation alone does not identify the radial coefficients of the original error term.  The Mellin integral must remain valid for every $\operatorname{Re}s>\beta$, or a separate Abelian or Tauberian theorem must supply the same boundary values.
\end{remark}

\section{Discrete Abel transfer and generating functions}\label{sec:discrete-transfer}
For a sequence $F=(F_n)_{n\ge0}$ define
\[
 A_\infty^d(F)=\lim_{N\to\infty}\sup_{n\ge N}|F_n|.
\]

\begin{lemma}[Ces\`aro-to-Abel transfer for sequences]\label{lem:cesaro-abel-disc}
If $(g_n)$ is bounded and $N^{-1}\sum_{n=0}^{N-1}g_n\to L$, then
\[
 (1-r)\sum_{n\ge0}g_nr^n\to L\qquad(r\uparrow1).
\]
\end{lemma}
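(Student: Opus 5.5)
The plan is to mirror the continuous argument of \cref{lem:cesaro-abel-cont}, with summation by parts in place of integration by parts. Write $G_N=\sum_{n=0}^{N-1}g_n$, so that $G_0=0$ and $G_N=LN+o(N)$. Since $(g_n)$ is bounded, say $|g_n|\le C$, we have $|G_N|\le CN$. The power series $\sum_n g_nr^n$ therefore converges absolutely for $0<r<1$, and $G_Nr^N\to0$ as $N\to\infty$.

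\textbf{Abel summation.} Using $g_n=G_{n+1}-G_n$, summation by parts gives
\[
  \sum_{n\ge0}g_nr^n=\sum_{N\ge1}G_N\bigl(r^{N-1}-r^N\bigr)=(1-r)\sum_{N\ge1}G_Nr^{N-1}.
\]
Hence
\[
  (1-r)\sum_{n\ge0}g_nr^n=(1-r)^2\sum_{N\ge1}G_Nr^{N-1}.
\]

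\textbf{Main term.} Split $G_N=LN+R_N$ with $R_N=o(N)$. From the identity $\sum_{N\ge1}Nr^{N-1}=(1-r)^{-2}$, the $LN$ part contributes exactly $L$.

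\textbf{Remainder.} This is the only step needing care, and it is the standard $\varepsilon$-splitting. Given $\varepsilon>0$, choose $N_0$ such that $|R_N|\le\varepsilon N$ for $N\ge N_0$. Then
\[
  (1-r)^2\sum_{N\ge1}|R_N|r^{N-1}\le(1-r)^2\sum_{N<N_0}|R_N|+\varepsilon(1-r)^2\sum_{N\ge1}Nr^{N-1}.
\]
The first term is $(1-r)^2$ times a finite constant, so it tends to $0$ as $r\uparrow1$. The second term equals $\varepsilon$. Letting $r\uparrow1$ and then $\varepsilon\downarrow0$ gives the claim.

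No real obstacle arises. The only points to check are the boundary terms in the summation by parts, which vanish because of the linear bound $|G_N|\le CN$, and the convergence of every series for $r<1$.
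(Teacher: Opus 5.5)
Your proof is correct and follows the same route as the paper. You use summation by parts to reach $(1-r)^2\sum_{N}S_Nr^N$; your $G_{N+1}$ is the paper's $S_N$, so the two identities agree after an index shift. You then extract the main term $L$ from $\sum_{N\ge1}Nr^{N-1}=(1-r)^{-2}$ and handle the $o(N)$ remainder by an $\varepsilon$-splitting. Your write-up supplies details the paper only cites as the discrete analogue of the continuous scaling argument: convergence for $r<1$, the vanishing boundary terms, and the explicit remainder estimate.
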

\begin{proof}
For $S_N=\sum_{n=0}^Ng_n=(N+1)L+o(N)$, summation by parts gives
\[
 (1-r)\sum_{n\ge0}g_nr^n=(1-r)^2\sum_{N\ge0}S_Nr^N.
\]
The main term is $L$ and the remainder tends to zero by the discrete analogue of the scaling argument in \cref{lem:cesaro-abel-cont}.
\end{proof}

\begin{theorem}[Discrete Abel coefficient lower bound]\label{thm:discrete-abel}
Assume $\sum_{n\ge0}|F_n|r^n<\infty$ for $0<r<1$ and, for pairwise distinct phases $\Theta$,
\[
 a_j=\lim_{r\uparrow1}(1-r)\sum_{n\ge0}F_nr^ne^{-in\theta_j}
\]
exists.  Then
\[
 A_\infty^d(F)\ge q_\Theta(a).
\]
Moreover, $q_\Theta(a)$ is the infimum over sequences constrained only by the Abel coefficients, convergence guard, and discrete orbit group.
\end{theorem}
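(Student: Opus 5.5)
The plan mirrors the proof of \cref{thm:continuous-abel}, with $G_\Theta$, $\Psi_\Theta$, \cref{prop:discrete-KW} and \cref{lem:cesaro-abel-disc} replacing their continuous counterparts. The lower bound comes from the dual formula in \cref{thm:compact-duality}. It therefore suffices to show that for every $c\in\mathbb C^m$ we have $|\langle a,c\rangle|\le A_\infty^d(F)\,\lambda_\Theta(c)$. If $A_\infty^d(F)=\infty$ there is nothing to prove, so assume $M:=A_\infty^d(F)<\infty$.

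\textbf{Pairing.} Fix $c$ and $\delta>0$, and choose $N_0$ with $|F_n|\le M+\delta$ for $n\ge N_0$. Taking linearity over the finitely many limits defining $a_j$ gives
\[
\langle a,c\rangle=\lim_{r\uparrow1}(1-r)\sum_{n\ge0}F_nr^n\,\overline{P_c(\Psi_\Theta(n))},
\]
because $\overline{c_j}e^{-in\theta_j}=\overline{c_j\chi_j(\Psi_\Theta(n))}$. The initial segment $n<N_0$ contributes a finite sum multiplied by $(1-r)$, so it vanishes in the limit. The tail is bounded by $(M+\delta)(1-r)\sum_n r^n|P_c(\Psi_\Theta(n))|$.

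\textbf{Averaging the character polynomial.} Apply \cref{prop:discrete-KW} to the continuous function $|P_c|$ on $G_\Theta$. This shows that the Ces\`aro means of the bounded sequence $|P_c(\Psi_\Theta(n))|$ converge to $\lambda_\Theta(c)$, and \cref{lem:cesaro-abel-disc} upgrades this to Abel convergence. Combining the two steps gives $|\langle a,c\rangle|\le(M+\delta)\lambda_\Theta(c)$. Let $\delta\downarrow0$ and take the supremum over $c\ne0$. The guard $\sum|F_n|r^n<\infty$ is used only to make all series absolutely convergent for $r<1$.

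\textbf{Optimality.} Given $\eta>0$, \cref{thm:compact-duality} provides a continuous $f$ on $G_\Theta$ with coefficients $a$ and $\|f\|_\infty\le q_\Theta(a)+\eta$. Set $F_n=f(\Psi_\Theta(n))$. This sequence is bounded, so the convergence guard holds, and $A_\infty^d(F)\le\|f\|_\infty$ because $\Psi_\Theta(n)\in G_\Theta$. Applying \cref{prop:discrete-KW} to the continuous functions $f\overline{\chi_j}$ and then \cref{lem:cesaro-abel-disc} shows that the Abel coefficients of $F$ equal $\int f\overline{\chi_j}\,dm_\Theta=a_j$. Hence $q_\Theta(a)$ is the infimum of $A_\infty^d$ over all sequences satisfying the constraints.

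\textbf{Main obstacle.} The delicate point is the identification of $q_\Theta$ as the exact infimum. It needs two things. First, the near-minimizer must be continuous, so that Kronecker--Weyl averaging applies along the orbit; this is why the continuous approximate attainment in \cref{thm:compact-duality} is required, not just the $L^\infty$ minimizer. Second, the phases must be pairwise distinct modulo $2\pi$, so that the restricted characters $\chi_j$ on $G_\Theta$ are distinct and \cref{thm:compact-duality} applies.
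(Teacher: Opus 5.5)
Your proposal is correct and follows the paper's proof step for step. You pair the Abel coefficients with $\overline{P_c}$ along the orbit, discard finitely many initial terms, and bound the tail by $(A_\infty^d(F)+\delta)$ times the Abel mean of $|P_c(\Psi_\Theta(n))|$, which \cref{prop:discrete-KW} and \cref{lem:cesaro-abel-disc} identify as $\lambda_\Theta(c)$; the dual formula of \cref{thm:compact-duality} then gives the bound, and optimality comes from continuous near-minimizers restricted to the orbit. One detail is worth stating explicitly: the bound $|f(\Psi_\Theta(n))|\le\|f\|_{L^\infty}$ for continuous $f$ holds because Haar measure on $G_\Theta$ has full support, so for such $f$ the essential supremum and the pointwise supremum agree.
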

\begin{proof}
Use the test polynomial $P_c(n)=\sum_jc_je^{in\theta_j}$.  Linearity gives the coefficient pairing; \cref{prop:discrete-KW,lem:cesaro-abel-disc} give the Abel limit of $|P_c(n)|$.  Split off finitely many initial terms and bound the tail by $A_\infty^d(F)+\delta$, then apply \cref{thm:compact-duality}.  Continuous near-minimizers restricted to $\Psi_\Theta(n)$ prove optimality.
\end{proof}

\begin{corollary}[Boundary poles of a generating function]\label{cor:gf-poles}
Let $\mathcal F(z)=\sum_{n\ge0}F_nz^n$.  Suppose that near $\zeta_j=e^{-i\theta_j}$ a meromorphic continuation agreeing with the power series inside the disc has the form
\[
 \mathcal F(z)=\frac{a_j}{1-e^{i\theta_j}z}+H_j(z)
\]
with $H_j$ holomorphic.  Then $A_\infty^d(F)\ge q_\Theta(a)$.  If $R_j$ is the residue at $\zeta_j$, then $a_j=-e^{i\theta_j}R_j$.
\end{corollary}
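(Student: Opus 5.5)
The plan is to reduce to \cref{thm:discrete-abel}: it suffices to show that the Abel limits
\[
 \lim_{r\uparrow1}(1-r)\sum_{n\ge0}F_nr^ne^{-in\theta_j}
\]
exist and equal $a_j$.  The conclusion $A_\infty^d(F)\ge q_\Theta(a)$ then follows at once.  The summability guard $\sum|F_n|r^n<\infty$ for $0<r<1$ comes from the power series converging inside the unit disc.  This convergence is implicit in the hypothesis that a continuation agrees with the power series there, so the radius of convergence is at least one.  The residue identity is then a separate local computation.

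For the Abel limits, I note that $(1-r)\sum_n F_nr^ne^{-in\theta_j}=(1-r)\mathcal F(re^{-i\theta_j})$.  As $r\uparrow1$, the points $z=re^{-i\theta_j}=r\zeta_j$ approach $\zeta_j$ radially from inside the disc.  There the power series coincides with the continuation, so we may substitute the local form.  Since $1-e^{i\theta_j}z=1-r$ at $z=r\zeta_j$, we get
\[
 (1-r)\mathcal F(r\zeta_j)=a_j+(1-r)H_j(r\zeta_j).
\]
Holomorphy of $H_j$ near $\zeta_j$ bounds $H_j(r\zeta_j)$ for $r$ close to $1$, so the second term tends to zero and the limit is $a_j$.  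Distinctness of the phases is part of the standing setup of \cref{thm:discrete-abel}, and applying that theorem gives the bound.

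For the residue, I write $1-e^{i\theta_j}z=-e^{i\theta_j}(z-\zeta_j)$, so that
\[
 \frac{a_j}{1-e^{i\theta_j}z}=\frac{-e^{-i\theta_j}a_j}{z-\zeta_j}.
\]
Hence $R_j=-e^{-i\theta_j}a_j$, that is, $a_j=-e^{i\theta_j}R_j$.

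The only delicate point is the guard: the radial points $r\zeta_j$ must lie in the region where the continuation agrees with the series.  This is supplied by the assumed interior agreement on a neighborhood of $\zeta_j$ intersected with the disc.  I would state this explicitly rather than rely on meromorphic continuation alone, in line with the analytic-guard remark after \cref{thm:mellin-transfer}.
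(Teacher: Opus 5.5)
Your proposal is correct and follows the paper's proof: the radial substitution $z=r\zeta_j$ gives $(1-r)\mathcal F(r\zeta_j)\to a_j$, and \cref{thm:discrete-abel} then gives the bound. Your factorization $1-e^{i\theta_j}z=-e^{i\theta_j}(z-\zeta_j)$ is the same residue computation that the paper phrases as differentiating $1-e^{i\theta_j}z$ at $\zeta_j$.
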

\begin{proof}
Along $z=re^{-i\theta_j}$, $(1-r)\mathcal F(z)\to a_j$.  The residue identity follows by differentiating $1-e^{i\theta_j}z$ at $\zeta_j$.
\end{proof}

\section{Full-torus bounds, character relations, uncertainty, and countable interpolation}\label{sec:character-relations}

\subsection{The full-torus case}
If the joint character map $G\to\mathbb T^m$ is an isomorphism of compact groups, the coordinate characters are independent Steinhaus variables.
\begin{lemma}[Steinhaus $L^1$--$L^2$ comparison]
For $S=\sum_jc_jZ_j$ with independent Steinhaus variables,
\[
 \frac1{\sqrt2}\|c\|_2\le\mathbb E|S|\le\|c\|_2.
\]
\end{lemma}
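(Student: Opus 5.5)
The upper bound is immediate from orthonormality. Independent Steinhaus variables $Z_j$ (uniform on $\mathbb T$) satisfy $\mathbb E[Z_j\overline{Z_k}]=\delta_{jk}$, so $\mathbb E|S|^2=\|c\|_2^2$. Cauchy--Schwarz, in the form $\mathbb E|S|\le(\mathbb E|S|^2)^{1/2}$, then gives $\mathbb E|S|\le\|c\|_2$.

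For the lower bound, normalize to $\|c\|_2=1$ and argue by moment interpolation. By H\"older with exponents $3/2$ and $3$ applied to $|S|^2=|S|^{2/3}|S|^{4/3}$,
\[
  \mathbb E|S|^2\le(\mathbb E|S|)^{2/3}(\mathbb E|S|^4)^{1/3},
\]
so $\mathbb E|S|\ge(\mathbb E|S|^2)^{3/2}/(\mathbb E|S|^4)^{1/2}$. It therefore suffices to bound the fourth moment. Expand
\[
  \mathbb E|S|^4=\sum c_jc_k\overline{c_l}\overline{c_n}\,\mathbb E[Z_jZ_k\overline{Z_l}\overline{Z_n}].
\]
For Steinhaus variables the expectation is nonzero only when $\{j,k\}=\{l,n\}$ as multisets. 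The surviving terms give
\[
  \mathbb E|S|^4=2\Bigl(\sum_j|c_j|^2\Bigr)^2-\sum_j|c_j|^4\le 2\|c\|_2^4.
\]
Consequently $\mathbb E|S|\ge 1/\sqrt2$, which is the claimed constant.

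The main step is the fourth-moment bookkeeping. The pairings $j=l,k=n$ and $j=n,k=l$ each contribute $(\sum|c_j|^2)^2$, and they double count the diagonal $j=k=l=n$, which is why $\sum|c_j|^4$ is subtracted once. No mixed terms survive, because $\mathbb E Z^2=0$ for a Steinhaus variable; this is exactly where Steinhaus differs from Rademacher. The factor $2$ in this fourth-moment bound is what produces the $1/\sqrt2$ in the final inequality, so no sharper constant (such as the optimal $\sqrt\pi/2$) is needed.
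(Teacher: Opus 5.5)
Your proof is correct and is the same argument as the paper's: Cauchy--Schwarz gives the upper bound, and the lower bound combines $\mathbb E|S|^4=2\|c\|_2^4-\sum_j|c_j|^4\le2\|c\|_2^4$ with the interpolation $\|S\|_2\le\|S\|_1^{1/3}\|S\|_4^{2/3}$, which is exactly your H\"older step with exponents $3/2$ and $3$. The only difference is that you also write out the pairing bookkeeping for the fourth moment, which the paper states without derivation.
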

\begin{proof}
One has $\mathbb E|S|^2=\|c\|_2^2$ and
\[
 \mathbb E|S|^4=2\|c\|_2^4-\sum_j|c_j|^4\le2\|c\|_2^4.
\]
Log-convexity $\|S\|_2\le\|S\|_1^{1/3}\|S\|_4^{2/3}$ gives the lower bound; Cauchy--Schwarz gives the upper bound.
\end{proof}

\begin{corollary}[Full-torus Euclidean comparison]\label{thm:full-torus-scale}
If the joint character map
\[
  G\longrightarrow\mathbb T^m,\qquad
  z\longmapsto(\chi_1(z),\dots,\chi_m(z))
\]
is an isomorphism of compact groups, then
\[
 \|a\|_2\le q_{G,\chi}(a)\le\sqrt2\|a\|_2.
\]
\end{corollary}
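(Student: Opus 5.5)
The lower bound needs nothing beyond \cref{cor:basic-size}, which already gives $\|a\|_2\le q_{G,\chi}(a)$ for every compact $G$ and distinct characters. For the upper bound I will use the dual formula of \cref{thm:compact-duality},
\[
 q_{G,\chi}(a)=\sup_{c\ne0}\frac{|\langle a,c\rangle|}{\lambda_{G,\chi}(c)},
\]
and reduce everything to a lower estimate for $\lambda_{G,\chi}(c)=\int_G|P_c|\,dm$ in terms of $\|c\|_2$.

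The first step transfers the integral to the torus. Because $\Phi:z\mapsto(\chi_1(z),\dots,\chi_m(z))$ is an isomorphism of compact groups $G\to\mathbb T^m$, it carries the Haar probability measure $m$ to the Haar probability measure of $\mathbb T^m$, by uniqueness of Haar measure. Moreover $P_c=S_c\circ\Phi$, where $S_c(w)=\sum_j c_jw_j$. Hence
\[
 \lambda_{G,\chi}(c)=\int_{\mathbb T^m}|S_c|\,d\mathrm{Haar}=\mathbb E\Bigl|\sum_j c_jZ_j\Bigr|,
\]
where the coordinates $Z_j$ under Haar measure on $\mathbb T^m$ are independent and uniformly distributed on $\mathbb T$, i.e. independent Steinhaus variables.

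The second step applies the preceding Steinhaus lemma, which gives $\lambda_{G,\chi}(c)\ge\|c\|_2/\sqrt2$. Combining this with Cauchy--Schwarz, $|\langle a,c\rangle|\le\|a\|_2\|c\|_2$, every ratio in the dual formula is at most $\sqrt2\|a\|_2$. Taking the supremum over $c\ne0$ yields $q_{G,\chi}(a)\le\sqrt2\|a\|_2$. For $a=0$ both sides vanish, consistent with \cref{cor:basic-size}.

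The only point needing care is the measure transfer in the first step. It is not automatic from $\Phi$ being merely a continuous bijection onto its image. Here, however, the hypothesis is an isomorphism of compact groups, so $\Phi_*m$ is a translation-invariant probability measure on $\mathbb T^m$ and therefore equals its Haar measure. Everything else is a direct application of earlier results.
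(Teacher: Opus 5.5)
Your proposal is correct and follows the paper's proof: the lower bound comes from \cref{cor:basic-size}, and the upper bound comes from inserting the Steinhaus estimate $\lambda_{G,\chi}(c)\ge\|c\|_2/\sqrt2$ into the dual formula of \cref{thm:compact-duality} and applying Cauchy--Schwarz. Your explicit Haar-measure transfer, which identifies the coordinate characters with independent Steinhaus variables, spells out a step the paper leaves implicit.
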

\begin{proof}
The lower bound is \cref{cor:basic-size}.  The Steinhaus comparison gives $\lambda(c)\ge\|c\|_2/\sqrt2$; insert this in \cref{thm:compact-duality} and use Cauchy--Schwarz.
\end{proof}

\subsection{Integer characters on one circle}
For an integer $m\ge1$, let $G=\mathbb T$ and $\chi_j(z)=z^j$, $1\le j\le m$.  Then
\[
 \lambda(\mathbf1_m)=\frac1{2\pi}\int_{-\pi}^{\pi}
 \left|\sum_{j=1}^me^{ijt}\right|dt\le1+\log m.
\]
Indeed the Dirichlet sum is bounded by $\min(m,\pi/|t|)$ and the integral splits at $\pi/m$.
\begin{proposition}[Lower bound for integer characters]\label{thm:coupled-lower-bound}
For the integer characters on one circle,
\[
 q_{\mathbb T,(z^j)}(\mathbf1_m)\ge\frac{m}{1+\log m},
\]
which grows faster than $\|\mathbf1_m\|_2=\sqrt m$.
\end{proposition}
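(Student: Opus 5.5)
The plan is to test the dual formula of \cref{thm:compact-duality} on the single vector $c=a=\mathbf1_m$, exactly as in the lower-bound half of \cref{cor:basic-size}, and to combine it with the displayed Dirichlet-kernel estimate $\lambda(\mathbf1_m)\le1+\log m$.

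First I check the setting. The group $G=\mathbb T$ is compact metrizable abelian, and the characters $z\mapsto z^j$ for $1\le j\le m$ are distinct. So \cref{thm:compact-duality} applies and gives
\[
 q_{\mathbb T,(z^j)}(\mathbf1_m)\ge\frac{|\langle\mathbf1_m,\mathbf1_m\rangle|}{\lambda(\mathbf1_m)}=\frac{m}{\lambda(\mathbf1_m)}.
\]
Here $P_{\mathbf1_m}(e^{it})=\sum_{j=1}^me^{ijt}$. Its modulus is $|\sin(mt/2)/\sin(t/2)|$, which is at most $m$ trivially. It is also at most $\pi/|t|$ on $[-\pi,\pi]$, using $|\sin(t/2)|\ge|t|/\pi$ there.

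Next I justify the estimate of $\lambda(\mathbf1_m)$, which is the only step needing care. Split the integral at $|t|=\pi/m$. The inner region contributes at most $\frac1{2\pi}\cdot\frac{2\pi}{m}\cdot m=1$. The outer region contributes at most $\frac1{2\pi}\cdot2\int_{\pi/m}^{\pi}\frac{\pi}{t}\,dt=\log m$. Together these give $\lambda(\mathbf1_m)\le1+\log m$, and substituting into the dual bound yields the claimed inequality. The main thing to watch is the constant in $|\sin(t/2)|\ge|t|/\pi$, via concavity of $\sin$ on $[0,\pi/2]$; it is what makes the outer integral exactly $\log m$ rather than a multiple of it.

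Finally, for the comparison with $\|\mathbf1_m\|_2=\sqrt m$, I note that
\[
 \frac{m/(1+\log m)}{\sqrt m}=\frac{\sqrt m}{1+\log m}\to\infty.
\]
So the lower bound eventually exceeds, and grows faster than, the Euclidean lower bound of \cref{cor:basic-size}. This also shows that the upper bound $\sqrt2\|a\|_2$ of \cref{thm:full-torus-scale} fails here, as expected, since the characters $z^j$ satisfy integer relations.
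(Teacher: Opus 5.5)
Your proposal is correct and follows the paper's own argument: take $c=\mathbf1_m$ in the dual formula of \cref{thm:compact-duality} and bound $\lambda(\mathbf1_m)\le1+\log m$ using $\bigl|\sum_{j=1}^m e^{ijt}\bigr|\le\min(m,\pi/|t|)$ with the split at $|t|=\pi/m$, and your use of $|\sin(t/2)|\ge|t|/\pi$ supplies the detail the paper leaves implicit. Your closing aside about \cref{thm:full-torus-scale} holds only for large $m$, since $m/(1+\log m)$ exceeds $\sqrt{2m}$ only once $\sqrt m>\sqrt2\,(1+\log m)$, but that remark is not needed for the proposition.
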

\begin{proof}
Use $c=\mathbf1_m$ in \cref{thm:compact-duality} and the Dirichlet estimate.
\end{proof}

\begin{remark}
Dependence alone does not guarantee improvement.  A stronger bound occurs when a coefficient direction compatible with the character relations has unusually small orbit $L^1$ cost.
\end{remark}

\subsection{Robust coefficients}
For a compact convex coefficient uncertainty set $b+U\subseteq\mathbb C^m$, \cref{thm:robust-quotient} specializes to
\[
 \inf_{u\in U}q_{G,\chi}(b+u)
 =\sup_{\lambda_{G,\chi}(c)\le1}
 \bigl(\operatorname{Re}\langle b,c\rangle-\sigma_U(-c)\bigr).
\]
The formula minimizes over coefficient uncertainty without selecting a realizing function.

\subsection{Countable character families}
Let $(\chi_j)_{j\ge1}$ be pairwise distinct characters on a compact metrizable abelian group $G$, and let $a=(a_j)$.
\begin{theorem}[Uniform finite interpolation criterion]\label{thm:countable-interpolation}
For $M\ge0$, the following are equivalent:
\begin{enumerate}[label=(\roman*)]
\item every finite coefficient subproblem has interpolation norm at most $M$;
\item there exists $f\in L^\infty(G)$ with $\|f\|_\infty\le M$ and
\[
 \int_Gf\overline{\chi_j}\,dm=a_j\qquad(j\ge1).
\]
\end{enumerate}
Consequently the least simultaneous bounded realization norm is the supremum of the finite interpolation norms.
\end{theorem}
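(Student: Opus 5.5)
The plan is to apply the weak-star finite-intersection principle, \cref{thm:weakstar-fip}, with $Y=L^1(G)$, $X=L^\infty(G)=Y^*$ (valid because $G$ is compact metrizable and $m$ is a finite measure), and with the tests $\ell_j(f)=\int_G f\overline{\chi_j}\,dm$. Each $\ell_j$ is evaluation against $\overline{\chi_j}\in C(G)\subseteq L^1(G)$, so it is weak-star continuous, as that proposition requires.

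For (ii)$\Rightarrow$(i), take a finite $F\subseteq\mathbb N_{>0}$ and restrict the given $f$. It satisfies the coefficient constraints indexed by $F$, so by the definition of $q_{G,\chi}$ the finite interpolation norm for $(\chi_j)_{j\in F}$ and $(a_j)_{j\in F}$ is at most $\|f\|_\infty\le M$.

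For (i)$\Rightarrow$(ii), the one point needing care is that (i) bounds the finite interpolation norms by $M$ only as an infimum, whereas \cref{thm:weakstar-fip} needs actual finite solutions of norm at most $M$. This is supplied by the attainment clause of \cref{thm:compact-duality}. For each finite $F$, the distinct characters $(\chi_j)_{j\in F}$ form a finite system, so the infimum defining $q_{G,(\chi_j)_{j\in F}}((a_j)_{j\in F})$ is attained by some $f_F\in L^\infty(G)$ with $\|f_F\|_\infty\le M$. One could also use \cref{prop:dual-attainment} directly. The empty $F$ is trivially handled by $f=0$. Then \cref{thm:weakstar-fip} yields a single $f$ with $\|f\|_\infty\le M$ satisfying all constraints simultaneously.

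For the consequence, let $s$ be the supremum of the finite interpolation norms in $[0,\infty]$.
\begin{itemize}
\item If $s=\infty$, then by (ii)$\Rightarrow$(i) no bounded simultaneous realization exists, and the least norm is $\inf\varnothing=\infty$.
\item If $s<\infty$, apply (i)$\Rightarrow$(ii) with $M=s$. This gives a realization of norm at most $s$. Conversely, (ii)$\Rightarrow$(i) shows that every realization has norm at least $s$.
\end{itemize}
So the least norm equals $s$ and is attained.

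I expect no serious obstacle. The only subtlety is the step from the infimum bound in (i) to an exact bound $\le M$, and \cref{thm:compact-duality} resolves it through attainment rather than an approximate bound of $M+\eta$.
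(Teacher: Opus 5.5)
Your proposal is correct and follows the paper's proof: attainment in the compact-group duality theorem turns the infimum bound in (i) into actual finite interpolants of norm at most $M$, the weak-star finite-intersection principle on the radius-$M$ ball of $L^\infty(G)$ combines them into one simultaneous interpolant, and restriction gives (ii)$\Rightarrow$(i). Your explicit handling of the case where the supremum of the finite interpolation norms is infinite is a small addition to the final consequence, which the paper leaves implicit.
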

\begin{proof}
By attainment in \cref{thm:compact-duality}, condition~(i) supplies, for each finite coefficient set, an interpolant whose norm is at most $M$.  The forward implication is then \cref{thm:weakstar-fip} applied to the weak-star compact ball of $L^\infty(G)$ and the character coefficient functionals.  Conversely, one simultaneous interpolant of norm at most $M$ restricts to an admissible interpolant for every finite coefficient set.
\end{proof}

\begin{corollary}[Finite-truncation lower bounds]
A half-line function or sequence carrying all the corresponding Abel coefficients has tail amplitude at least the supremum of the finite interpolation norms.  Divergence of that supremum forces unbounded tail amplitude.
\end{corollary}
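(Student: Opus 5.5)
The plan is to combine \cref{thm:countable-interpolation} with the Abel lower bounds \cref{thm:continuous-abel,thm:discrete-abel} applied to finite subfamilies. The statement is read in the continuous setting as follows. $F$ satisfies the integrability guard of \cref{thm:continuous-abel}. We fix a countable family of distinct real frequencies $(\gamma_j)_{j\ge1}$, and all the Abel limits $a_j$ exist. The discrete setting is analogous, with distinct phases $(\theta_j)$ and the guard of \cref{thm:discrete-abel}. For each finite index set $J_0\subseteq\mathbb N_{>0}$ we write $\Gamma_{J_0}=(\gamma_j)_{j\in J_0}$, and $a|_{J_0}$ for the restricted coefficient vector. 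The \emph{finite interpolation norms} are the numbers $q_{\Gamma_{J_0}}(a|_{J_0})$, computed on the finite orbit group $G_{\Gamma_{J_0}}$.

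First step. For each finite $J_0$, the hypotheses of \cref{thm:continuous-abel} hold for the frequency tuple $\Gamma_{J_0}$. The integrability guard does not depend on the frequencies. The frequencies in $\Gamma_{J_0}$ are distinct, and each required Abel limit $a_j$, $j\in J_0$, exists by assumption. Hence $A_\infty^c(F)\ge q_{\Gamma_{J_0}}(a|_{J_0})$. Taking the supremum over all finite $J_0$ gives
\[
  A_\infty^c(F)\ \ge\ \sup_{J_0\ \mathrm{finite}} q_{\Gamma_{J_0}}(a|_{J_0}),
\]
with the supremum taken in $[0,\infty]$. The discrete case is identical, using \cref{thm:discrete-abel}.

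Second step. We connect this supremum with the phrase ``the supremum of the finite interpolation norms'' in \cref{thm:countable-interpolation}. Here there is a subtlety, and I expect it to be the main obstacle. \Cref{thm:countable-interpolation} is stated on one fixed compact group $G$. The finite Abel bounds, by contrast, use the orbit groups $G_{\Gamma_{J_0}}$, and these vary with $J_0$. I would handle this by defining $G$ as the closure of the image of $y\mapsto(e^{i\gamma_jy})_j$ in $\mathbb T^{\mathbb N}$. This $G$ is compact and metrizable. The projection $G\to\mathbb T^{J_0}$ has image exactly $G_{\Gamma_{J_0}}$, because continuous images of closures of dense orbits are closures of orbits.

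Since Haar measure pushes forward to Haar measure under a surjective homomorphism, $\lambda$ computed on $G$ with the characters indexed by $J_0$ equals $\lambda_{\Gamma_{J_0}}$. By the dual formula in \cref{thm:compact-duality}, the finite interpolation norms computed on $G$ therefore coincide with $q_{\Gamma_{J_0}}(a|_{J_0})$. So the two readings of ``finite interpolation norm'' agree, and by \cref{thm:countable-interpolation} the supremum also equals the least norm of a simultaneous $L^\infty(G)$ interpolant of all the $a_j$. For the statement as worded, however, only the first step is needed.

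Final step. If the supremum is $+\infty$, the inequality of the first step forces $A_\infty^c(F)=\infty$ (respectively $A_\infty^d(F)=\infty$), i.e.\ the tail amplitude is unbounded. No compactness argument is needed here. \Cref{thm:weakstar-fip} enters only if one also wants to identify the bound with a single simultaneous realization, which is the interpretive remark in the second step.
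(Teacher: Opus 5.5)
Your proposal is correct and follows the paper's proof: apply \cref{thm:continuous-abel} or \cref{thm:discrete-abel} to each finite coefficient subfamily and take the supremum in $[0,\infty]$, so that divergence of the supremum immediately forces infinite tail amplitude. Your second step, which realizes every finite orbit group as a projection of one closed subgroup of $\mathbb T^{\mathbb N}$ so that the finite norms agree with those of \cref{thm:countable-interpolation}, is a correct clarification that the paper leaves implicit and does not need for this corollary.
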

\begin{proof}
Apply \cref{thm:continuous-abel} or \cref{thm:discrete-abel} to every finite coefficient set and then take the supremum of the resulting lower bounds.
\end{proof}

\begin{example}[Square-summable coefficients forcing unboundedness]
On $G=\mathbb T$, take $\chi_n(z)=z^n$ and
\[
 a_n=\frac1{\sqrt n\log(n+1)}\qquad(n\ge2).
\]
Then $(a_n)\in\ell^2$.  For $J_m=\{2,\dots,m\}$, testing the dual formula with all coefficients one gives numerator $\gg\sqrt m/\log m$ and denominator $O(\log m)$, so
\[
 q_{J_m}(a_{J_m})\gg\frac{\sqrt m}{\log^2m}\to\infty.
\]
Thus any guarded continuous or discrete Abel realization carrying all these coefficients is unbounded on its tails.  A continuous realization with these coefficients exists: Riesz--Fischer \cite{katznelson,rudin} supplies $f\in L^2(0,2\pi)$ with Fourier coefficients $a_n$ for $n\ge2$ and zero at the other prescribed modes.  Its periodic extension lies in $L^1$ on one period, and the geometric-series identity for its exponentially weighted periodization shows that its continuous Abel coefficients are the $a_n$.  The preceding finite-truncation lower bounds then force its tail amplitude to be infinite.  Square summability alone therefore does not imply boundedness for these dependent characters.
\end{example}

\section{A finite-field curve realization}\label{sec:finite-field}
The point-count identity and Frobenius modulus bound $|\alpha_j|=q^{1/2}$ used in this section are from \cite{weil,deligne}.
Let $C/\mathbb F_q$ be a smooth, projective, geometrically connected curve of genus $g\ge1$.  Write $N_n=\#C(\mathbb F_{q^n})$.  The reciprocal Frobenius roots $\alpha_1,\dots,\alpha_{2g}$ satisfy
\[
 N_n=q^n+1-\sum_{j=1}^{2g}\alpha_j^n,
 \qquad |\alpha_j|=q^{1/2}.
\]
Group equal normalized roots into distinct phases $e^{i\theta_1},\dots,e^{i\theta_r}$ with multiplicities $m_1,\dots,m_r$.  For $n\ge1$ set
\[
 F_n=q^{-n/2}(N_n-q^n-1)=-\sum_{\nu=1}^rm_\nu e^{in\theta_\nu}.
\]
Choose $F_0$ arbitrarily.  This finite initial choice changes neither the Abel boundary coefficients nor the tail limsup.

\begin{theorem}[Frobenius orbit bound and limsup formula]\label{thm:frobenius}
With $\Theta=(\theta_1,\dots,\theta_r)$,
\[
 \limsup_{n\to\infty}\frac{|N_n-q^n-1|}{q^{n/2}}
 \ge q_\Theta(-m),
\]
and in fact
\[
 \limsup_{n\to\infty}\frac{|N_n-q^n-1|}{q^{n/2}}
 =\max_{z\in G_\Theta}\left|\sum_{\nu=1}^rm_\nu z_\nu\right|.
\]
\end{theorem}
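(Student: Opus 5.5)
The plan is to prove the limsup formula directly via Kronecker--Weyl density, and then derive the inequality either from \cref{thm:discrete-abel} or from the formula together with \cref{cor:basic-size}-type reasoning.

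\emph{Step 1: Abel coefficients.} For $n\ge1$ we have $F_n=-\sum_\nu m_\nu e^{in\theta_\nu}$, a bounded sequence, so the guard $\sum_n|F_n|r^n<\infty$ holds for $0<r<1$. For each $j$,
\[
(1-r)\sum_{n\ge0}F_nr^ne^{-in\theta_j}=(1-r)F_0-(1-r)\sum_{\nu}m_\nu\sum_{n\ge1}r^ne^{in(\theta_\nu-\theta_j)}.
\]
The term with $\nu=j$ tends to $-m_j$. For $\nu\ne j$ the phases are distinct modulo $2\pi$, so the geometric sum stays bounded as $r\uparrow1$, and multiplying by $(1-r)$ kills it. Hence $a=-m$. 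Since $A_\infty^d(F)$ equals the limsup in the statement, \cref{thm:discrete-abel} gives the lower bound $\ge q_\Theta(-m)$.

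\emph{Step 2: limsup formula.} Put $\phi(z)=\bigl|\sum_\nu m_\nu z_\nu\bigr|$, which is continuous on $G_\Theta$. Then $|F_n|=\phi(\Psi_\Theta(n))$ for $n\ge1$.

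The upper bound is immediate, since $\Psi_\Theta(n)\in G_\Theta$ gives $|F_n|\le\max_{G_\Theta}\phi$.

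The lower bound is the main point. Let $z_0$ attain the maximum. We need $\Psi_\Theta(n)$ to come arbitrarily close to $z_0$ for arbitrarily large $n$, not merely for some $n$. Use the group structure: $\Psi_\Theta(\mathbb Z)$ is dense in $G_\Theta$, and $\Psi_\Theta(n+N)=\Psi_\Theta(N)\Psi_\Theta(n)$. If $\Psi_\Theta(\mathbb Z)$ is finite, the orbit is periodic and $z_0$ is hit infinitely often. Otherwise the orbit is infinite, so every neighbourhood of a point of $G_\Theta$ contains infinitely many orbit points, and in particular infinitely many $n$ with $\Psi_\Theta(n)$ near $z_0$. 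A negative $n$ can be replaced by a large positive $n'$: choose $N$ large with $\Psi_\Theta(N)$ near $1$ (again infinitely many such $N$ exist) and shift.

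Alternatively, the lower bound follows from \cref{prop:discrete-KW}. Apply the averaging to a continuous bump $\psi\ge0$ supported near $z_0$ with $\int\psi\,dm_\Theta>0$, which is possible because Haar measure has full support. Positivity of the averages forces infinitely many $n\ge0$ with $\Psi_\Theta(n)$ in the support of $\psi$. Continuity of $\phi$ then gives $\limsup_n|F_n|\ge\phi(z_0)-\epsilon$ for every $\epsilon>0$.

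\emph{Step 3: consistency.} The inequality can also be checked against the formula. Testing the dual formula of \cref{thm:compact-duality} shows $q_\Theta(-m)\le\|{-m}\|_1$. Moreover, the continuous function $f(z)=-\sum_\nu m_\nu z_\nu$ is an admissible interpolant with $\|f\|_\infty=\max\phi$, so $q_\Theta(-m)\le\max\phi$. This gives the inequality independently of Step 1.

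I expect the infinitely-many-large-$n$ recurrence in Step 2 to be the only real obstacle. I would handle it by the Kronecker--Weyl bump-function argument, since that uses only \cref{prop:discrete-KW} and full support of Haar measure.
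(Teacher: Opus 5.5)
Your proposal is correct and follows the paper's proof. You show the Abel coefficient vector is $-m$, so \cref{thm:discrete-abel} gives the lower bound, and you use density of every positive orbit tail in $G_\Theta$ plus continuity of $|f|$ for the limsup formula; your Kronecker--Weyl bump argument supplies the tail-density step that the paper only asserts. Your Step~3 observation that $f(z)=-\sum_\nu m_\nu z_\nu$ is itself an admissible interpolant, so the inequality also follows from the formula, is valid; only note that $q_\Theta(-m)\le\|m\|_1$ comes from the interpolant in \cref{cor:basic-size}, not from testing the dual formula, since testing a single $c$ gives only lower bounds.
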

\begin{proof}
After the harmless choice of $F_0$, the normalized error is the restriction to the discrete orbit of the continuous trigonometric polynomial $f(z)=-\sum m_\nu z_\nu$.  Its Abel coefficient vector is $-m$, so \cref{thm:discrete-abel} gives the universal lower bound.  The closure of every positive tail $\{\Psi_\Theta(n):n\ge N\}$ equals the compact cyclic group $G_\Theta$; hence continuity of $|f|$ implies that the tail orbit values approach the maximum on $G_\Theta$.
\end{proof}

\begin{remark}
The lower bound $q_\Theta(-m)$ is optimal among sequences constrained only by these Abel coefficients and the orbit group $G_\Theta$.  The maximum in \cref{thm:frobenius} also uses the specified trigonometric-polynomial formula $F_n=-\sum_\nu m_\nu e^{in\theta_\nu}$.  Closed-point counts instead involve additional divisor corrections.
\end{remark}

\compactpart{V}{Combining Upper and Lower Bounds}

\section{Occurrence-based upper bounds and behavior-based lower bounds}\label{sec:synthesis}
The maps $p,\Err,O,A,$ and $Q$ are related by the following diagram:
\[
\begin{tikzcd}[column sep=3.2em]
\mathcal R \arrow[r,"\Err"] \arrow[d,"p"']
  & \mathcal E \arrow[r,"O"] \arrow[d,"A" description]
  & \mathcal B_{\mathrm{err}} \arrow[d,"Q"]\\
\mathcal S
  & {[0,\infty]} \arrow[r,equal]
  & {[0,\infty].}
\end{tikzcd}
\]
The upper certificate $U$ is computed on $\mathcal R$ from occurrence-sensitive paths; the lower reflection $Q$ is computed from observed error behavior.  The quotation and aggregate-code constructions are not used in defining either numerical map.

\begin{theorem}[Combined realization--error bounds]\label{thm:integrated}
Assume:
\begin{enumerate}[label=(I\arabic*)]
\item realizations form a category fibered in groupoids over specifications, with small fibers or in a fixed universe enlargement, and carry occurrence structure transported by cartesian arrows;
\item an error extractor $\Err$ is cartesian or otherwise invariant under the chosen realization isomorphisms;
\item local guarded rules carry sound certificates in an ordered error algebra, yielding $A(\Err(r))\le U(r)$;
\item the observed error behavior admits a behaviorwise reflection $Q$; in the discrete case $Q(b)=\inf_{O(e)=b}A(e)$, and in the non-discrete case $A$ is functorial and $Q=\Ran_OA$;
\item whenever $Q$ is evaluated by a test family, that family satisfies the norming or stated lower-bound hypotheses of the applicable transfer theorem.
\end{enumerate}
Then:
\begin{enumerate}[label=(\alph*)]
\item every realization satisfies $Q(O\Err(r))\le A(\Err(r))\le U(r)$;
\item the left endpoint is invariant under all realization changes invisible to $O$, whereas the right endpoint may distinguish those realizations through occurrence structure;
\item the Yoneda embeddings into the fiberwise presheaf completion keep nonisomorphic proof or program realizations distinct, whereas $Q$ need not distinguish them;
\item if the error-object model with observation $o=O$, magnitude $A_{\mathrm{ext}}=A$, and lower function $q=Q$ is approximately complete in the sense of \cref{sec:guarded-transfer}, then
\[
  Q(b)=\inf\{A(e):O(e)=b\};
\]
\item under the respective hypotheses of \cref{thm:robust-quotient,thm:weakstar-fip}, minimax treats compact-convex uncertainty and weak-star compactness treats countable observations while leaving the realization category and its occurrence data unchanged.
\end{enumerate}
\end{theorem}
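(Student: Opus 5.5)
The plan is to prove the five conclusions separately, each time reducing to a result already stated. No new estimate is needed; the work is in checking that the hypotheses (I1)--(I5) supply the inputs those results require.

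For (a), the right inequality is exactly (I3), i.e.\ \cref{thm:ordered-path-certificate} applied on the actual guard. For the left inequality I split on (I4). In the discrete case, \cref{thm:behavior-reflection} gives $Q\circ O\le A$ directly. In the non-discrete case, $Q=\Ran_OA$ comes with its counit $(\Ran_OA)\circ O\le A$; pointwise this is the comma-category infimum at $b=O(e)$, taken with the identity arrow $1:O(e)\to O(e)$. Evaluating at $e=\Err(r)$ and chaining with (I3) yields the sandwich, as in \cref{prop:basic-sandwich}. When $Q$ is computed through a test family instead, (I5) together with \cref{thm:test-transfer} guarantees that the computed quantity is still a lower bound.

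For (b), the left endpoint $Q(O\Err(r))$ is by construction a function of $O(\Err(r))$ alone. Hence two realizations with equal observed behavior get equal left endpoints. By (I2), the same holds under the realization isomorphisms being quotiented, since then $O\Err(r)$ changes only by an isomorphism; in the non-discrete case $Q$ is a functor into a poset, so it takes equal values on isomorphic objects. For the right endpoint, I would exhibit the possibility of distinction and cite \cref{sec:information-loss} and \cref{prop:certificate-transport}: $U$ is built from occurrence data, which $O$ forgets. Part (c) combines full faithfulness of Yoneda in each fiber, from \cref{thm:fiberwise} and \cref{thm:three-levels}, with (I1), which guarantees that the fiberwise completion exists. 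A two-object fiber with equal magnitudes, as in \cref{thm:two-aggregations}, shows that $Q$ need not separate the realizations.

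Part (d) is the infimum-formula proposition of \cref{sec:guarded-transfer}, applied with $o=O$, $A_{\mathrm{ext}}=A$ and $q=Q$; the soundness input is (a). Part (e) simply cites \cref{thm:robust-quotient} and \cref{thm:weakstar-fip}. These act only on $B$ and on the realization spaces $X$, so the realization category and its occurrence data are left unchanged.

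The main obstacle is the non-discrete case of (a). One must check that the right Kan extension really gives the pointwise inequality at $O(e)$, and, in the test-family route, that the hypotheses of (I5) match those of \cref{thm:test-transfer}. I would handle the first point through the order-enriched universal property, $H\le\Ran_OA$ if and only if $HO\le A$, applied with $H=\Ran_OA$ itself.
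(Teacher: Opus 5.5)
Your proposal is correct and follows the paper's proof essentially step for step: for (a), the discrete reflection or the right-Kan counit $(\Ran_OA)O\le A$ combined with (I3); for (b), the factorization of $Q$ through $O$; for (c), \cref{thm:fiberwise} together with \cref{thm:two-aggregations}; and for (d) and (e), direct citations. One small precision: the soundness input needed in (d) is $Q(O(e))\le A(e)$ for every error object $e\in\mathcal E$, not only for $e=\Err(r)$ as in (a). Your reflection/counit argument already gives this before you specialize to $\Err(r)$, so you should cite that inequality rather than (a) itself.
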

\begin{proof}
In the discrete case, the left inequality in (a) is \cref{thm:behavior-reflection}.  In the non-discrete case it is the right-Kan counit $(\Ran_OA)O\le A$; under the additional fibration hypothesis it also has the strict-fiber form of \cref{thm:fibrational-reflection}.  Combine this inequality with (I3).  When the chosen tests are norming for $Q$, the same sandwich is also an instance of \cref{thm:guarded-bounds}.  Part (b) follows from the factorization of $Q$ through $O$ and from the realization-level construction of $U$.  Part (c) combines (I1) with \cref{thm:fiberwise,thm:two-aggregations}.  Part (d) is the infimum formula from soundness and approximate completeness, applied with the data stated there.  Part (e) is \cref{thm:robust-quotient,thm:weakstar-fip}.
\end{proof}

\subsection{Function errors}
Let a formula or analytic approximation produce $J(r)(x)$ for a target $\tau(s)(x)$ and set $E_r(x)=J(r)(x)-\tau(s)(x)$.  A local analytic derivation may produce $U(r)$ by Lipschitz or affine composition.  If Mellin boundary coefficients of $E_r$ are justified by the original integral, then, for the resulting residue vector $\mathbf a$,
\[
 q_\Gamma(\mathbf a)\le A_\beta(E_r)\le U(r).
\]
The lower bound says that no error function with those coefficients can decay below the reflected scale; the upper bound certifies what the chosen derivation achieves.

\subsection{Algorithmic and sequence errors}
For an algorithm returning a sequence $Y_n$ approximating targets $S_n$, put $F_n=Y_n-S_n$.  Rule-level rounding, truncation, or approximation errors propagate to an upper certificate.  Boundary poles of the guarded generating function of $F$ produce discrete coefficients and hence
\[
 q_\Theta(a)\le\limsup_{n\to\infty}|F_n|\le U(r),
\]
where $U(r)$ is understood as a certified upper bound for the tail amplitude.  Repeated phases are merged, and finite initial terms do not affect the tail reflection.

\subsection{Proof and rewrite errors}
A proof realization may carry a residual obligation, failed side condition, quantitative annotation, or approximate semantic value as its error object.  Track spans determine the greatest pathwise domain of intact occurrence transport.  Under the context-embedding hypothesis of \cref{prop:certificate-transport}, certificates on transported occurrences are unchanged.  Representable Yoneda codes keep nonisomorphic realizations distinct, whereas $Q$ records only what the chosen tests force about the residual magnitude.

\section{Conditions and limitations}\label{sec:limitations}
\begin{enumerate}
\item \textbf{Error extraction is problem-specific.}  The application must supply the error extractor, difference or residual, and magnitude function, and these data must respect its specification semantics.
\item \textbf{Guards are mathematical hypotheses.}  A partial rule must be applied on its actual domain.  A Mellin continuation or generating-function continuation must agree with the original interior representation, or an independent transfer theorem is required.
\item \textbf{Behaviorwise optimality is relative to the observation map.}  When the error and behavior categories are discrete, or when $Q=\Ran_OA$ and $O$ satisfies the fibration hypothesis of \cref{thm:fibrational-reflection},
\[
Q(b)=\inf_{O(e)=b}A(e).
\]
The same formula follows when the approximate-completeness hypothesis of \cref{sec:guarded-transfer} applies.  For a general non-discrete observation functor without one of these hypotheses, whenever the pointwise right Kan extension exists,
\[
(\Ran_OA)(b)
=\inf_{(u:b\to O(e))\in(b\downarrow O)}A(e),
\]
so comma objects, rather than only realizations in the strict fiber, determine the value.  In \cref{thm:frobenius}, the coefficient-constrained lower bound is distinguished from the exact limsup of the specified trigonometric realization.
\item \textbf{Finite simple boundary poles do not force unboundedness.}  A nonzero coefficient vector gives a positive lower bound for the normalized tail amplitude.  Radial divergence and divergence of the finite-truncation interpolation norms are two sufficient criteria for unboundedness.
\item \textbf{Quantitative completeness does not reconstruct realizations.}  Norming tests can determine a quotient cost while being unable to distinguish sharing, quotation boundaries, or proof identity.
\item \textbf{Frequency observations have limited scope.}  They are suited to asymptotic and oscillatory errors, not to every finite-time, nonlinear, probabilistic, or adversarial error.  Other observation maps require their own transfer estimates and definitions of $Q$.
\item \textbf{Compactness is not descent.}  Countable coefficient realization uses weak-star compactness; proof gluing uses stack descent.  Neither argument supplies the conclusion of the other.
\item \textbf{Upper and lower bounds may differ.}  Such a gap may result from a nonoptimal realization, a weak upper estimate, or information discarded by the observer.  Equality requires a separate optimality argument.
\end{enumerate}

\begin{theorem}[Compact-orbit lower and upper bounds]\label{thm:summary}
Let $r$ be a realization with extracted error $e=\Err(r)$ and a sound occurrence-based upper certificate $U(r)$.  Suppose either:
\begin{enumerate}[label=(\roman*)]
\item $e$ is a half-line error with guarded continuous Abel coefficients $a$ along a real frequency orbit, and its tail magnitude satisfies $A_\infty^c(e)\le A(e)$; or
\item $e$ is a sequence error with guarded discrete Abel coefficients $a$ along an integer frequency orbit, and its tail magnitude satisfies $A_\infty^d(e)\le A(e)$.
\end{enumerate}
Here $A_\infty^c(e)$ and $A_\infty^d(e)$ denote the relevant continuous and discrete tail amplitudes; in the principal applications $A(e)$ is defined to be that tail amplitude.  Let $(G,\chi)$ denote the associated orbit group and distinct character family.  Then
\[
 q_{G,\chi}(a)\le A(e)\le U(r).
\]
By \cref{thm:compact-duality}, the lower endpoint has the displayed compact-group dual formula.  If the coefficients are known only up to compact-convex uncertainty, \cref{thm:robust-quotient} gives the corresponding minimax lower endpoint.  For a compatible countable character family, \cref{thm:countable-interpolation} gives the simultaneous lower endpoint as the supremum of the finite truncations.  If the orbit group is a full torus, the finite-dimensional endpoint is comparable to $\|a\|_2$; relations among the characters can make it substantially larger.
\end{theorem}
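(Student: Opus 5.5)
The plan is to assemble the sandwich from two independent inequalities and then read off the supplementary assertions from the cited results. The right-hand inequality $A(e)\le U(r)$ needs no argument beyond the hypothesis that $U(r)$ is a sound occurrence-based upper certificate, i.e.\ the defining property used in \cref{prop:basic-sandwich}; when $U$ is built compositionally, its soundness is supplied by \cref{thm:ordered-path-certificate}. All of the work is in the lower endpoint, and I will split into the two cases of the statement.

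In case (i), I will take the half-line error $e=F\in L^1_{\mathrm{loc}}([0,\infty))$. Here ``guarded continuous Abel coefficients'' is read as exactly the hypotheses of \cref{thm:continuous-abel}: the exponential integrability guard together with existence of the limits $a_j$ at distinct real frequencies $\Gamma$. That theorem gives $A_\infty^c(F)\ge q_\Gamma(a)$. By the definitions in \cref{sec:orbits}, $q_\Gamma=q_{G_\Gamma,\chi}$, where $G_\Gamma$ is the orbit closure and $\chi$ the restricted coordinate characters. These characters are distinct because the frequencies are distinct, and $G_\Gamma$ is a closed subgroup of $\mathbb T^m$, hence compact metrizable abelian. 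Chaining with the assumed $A_\infty^c(e)\le A(e)$ gives $q_{G,\chi}(a)\le A(e)$.

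Case (ii) is identical, using \cref{thm:discrete-abel} with $G_\Theta$. The phases are pairwise distinct modulo $2\pi$, merged as in the remark on repeated phases, so the characters on $G_\Theta$ are distinct. Transitivity in $[0,\infty]$ then yields the displayed sandwich in both cases.

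The remaining assertions are invocations of earlier results.
\begin{enumerate}
\item The dual formula for the lower endpoint is \cref{thm:compact-duality}.
\item For compact-convex coefficient uncertainty, every admissible coefficient vector $b+u$ still gives $q_{G,\chi}(b+u)\le A(e)$. Taking the infimum over $u$ gives the minimax endpoint, computed by \cref{thm:robust-quotient} as specialized in \cref{sec:character-relations}.
\item For a countable family, I will apply the finite-truncation corollary: the lower bound holds for each finite subfamily, so one takes the supremum. \Cref{thm:countable-interpolation} identifies that supremum with the simultaneous interpolation norm.
\item The full-torus comparison is \cref{thm:full-torus-scale}.
\item The claim that character relations can make the endpoint larger is witnessed by \cref{thm:coupled-lower-bound}.
\end{enumerate}

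The only real obstacle is bookkeeping: checking that the abstract ``guarded Abel coefficients'' in the statement match the precise integrability and limit hypotheses of \cref{thm:continuous-abel,thm:discrete-abel}, and that distinctness of frequencies or phases yields distinct characters on the orbit group. If needed, I would merge coincident phases first, as the remark on repeated phases prescribes, before applying the discrete theorem.
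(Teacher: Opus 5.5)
Your proposal is correct and follows the paper's proof essentially step for step: the lower bound in each case comes from \cref{thm:continuous-abel} or \cref{thm:discrete-abel} chained with the assumed tail-magnitude inequality, the upper bound is the certificate hypothesis, and the remaining assertions are read off from \cref{thm:compact-duality,thm:robust-quotient,thm:countable-interpolation,thm:full-torus-scale,thm:coupled-lower-bound}; your checks that the restricted characters are distinct and the orbit group is compact metrizable are harmless extra detail. One small wording fix in the robust step: it is not the case that every admissible $b+u$ gives $q_{G,\chi}(b+u)\le A(e)$; rather, the true coefficient vector $a$ lies in $b+U$, so $\inf_{u\in U}q_{G,\chi}(b+u)\le q_{G,\chi}(a)\le A(e)$.
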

\begin{proof}
In case (i), \cref{thm:continuous-abel} gives $q_{G,\chi}(a)\le A_\infty^c(e)\le A(e)$.  In case (ii), \cref{thm:discrete-abel} gives $q_{G,\chi}(a)\le A_\infty^d(e)\le A(e)$.  The upper inequality is the certificate hypothesis.  The uncertainty-adjusted formula, dual formula, and countable-family formula are \cref{thm:compact-duality,thm:robust-quotient,thm:countable-interpolation}.  The full-torus comparison is \cref{thm:full-torus-scale}, and \cref{thm:coupled-lower-bound} gives the larger lower bound for dependent integer characters.
\end{proof}

\newpage

\section*{Declaration}
The author used ChatGPT to generate parts of the proofs.

\appendix
\section{Index of notation}
\begin{longtable}{@{}>{\raggedright\arraybackslash}p{.19\linewidth}p{.77\linewidth}@{}}
$\mathcal S,\mathcal R$ & specification and realization categories\\
$\mathcal E$ & category of structured error objects\\
$\Err(r)$ & error object extracted from realization $r$ relative to its specification\\
$O(e)$ & observed error behavior\\
$A(e)$ & true magnitude of an error object\\
$U(r)$ & occurrence-based upper certificate\\
$Q_A(b)$ & behaviorwise quantitative reflection $\inf_{O(e)=b}A(e)$\\
$\Ran_OA$ & right Kan extension controlled in general by $(b\downarrow O)$, and by the strict fiber in the discrete or fibrational cases\\
$q_T$ & quotient cost induced by a surjective linear observation map $T$\\
$\lambda_{G,\chi}$ & $L^1$ cost of a character polynomial on a compact group\\
$q_{G,\chi}$ & least $L^\infty$ realization norm with prescribed character coefficients\\
$G_\Gamma,G_\Theta$ & continuous and discrete frequency-orbit closures\\
$A_\infty^c,A_\infty^d$ & continuous tail essential amplitude and discrete tail supremum\\
$A_\beta(E)$ & normalized Mellin tail amplitude\\
$\trk(\pi)$ & partial track span of a rewrite path\\
$\rho_\#$ & canonical transport on the stable principal ideal\\
$\mathcal E_b$ & realization fiber over behavior $b$\\
$\Sigma_P(v)$ & minimal joint-support antichain of occurrence $v$\\
\end{longtable}

\section{Bibliographic notes}\label{sec:bibliographic-notes}
For categories, topoi, fibrations, and presheaves, see \cite{maclane,maclane-moerdijk,streicher}; for stack semantics, see \cite{coquand-mannaa-ruch}.  Whole-grain occurrence models and adhesive compositional rewriting are developed in \cite{kock-petri,lack-sobocinski,behr-harmer-krivine,zanasi-free-hypergraph,bonchi-rewriting-frobenius}.  The discussions of proof identity, sharing, and intensional stratification draw on \cite{girard,strassburger,hasegawa,pfenning,hu-pientka}.  For harmonic analysis and compact-group Fourier theory, see \cite{rudin,katznelson}; for functional-analytic duality and weak-star compactness, see \cite{conway,folland}.  The minimax argument uses \cite{sion}, the Mellin discussion uses \cite{zagier}, and the orbit-averaging results use \cite{bailleul}.  The finite-field example uses Weil's curve point-count formula and Deligne's Frobenius eigenvalue bound \cite{weil,deligne}.

\begingroup
\small
\phantomsection
\addcontentsline{toc}{section}{References}
\bibliographystyle{unsrtnat}
\bibliography{GQRS_source_audited_final}
\endgroup

\end{document}